\def\backgroundcolor{black!80}
\def\backgroundcolor{white}
\newcommand{\optbracket}[1]{\opt{ams}{#1}\opt{comp}{(#1)}}
\tikzset{every picture/.style={scale=2,line width=0.7pt}}
\tikzstyle{longdashed}=[dashed,dash pattern=on 6pt off 6pt]
\tikzset{gap/.style={inner sep=2pt,fill=\backgroundcolor}}
\newcommand\stdscale{1.1}
\newcommand\stdshrink{0.95}
\newcommand\stdstyle{\footnotesize}
\newcommand\compstyle{\small}
\newcommand\stdskip\medskip 
\newcommand\hrefsf[2]{\href{#1}{\textsf{#2}}}
\newcommand\captionsized[1]{\caption{\opt{ams}{\footnotesize} #1}}
\newcommand\compdiagscale{1.1}
\newcommand{\equalsSep}{0.8pt}
\newcommand{\into}{\hookrightarrow}
\newcommand{\isoto}{\xrightarrow{\sim}} 
\def\arrowSep{2.25pt}
\newcommand\quoted[1]{`#1'}
\newcommand\Z{\mathbb Z}
\newcommand\A{\mathbb A}
\newcommand \placeholder{-}
\newcommand\id{\operatorname{id}}
\DeclareMathOperator \Hom {Hom}
\newcommand\sHom{\mathcal{H}om }
\newcommand\RDerived{\mathrm{R}}
\newcommand{\Spec}{\operatorname{Spec}}\newcommand{\Tot}{\operatorname{Tot}}
\newcommand{\comp}{} 
\newcommand\Cone{\mathsf{Cone}}
\newcommand\D{\mathsf{D}}
\newcommand\Exc{\mathrm{E}}
\newcommand\Bl {\mathrm{Bl}}
\newcommand\normal[1] {\mathcal{N}_{#1}}
\newcommand\normalof[2] {\mathcal{N}_{#1} {#2}}
\newcommand\conormalof[2] {\mathcal{N}^\vee_{#1} {#2}}
\newcommand\normalcone[2]{\mathcal{C}_{#1} {#2}}
\newcommand\zeroes[1] {V(#1)}
\newcommand\ideal {\mathcal{I}}
\newcommand\Ladj[1]{{#1}^{\mathsf{L}}}
\newcommand\Radj[1]{{#1}^{\mathsf{R}}}
\newcommand\codim{\operatorname{codim}}
\newcommand\rk{\operatorname{rk}}
\renewcommand\P{\mathbb P}
\newcommand\pt{\operatorname{pt}}
\newcommand\inc{\mathit{inc}}
\newcommand\cL{\mathcal L}
\newcommand\cM{\mathcal M}
\newcommand{\cE}{\mathcal{E}}
\newcommand{\cF}{\mathcal{F}}
\newcommand{\cO}{\mathcal{O}}
\newlength\tempWidth
\newcommand\InSpaceOf[2]{
   \settowidth{\tempWidth}{$#1$}\phantom{#1}\hspace{-\tempWidth}{#2}}
\newcounter{keyenvcountA}
\newcounter{keyenvcountB}
\newcounter{keyenvcountC}
\numberwithin{equation}{section}
\def\beq{\begin{equation*}}
\def\eeq{\end{equation*}}
\def\beqn{\begin{equation}}
\def\eeqn{\end{equation}}
\theoremstyle{plain}
\newtheorem{thm}{Theorem}[section]
\newtheorem{keythm}[keyenvcountB]{Theorem}
\newtheorem{prop}[thm]{Proposition}
\newtheorem{lem}[thm]{Lemma}
\newtheorem{cor}[thm]{Corollary}
\theoremstyle{remark}
\newtheorem{rem}[thm]{Remark}
\newcommand\acksize{\opt{ams}{\footnotesize}} 
\newtheorem*{acks}{\acksize Acknowledgements}
\theoremstyle{definition}
\newtheorem{defn}[thm]{Definition}
\newtheorem{setn}[thm]{Setting}
\newtheorem{keyassm}[keyenvcountC]{Assumption}
\newtheorem*{keynotn}{Notation} 
\newtheorem{eg}[thm]{Example}
\newtheorem{assm}[thm]{Assumption}
\newcommand{\firstproofpart}[1]{\noindent {(#1)}}
\newcommand{\proofpart}[1]{\stdskip \noindent {(#1)}}
\newcommand{\marginparstretch}{0.6}
\let\oldmarginpar\marginpar
\renewcommand\marginpar[1]{\-\oldmarginpar[\framebox{\setstretch{\marginparstretch}\begin{minipage}{\marginparwidth}{\raggedleft\scriptsize #1}\end{minipage}}]{\framebox{\setstretch{\marginparstretch}\begin{minipage}{\marginparwidth}{\raggedright\scriptsize #1}\end{minipage}}}}
\def\baseheight{0.25}
\def\baseskew{0.22}
\def\basetilt{0.06}
\def\basebend{12}
\def\baseshift{-0.09}
\def\baseperspective{0.9}
\def\fibwidth{0.24}
\def\fibheight{0.68}
\def\fibgrow{0.10}
\def\fibskew{-0.35}
\def\fibsep{0.4}
\def\dotsize{0.3pt}
\def\coneRskew{0.19}
\def\coneLskew{-0.20}
\def\coneext{1.1}
\def\conethick{1.5pt}
\def\bgcol{gray!30}
\def\labelsepn{0.06}
\def\labelpos{0.7}
\def\arrowshrink{0.05}
\def\curveheight{0.13}
\def\curvebend{35}
\def\curveshift{0.1}
\def\curvetweakX{0.08}
\def\curvetweakY{0.05}
\def\resgrow{0.7}
\def\resfibsep{0.02}
\def\flagSingular{0}
\def\flagResolution{1}
\def\draft{0}
\def\flagDraft{1}
\newcommand{\curve}[5]{
\draw[name path=#1,#2] ($(#3)+(#4,-\curveheight+#5)$) to [bend left=\curvebend] ($(#3)+(0,+\curveheight)$);
}
\newcommand{\fibcoord}[5]{
\coordinate (#1TL) at (#2-\fibwidth,#3+\fibheight+\fibskew+#4);
\coordinate (#1TR) at (#2+\fibwidth,#3+\fibheight-\fibskew+#4);
\coordinate (#1BL) at (#2-\fibwidth,#3-\fibheight+\fibskew-#5);
\coordinate (#1BR) at (#2+\fibwidth,#3-\fibheight-\fibskew-#5);
}
\newcommand{\threefoldpicture}[1]{
\def\flag{#1}
\begin{tikzpicture}[scale=2.0]

\coordinate (baseTL) at (-\baseperspective+\baseskew*\baseperspective,\baseheight+\baseshift-\basetilt);
\coordinate (baseTR) at (+\baseperspective+\baseskew*\baseperspective,\baseheight+\baseshift+\basetilt);
\coordinate (baseBL) at (-1-\baseskew,-\baseheight+\baseshift-\basetilt);
\coordinate (baseBR) at (+1-\baseskew,-\baseheight+\baseshift+\basetilt);

\ifx\flag\flagResolution
\curve{curveTR}{}{baseTR}{0}{0};
\curve{curveBL}{}{baseBL}{\curvetweakX}{\curvetweakY};
\curve{curveBR}{}{baseBR}{0}{0};
\curve{curveR}{}{+\fibsep+\resfibsep,+\curveshift}{0}{0};
\curve{curveL}{}{-\fibsep-\resfibsep,+\curveshift}{0}{0};
\curve{curve}{line width=\conethick}{0,+\curveshift}{0}{0};
\fi

\ifx\flag\flagSingular
\draw (baseBL) to [bend left=\basebend] (baseTL) to [bend left=\basebend] (baseTR) to [bend right=\basebend] (baseBR);
\fi 

\ifx\flag\flagResolution
\draw ($(baseBL)+(0,+\curveheight)$) to [bend left=\basebend] ($(baseTL)+(0,+\curveheight)$) to [bend left=\basebend] ($(baseTR)+(0,+\curveheight)$) to [bend right=\basebend] ($(baseBR)+(0,+\curveheight)$);
\draw ($(baseTR)+(0,-\curveheight)$) to [bend right=\basebend] ($(baseBR)+(0,-\curveheight)$);
\draw ($(baseBR)+(0,-\curveheight)$) [bend right=\basebend] ($(baseBL)+(\curvetweakX,-\curveheight+\curvetweakY)$);
\fi

\ifx\flag\flagResolution
\fibcoord{fib}{0}{0}{\resgrow*\curveheight}{\fibgrow}
\fibcoord{Lfib}{-\fibsep}{0}{\resgrow*\curveheight}{\fibgrow}
\fibcoord{Rfib}{+\fibsep}{0}{\resgrow*\curveheight}{\fibgrow}
\else
\fibcoord{fib}{0}{0}{0}{0}
\fibcoord{Lfib}{-\fibsep}{0}{0}{0}
\fibcoord{Rfib}{+\fibsep}{0}{0}{0}
\fi


\draw[\bgcol] (LfibTL) -- (LfibTR) --(LfibBR) --  (LfibBL) --  cycle;

\draw[\bgcol,line width=\conethick] (fibTL) -- (fibTR) --(fibBR) --  (fibBL) --  cycle;

\draw (RfibTL) -- (RfibTR) --(RfibBR) --  (RfibBL) --  cycle;


\path [name path=fibTR--fibBR] (fibTR) -- (fibBR);
\path [name path=LfibTL--LfibTR] (LfibTL) -- (LfibTR);
\path [name path=RfibTL--RfibTR] (RfibTL) -- (RfibTR);
\path [name path=fibBL--fibBR] (fibBL) -- (fibBR);
\path [name path=RfibBL--RfibBR] (RfibBL) -- (RfibBR);
\path [name path=RfibTL--RfibBL] (RfibTL) -- (RfibBL);

\path [name path=LfibTR--LfibBR] (LfibTR) -- (LfibBR);
\path [name path=fibTL--fibTR] (fibTL) -- (fibTR);
\path [name path=LfibBL--LfibBR] (LfibBL) -- (LfibBR);
\path [name path=fibTL--fibBL] (fibTL) -- (fibBL);

\path [name intersections={of=fibTR--fibBR and RfibTL--RfibTR,by=RfibTint}];
\path [name intersections={of=fibBL--fibBR and RfibTL--RfibBL,by=RfibBint}];

\path [name intersections={of=LfibTR--LfibBR and fibTL--fibTR,by=LfibTint}];
\path [name intersections={of=LfibBL--LfibBR and fibTL--fibBL,by=LfibBint}];

\draw (LfibTint) -- (LfibTR) -- (LfibTL) -- (LfibBL) -- (LfibBint);
\draw[line width=\conethick] (RfibTint) -- (fibTR) -- (fibTL) -- (fibBL) -- (RfibBint);


\ifx\flag\flagSingular
\draw[fill=black] (0,0) circle (\dotsize);
\draw[fill=black] (+\fibsep,0) circle (\dotsize);
\draw[fill=black] (-\fibsep,0) circle (\dotsize);
 \node (labelz) at (-\labelsepn*\labelpos,+\labelsepn) {$\InSpaceOf{y}{z}$};
 \node (labely) at (+\fibsep-\labelsepn*\labelpos,+\labelsepn) {$y$};
\fi

\ifx\flag\flagResolution
 \node (labelEz) at (-\labelsepn,\curveheight+\labelsepn+\curveshift) {$\excBlowupSingularShort_{z\InSpaceOf{y}{}}$};
 \node (labelEy) at (-\labelsepn+\fibsep,\curveheight+\labelsepn+\curveshift) {$\excBlowupSingularShort_y$};
\fi


\coordinate (LconeT) at (-\fibsep+\coneext*\coneLskew,\coneext);
\coordinate (LconeB) at (-\fibsep-\coneext*\coneLskew,-\coneext);
\coordinate (RconeT) at (+\fibsep+\coneext*\coneRskew,\coneext);
\coordinate (RconeB) at (+\fibsep-\coneext*\coneRskew,-\coneext);

\path [name path=RconeT--RconeB] (RconeT) -- (RconeB);
\path [name path=LconeT--LconeB] (LconeT) -- (LconeB);

\path [name intersections={of=LconeT--LconeB and LfibTL--LfibTR,by=LconeTint}];
\path [name intersections={of=LconeT--LconeB and LfibBL--LfibBR,by=LconeBint}];
\path [name intersections={of=RconeT--RconeB and RfibTL--RfibTR,by=RconeTint}];
\path [name intersections={of=RconeT--RconeB and RfibBL--RfibBR,by=RconeBint}];

\ifx\draft\flagDraft
\draw[\bgcol] (LconeT) -- (LconeB);
\draw[\bgcol] (RconeT) -- (RconeB);
\fi

\draw[line width=\conethick] (LconeTint) -- (LconeBint);
\draw[line width=\conethick] (RconeTint) -- (RconeBint);


\ifx\flag\flagResolution
\path [name intersections={of=LconeT--LconeB and curveL,by=Ltransint}];
\path [name intersections={of=RconeT--RconeB and curveR,by=Rtransint}];
\draw[fill=black] (Ltransint) circle (\dotsize);
\draw[fill=black] (Rtransint) circle (\dotsize);
\fi

\ifx\flag\flagSingular
\draw (baseBR) to [bend right=\basebend]  (baseBL);
\fi 

\ifx\flag\flagResolution
\draw ($(baseBR)+(0,+\curveheight)$) to [bend right=\basebend] ($(baseBL)+(0,+\curveheight)$);
\draw ($(baseBR)+(0,-\curveheight)$) to [bend right=\basebend] ($(baseBL)+(\curvetweakX,-\curveheight+\curvetweakY)$);
\fi


\ifx\flag\flagSingular
\coordinate (labelBpos) at (+1-\labelsepn,-\baseheight);
\else
\coordinate (labelBpos) at (+1,-\baseheight-2*\curveheight);
\fi

\ifx\flag\flagSingular
 \node (labelY) at (labelBpos) {$\blowupLocusA$};
\fi
\ifx\flag\flagResolution
 \node (labelY) at ($(labelBpos)+(0,+\curveheight)$) {
 $\excBlowupAmbient$
 };
\fi

 \ifx\flag\flagSingular
 \coordinate (labelNpos) at (-\fibsep-\fibwidth/3,\fibheight+3*\labelsepn);
 \node (labelN) at (labelNpos) {$\normalof{\blowupLocusA}{\ambient}$};
 \fi
\ifx\flag\flagResolution
 \coordinate (labelNpos) at (-\fibsep-\fibwidth/3,\fibheight+5*\labelsepn);
 \node (labelN) at (labelNpos) {$\normalof{\excBlowupSingularShort}{\resolutionAmbient}$};
 \fi

\coordinate (labelCpos) at (\fibsep+\fibwidth+4*\labelsepn,\fibheight-3*\labelsepn);
\path [name path=labelCposL] (labelCpos) --+ (-1,0);
\path [name intersections={of=labelCposL and RconeT--RconeB,by=labelCposint}];

\ifx\flag\flagSingular
\node (labelC) at (labelCpos) {$\normalcone{\blowupLocusA}{\singular}$};
\draw[-{Latex[round]}] (labelC) -- ($(labelCposint)+(\arrowshrink,0)$);
\fi

\end{tikzpicture}
}
\def\surfheight{2}
\def\ellthick{1.0pt}
\def\ellheight{0.4}
\def\singtweakx{0.03}
\def\singtweaky{0.1}
\def\resbend{25}
\def\gridx{12}
\def\gridy{8}
\def\boxsize{2}
\def\boxstretch{1.1}
\def\boxskew{0.7}
\def\boxdepth{0.8}
\def\boxshift{0.2}
\def\apexsize{1.5pt}
\def\excradius{0.75}
\def\exctweakx{0}
\def\exctweaky{0.05}
\def\surflabelsepn{0.5}
\def\surfarrowshrink{0.2}
\def\commscale{2}
\def\commshift{-0.5}
\newcommand{\surfbox}[2]{

\coordinate (TBL) at (#1-\boxsize*\boxstretch+\boxskew*\boxdepth,#2+\boxdepth+\boxsize);
\coordinate (TBR) at (#1+\boxsize*\boxstretch+\boxskew*\boxdepth,#2+\boxdepth+\boxsize);
\coordinate (TFR) at (#1+\boxsize*\boxstretch-\boxskew*\boxdepth,#2-\boxdepth+\boxsize);
\coordinate (TFL) at (#1-\boxsize*\boxstretch-\boxskew*\boxdepth,#2-\boxdepth+\boxsize);
\coordinate (BBL) at (#1-\boxsize*\boxstretch+\boxskew*\boxdepth,#2+\boxdepth-\boxsize);
\coordinate (BBR) at (#1+\boxsize*\boxstretch+\boxskew*\boxdepth,#2+\boxdepth-\boxsize);
\coordinate (BFR) at (#1+\boxsize*\boxstretch-\boxskew*\boxdepth,#2-\boxdepth-\boxsize);
\coordinate (BFL) at (#1-\boxsize*\boxstretch-\boxskew*\boxdepth,#2-\boxdepth-\boxsize);

\draw (TBL) -- (TBR)  --  (TFR) -- (TFL)  -- cycle;
\draw  (TFL)  -- (BFL) -- (BFR) -- (TFR);
\draw (BFR) --  (BBR) -- (TBR);

}
\newcommand{\surfcone}[4]{
\def\flag{#1}

\draw[#4,line width=\ellthick] (#2,#3+\surfheight) ellipse (1 and \ellheight);
\draw[#4,line width=\ellthick] (#2,#3-\surfheight) ellipse (1 and \ellheight);

\ifx\flag\flagResolution

\draw[#4,name path=resR] (#2+1-\singtweakx,#3+\surfheight-\singtweaky) to [bend right=\resbend] (#2+1-\singtweakx,#3-\surfheight+\singtweaky);
\draw[#4,name path=resL] (#2-1+\singtweakx,#3+\surfheight-\singtweaky) to [bend left=\resbend] (#2-1+\singtweakx,#3-\surfheight+\singtweaky);

\path[\bgcol,name path=exc]  (#2+3/2,#3+2*\surfheight/3) to  (#2-3/2,#3-2*\surfheight/3);
\path [name intersections={of=resL and exc,by=excL}];
\path [name intersections={of=resR and exc,by=excR}];
\draw[#4] (excL) to [bend right=\resbend] (excR);

\else

\draw[#4] (#2+1-\singtweakx,#3+\surfheight-\singtweaky) -- (#2-1+\singtweakx,#3-\surfheight+\singtweaky);
\draw[#4] (#2-1+\singtweakx,#3+\surfheight-\singtweaky) -- (#2+1-\singtweakx,#3-\surfheight+\singtweaky);
\draw[#4,fill=#4] (#2,#3) circle (\apexsize);

\fi
}
\newcommand{\surfpicture}{
\begin{tikzpicture}[scale=0.3]

    	\node (singular) at (\gridx/2-\commscale+\commshift,\gridy/2-\commscale) {$\singular$};
	\node (resolution) at (\gridx/2-\commscale+\commshift,\gridy/2+\commscale) {$\resolution$}; 
	\node (ambient) at (\gridx/2+\commscale+\commshift,\gridy/2-\commscale) {$\ambient$};
	\node (blowupAmbient) at (\gridx/2+\commscale+\commshift,\gridy/2+\commscale) {$\resolutionAmbient$};
	\draw [right hook->] (singular) to  (ambient);
	\draw [->] (resolution) to node[left]{\stdstyle $\resMap$} (singular);
	\draw [right hook->] (resolution) to (blowupAmbient);
	\draw [->] (blowupAmbient) to  (ambient);
	\draw [->] (resolution) to  node[above right]{\stdstyle $\compMap$} (ambient);

\surfcone{1}{0}{\gridy}{}
\surfcone{1}{\gridx}{\gridy}{\bgcol}
\surfcone{0}{0}{0}{}
\surfcone{0}{\gridx}{0}{\bgcol}

\surfbox{\gridx+\boxshift}{\gridy}
\surfbox{\gridx+\boxshift}{0}


\draw (\gridx+\exctweakx,\gridy+\exctweaky) circle (\excradius);

\coordinate (labelYpos) at (-5*\surflabelsepn,0);
\node (labelY) at (labelYpos) {$\blowupLocusA$};
\draw[-{Latex[round]}] (labelY.east) -- (-\surflabelsepn,0);

\coordinate (labelEXpos) at (-1-3*\surflabelsepn,\gridy);
\node (labelEX) at (labelEXpos) {$\excBlowupSingular$};
\draw[-{Latex[round]}] (labelEX.east) -- (0,\gridy);

\coordinate (labelESpos) at (\gridx+\boxsize+\boxsize*\boxstretch+\surflabelsepn/2,\gridy);
\node (labelES) at (labelESpos) {$\excBlowupAmbient$};
\draw[-{Latex[round]}] (labelES.west) -- (\gridx+\excradius+\exctweakx+\surfarrowshrink,\gridy);
\end{tikzpicture}

}
\begin{document}

\title[Derived symmetries for crepant contractions]{\opt{ams}{\phantom{.}\\[-20pt] }Derived symmetries for crepant contractions \\ to hypersurfaces}
\author{W.\ Donovan}
\opt{comp}{\email{donovan@mail.tsinghua.edu.cn}}
\address{Yau MSC, Tsinghua University, Haidian, Beijing, China; BIMSA, Yanqi Lake, Huairou, Beijing, China; Kavli IPMU (WPI), TODIAS, University of Tokyo, Kashiwanoha, Chiba, Japan}
\opt{ams}{\email{donovan@mail.tsinghua.edu.cn}}

\thanks{I am supported by Yau MSC, Tsinghua University, BIMSA, and China TTP}

\begin{abstract} 
Given a crepant contraction~$f$ to a singularity~$X$, we may expect a derived symmetry of the source of~$f$. Under easily-checked geometric assumptions, I~construct such a symmetry when~$X$ is a hypersurface in a smooth ambient~$S$, using a spherical functor from the derived category of~$S$. I~describe this symmetry, relate it to other symmetries, and establish its compatibility with base change.
\end{abstract}
\subjclass[2010]{Primary 14F08; Secondary 14J32, 18G80}


\keywords{Birational geometry, blowup, crepant contraction, derived category, autoequivalence, spherical functor, semiorthogonal decomposition.}

\maketitle


\def\biggerscale{1.2}
\def\diagscale{0.85}


\def\field{\mathbbm{k}}
\def\blowupLocusA{Y}
\def\blowupLocusAcoord{y}
\def\resolution{{\tilde{\singular}}}
\def\resolutionAmbient{{\tilde{\ambient}}}
\def\singular{X}
\def\ambient{{S}} \def\ambientbc{{\psi}} \def\ambientSource{{R}} 
\def\ambientSourcebc{{\phi}} 
\def\singembed{{\tau}}
\def\bcembed{{\ambientSourcebc}} 
\def\blowupSingular{\Bl_{\blowupLocusA} \singular}
\def\excBlowupSingular{\Exc_{\blowupLocusA} \singular}
\def\excBlowupSingularTilde{\Exc_{\blowupLocusA} \resolution} 
\def\excBlowupSingularShort{\Exc}
\def\blowupAmbient{\Bl_{\blowupLocusA} \ambient}
\def\excBlowupAmbient{\Exc_{\blowupLocusA} \ambient}
\def\blowupLocusB{Z}
\def\singLocus{\mathrm{Sing}_\singular}
\def\blowupB{\Bl_{\blowupLocusB} \blowupLocusA}
\def\excBlowupB{\Exc_{\blowupLocusB} \blowupLocusA}
\def\excBlowupBshort{\Exc}

\def\resMap{f}
\def\compMap{g}
\def\blowupAmbientMap{h}
\def\incInAmbient{i}
\def\incResInBlowupAmbient{\inc}
\def\incExcInExcAmbient{\inc'}
\newcommand\incExcInResolution[1][]{{j}_{\blowupLocusA#1}'}
\def\blowupProjAmbient{{r}}
\def\blowupProjLocal{{s}}
\def\blowupIncAmbient{i_{\blowupLocusA}}
\def\blowupBInc{{i_{\blowupLocusB}}}
\newcommand\blowupExcIncAmbient[1][]{j_{\blowupLocusA#1}}
\newcommand\blowupExcBInc[1][]{j_{\blowupLocusB#1}}
\def\blowupProjA{p}
\def\blowupProjB{q}
\def\SODembedA{\Phi}
\def\SODembedB{\Psi}

\def\divcasetwist{\cM}

\def\gendivisor{D}
\def\gendivisorambient{X}

\def\pagoda{w}
\def\index{m}

\newcommand\fun[1][F]{\operatorname{\mathsf{#1}}}

\newcommand{\twObj}[2]{\fun[T]_{#2}} 
\newcommand{\twk}[1][]{\fun[S]_{#1}}
\newcommand{\hypertw}[1][]{\fun[hyperT]_{#1}}
\newcommand{\nctw}{\fun[T]_{\mathsf{nc}}}
\newcommand{\tw}[1][]{\fun[T]_{#1}}
\newcommand{\lb}[1][]{\fun[L]_{#1}}
\newcommand{\ctw}[1][]{\fun[C]_{#1}}
\newcommand{\twd}[1][]{\fun[T]'_{#1}}
\newcommand{\ctwd}[1][]{\fun[C]'_{#1}}

\def\funFlopFlop{\fun[FlopFlop]}
\def\gun{\fun[G]}
\def\hun{\fun[H]}
\def\hunb{\fun[H]_1}
\def\iun{\fun[J]}

\newcommand\serre[1]{\fun[S]}

\def\exbase{\blowupLocusA}
\def\exbasept{z}
\def\bunsec{\cF}
\def\bunseccoord{w}
\def\buntot{\cF'}
\def\buntotcoord{z}
\def\buntwist{\divcasetwist_\blowupLocusA}
\def\bunsecproj{s}
\def\buntotproj{s'}
\def\sec{\theta}
\def\excsec{\rho}
\def\blowsec{\sigma}
\def\bunproj{\pi}
\def\bunprojres{\tilde{\pi}}
\def\bunlin{\cL}
\def\buntheta{\Theta}
\def\indsec{\eta}


\opt{toc}
{
\setcounter{tocdepth}{1}
{\small\tableofcontents}
}

\section{Introduction}

\subsection*{Background}
The derived category of coherent sheaves on a variety is a fundamental invariant. In recent years it has continued to be a key tool in subjects including birational and enumerative geometry, mirror symmetry, and moduli, see for instance~\cite{BM,HLDequivalence,PT,SS1,TodDTPT}. Its autoequivalences may be considered as \emph{derived symmetries} of the variety. 
\opt{ams}{

}Derived symmetries have found important and diverse applications, in particular in studying symplectic geometry via mirror symmetry~\cite{KSmith,SS2},  
in constraining enumerative geometry~\cite{BMX,TodGV,TodspP2}, 
and in categorifying perverse sheaves~\cite{KS1,BKS}.

\opt{ams}{\stdskip}

The derived symmetry group of a variety depends strongly on its canonical bundle. Varieties with ample or antiample canonical bundle have tightly constrained derived symmetries~\cite{BO}, whereas varieties with trivial canonical bundle may have much richer derived symmetry groups. As a relative analog of this, a class of birational morphisms $\resolution \to \singular$ with trivial {relative} canonical bundle, known as \emph{crepant contractions}, may be expected to yield derived symmetries of $\resolution$.

\subsection*{Existing work}

Derived symmetries of such $\resolution$ have been constructed and studied for various classes of singularities $\singular$, for instance for Du Val surface singularities~\cite{ST} and compound Du Val singularities of 3-folds~\cite{DW1,DW3}. There has also been work on higher-dimensional examples and classes of hyperk\"ahler~$\resolution$~\cite{CKL,DS1,HuyTho}.
\opt{ams}{

}These approaches are often restricted to particular geometries, or require increasingly challenging homological calculations in higher dimensions. Furthermore, global assumptions such as (quasi)projectivity are commonly required to control coherent duality.

Grade restriction window techniques from~\cite{BFK,HL} are effective for constructing derived symmetries especially when the contraction is associated to a `balanced' wall crossing~\cite{HLShi}. Noncommutative methods have led to general results in particular for \mbox{3-folds} and contractions of families of curves~\cite{BB,DW1,DW4,IW9}, though these methods face challenges in higher dimensions from obstructions to noncommutative crepant resolutions~\cite{Dao}.
\opt{ams}{

}The author and Wemyss gave a derived symmetry construction for general fibre dimension in~\cite{DW4}, assuming a relative tilting bundle and using the associated sheaf of noncommutative algebras. Barbacovi gave a different general construction in a flop setting~\cite{Bar}, generalizing results from~\cite{BB}.

\opt{ams}{\stdskip}

Nevertheless, for general contractions with fibres of dimension more than~$1$, constructing and understanding derived symmetries remains difficult. Addressing this challenge may be seen as complementary to ongoing efforts to address the long-standing D-equivalence conjecture~\cite{Kawamata}. It is therefore important to develop new approaches to drive progress, especially in higher dimensions, and to develop general constructions of derived symmetries using minimal assumptions.

\subsection*{This paper} For a large class of crepant contractions $\resolution \to \singular$, I construct an  associated derived symmetry of~$\resolution$ using a smooth ambient space for~$\singular$. Taking the case when~$\singular$ is a hypersurface, this approach yields new derived symmetries and relations between them in arbitrary dimension, as well as new descriptions of existing equivalences. 
\opt{ams}{

}My standing assumptions in Section~\ref{sect.setting} are geometric and easily-checked. Apart from assuming some equidimensionality, these assumptions are furthermore {local} on~$\singular$, so that in particular projectivity or quasi-projectivity are not needed for the main results.\footnote{More restrictive assumptions are used in Theorem~\ref{thm.mainintroB} to describe the derived symmetry in a class of contractions with higher fibre dimension.\opt{ams}{ These assumptions imply $\resolution$ and $\singular$ are projective Calabi--Yau.}}

I achieve this by exploiting the interplay between Orlov's blowup formula for the derived category, and known spherical functors for Cartier divisors. Strikingly, no explicit Ext calculations are required --- I do not even need to consider particular sheaves on spaces, apart from those associated to the exceptional divisors of blowups, and their tensor powers.

\subsection{Setting}\label{sect.setting} Take a birational contraction of reduced separated schemes
\begin{equation*}
\resMap\colon\resolution\to\singular
\end{equation*}
where `contraction' means that $\resMap$ is projective, and the  
(non-derived) pushforward of $\cO_{\resolution}$ is~$\cO_{\singular}$. I will use the following assumptions throughout.\footnote{As a mnemonic, we have `a' for ambient, `b' for blowup, and `c' for crepant.}

\begin{keyassm}\label{assm.embed}   $\singular$ is a hypersurface in a smooth equidimensional ambient~$\ambient$.\end{keyassm}

Our interest will be in the case when $\singular$ is singular. When $\singular$ is cut out by a global function\opt{ams}{ }\opt{comp}{~}$t$, we may think of $t\colon \ambient \to \mathbb{A}^1$ as a smoothing of $\singular$.

\begin{keyassm}\label{assm.blowup} $\resMap$ is a blowup along a smooth subscheme~$\blowupLocusA$ of constant codimension~$n$.
\end{keyassm}

\begin{keyassm}\label{assm.crep} $\resMap$ is crepant.  \end{keyassm}

Here \quoted{crepant} means that $\omega_\resolution \cong \resMap^* \omega_\singular$ or equivalently that $\omega_\resMap = \omega_\resolution \otimes \resMap^* \omega_\singular^\vee$ is trivial.\footnote{Here $\resolution$ and $\singular$ are Gorenstein (see Proposition~\ref{prop.smCar2}) so their canonical sheaves are invertible.} We say \quoted{crepant} following Reid because $\omega_\resMap$ measures \quoted{discrepancy}. 

\opt{ams}{\stdskip}

First examples of this setting may be obtained by letting $\singular$ be the affine cone over a hypersurface of degree $n$ in $\mathbb{P}^n$, and $\resMap$ the blowup at its vertex, see Example~\ref{eg.cone}.

\begin{keynotn} Let
$\compMap\colon\resolution\to\ambient$
be $\resMap$ composed with the inclusion of~$\singular$ into~$\ambient$.
\end{keynotn}

\begin{rem} For $n\geq 2$, our assumptions imply $\singular$ singular: otherwise, $\omega_\resMap$~corresponds to the exceptional divisor of~$\resMap$ with multiplicity $n-1>0$, contradicting crepancy. For $n=1$, so that $\blowupLocusA$ is a divisor in $\singular$, we will see that  interesting examples occur with $\blowupLocusA$ non-Cartier. 
\end{rem}

\subsection{Pullback from ambient} 

I first show that there is a strong homological relationship between~$\resolution$ and the ambient space~$\ambient$.

\begin{keythm}[\optbracket{Theorem~\ref{thm.main}}]\label{thm.pullbacksph} 
In the setting of Section~\ref{sect.setting} the pullback functor
\begin{equation*}\compMap^* \colon \D(\ambient) \to \D(\resolution)\end{equation*}
is spherical, where $\D$ denotes the bounded coherent derived category.
\end{keythm}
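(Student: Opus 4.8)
The plan is to realize $\compMap^*$ as the composite of two functors whose spherical/adjunction properties are already understood, and then invoke a composition principle for spherical functors. The key geometric input is that $\compMap\colon\resolution\to\ambient$ factors through the blowup $\resolutionAmbient=\blowupAmbient$ of the ambient space along the same centre $\blowupLocusA$: indeed, since $\resolution=\blowupSingular$ and $\singular\subset\ambient$ is a Cartier divisor, the strict transform of $\singular$ in $\blowupAmbient$ equals $\resolution$, and $\resolution\hookrightarrow\resolutionAmbient$ is a divisor. So I would write $\compMap = \blowupAmbientMap\circ\incResInBlowupAmbient$, where $\incResInBlowupAmbient\colon\resolution\hookrightarrow\resolutionAmbient$ is the divisorial inclusion and $\blowupAmbientMap\colon\resolutionAmbient\to\ambient$ is the blowup of a smooth centre in a smooth scheme. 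Consequently $\compMap^* = \incResInBlowupAmbient^*\circ\blowupAmbientMap^*$.

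**Analyzing the two factors.**

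First I would record that $\blowupAmbientMap^*\colon\D(\ambient)\to\D(\resolutionAmbient)$ is fully faithful, by Orlov's blowup formula: $\D(\resolutionAmbient)$ admits a semiorthogonal decomposition with one copy of $\D(\ambient)$ (embedded by $\blowupAmbientMap^*$) together with $n-1$ further copies of $\D(\blowupLocusA)$ coming from the exceptional divisor $\excBlowupAmbient$. In particular $\blowupAmbientMap^*$ is the inclusion of an admissible subcategory, hence has both adjoints. Second, for the divisorial inclusion $\incResInBlowupAmbient\colon\resolution\hookrightarrow\resolutionAmbient$ with $\resolution$ a Cartier divisor in the smooth scheme $\resolutionAmbient$, the restriction functor $\incResInBlowupAmbient^*$ is spherical: this is the standard "spherical functor for a Cartier divisor'' (the cotwist is tensoring by the conormal bundle shifted, the twist is the spherical twist around the structure sheaf of the divisor), which the paper has cited. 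The crepancy Assumption~\ref{assm.crep} should be what guarantees that the relevant normal/conormal bundle on $\resolution$ is appropriately trivial so that the twist is an honest autoequivalence in the form we want; I would check that $\omega_{\resMap}$ trivial translates, via the blowup formula for $\blowupAmbientMap$ and adjunction along $\incResInBlowupAmbient$, into the statement that $\normalof{\resolution}{\resolutionAmbient}$ restricted appropriately is controlled.

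**Combining: composition of spherical-type functors.**

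The final step is to combine a fully faithful functor with a left/right adjoint (the admissible embedding $\blowupAmbientMap^*$) and a spherical functor ($\incResInBlowupAmbient^*$) to conclude that the composite $\incResInBlowupAmbient^*\circ\blowupAmbientMap^*$ is spherical. For this I would use the gluing/composition criterion for spherical functors (Kuznetsov, or Addington--Aspinwall, or the Anno--Logvinenko framework): composing a spherical functor $S$ with a functor $F$ admitting adjoints such that $F$ is fully faithful onto an admissible subcategory preserves sphericality, provided a compatibility of the cotwists is met. Concretely one needs that the cotwist of $\incResInBlowupAmbient^*$ — which is, up to shift, $(\placeholder)\otimes\normalof{\resolution}{\resolutionAmbient}^\vee$ — interacts correctly with the decomposition $\D(\resolutionAmbient) = \langle \blowupAmbientMap^*\D(\ambient),\ \D(\blowupLocusA)^{\oplus(n-1)}\rangle$, i.e. that restricting this decomposition along $\incResInBlowupAmbient$ and twisting is compatible. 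This is where the real work lies.

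**The main obstacle.**

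The hard part will be this last compatibility: showing that after restricting Orlov's semiorthogonal decomposition of $\D(\resolutionAmbient)$ to the divisor $\resolution$, the "extra'' exceptional pieces (copies of $\D(\excBlowupB)$ or $\D(\blowupLocusA)$) interact with the conormal twist on $\resolution$ in exactly the way needed for the Anno--Logvinenko sphericality criterion. I expect one must analyze the exceptional divisor $\excBlowupSingular = \Exc_{\blowupLocusA}\singular$ of $\resMap$ and its normal bundle inside $\resolution$, use that $\resolution\cap\excBlowupAmbient = \excBlowupSingular$ is cut out inside $\excBlowupAmbient\cong\P(\normalof{\blowupLocusA}{\ambient})$ by the section corresponding to $\singular$, and then feed crepancy (Assumption~\ref{assm.crep}) in to see that all shifts and twists line up — in effect, crepancy is precisely the numerical condition that makes the naive composite spherical rather than merely "split'' or "$\P$-functor-like.'' Organizing this without explicit $\Ext$-computations, purely through the functorial formalism and the blowup formula, will be the technical crux.
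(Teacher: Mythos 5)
Your strategy coincides with the paper's: factor $\compMap^* = \incResInBlowupAmbient^*\circ\blowupAmbientMap^*$ through the ambient blowup, invoke Orlov's decomposition of $\D(\resolutionAmbient)$ and the sphericality of the Cartier-divisor restriction $\incResInBlowupAmbient^*$, and then restrict along the decomposition. But you stop precisely at the step you correctly identify as the crux, so the proposal as written has a genuine gap there. The criterion that closes it is Halpern-Leistner--Shipman (Theorem~\ref{thm.sodsph}): for a spherical $\fun$ whose source has decompositions $\langle\mathsf{A},\mathsf{B}\rangle=\langle\ctw[\fun]\mathsf{B},\mathsf{A}\rangle$, both restrictions are spherical. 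Here $\fun=\incResInBlowupAmbient^*$ and its cotwist is $\placeholder\otimes\ideal_\resolution=\placeholder\otimes\cO(-\resolution)$ as an endofunctor of $\D(\resolutionAmbient)$ --- not, as you write, the conormal bundle up to shift, which would live on the target. The compatibility to be checked is that $\placeholder\otimes\ideal_\resolution$ carries $\mathsf{B}=\langle\SODembedA_1\D(\blowupLocusA),\dots,\SODembedA_n\D(\blowupLocusA)\rangle$ onto $\mathsf{B}'=\langle\SODembedA_{-n+1}\D(\blowupLocusA),\dots,\SODembedA_0\D(\blowupLocusA)\rangle$, the complementary component in the mutated decomposition.

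That compatibility reduces to the single identity $\ideal_\resolution\cong\blowupAmbientMap^*\ideal_\singular\otimes\cO(n\excBlowupAmbient)$, equivalently $\blowupAmbientMap^*\singular=\resolution+n\excBlowupAmbient$: the multiplicity of $\singular$ along $\blowupLocusA$ equals $n=\codim_\singular\blowupLocusA$. This is exactly where crepancy enters, and it is a sharp numerical statement rather than a triviality of $\normal{\resolution}$: one computes $K_\resMap=(n-m)\excBlowupSingular$ and rules out $m\neq n$ because $-\excBlowupSingular$ is $\resMap$-very ample, so $m\neq n$ would force $\resMap$ to be an isomorphism onto $\operatorname{Proj}_\singular\cO_\singular$ and hence $m=n=1$ anyway. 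Given this, the projection formula yields $\SODembedA_\index(\placeholder)\otimes\ideal_\resolution\cong\SODembedA_{\index-n}(\placeholder\otimes\blowupIncAmbient^*\ideal_\singular)$, the criterion applies, and $\compMap^*$ is the restriction of $\incResInBlowupAmbient^*$ to the component $\blowupAmbientMap^*\D(\ambient)$. Two smaller corrections: Orlov's decomposition has $n$ copies of $\D(\blowupLocusA)$, not $n-1$, since $\blowupLocusA$ has codimension $n+1$ in $\ambient$; and no general ``spherical composed with fully faithful is spherical'' principle is available or needed --- without the mutation compatibility above, the restriction of a spherical functor to an admissible component need not be spherical.
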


Spherical functors are briefly reviewed in Section~\ref{sec.sphfun}. They provide a natural language to describe and manipulate autoequivalences of (enhanced) triangulated categories and their relatives~\cite{AL,Seg}. For now, note that a spherical functor induces an autoequivalence of its target category: 
Theorem~\ref{thm.pullbacksph} thence gives a \emph{twist} autoequivalence~$\tw[\compMap^*]$ of $\D(\resolution)$ which fits into a triangle of Fourier--Mukai functors~\cite{Huy} as follows.
\begin{equation} \label{eq.twisttri}
 \compMap^* \compMap_* \to \id_{\resolution} \to \tw[\compMap^*] \to \end{equation}
The autoequivalence $\tw[\compMap^*]$ is central to this paper. I  will describe it in a range of natural geometric settings, and establish its properties.

\stdskip
I outline the proof of Theorem~\ref{thm.pullbacksph}. Let $\blowupAmbientMap\colon\resolutionAmbient\to\ambient$ be the blowup along $\blowupLocusA$, which is of constant codimension~$n+1$. Then Orlov~\cite{Orl} gives a semiorthogonal decomposition
\begin{equation}\label{eqn.keysod}
\begin{aligned}
\D(\resolutionAmbient) & = \big\langle \blowupAmbientMap^* \D(\ambient), \D(\blowupLocusA), \dots, \D(\blowupLocusA)\big\rangle
\end{aligned}
\end{equation}
with~$n$ copies of~$\D(\blowupLocusA)$. Writing~$\incResInBlowupAmbient\colon\resolution \into \resolutionAmbient$, it is known that the derived restriction functor $\incResInBlowupAmbient^*\colon\D(\resolutionAmbient)\to\D(\resolution)$ is spherical, using that $\incResInBlowupAmbient$ is the embedding of a Cartier divisor. Under a certain criterion of Halpern-Leistner and Shipman~\cite{HLShi} given in Section~\ref{sect.factoring}, the restriction of $\incResInBlowupAmbient^*$ to the subcategory $\blowupAmbientMap^* \D(\ambient)$ is also a spherical functor. I find that this criterion holds using the crepancy of $\resMap$, and conclude that $\incResInBlowupAmbient^*\blowupAmbientMap^*\cong\compMap^*$ is spherical.

\begin{rem} I do not require smoothness of $\resolution$, or that $\resMap$ is a morphism of normal schemes, see for instance Remark~\ref{eg.relaxed}.
\end{rem}

\subsection{Blowups in non-Cartier divisors} 

The following describes $\tw[\compMap^*]$  when $n=1$ in terms of the blowup of $\singular$ along $\blowupLocusA$.

\begin{keythm}[\optbracket{Theorem~\ref{thm.mainn1}}]\label{thm.mainintro} 
In the setting  of Section~\ref{sect.setting} suppose $n=1$.
Define a functor 
\begin{equation*}
	\begin{tikzpicture}[xscale=\compdiagscale]
		\node (A) at (-0.25,0) {$\gun\colon \D(\blowupLocusA)$};
		\node (B) at (1,0) {$\D(\excBlowupSingular)$};
		\node (C) at (2,0) {$\D(\resolution)$};

		\draw [->] (A) to node[above]{\compstyle $\blowupProjA^*$} (B);
		\draw [->] (B) to (C);
	\end{tikzpicture}
\end{equation*}
by composition, where $\blowupProjA$ is the restriction of the contraction $\resMap$ to the exceptional locus~$\excBlowupSingular$, and the last functor is  pushforward. Then:
\begin{enumerate}[label={(\arabic*)},ref={\arabic*}]
\item\label{thm.mainintropartA} $\gun$ is spherical.
\item\label{thm.mainintropartB} There is an isomorphism \begin{equation*}
\tw[\compMap^*](\placeholder) \otimes \normal{\resolution} \cong \tw[\gun]^{-1}(\placeholder) [2]
\end{equation*}
between autoequivalences of $\D(\resolution)$.
\end{enumerate}
Here $\normal{\resolution}$ is the invertible normal sheaf of $\resolution$ in $\resolutionAmbient=\blowupAmbient$.
\end{keythm}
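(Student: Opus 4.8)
The plan is to work on the blowup $\resolutionAmbient=\blowupAmbient$, which by Assumptions~\ref{assm.embed} and~\ref{assm.blowup} is the blowup of the smooth ambient $\ambient$ along the codimension-$2$ smooth subscheme $\blowupLocusA$. In this $n=1$ case Orlov's decomposition~\eqref{eqn.keysod} has a \emph{single} copy of $\D(\blowupLocusA)$, embedded via $\blowupProjAmbient^*$ followed by pushforward along $\excBlowupAmbient\into\resolutionAmbient$, where $\excBlowupAmbient$ is a $\P^1$-bundle over $\blowupLocusA$. The functor $\gun$ is the restriction of this embedding to the Cartier divisor $\resolution\into\resolutionAmbient$: indeed $\excBlowupSingular=\excBlowupAmbient\cap\resolution$ (this should be checked, using that $\resMap$ is the strict transform / that $\resolution$ meets $\excBlowupAmbient$ in the expected dimension, which is where crepancy enters), so restricting $\blowupProjAmbient^*\circ(\excBlowupAmbient\into\resolutionAmbient)_*$ along $\incResInBlowupAmbient$ gives $\blowupProjA^*$ followed by pushforward along $\excBlowupSingular\into\resolution$, i.e.\ exactly $\gun$. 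Thus $\gun$ is the composite of the SOD-embedding $\SODembedA\colon\D(\blowupLocusA)\to\D(\resolutionAmbient)$ with the spherical restriction $\incResInBlowupAmbient^*$.

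**Sphericity of $\gun$.** For part~\eqref{thm.mainintropartA} I would apply the Halpern--Leistner--Shipman criterion from Section~\ref{sect.factoring} --- the same criterion already used to prove Theorem~\ref{thm.pullbacksph} --- but now to the \emph{other} SOD component: the restriction of the spherical functor $\incResInBlowupAmbient^*\colon\D(\resolutionAmbient)\to\D(\resolution)$ to the admissible subcategory $\SODembedA\D(\blowupLocusA)\subset\D(\resolutionAmbient)$. One must verify the relevant orthogonality/Serre-functor compatibility condition for this component; this is a computation on the $\P^1$-bundle $\excBlowupAmbient$ together with the conormal bundle of $\resolution$, and crepancy of $\resMap$ (Assumption~\ref{assm.crep}) is what makes it work, just as before. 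So sphericity of $\gun$ is essentially a second instance of the argument already carried out for $\compMap^*$.

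**The relation between the twists.** For part~\eqref{thm.mainintropartB} the idea is that $\compMap^*=\incResInBlowupAmbient^*\circ\blowupAmbientMap^*$ and $\gun=\incResInBlowupAmbient^*\circ\SODembedA$ arise from restricting the spherical functor $\incResInBlowupAmbient^*$ to the two complementary pieces $\blowupAmbientMap^*\D(\ambient)$ and $\SODembedA\D(\blowupLocusA)$ of the two-term SOD~\eqref{eqn.keysod}. There should be a general principle (to be stated and proved in the body, presumably in Section~\ref{sect.factoring}) that when a spherical functor $F$ is restricted to each summand of a two-step SOD $\langle \cA,\cB\rangle$ of its source, the resulting twists are related by $\tw[F|_\cA]\circ\tw[F|_\cB]\cong\tw[F]$ up to the appropriate shift/twist, or --- more usefully here, since the target twist $\tw[\incResInBlowupAmbient^*]$ of a divisor restriction is explicitly the spherical twist around $\incResInBlowupAmbient^*\incResInBlowupAmbient_*$, controlled by $\normal{\resolution}$ --- that $\tw[\gun]$ and $\tw[\compMap^*]$ are mutually inverse up to tensoring by $\normal{\resolution}$ and a shift by $[2]$. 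Concretely I would produce the isomorphism from the defining triangles: combine the twist triangle~\eqref{eq.twisttri} for $\compMap^*$, the analogous triangle for $\gun$, and the triangle expressing $\id_\resolution$ in terms of $\incResInBlowupAmbient^*\incResInBlowupAmbient_*$ (i.e.\ the twist triangle for $\incResInBlowupAmbient^*$ itself, whose twist is $(\placeholder)\otimes\normal{\resolution}[2]$ up to sign conventions, since $\incResInBlowupAmbient$ is a Cartier divisor), and play these off against the decomposition $\blowupAmbientMap_*\incResInBlowupAmbient^*\incResInBlowupAmbient_* \oplus (\text{$\blowupLocusA$-part})$ coming from~\eqref{eqn.keysod}.

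**Main obstacle.** The crux is the last step: cleanly extracting the identity $\tw[\compMap^*](\placeholder)\otimes\normal{\resolution}\cong\tw[\gun]^{-1}(\placeholder)[2]$ rather than merely a relation among cones. This requires pinning down how the cotwist and twist of $\incResInBlowupAmbient^*$ (the divisor restriction, with its known explicit form involving $\normal{\resolution}$ and a shift) interact with the orthogonal projections onto the two SOD pieces, and keeping track of signs, shifts, and the direction of the isomorphism (hence the appearance of $\tw[\gun]^{-1}$ rather than $\tw[\gun]$). I expect the honest work to be: (i) verifying $\excBlowupSingular=\excBlowupAmbient\times_{\resolutionAmbient}\resolution$ scheme-theoretically with the expected normal bundle, and (ii) establishing the abstract ``two-step SOD restriction'' lemma about spherical functors with enough precision to read off the twist relation with correct shift and normal-bundle twist --- this bookkeeping, not any deep new input, is where the difficulty lies.
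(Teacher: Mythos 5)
Your proposal is correct and follows essentially the same route as the paper: apply the Halpern-Leistner--Shipman factorization (Theorem~\ref{thm.sodsph}) to the spherical divisor restriction $\incResInBlowupAmbient^*\colon\D(\resolutionAmbient)\to\D(\resolution)$ with Orlov's two-term decomposition, identify the restriction to the $\D(\blowupLocusA)$-component with $\gun$ via base change on the fibre square of exceptional loci, and rearrange $\tw[\incResInBlowupAmbient^*]\cong\placeholder\otimes\normal{\resolution}^\vee[2]$ to obtain the inverse-twist relation. One small correction: crepancy is not what makes $\excBlowupSingular=\excBlowupAmbient\times_{\resolutionAmbient}\resolution$ hold (that is Lemma~\ref{prop.twoexcs}, valid for any blowup, and the base change isomorphism needs only smoothness of $\resolutionAmbient$ and $\excBlowupAmbient$ plus $\resolution$ Cohen--Macaulay of the right dimension); crepancy enters solely in verifying the mutation condition $\ctw[\incResInBlowupAmbient^*]\mathsf{B}=\mathsf{B}'$, via the comparison $\ideal_\resolution\cong\blowupAmbientMap^*\ideal_\singular\otimes\cO(\excBlowupAmbient)$ of Proposition~\ref{lem.ideals}.
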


The proof is a continuation of the proof of Theorem~\ref{thm.pullbacksph} for the case $n=1$. The criterion of Halpern-Leistner--Shipman used there also gives that the restriction of $\incResInBlowupAmbient^*$ to the other subcategory appearing in~\eqref{eqn.keysod}, namely $\D(\blowupLocusA)$, is a spherical functor. Using a certain base change isomorphism\footnote{Here I need some smoothness on~$\ambient$, see Remark~\ref{rem.smoothness}.} I find that this restriction is isomorphic to~$\gun$, yielding part~(\ref{thm.mainintropartA}). The criterion furthermore gives a factorization of the twist $\tw[\incResInBlowupAmbient^*] \cong \placeholder\otimes  \normal{\resolution}^\vee[2]$ with factors $\tw[\compMap^*]$ and $\tw[\gun]$. This factorization then rearranges to give~(\ref{thm.mainintropartB}).

\stdskip

A first example is as follows.

\begin{eg}\label{eg.conifold} Take $\singular=\{ac+bd=0\}$ a hypersurface in $\ambient = \mathbbm{k}^4$ with an ordinary double point, and $\resolution$ a small resolution. The latter may be obtained by taking $\blowupLocusA=\{c,d=0\}$, which is Cartier in $\singular$ except at the origin. We have $\blowupLocusA\cong\mathbbm{k}^2$, and a calculation of the blowup shows that $\excBlowupSingular\cong\Bl_{\{0\}} \mathbbm{k}^2$. Part~(\ref{thm.mainintropartA}) above therefore gives a spherical functor
\begin{equation*}
	\begin{tikzpicture}[xscale=\compdiagscale]
		\node (A) at (-0.25,0) {$\gun\colon \D(\mathbbm{k}^2)$};
		\node (B) at (1,0) {$\D(\Bl_{\{0\}} \mathbbm{k}^2)$};
		\node (C) at (2,0) {$\D(\resolution)$};

		\draw [->] (A) to node[above]{\compstyle $\blowupProjA^*$} (B);
		\draw [->] (B) to (C);
	\end{tikzpicture}
\end{equation*}
to the resolution $\resolution$. Then part~(\ref{thm.mainintropartB}) says that the twist  $\tw[\gun]$ is inverse to $\tw[\compMap^*]$ up to homological shift and tensoring by an invertible sheaf.
\end{eg}

I am not aware of other work on the functor~$\tw[\gun]$, even in this first example.

\stdskip

The following theorem establishes a general setting where $\excBlowupSingular\cong\blowupB$ for some~$\blowupLocusB$, in particular generalizing Example~\ref{eg.conifold}. It gives a description of $\tw[\compMap^*]$ in terms of the blowup of $\blowupLocusA$ along~$\blowupLocusB$. 
I allow this~$\blowupLocusB$ to be singular and non-reduced.

\begin{keythm}[\optbracket{Corollary~\ref{cor.blowupB}}]\label{keythm.blowupB} In the setting of Section~\ref{sect.setting} suppose $n=1$, and furthermore:

\begin{enumerate}[label={(\roman*)},ref={\roman*}]
\item\label{cor.blowupBgeomAssmA} The normal cone $\normalcone{\blowupLocusA}{\singular}$ is cut out of \,$\normalof{\blowupLocusA}{\ambient}$ by fibrewise linear functions induced by a section~$\sec$ of a rank~$2$ locally free sheaf on $\blowupLocusA$, as explained in Assumption~\ref{assm.cutnormal}.
\item\label{cor.blowupBgeomAssmB} The section $\sec$ in part~{(\ref{cor.blowupBgeomAssmA})} is regular. 
\end{enumerate}

\noindent Let $\blowupLocusB$ be the  (possibly singular and non-reduced) zeroes of~$\sec$ in $\blowupLocusA$. Then:

\begin{enumerate}[start=0,label={(\arabic*)},ref={\arabic*}]
\item\label{keythm.blowupBA} The projection $\blowupProjA\colon\excBlowupSingular\to\blowupLocusA$ is the blowup of\, $\blowupLocusA$\! along $\blowupLocusB$.
\end{enumerate}
For this latter blowup, let $\blowupProjB$ be the projection from $\excBlowupB$. We may put
\begin{equation*}
	\begin{tikzpicture}[xscale=\compdiagscale]
		\node (A) at (-0.25,0) {$\hun\colon \D(\blowupLocusB)$};
		\node (B) at (1,0) {$\D(\excBlowupB)$};
		\node (C) at (2.75,0) {$\D(\excBlowupB)$};
		\node (D) at (3.75,0) {$\D(\resolution)$};

		\draw [->] (A) to node[above]{\compstyle $\blowupProjB^*$} (B);
		\draw [->] (B) to node[above]{\compstyle $\otimes \cO_{\blowupProjB}(-1)$} (C);
		\draw [->] (C) to (D);
	\end{tikzpicture}
\end{equation*}
where the last functor is  pushforward. Then furthermore:
\begin{enumerate}[resume*]
\item\label{keythm.blowupBB} $\hun$ is spherical.
\item\label{keythm.blowupBC} There is an isomorphism
\begin{equation*}
 \tw[\compMap^*](\placeholder)\otimes \resMap^* \normal{\singular}  \cong  \tw[\hun] (\placeholder) [2]
\end{equation*}
 between  autoequivalences of $\D(\resolution)$.
\end{enumerate}
Here $\normal{\singular}$ is the invertible normal sheaf of $\singular$ in $\ambient$.
\end{keythm}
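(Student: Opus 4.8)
The plan is to deduce parts~(\ref{keythm.blowupBB}) and~(\ref{keythm.blowupBC}) from Theorem~\ref{thm.mainintro} by recognising both its functor $\gun$ and the functor $\hun$ above as the two restrictions of a single spherical functor to the pieces of a semiorthogonal decomposition, and then invoking the factoring criterion of Section~\ref{sect.factoring}. First I would prove part~(\ref{keythm.blowupBA}): by construction $\excBlowupSingular = \P(\normalcone{\blowupLocusA}{\singular})$, and under Assumption~\ref{assm.cutnormal} together with hypothesis~(\ref{cor.blowupBgeomAssmB}) the normal cone $\normalcone{\blowupLocusA}{\singular}$ is the zero scheme inside the rank~$2$ bundle $\normalof{\blowupLocusA}{\ambient}$ of a fibrewise linear section determined by the regular section $\sec$. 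The standard realisation of the blowup of a regular zero-locus of a rank~$2$ locally free sheaf as a relative divisor in the associated $\P^1$-bundle then identifies $\excBlowupSingular$, as a subscheme of $\P(\normalof{\blowupLocusA}{\ambient})$, with $\blowupB$, carrying $\blowupProjA$ to the blowup projection $\blowupB \to \blowupLocusA$; this is~(\ref{keythm.blowupBA}), and $\blowupLocusB$ may be singular or non-reduced since $\sec$ is only assumed regular.

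Under this identification $\blowupLocusB$ is a local complete intersection of codimension~$2$ in the smooth scheme $\blowupLocusA$, so the blowup semiorthogonal decomposition applies, and in the orientation with the $\D(\blowupLocusB)$-component on the left it reads
\begin{equation*}
\D(\excBlowupSingular) \;=\; \big\langle\, \SODembedB\,\D(\blowupLocusB),\ \blowupProjA^*\D(\blowupLocusA) \,\big\rangle,
\qquad
\SODembedB(\placeholder) \;=\; (\blowupExcBInc)_*\big(\blowupProjB^*(\placeholder)\otimes\cO_{\blowupProjB}(-1)\big) ,
\end{equation*}
where $\SODembedB$ is precisely the functor occurring in the definition of $\hun$ before its final pushforward. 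Write $\iota\colon\excBlowupSingular\into\resolution$ for the inclusion of the exceptional Cartier divisor of $\resMap$. Since $\iota^*$ is spherical, being the restriction to a Cartier divisor, so is its right adjoint $\iota_*$, and a short projection-formula computation shows that its twist is the line-bundle autoequivalence $\tw[\iota_*]\cong(\placeholder)\otimes\cO_{\resolution}(\excBlowupSingular)$. By construction $\iota_*\circ\blowupProjA^* = \gun$ and $\iota_*\circ\SODembedB = \hun$, so $\gun$ and $\hun$ are exactly the restrictions of the spherical functor $\iota_*$ to the right piece and the left piece of the decomposition above.

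Next I would apply the Halpern-Leistner--Shipman criterion of Section~\ref{sect.factoring} to $\iota_*$ and this decomposition. Its hypothesis concerns the compatibility of the cotwist of $\iota_*$ --- a shift of tensoring by the normal bundle of $\excBlowupSingular$ in $\resolution$ --- with the decomposition, and I expect it to follow from crepancy (Assumption~\ref{assm.crep}) by the same mechanism used to verify the criterion in the proof of Theorem~\ref{thm.mainintro}. The criterion then gives that $\hun$ is spherical, which is part~(\ref{keythm.blowupBB}), together with a factorisation which, with the orientation fixed above, reads $\tw[\iota_*]\cong\tw[\hun]\circ\tw[\gun]$. Substituting $\tw[\iota_*](\placeholder)\cong(\placeholder)\otimes\cO_{\resolution}(\excBlowupSingular)$ and the identity $\tw[\gun]^{-1}(\placeholder)\cong\tw[\compMap^*](\placeholder)\otimes\normal{\resolution}[-2]$ from Theorem~\ref{thm.mainintro}(\ref{thm.mainintropartB}) yields
\begin{equation*}
\tw[\hun](\placeholder) \;\cong\; \tw[\compMap^*](\placeholder)\otimes\normal{\resolution}\otimes\cO_{\resolution}(\excBlowupSingular)\,[-2] ,
\end{equation*}
and the isomorphism $\normal{\resolution}\otimes\cO_{\resolution}(\excBlowupSingular)\cong\resMap^*\normal{\singular}$ --- a consequence of crepancy, obtained by comparing $\omega_{\resolution}$ with the restriction to $\resolution$ of $\omega_{\resolutionAmbient}$ for $\resolutionAmbient=\blowupAmbient$ --- converts this into $\tw[\hun](\placeholder)[2]\cong\tw[\compMap^*](\placeholder)\otimes\resMap^*\normal{\singular}$, which is part~(\ref{keythm.blowupBC}).

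The conceptual skeleton is short; the main obstacle I anticipate is the bookkeeping around the two identifications. The delicate points are: establishing the blowup decomposition for the possibly-singular lci centre $\blowupLocusB$ in exactly the form above --- with the second component functor equal to $\SODembedB$ and on the side that makes $\hun$, not $\gun$, the left restriction --- and checking the hypothesis of the factoring criterion for $\iota_*$, i.e.\ that crepancy supplies precisely the needed compatibility and leaves no residual twist, so that the factorisation emerges with its factors in the order producing part~(\ref{keythm.blowupBC}) rather than the variant with them exchanged.
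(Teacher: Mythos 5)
Your overall architecture matches the paper's: part~(\ref{keythm.blowupBA}) is proved geometrically first, and then $\hun$ and $\gun$ are exhibited as the restrictions of the spherical pushforward $\iota_*$ from the exceptional Cartier divisor to the two pieces of the blowup decomposition of $\D(\excBlowupSingular)$, with Theorem~\ref{thm.sodsph} supplying the factorization of $\tw[\iota_*]\cong\placeholder\otimes\cO(\excBlowupSingular)$ and Theorem~\ref{thm.mainintro} plus Proposition~\ref{lem.normals} finishing the computation. Your choice of orientation $\D(\excBlowupSingular)=\langle\SODembedB_0\D(\blowupLocusB),\blowupProjA^*\D(\blowupLocusA)\rangle$ is in fact slightly more economical than the paper's, which factors through the functor $\hun_1=\iota_*\SODembedB_1$ and then conjugates by $\placeholder\otimes\cO(\excBlowupSingular)$ to reach $\hun=\hun_0$; both give $\tw[\hun]\comp\tw[\gun]\cong\placeholder\otimes\cO(\excBlowupSingular)$ and the rest of your computation is correct.

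There is, however, a genuine gap at exactly the point you flag as delicate: the compatibility hypothesis of Theorem~\ref{thm.sodsph} for $\iota_*$ does \emph{not} follow from crepancy. The cotwist of $\iota_*$ is $\placeholder\otimes\normal{\excBlowupSingular}[-2]=\placeholder\otimes\cO_{\blowupProjA}(-1)[-2]$, and what must be checked is that tensoring by $\cO_{\blowupProjA}(-1)$ carries $\SODembedB_1\D(\blowupLocusB)$ to $\SODembedB_0\D(\blowupLocusB)$ (equivalently, that your $\langle\ctw[\iota_*]\blowupProjA^*\D(\blowupLocusA),\SODembedB_0\D(\blowupLocusB)\rangle$ is again a semiorthogonal decomposition). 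This requires $\blowupExcBInc^*\cO_{\blowupProjA}(1)\cong\cO_{\blowupProjB}(1)\otimes\blowupProjB^*\divcasetwist$ for some invertible $\divcasetwist$ on $\blowupLocusB$ (Assumption~\ref{assm.div}), which is a nontrivial output of the geometric analysis in part~(\ref{keythm.blowupBA}), not of Assumption~\ref{assm.crep}. The mechanism from Theorem~\ref{thm.mainintro} you propose to imitate (Proposition~\ref{lem.ideals}, which does use crepancy) concerns the cotwist of $\incResInBlowupAmbient^*$ on $\D(\resolutionAmbient)$ and has no analogue here. Relatedly, your sketch of part~(\ref{keythm.blowupBA}) understates the work: $\Bl_{\blowupLocusB}\blowupLocusA$ arises naturally as a divisor in $\P\bunsec$ with $\bunsec=\sHom(\normalof{\blowupLocusA}{\ambient},\normalof{\singular}{\ambient}|_\blowupLocusA)$, whereas $\excBlowupSingular$ sits in $\P\normalof{\blowupLocusA}{\ambient}$; identifying these uses the rank-$2$ duality $\P\bunsec\cong\P\normalof{\blowupLocusA}{\ambient}$, under which the two tautological $\cO(1)$'s differ by a pullback from $\blowupLocusA$. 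Tracking that discrepancy is precisely what produces the degree-$1$ condition above, so the geometric step must deliver both the blowup identification and this line-bundle comparison before the homological argument can run.
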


If $n\coloneqq\codim_\singular \blowupLocusA=1$ then the fibre dimension of $\resMap$ is at most~$1$, see 
Proposition~\ref{prop.smCar2}(\ref{prop.smCarR}). Note that $\blowupLocusB$ is codimension~$2$  in $\blowupLocusA$ by~\textnormal{(\ref{cor.blowupBgeomAssmB})} above, and so $\blowupLocusB$ is codimension~$3$  in $\singular$. Hence in the setting of the above theorem, $\resMap$ is necessarily a small contraction.

\begin{rem} If $\blowupLocusB$ is a fattened point, $\tw[\hun]$ is the fat spherical twist of Toda~\cite{Tod}, see Example~\ref{eg.pagoda}. This gives a new proof that this twist is an autoequivalence, avoiding a delicate Ext calculation to verify the fat spherical condition.

More generally, $\tw[\hun]$ is related to the spherical fibrations of Anno and Logvinenko~\cite{ALorthog}, and twists by sheafy contraction algebras~\cite{DW4}.  However the methods here are quite different, and the isomorphism~(\ref{keythm.blowupBC}) is new, to my knowledge.

Bodzenta and Bondal~\cite{BB} consider a setting with more general flops of curves, though with $\ambient$ affine and $\normal{\singular}$ trivial. They study a relative of the triangle~\eqref{eq.twisttri}, see Remark~\ref{rem.BB}. They do not show that $\compMap^*$ is spherical, but their work suggests an isomorphism as in~(\ref{keythm.blowupBC}) above, modulo tensoring by an invertible sheaf. This was one of the inspirations for me to formulate and prove Theorem~\ref{keythm.blowupB}.\end{rem} 

\begin{eg}\label{eg.conifoldcont}  
Take again $\singular=\{ac+bd=0\}$ in~$\ambient = \mathbbm{k}^4$ with an ordinary double point, and $\blowupLocusA = \{c,d=0\} \cong \mathbbm{k}^2$ with coordinates~$(a,b)$. Then $\normalof{\blowupLocusA}{\ambient}$ has fibre coordinates $(c,d)$. For the regular section $\sec = (a,b)$ of the trivial rank~$2$ locally free sheaf on $\blowupLocusA$, the induced function on $\normalof{\blowupLocusA}{\ambient}$ is $ac+bd$, which clearly cuts out the normal cone $\normalcone{\blowupLocusA}{\singular}$. We are therefore in the setting of Theorem~\ref{keythm.blowupB}: full details are given in Section~\ref{sec.eg}.

Noting that $\blowupLocusB = \{ 0 \}$ and $\excBlowupB \cong \P^1$, the twist $\tw[\hun]$ here simplifies to a twist~$\twObj{\resolution}{\cE}$ by a spherical object $\cE\in \D(\resolution)$, see Section~\ref{sec.sphobj}. The theorem thence gives the following.
\begin{equation}\label{eq.3fold}
 \tw[\compMap^*]  \cong \twObj{\resolution}{\cO_{\P^1}(-1)} [2]
\end{equation}
As a check, it is straightforward to verify that this holds on $\cO_{\P^1}(-1)$, see Remark~\ref{rem.redconifold}. In this example, \eqref{eq.3fold} was proved by the author and \mbox{E.~Segal} by a very different method using {grade restriction windows}~\cite[Section~2.3]{DS1}.
\end{eg}

I outline the proof of Theorem~\ref{keythm.blowupB}. Assumption~(\ref{cor.blowupBgeomAssmA}) gives a family of \mbox{$3$-fold} ordinary double point singularities parametrized by $\blowupLocusB$ (see Remark~\ref{rem.lowdimegs} below for a construction). I~analyze such geometries in Section~\ref{sec.eg}, obtaining Theorem~\ref{cor.blowupBgeom}. This establishes part~(\ref{keythm.blowupBA}) above, and also shows that the following is satisfied.

\begin{keyassm}[\optbracket{Assumption~\ref{assm.res}}]\label{assm.div} The restriction of $ \cO_{\blowupProjA}(1)$ to $\excBlowupB$ is
$\cO_{\blowupProjB}(1) \otimes \blowupProjB^* \divcasetwist $
for some invertible sheaf~$ \divcasetwist$.\footnote{As a mnemonic, we have `d' for degree, noting that this assumption gives that the invertible sheaf $ \cO_{\blowupProjA}(1)$ has degree~$1$ on each fibre of $\blowupProjB$.}  \end{keyassm}

\noindent The rest of the proof is then completed in Section~\ref{section.divisor}. Using part~(\ref{keythm.blowupBA}) we have a semiorthogonal decomposition as follows, using that $Z$ is regularly embedded in $\blowupLocusA$ by assumption~(\ref{cor.blowupBgeomAssmB}). 
\begin{equation}\label{eqn.keysodB}
\begin{aligned}
\D(\excBlowupSingular) & = \big\langle \blowupProjA^* \D(\blowupLocusA),  \D(\blowupLocusB) \big\rangle 
\end{aligned}
\end{equation}
Writing now $\incResInBlowupAmbient\colon\excBlowupSingular \into \resolution$, the pushforward functor $\incResInBlowupAmbient_*\colon\D(\excBlowupSingular)\to\D(\resolution)$ is spherical, again using that $\incResInBlowupAmbient$ is the embedding of a Cartier divisor. 

I find that Halpern-Leistner--Shipman's criterion holds for~\eqref{eqn.keysodB} using Assumption~\ref{assm.div} above, see Proposition~\ref{prop.blowupB}. Restricting $\incResInBlowupAmbient_*$ to the components of~\eqref{eqn.keysodB} gives respectively $\gun$ and $\hun$, up to tensoring by an invertible sheaf. It follows that these functors are spherical, yielding part~(\ref{keythm.blowupBB}). I furthermore establish a factorization of the twist $\tw[\incResInBlowupAmbient_*] \cong \placeholder\otimes \cO(\excBlowupSingular)$ with factors $ \tw[\gun]$ and $\tw[\hun]$. Along with Theorem~\ref{thm.mainintro}, I thence obtain two independent descriptions of $ \tw[\gun]$. Comparing these descriptions, and relating the invertible sheaves which appear in them, finally gives the isomorphism~(\ref{keythm.blowupBC}).

\def\threefoldpicturefigure{
\threefoldpicture{0}
\captionsized{ Sketch of the singular geometry of $\singular$ along $\blowupLocusA$ in the setting of Theorem~\ref{keythm.blowupB}. The vertical planes are fibres of~$\normalof{\blowupLocusA}{\ambient}$. This bundle contains the normal cone $\normalcone{\blowupLocusA}{\singular}$, shown by thickened lines. Recall that $\blowupLocusB \subset \blowupLocusA$ is the codimension~$2$ locus where $\sec$~vanishes: over $z$ in $\blowupLocusB$, $\normalcone{\blowupLocusA}{\singular}$ coincides with~$\normalof{\blowupLocusA}{\ambient}$; over $y$ not in $\blowupLocusB$, $\normalcone{\blowupLocusA}{\singular}$ is cut out of~$\normalof{\blowupLocusA}{\ambient}$ by a non-zero linear function~$\sec(y)$.}  
\label{fig.singn1}
}

\begin{figure}[htb]
\opt{ams}{\threefoldpicturefigure}\opt{comp}{\begin{center}\threefoldpicturefigure\end{center}}
\end{figure}

In the setting of Theorem~\ref{keythm.blowupB}, I therefore find three different spherical functors to~$\D(\resolution)$, namely $\compMap^*$, $\gun$, and~$\hun$, whose twists coincide up to inverses, homological shifts, and tensoring by an invertible sheaf. In Section~\ref{sec.objects} I\opt{ams}{ }\opt{comp}{~}give an explicit description of their action on objects.

\begin{rem}\label{rem.lowdimegs} Theorem~\ref{keythm.blowupB} encompasses many examples in all dimensions, see Section~\ref{section.Examples}. I~give one construction here. Take a smooth scheme $\blowupLocusA$ with a locally free sheaf $\bunsec$ of rank~$2$ and a regular section $\sec$ cutting out $\blowupLocusB$. Let $\ambient$ be the total space $\Tot \bunsec^\vee$ and embed $\blowupLocusA$ as its zero section. Then $\sec$ tautologically induces a global function on $\ambient$, see Assumption~\ref{assm.cutnormal}. Taking $\singular$ to be its zeroes, the assumptions of  Theorem~\ref{keythm.blowupB} are satisfied by Proposition~\ref{prop.localThmC}. In particular, $\resolution$ is a small (thence crepant) resolution of $\singular$.
\end{rem}

I next study contractions which are not necessarily small.

\subsection{Blowups in general codimension} I describe $ \tw[\compMap^*] $ for general $n\coloneqq\codim_\singular \blowupLocusA\geq 1$, using additional global assumptions on~$\singular$ and the ambient space~$\ambient$. I find a higher degree relation in the autoequivalence group of $\D(\resolution)$, given in~(\ref{thm.mainintroBB}) below.

\begin{keythm}[\optbracket{Theorem~\ref{thm.mainB}}]\label{thm.mainintroB} In the setting of Section~\ref{sect.setting} suppose  that:
\begin{enumerate}[label={(\roman*)},ref={\roman*}]
\item \label{thm.mainintroBassmA} $\ambient$ is projective.
\item \label{thm.mainintroBassmB} $\singular$ is an anticanonical divisor in $\ambient$. \end{enumerate}
We may put 
\begin{equation*}
	\begin{tikzpicture}[xscale=\compdiagscale]
		\node (A) at (-0.25,0) {$ \gun_\index\colon\D(\blowupLocusA)$};
		\node (B) at (1,0) {$\D(\excBlowupSingular)$};
		\node (C) at (3,0) {$\D(\excBlowupSingular)$};
		\node (D) at (4,0) {$\D(\resolution)$};

		\draw [->] (A) to node[above]{\compstyle $\blowupProjA^*$} (B);
		\draw [->] (B) to node[above]{\compstyle $\otimes \cO_{\blowupProjA}(\index-1)$} (C);
		\draw [->] (C) to (D);
	\end{tikzpicture}
\end{equation*}
for $\index\in\mathbb{Z}$, where the last functor is  pushforward. Then:

\begin{enumerate}[label={(\arabic*)},ref={\arabic*}]
\item\label{thm.mainintroBA} $\gun_\index$ is spherical for each $\index\in\mathbb{Z}$.
\item\label{thm.mainintroBB} There is  an isomorphism
\begin{equation*}
 \tw[\compMap^*] (\placeholder)\otimes \normal{\resolution} \cong \big(\tw[\gun_1] \comp \dots \comp \tw[\gun_n]\big)^{-1}(\placeholder)[2]
\end{equation*}
between autoequivalences of $\D(\resolution)$.\end{enumerate}
Here $\normal{\resolution}$ is the invertible normal sheaf of $\resolution$ in $\resolutionAmbient=\blowupAmbient$. 
\end{keythm}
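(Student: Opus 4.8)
The plan is to run the same strategy that proved Theorem~\ref{thm.mainintro} (the case $n=1$) but now iterating across all $n$ copies of $\D(\blowupLocusA)$ appearing in Orlov's semiorthogonal decomposition~\eqref{eqn.keysod}. Recall the setup: with $\incResInBlowupAmbient\colon\resolution\into\resolutionAmbient$ the Cartier divisor embedding, the restriction functor $\incResInBlowupAmbient^*\colon\D(\resolutionAmbient)\to\D(\resolution)$ is spherical with twist $\tw[\incResInBlowupAmbient^*]\cong(\placeholder)\otimes\normal{\resolution}^\vee[2]$. The decomposition~\eqref{eqn.keysod} has $n+1$ components: $\blowupAmbientMap^*\D(\ambient)$ and $n$ copies of $\D(\blowupLocusA)$, the $\index$-th copy embedded via $\blowupProjA^*(\placeholder)\otimes\cO_{\blowupProjA}(\index-1)$-type functors composed with pushforward into $\resolutionAmbient$. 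The Halpern-Leistner--Shipman criterion, once verified for each consecutive pair of components in this semiorthogonal chain, gives simultaneously that the restriction of $\incResInBlowupAmbient^*$ to each component is spherical and that the ambient twist $\tw[\incResInBlowupAmbient^*]$ factors as a composite of the component twists in the order dictated by the SOD. Identifying the restriction of $\incResInBlowupAmbient^*$ to $\blowupAmbientMap^*\D(\ambient)$ with $\compMap^*$ (as in Theorem~\ref{thm.pullbacksph}), and the restriction to the $\index$-th copy of $\D(\blowupLocusA)$ with $\gun_\index$ up to a twist by an invertible sheaf (via the base-change isomorphism already used for $\gun$ in Theorem~\ref{thm.mainintro}), yields part~(\ref{thm.mainintroBA}) and a factorization
\begin{equation*}
(\placeholder)\otimes\normal{\resolution}^\vee[2]\;\cong\;\tw[\compMap^*]\comp\tw[\gun_1]\comp\dots\comp\tw[\gun_n]
\end{equation*}
up to reconciling invertible-sheaf twists; rearranging this gives~(\ref{thm.mainintroBB}).

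The first key step is to pin down precisely which twisting sheaves $\cO_{\blowupProjA}(\index-1)$ appear in the embedding functors for the copies of $\D(\blowupLocusA)$, and to match them with the stated definition of $\gun_\index$. Orlov's theorem packages the copies of $\D(\blowupLocusA)$ inside $\D(\excBlowupSingularTilde)$ (the exceptional divisor of $\blowupAmbient\to\ambient$), pushed forward via the inclusion into $\resolutionAmbient$; the $\index$-th functor is $(\placeholder)\mapsto j_*(\blowupProjAmbient^*(\placeholder)\otimes\cO(\index-1))$ for the appropriate projective-bundle twist. Restricting along the Cartier divisor $\resolution\subset\resolutionAmbient$ and using that $\resolution\cap\excBlowupSingularTilde=\excBlowupSingular$ with compatible $\cO(1)$'s, this restriction becomes $(\placeholder)\mapsto\incResInBlowupAmbient_*(\blowupProjA^*(\placeholder)\otimes\cO_{\blowupProjA}(\index-1))$, exactly $\gun_\index$. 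The second key step is verifying the Halpern-Leistner--Shipman criterion for each adjacent pair in the chain; this is where crepancy (Assumption~\ref{assm.crep}) together with the new global hypotheses~(\ref{thm.mainintroBassmA}) and~(\ref{thm.mainintroBassmB}) enter. Projectivity is needed to have well-behaved Serre functors and coherent duality globally, and the anticanonical condition on $\singular\subset\ambient$ controls the relevant canonical-bundle bookkeeping so that the spherical-cotwist computations close up --- concretely, one checks that the left/right adjoints of the component embeddings, composed with $\incResInBlowupAmbient^*$ and its adjoints, satisfy the vanishing and invertibility conditions of the criterion, exactly as in Section~\ref{sect.factoring} but now with $n$ copies.

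The main obstacle I expect is the bookkeeping of the order of composition and the accumulated invertible-sheaf twists when iterating the factorization across $n$ components. In the $n=1$ case there is a single $\gun$ and a single comparison; here the Halpern-Leistner--Shipman machinery must be applied to a length-$(n+1)$ semiorthogonal chain, and the resulting factorization of $\tw[\incResInBlowupAmbient^*]$ is an ordered composite whose factors must be matched one-by-one with $\tw[\compMap^*]$ and the $\tw[\gun_\index]$. Each matching introduces a twist by some line bundle pulled back from $\blowupLocusA$ (coming from the base-change isomorphism and from the discrepancy between $\cO_{\blowupProjA}(\index-1)$ on $\excBlowupSingular$ versus the intrinsic normal-bundle twist), and these must be shown to cancel against $\normal{\resolution}$ versus $\normal{\resolution}^\vee$ and the shift $[2]$ versus $[2]$, leaving only the single clean tensor factor $\normal{\resolution}$ in the final statement. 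Verifying that all these line-bundle contributions telescope correctly --- using crepancy to relate $\omega_{\resolution}$, $\resMap^*\omega_{\singular}$, the exceptional divisor classes, and the anticanonical hypothesis on $\singular$ --- is the genuinely delicate part; the rest is a direct iteration of the $n=1$ argument. I would handle it by first treating the two extreme components ($\blowupAmbientMap^*\D(\ambient)$, giving $\compMap^*$, and the last copy of $\D(\blowupLocusA)$) to fix conventions, then arguing the interior copies are identical up to the index shift, and finally assembling the telescoping line-bundle identity as a separate lemma.
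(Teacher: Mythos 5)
Your overall architecture (restrict the spherical functor $\incResInBlowupAmbient^*$ to the components of Orlov's decomposition~\eqref{eqn.keysod}, identify the restriction to $\blowupAmbientMap^*\D(\ambient)$ with $\compMap^*$ and the restriction to the $\index$-th copy of $\D(\blowupLocusA)$ with $\gun_\index$ via base change, then rearrange the factorization of $\tw[\incResInBlowupAmbient^*]\cong\placeholder\otimes\normal{\resolution}^\vee[2]$) matches the paper. But the engine you propose for the factorization --- applying the Halpern-Leistner--Shipman criterion ``for each consecutive pair of components'' --- has a genuine gap, and it is precisely the gap that the two extra hypotheses are designed to route around. Theorem~\ref{thm.sodsph} is a two-component statement: to split a further factor off the restriction $\fun_{\mathsf{B}}$ of $\incResInBlowupAmbient^*$ to $\mathsf{B}=\big\langle\SODembedA_1\D(\blowupLocusA),\dots,\SODembedA_n\D(\blowupLocusA)\big\rangle$, you would need the mutation condition for the cotwist of the \emph{restricted} functor, $\ctw[\fun_{\mathsf{B}}]$, not for $\ctw[\incResInBlowupAmbient^*]\cong\placeholder\otimes\ideal_\resolution$. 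The cotwist of a restriction involves the projection back onto the subcategory and is computed nowhere in your outline; Proposition~\ref{lem.intertwine} only tells you that tensoring by $\ideal_\resolution$ sends $\SODembedA_\index\D(\blowupLocusA)$ to $\SODembedA_{\index-n}\D(\blowupLocusA)$, which is exactly what makes the single top-level application of Theorem~\ref{thm.sodsph} work (Theorem~\ref{thm.main}, valid for all $n$) but says nothing about the interior splittings. The paper itself flags iterated Halpern-Leistner--Shipman only as a conjectural alternative ``under appropriate assumptions'' in Remark~\ref{rem.relgen}, and does not carry it out.

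What the paper actually does is apply a different criterion, Theorem~\ref{thm.sodsphserre} (Addington--Aspinwall, credited to Kuznetsov), which factors the twist across an $(n+1)$-term semiorthogonal decomposition in a single step, provided the source category has a Serre functor $\serre{}$ and $\ctw[\fun][d]\cong\serre{}$ for some $d$. This is where hypotheses~(\ref{thm.mainintroBassmA}) and~(\ref{thm.mainintroBassmB}) enter, and more precisely than your ``canonical-bundle bookkeeping'': projectivity of $\ambient$ makes $\resolutionAmbient$ smooth projective, so $\serre{}\cong\placeholder\otimes\omega_{\resolutionAmbient}[\dim\resolutionAmbient]$; the anticanonical hypothesis gives $\ideal_\singular\cong\omega_\ambient$, whence $\ideal_\resolution\cong\blowupAmbientMap^!\ideal_\singular\cong\omega_{\resolutionAmbient}$ by Proposition~\ref{lem.ideals}(\ref{lem.idealsX}), so the cotwist of $\incResInBlowupAmbient^*$ is the Serre functor up to shift. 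Note also that the ``accumulated invertible-sheaf twists'' you expect to have to telescope do not arise: Proposition~\ref{lem.resembed} identifies $\incResInBlowupAmbient^*\comp\SODembedA_\index$ with $\gun_\index$ on the nose, with no residual line bundle, and the comparison $\gun_{\index+l}(\placeholder)\cong\gun_\index(\placeholder)\otimes\cO_\resolution(-l\,\excBlowupSingular)$ of Lemma~\ref{lem.resembeddiff} is needed only to propagate sphericality from $\index\in\{1,\dots,n\}$ to all $\index\in\mathbb{Z}$.
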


\begin{rem} 

Parts (\ref{thm.mainintroBassmA}) and (\ref{thm.mainintroBassmB}) imply that $\singular$ is Calabi--Yau, and therefore supplement our assumption that $\resMap$ is crepant (which may be thought of as a {relative} Calabi--Yau assumption). They could surely be relaxed, in particular by working relative to the base, see Remark~\ref{rem.relgen} for discussion.
\end{rem}

Theorem~\ref{thm.mainintroB} is proved by again taking the decomposition~\eqref{eqn.keysod} for the blowup $\resolutionAmbient=\blowupAmbient$. This has $n+1 \geq 2$ components. To describe $ \tw[\compMap^*] $ we would like to factor the twist $\tw[\incResInBlowupAmbient^*] $ as previously in Theorem~\ref{thm.mainintro}, but this time into $n+1$~factors. A factorization criterion for this situation is given by Addington and Aspinwall~\cite{AddingtonAspinwall}, credited to Kuznetsov, see Theorem~\ref {thm.sodsphserre}. This assumes a Serre functor for the source category $\D(\resolutionAmbient)$, which exists using~(\ref {thm.mainintroBassmA}).  The criterion may then be checked using~(\ref {thm.mainintroBassmB}).
 
 The rest of the argument is then similar to Theorem~\ref{thm.mainintro}. In particular, spherical functors $\gun_1, \dots, \gun_n$ are obtained by restriction of $\incResInBlowupAmbient^*$ to the last $n$~components of the decomposition~\eqref{eqn.keysod}. I thereby find a factorization of the twist $\tw[\incResInBlowupAmbient^*] \cong\placeholder\otimes  \normal{\resolution}^\vee[2]$ which rearranges to give  the isomorphism~(\ref{thm.mainintroBB}).

\stdskip

For a first example, consider the following.

\begin{eg}\label{eg.quarticK3} Let $\singular$ be a quartic K3 surface in $\ambient = \mathbb{P}^3$ with a node~$x$, and put $\blowupLocusA = \{x\}$ so that $\resolution \to \singular$ resolves this node. Then $\excBlowupSingular\cong\mathbb{P}^1$, and (\ref{thm.mainintroBB})~describes $\tw[\compMap^*]$ in terms of twists  by spherical objects on $\resolution$ as follows.
\begin{equation*}\label{eq.surf}
 \tw[\compMap^*] (\placeholder)\otimes \normal{\resolution} \cong \big(\tw[\cO_{\mathbb{P}^1}] \comp \tw[\cO_{\mathbb{P}^1}(2)]\big)^{-1}(\placeholder)[2]
\end{equation*}
The geometry is illustrated in Figure~\ref{fig.surf}. For details, see Example~\ref{eg.quarticK3ext}.
\end{eg}

\begin{figure}[htb]
\begin{center}
\surfpicture
\captionsized{Local sketch of quartic K3 surface $\singular$ from Example~\ref{eg.quarticK3} with node $x$, in smooth ambient $3$-fold $\ambient= \mathbb{P}^3$. Blowing up~$\singular$ at $\blowupLocusA=\{x\}$ gives an exceptional $\excBlowupSingular \cong \P^1$ which is conic in~$\excBlowupAmbient\cong \P^2$.}
\label{fig.surf}
\end{center}
\end{figure}

\subsection{Compatibility with base change}

I explain a useful compatibility of the twist functors~$\tw[\compMap^*]$ with base change, in particular to relate twists in different dimensions.
  
Consider then the morphism $\compMap \colon \resolution \to \ambient$ as above in Section~\ref{sect.setting}, and take $\compMap' \colon \resolution' \to \ambient'$ its base change along some $\ambient' \to \ambient$. Suppose that $\compMap'$ may also be obtained as in Section~\ref{sect.setting}. Then I give an intertwinement of the corresponding twists as follows, see Proposition~\ref{prop.twistbc}, via the morphism~$\ambientSourcebc\colon \resolution' \to \resolution$ induced by the base change.
\beq
\tw[\compMap^*] \ambientSourcebc_* \cong \ambientSourcebc_* \tw[\compMap'^*]
\eeq
In particular, we may take an embedding $\ambient' \into \ambient$, for example as follows. 

\begin{eg} Let $\singular$ be a {one-parameter deformation} of a quartic K3 surface $\singular'$ with node~$x$, embedded in a smooth one-parameter deformation $\ambient$ of $\ambient'=\mathbb{P}^3$. Require that~$\singular$ has a 3-fold ordinary double point at~$x$. Taking appropriate resolutions, we are in the setting above, and comparing Theorems~\ref{keythm.blowupB} and~\ref{thm.mainintroB} yields a relation in the autoequivalence group of $\D(\resolution')$ which follows, after some work, from known facts: for details, see Example~\ref{eg.defquarticK3}, with the relation given in Proposition~\ref{prop.K3rel}.
\end{eg}

\subsection{Canonicity}

It is interesting to explore to what extent a derived symmetry might be {canonically} associated to a contraction. Inspired by the above results, I make the following. 

\begin{defn}\label{defn.hypertw} Assuming that the twist $\tw[\compMap^*]$ is an autoequivalence, and using Assumption~\ref{assm.embed}, define a \emph{hypersurface twist} as follows.
\beq
\hypertw = \tw[\compMap^*](\placeholder) \otimes \resMap^* \normal{\singular} [-2]
\eeq
\end{defn}
A priori, this  depends on a choice of embedding of the hypersurface $\singular$ in the ambient space~$\ambient$.
However, the  above results suggest that, under various assumptions, it may be possible to  describe $\hypertw$ in terms of the geometry of the contraction $\resMap$.
\begin{itemize}
\item In the settings of Theorem~\ref{thm.mainintro} and~\ref{thm.mainintroB}, we have
\beq 
\hypertw \cong \tw[\gun]^{-1}(\placeholder) \otimes \cO(\excBlowupSingular)
\eeq
and 
\beq 
\hypertw \cong \big(\tw[\gun_1] \comp \dots \comp \tw[\gun_n]\big)^{-1}(\placeholder) \otimes \cO(n\excBlowupSingular)
\eeq
respectively, using Proposition~\ref{lem.normals}.
\item In the setting of Theorem~\ref{keythm.blowupB}, we immediately have $\hypertw \cong \tw[\hun] $.
\end{itemize}

I was not able to recover the right-hand sides of the above equations from the morphism~$\resMap$ alone, but I give the following partial results.
\begin{itemize}
\item For \mbox{$n\geq 2$,} the locus $\blowupLocusA$ may be characterized by the property that its points have positive-dimensional preimage under $\resMap$, using 
Proposition~\ref{prop.smCar2}(\ref{prop.smCarR}). 
\item For $n=1$ in the setting of Theorem~\ref{keythm.blowupB}, the locus $\blowupLocusB$ may be characterized scheme-theoretically in terms of the singularities of $\singular$, see Proposition~\ref{prop.singloc}.
\end{itemize}
Note also  that for a flopping contraction $\resMap$ where equivalences
\beq\funFlopFlop\colon \D(\resolution) \to\D(\resolution') \to \D(\resolution)
\eeq
are known, then $\funFlopFlop$ may be taken as a derived symmetry canonically associated to the contraction. Remark~\ref{rem.BB} explains why we may hope that $\hypertw\cong\funFlopFlop^{-1}$ in a setting where they are both defined, but I do not pursue this claim further here.

\subsection{Further questions}

Given Theorem~\ref{thm.pullbacksph} it is natural to ask for which morphisms $\psi$ is the functor $\psi^*$ spherical. I am grateful to Evgeny Shinder for this interesting question. In particular, my proofs crucially use that Cartier divisors yield spherical functors: the same is true of ramified double covers~\cite[Lemma 2.9]{KP}, so these may likewise give spherical functors in combination with crepant contractions.

It is also natural to ask if there is a mirror statement to Theorem~\ref{thm.pullbacksph}, relating to known spherical functors in symplectic geometry. See for instance~\cite{KPS} and~\cite[Introduction]{GJ} for discussion of the mirror to the spherical functor $\incResInBlowupAmbient_*$ for an anticanonical embedding~$\incResInBlowupAmbient$.

It would be interesting to study whether Assumption~\ref{assm.blowup} can be relaxed to allow singular~$\blowupLocusA$. For instance, in general a flopping contraction~$f$ of a \mbox{3-fold}
 is a blowup along singular~$\blowupLocusA$. In this setting the author and Wemyss constructed a derived symmetry $\nctw$ of the 3-fold by using noncommutative deformation techniques~\cite{DW1,DW3}, so it would be good to investigate whether the constructions in this paper may extend to recover $\nctw$. Note that they already recover \emph{commutative} deformation algebras, see Example~\ref{eg.pagoda}.

Finally, Kuznetsov and Shinder~\cite{KS} develop a theory of categorical absorption of singularities. They apply it to varieties with isolated ordinary double points, and study its relationship with smoothings. It would be interesting to connect it with this work.

\begin{acks} \acksize 
I am grateful to A.~
Bodzenta for our collaboration~\cite{BD} which greatly aided this project, and for discussions on relating the results here for~$n=1$ to periodic semiorthogonal decompositions; to~E.~
Segal for enjoyable attempts to remember details of our work~\cite{DS1}, and an important conversation on an early version of Theorem~\ref{keythm.blowupB} where he noted that $\normalof{\blowupLocusA}{\ambient}$ need not be split; to M.\ Wemyss for our collaboration since~\cite{DW1}, and for all we have learnt together, not least about typesetting;
to Yu Zhao for inspiring discussions and correspondence on an approach to Theorem~\ref{keythm.blowupB} using derived algebraic geometry, and for sharing~\cite{Zha1,Zha2}. I~thank the author of a quick opinion for helpful comments and for sharing a simplified argument for Proposition~\ref{lem.ideals}, improving on a previous version.

I thank P.~
Achinger, N.~
Addington, A.~
Bondal, C.~
Fietz, Y.~
Ito, A.~
Keating, C.~N.~
Leung, B.~
Pauwels, M.~
Romo, I.~
Smith, and Y.~
Toda 
for helpful conversations on and around this project. I gratefully acknowledge the hospitality of ANU in Canberra, Kavli IPMU, U.~Kyoto, Makerere~U.\ in Kampala, and U.~Oregon, as well as warm welcomes and financial support from IMS at CUHK, J.~
Knapp and Gufang~
Zhao at U.~Melbourne, and SMRI in Sydney, during extended visits.
\end{acks}

\subsection*{Conventions} 
{ 
Schemes are over an algebraically closed field of characteristic~0, and are taken to be of finite type, with morphisms of finite type. They are furthermore taken to be reduced and separated, unless explicitly stated.\footnote{In particular, I allow non-reduced $\blowupLocusB$ in Theorem~\ref{keythm.blowupB}, and a non-reduced exceptional locus~$\excBlowupB$.}  I say a scheme is {Calabi--Yau} if it has at worst Gorenstein singularities and trivial canonical sheaf.

A {contraction} $\resMap\colon\resolution \to\singular$ is a projective morphism with $ \RDerived^0\resMap_*\cO_\resolution \cong \cO_\singular$, and a morphism~$\resMap$ is birational if it induces a bijection of sets of generic points of irreducible components, which in turn induces isomorphisms of local rings at those points.\footnote{I allow non-normal and reducible schemes, see Remark~\ref{eg.relaxed} for an example. For details of this definition of birational, see for instance~\cite[\href{https://stacks.math.columbia.edu/tag/01RO}{Definition 01RO}]{stacks-project}.} 

I write $\normalcone{B}{A}$ for the normal cone of~$B$ in~$A$, and reserve the notation $\normalof{B}{A}$ for the case when this is a bundle, in particular when $B$ is an effective Cartier divisor in~$A$. I sometimes write simply $\normal{B}$ when the embedding in $A$ can be clear from context.}

The bounded derived category of coherent sheaves is denoted by $\D(X)$, and functors are taken to be derived. Components of semiorthogonal decompositions will be required to be admissible (rather than only left or right admissible).

\section{Preliminaries}
\label{sec.sphfun}

\subsection{Spherical functors}
Take a functor $\fun$ of enhanced triangulated categories with a right adjoint $\Radj\fun$. Suppose  that we have triangles of Fourier--Mukai functors associated to the adjunction counit and unit, as follows.
\begin{equation*}
\fun \comp \Radj\fun \to \id \to \tw[\fun] \to 
\qquad
\ctw[\fun] \to \id \to \Radj\fun \comp \fun \to  \end{equation*}
We call $\tw[\fun]$ and $\ctw[\fun]$ the \emph{twist} and \emph{cotwist} of $\fun$, respectively.\footnote{Different conventions are used, for instance Addington~\cite{Addington} writes $\ctw[\fun]$ where we would write $\ctw[\fun][1]$. That convention has the advantage that the cotwist arises from a cone, rather than a shifted cone. However the statement of Proposition~\ref{prop.divsph} becomes less symmetric and memorable.}
If furthermore~$\fun$ has a left adjoint $\Ladj\fun$ then we have the following.

\begin{defn} We say that $\fun$~is \emph{spherical} if both $\tw[\fun]$ and $\ctw[\fun]$ are  autoequivalences. 
\end{defn}

This is one of several equivalent characterisations of spherical, see Anno and Logvinenko~\cite{Anno,AL} and also Kuznetsov~\cite[Section~2.5]{Kuz}.

\subsection{Spherical objects}\label{sec.sphobj}
For the special case of a spherical functor
$\fun\colon \D(\pt) \to \D(X)$
we say that $\cE = \fun(\cO_{\pt}) \in \D(X)$ is a \emph{spherical object}, following the sense of Seidel--Thomas~\cite{ST}. In this case we abuse notation by writing $\twObj{X}{\cE}$ for $\tw[\fun]$ so that we have the following.
\begin{equation*}
\cE \otimes \comp \Hom_X(\cE,\placeholder) \to \id \to \twObj{X}{\cE} \to
\end{equation*}
We refer to $\twObj{X}{\cE}$ as the twist by the object $\cE$.

\subsection{Cartier divisors} These give fundamental examples of spherical functors.

\begin{prop}[\cite{Anno,Addington,Kuz}]\label{prop.divsph} Take $\inc\colon \gendivisor \into \gendivisorambient$ the inclusion of a  Cartier divisor.

\begin{enumerate}
\item\label{prop.divsphA} The functor $\inc^*\colon \D(\gendivisorambient) \to \D(\gendivisor)$ is spherical with:
\begin{equation*}
\tw[\inc^*] \cong \placeholder\otimes  \normal{\gendivisor}^\vee[2]
\qquad
\ctw[\inc^*] \cong \placeholder\otimes \ideal_\gendivisor
\end{equation*}

\item\label{prop.divsphB} The functor $\inc_*\colon \D(\gendivisor) \to \D(\gendivisorambient)$ is spherical with:
\begin{equation*}
\tw[\inc_*] \cong \placeholder\otimes \ideal_\gendivisor^\vee
\qquad
\ctw[\inc_*] \cong \placeholder\otimes \normal{\gendivisor}[-2]
\end{equation*}
\end{enumerate}
\end{prop}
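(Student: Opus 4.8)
The plan is to treat the two spherical-functor claims in parallel, since $\inc^*$ and $\inc_*$ are adjoint up to twist and shift, so the twist and cotwist of one are closely related to those of the other. I would begin with part~(\ref{prop.divsphA}). The first task is to identify the left and right adjoints of $\inc^*$. Since $\inc$ is a proper morphism, $\inc_*$ is right adjoint to $\inc^*$; and since $\inc$ is a regular (Cartier divisor) embedding, it is perfect, so $\inc^*$ also has a right adjoint of its own, namely the twisted pushforward $\inc_*(\placeholder \otimes \normal{\gendivisor}[-1])$ arising from Grothendieck duality for $\inc$ (the relative dualizing complex of a Cartier divisor embedding is $\normal{\gendivisor}[-1]$). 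Equivalently $\Ladj{(\inc^*)} = \inc_*$ and $\Radj{(\inc^*)} = \inc_*(\placeholder\otimes\normal{\gendivisor}[-1])$, or one can phrase it the other way; either way the key input is the Cartier-divisor form of relative duality.

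The core computation is then the two defining triangles. For the cotwist: apply $\inc^*$ to the Koszul-type triangle $\ideal_{\gendivisor} \to \cO_{\gendivisorambient} \to \inc_*\cO_{\gendivisor} \to$ on $\gendivisorambient$, or rather tensor an arbitrary object of $\D(\gendivisorambient)$ with it, and use $\inc^* \inc_* (\placeholder) \cong \placeholder \oplus \placeholder \otimes \normal{\gendivisor}^\vee[1]$ together with the identification of the connecting map; this shows $\Radj{(\inc^*)}\comp\inc^* \cong \id \oplus (\placeholder\otimes\ideal_{\gendivisor}[?])$ splitting off the required cone, giving $\ctw[\inc^*] \cong \placeholder\otimes\ideal_{\gendivisor}$. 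For the twist, the counit triangle $\inc^*\Radj{(\inc^*)} \to \id \to \tw[\inc^*] \to$ on $\D(\gendivisor)$ is computed by pulling back the same short exact sequence of sheaves on $\gendivisorambient$, now twisted by $\normal{\gendivisor}$, and using the projection formula: one finds the cone is $\placeholder\otimes\normal{\gendivisor}^\vee[2]$. The genuinely delicate point — and the step I expect to be the main obstacle — is verifying that the relevant connecting/structural maps in these triangles are the expected ones (not zero, and compatible with the adjunction units/counits), so that the cones really are the stated autoequivalences rather than merely having the stated cohomology; this is a standard but fiddly Fourier--Mukai bookkeeping exercise, and is exactly the kind of check carried out in Anno--Logvinenko.

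Finally, part~(\ref{prop.divsphB}) follows formally from part~(\ref{prop.divsphA}): if $\fun$ is spherical with twist $\tw[\fun]$ and cotwist $\ctw[\fun]$, then its right adjoint $\Radj\fun$ is also spherical with $\tw[\Radj\fun] \cong \ctw[\fun]^{-1}$ and $\ctw[\Radj\fun] \cong \tw[\fun]^{-1}$ (see Anno--Logvinenko). Taking $\fun = \inc^*$, whose right adjoint is $\inc_*(\placeholder\otimes\normal{\gendivisor}[-1])$, one deduces sphericity of $\inc_*(\placeholder\otimes\normal{\gendivisor}[-1])$, hence of $\inc_*$ itself after conjugating away the invertible twist $\placeholder\otimes\normal{\gendivisor}[-1]$ (conjugation by an autoequivalence preserves sphericity and transforms twist/cotwist by that autoequivalence). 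Reading off: $\tw[\inc_*] \cong \ctw[\inc^*]^{-1} \cong \placeholder\otimes\ideal_{\gendivisor}^\vee$ and $\ctw[\inc_*] \cong \tw[\inc^*]^{-1} \cong \placeholder\otimes\normal{\gendivisor}[-2]$, using $\ideal_{\gendivisor}^\vee \cong \cO_{\gendivisorambient}(\gendivisor)$, which matches the asserted formulas. I would remark that all of this is already in the cited references~\cite{Anno,Addington,Kuz}, so in the paper this proposition can simply be quoted; the sketch above records the structure for the reader's convenience.
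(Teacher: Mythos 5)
The paper does not actually prove this proposition: it is quoted from the references (Addington in the smooth case, Kuznetsov in general), and the only argument supplied is Remark~\ref{rem.adjs}, which records the adjoint string $\inc_! \dashv \inc^* \dashv \inc_* \dashv \inc^!$, and Remark~\ref{rem.inverses}, which deduces part~(\ref{prop.divsphB}) from part~(\ref{prop.divsphA}) by passing to inverse twists via the adjunction $\inc^* \dashv \inc_*$. Your sketch of part~(\ref{prop.divsphA}) is the standard Koszul computation, and your deduction of part~(\ref{prop.divsphB}) is essentially Remark~\ref{rem.inverses}, so the route agrees with the paper's; the conclusions are all correct. Two local errors should be fixed, though.

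First, the adjoints. For proper $\inc$ one has $\Radj{(\inc^*)} = \inc_*$ and $\Ladj{(\inc^*)} = \inc_! = \inc_*(\placeholder\otimes\normal{\gendivisor}[-1])$; your displayed identification \emph{reverses} this (and contradicts your own opening sentence). You get away with it only because the twist is unchanged by precomposing with an autoequivalence and because $\placeholder\otimes\normal{\gendivisor}[-1]$ commutes with the line-bundle functors appearing in the answer, so the formulas survive the mix-up. Second, the claim that $\Radj{(\inc^*)}\comp\inc^* = \inc_*\inc^*$ splits as $\id\oplus(\placeholder\otimes\ideal_\gendivisor[?])$ is false: already $\inc_*\inc^*\cO_{\gendivisorambient}\cong\inc_*\cO_\gendivisor$ does not contain $\cO_{\gendivisorambient}$ as a summand. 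The splitting valid for a Cartier divisor is on the other side, $\inc^*\inc_*\cong\id\oplus(\placeholder\otimes\normal{\gendivisor}^\vee[1])$, which is what computes the twist. The cotwist needs no splitting at all: tensoring an object with the triangle $\ideal_\gendivisor\to\cO_{\gendivisorambient}\to\inc_*\cO_\gendivisor$ and applying the projection formula exhibits $\placeholder\otimes\ideal_\gendivisor$ directly as the fibre of the unit $\id\to\inc_*\inc^*$, modulo the bookkeeping (identifying this map with the actual adjunction unit) that you correctly flag as the genuinely delicate point and which is carried out in the cited references.
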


This appeared, assuming smoothness, in \cite[Section~2.2, (4) and~(5)]{Addington}.  It is proved without smoothness in \cite[Example~3.1, Proposition~3.4]{Kuz}, and stated in \cite[Lemma~2.8]{KP}.

\begin{rem}\label{rem.adjs} The adjoints required for Proposition~\ref{prop.divsph} are
$\inc_! \dashv \inc^*  \dashv \inc_* \dashv \inc^!$
where:
\begin{equation*}
\inc_! = \inc_* (\placeholder \otimes \normal{\gendivisor}[-1])
\qquad
\inc^! =  \normal{\gendivisor}[-1] \otimes \inc^* (\placeholder) 
\end{equation*}
Note that $\inc_*$ preserves the bounded coherent derived category because $\inc$ is proper, and similarly for $\inc^*$ because, although $\gendivisorambient$ (and furthermore~$\gendivisor$) is not necessarily smooth, $\inc$~is the inclusion of a Cartier divisor.
\end{rem}

\begin{rem}\label{rem.inverses} Parts (\ref{prop.divsphA}) and (\ref{prop.divsphB}) above are related by taking inverses, as follows. Given triangles of Fourier--Mukai functors associated to the adjunction $\Ladj\fun \dashv \fun$
\begin{equation*}
\twd[\fun] \to \id \to   \fun\comp\Ladj\fun \to
\qquad
\Ladj\fun \comp \fun \to \id \to \ctwd[\fun] \to
\end{equation*}
then $\twd[\fun]$ and $\ctwd[\fun]$ are, by construction, $\ctw[\Ladj\fun]$ and $\tw[\Ladj\fun]$ respectively. For spherical~$\fun$, they furthermore yield $\tw[\fun]^{-1}$ and $\ctw[\fun]^{-1}$ respectively. 
\end{rem}

\begin{rem} For a variant of Proposition~\ref{prop.divsph} in a triangulated setting using square root stacks and periodic semiorthogonal decompositions, see recent work of the author and Bodzenta~\cite[Corollary~1.5]{BD}. 
\end{rem}

\subsection{Decompositions} We will extensively use semiorthogonal decompositions of triangulated categories. For a survey see~\cite{KuzICM}. Recall that a full triangulated subcategory~$\mathsf{A}$ of a triangulated category $\mathsf{D}$ is \emph{left} (respectively \emph{right}) \emph{admissible} if its embedding has a left (respectively right) adjoint. We say that it is \emph{admissible} if its embedding has both adjoints.

Now write ${}^\perp\mathsf{A}$ for the \emph{left orthogonal} to $\mathsf{A}$, the full (triangulated) subcategory of~$\mathsf{D}$ with objects having only zero morphisms to $\mathsf{A}$. Assuming $\mathsf{A}$ and ${}^\perp\mathsf{A}$ are admissible, we obtain a semiorthogonal decomposition as follows.\footnote{Often `semiorthogonal decomposition' describes the weaker notion where the two components are only required to be left and right admissible, respectively.}
\beq\mathsf{D} = \big\langle \mathsf{A}, {}^\perp\mathsf{A} \big\rangle\eeq

\subsection{Factoring twists} 
\label{sect.factoring}

The following result of Halpern-Leistner--Shipman is key for our proofs of Theorems~\ref{thm.pullbacksph}, \ref{thm.mainintro} and \ref{keythm.blowupB}. It allows us to factor the twist of a spherical functor when its source category has a semiorthogonal decomposition satisfying an appropriate compatibility. It will be applied throughout to decompositions associated to blowups.

\begin{thm}[{\cite[Section~4.3]{HLShi}}]\label{thm.sodsph} Let $\fun\colon \mathsf{D} \to \mathsf{D}'$ be a spherical functor such that:
\begin{equation*}
\mathsf{D} = \big\langle \mathsf{A}, \mathsf{B} \big\rangle = \big\langle \ctw[\fun]\mathsf{B}, \mathsf{A} \big\rangle
\end{equation*}
Then $\fun_{\mathsf{A}}$ and $\fun_{\mathsf{B}}$, the restrictions of $\fun$ to $\mathsf{A}$ and $\mathsf{B}$ respectively, are spherical, and: 
\begin{equation*}
\tw[\fun] \cong \tw[\fun_{\mathsf{A}}] \comp \tw[\fun_{\mathsf{B}}]
\end{equation*}
\end{thm}

\section{Pullback from the ambient}\label{sect.pullback}

In this section I prove Theorem~\ref{thm.pullbacksph}, that the derived pullback functor~$\compMap^*$ from the ambient space~$\ambient$ is spherical, by studying semiorthogonal decompositions associated to a blowup of~$\ambient$. Continuing the line of argument yields  Theorem~\ref{thm.mainintro}. Along the way I give some useful consequences of crepancy (Propositions~\ref{lem.normals} and~\ref{lem.ideals}) which are also used later (in particular in the proofs of Theorem~\ref{keythm.blowupB} and~\ref{thm.mainintroB} respectively).

\opt{ams}{\stdskip}
Throughout take the setting of Section~\ref{sect.setting} with $\blowupLocusA \hookrightarrow \singular \hookrightarrow \ambient$. Then $\resMap\colon \resolution \to \singular$ is the blowup of $\singular$ along~$\blowupLocusA$ by~Assumption~\ref{assm.blowup}. Letting $\blowupAmbientMap\colon \resolutionAmbient \to \ambient$ be the blowup of $\ambient$ along this same\opt{ams}{ }\opt{comp}{~}$\blowupLocusA$, we have a commutative diagram as follows.
\begin{equation}
\begin{aligned}\label{eq.resandamb}
    \begin{tikzpicture}[scale=\stdscale]
    	\node (singular) at (0,0) {$\singular$};
	\node (resolution) at (0,1) {$\resolution$}; 
	\node (ambient) at (1,0) {$\ambient$};
	\node (blowupAmbient) at (1,1) {$\resolutionAmbient$};
	\draw [right hook->] (singular) to node[below]{\stdstyle $\incInAmbient$} (ambient);
	\draw [->] (resolution) to node[left]{\stdstyle $\resMap$} (singular);
	\draw [right hook->] (resolution) to node[above]{\stdstyle $\incResInBlowupAmbient$} (blowupAmbient);
	\draw [->] (blowupAmbient) to node[right]{\stdstyle $\blowupAmbientMap$} (ambient);
	\draw [->] (resolution) to  node[above right]{\stdstyle $\compMap$} (ambient);
	\end{tikzpicture}
\end{aligned}
\end{equation}

Assumptions~\ref{assm.embed} and~\ref{assm.blowup} control the geometry of $\blowupAmbientMap$ as follows.

\begin{prop}\label{prop.smCar} Recall $\blowupLocusA$ is smooth of constant codimension~$n$ in $\singular$ by Assumption~\ref{assm.blowup}.

\begin{enumerate}
\item\label{prop.smCarZ} $n\geq 1$.

\item\label{prop.smCarQ} $\blowupLocusA$ is regularly embedded in $\ambient$ with codimension $n+1$.

\item\label{prop.smCarY} $\resolutionAmbient$  and $\excBlowupAmbient$ are smooth equidimensional.

\item\label{prop.smCarX} The relative canonical sheaf $\omega_\blowupAmbientMap = \omega_{\resolutionAmbient} \otimes \blowupAmbientMap^* \omega_\ambient^\vee$ is isomorphic to $\cO(n\excBlowupAmbient)$.
\end{enumerate}
\end{prop}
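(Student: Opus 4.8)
The plan is to reduce everything to the smooth blowup formula, since by Assumption~\ref{assm.embed} the ambient $\ambient$ is smooth and equidimensional, and by Assumption~\ref{assm.blowup} the centre $\blowupLocusA$ is smooth equidimensional. First I would establish part~(\ref{prop.smCarQ}): since $\singular$ is a hypersurface in $\ambient$, it is an effective Cartier divisor, hence regularly embedded with codimension~$1$; and since $\blowupLocusA$ is regularly embedded in $\singular$ with codimension~$n$ (being smooth of constant codimension inside the Cartier divisor $\singular$, which is itself Gorenstein, one checks the centre meets the regular sequence cutting out $\singular$ transversally, or more simply: $\blowupLocusA$ smooth and $\ambient$ smooth forces $\blowupLocusA \into \ambient$ regular, and the codimension is additive), we get that $\blowupLocusA$ is regularly embedded in $\ambient$ with codimension $n+1$. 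For part~(\ref{prop.smCarZ}), I would note $n\geq 1$ because if $n=0$ then $\blowupLocusA$ would be a union of components of $\singular$, contradicting that $\resMap$ is a birational contraction with $\blowupLocusA$ a proper centre; alternatively $n\geq 1$ is immediate from $\codim_\singular\blowupLocusA = n$ and $\blowupLocusA\neq\singular$.

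Next, part~(\ref{prop.smCarY}): since $\blowupLocusA$ is smooth and $\ambient$ is smooth, $\resolutionAmbient = \Bl_{\blowupLocusA}\ambient$ is smooth (the blowup of a smooth variety along a smooth centre is smooth), and equidimensional of the same dimension as $\ambient$ since blowups along nowhere-dense centres are birational; the exceptional divisor $\excBlowupAmbient = \P(\normalof{\blowupLocusA}{\ambient})$ is a projective bundle over the smooth $\blowupLocusA$, hence smooth and equidimensional.

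Finally, part~(\ref{prop.smCarX}) is the heart of the statement, and I expect it to be the only place requiring real computation, though it is standard. The smooth blowup formula (Grothendieck) gives, for the blowup $\blowupAmbientMap\colon\resolutionAmbient\to\ambient$ of the smooth $\ambient$ along the smooth centre $\blowupLocusA$ of codimension $c = n+1$,
\begin{equation*}
\omega_{\resolutionAmbient} \cong \blowupAmbientMap^*\omega_\ambient \otimes \cO\big((c-1)\excBlowupAmbient\big),
\end{equation*}
so that $\omega_\blowupAmbientMap = \omega_{\resolutionAmbient}\otimes\blowupAmbientMap^*\omega_\ambient^\vee \cong \cO((c-1)\excBlowupAmbient) = \cO(n\excBlowupAmbient)$, using $c-1 = n$. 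The main obstacle, such as it is, is simply marshalling the regularity and codimension bookkeeping in~(\ref{prop.smCarQ}) carefully — in particular confirming that $\blowupLocusA \into \ambient$ really is regular of codimension exactly $n+1$ and not merely of codimension $\geq n+1$ — since everything downstream (the blowup being smooth, the projective bundle structure on $\excBlowupAmbient$, and the exponent $n$ in the canonical bundle formula) depends on that codimension being correct on the nose. Once that is pinned down, the rest is a direct appeal to the classical smooth blowup formulae.
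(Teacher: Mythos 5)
Your proposal is correct and follows essentially the same route as the paper: additivity of codimension plus smoothness of $\blowupLocusA$ in smooth $\ambient$ for the regular embedding, the birationality of $\resMap$ to rule out $n=0$, smoothness of blowups along smooth centres, and the standard smooth blowup formula for $\omega_\blowupAmbientMap$. The only caveat is that your ``alternative'' justification of $n\geq 1$ (from $\blowupLocusA\neq\singular$) does not quite work on its own, since $\blowupLocusA$ could a priori contain an irreducible component of $\singular$ without equalling $\singular$ --- but your primary argument via birationality is exactly the paper's.
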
 

\begin{proof} 
For~(\ref{prop.smCarZ}), suppose for a contradiction that $n=0$. Note that each connected component~$\blowupLocusA'$ of~$\blowupLocusA$ is irreducible by smoothness. Then $\blowupLocusA'$ would coincide with some irreducible component $\singular'$ of~$\singular$, and so the blowup~$\resolution$ would have no irreducible component corresponding to $\singular'$, contradicting our assumption that $\resMap$ is birational.\footnote{Recall that, according to our conventions, such a morphism induces a bijection between irreducible components by definition.}
For~(\ref{prop.smCarQ}), we use that, by Assumption~\ref{assm.embed}, $\singular$ is a hypersurface in~$\ambient$ smooth equidimensional. For~(\ref{prop.smCarY}), smoothness of $\resolutionAmbient$ may then be checked locally using smoothness of~$\ambient$, and equidimensionality  of $\resolutionAmbient$ follows from equidimensionality of~$\ambient$. Then (\ref{prop.smCarX}) is standard.
\end{proof}

Assumptions~\ref{assm.embed} and~\ref{assm.blowup} furthermore restrict the geometry of $\resMap\colon\resolution\to\singular$.

\begin{prop}\label{prop.smCar2} We have the following:

\begin{enumerate}
\item\label{prop.smCarA} $\singular$ and $\resolution$ are effective Cartier divisors in $\ambient$ and $\resolutionAmbient$ respectively.

\item\label{prop.smCarB} $\singular$ and $\resolution$ are Gorenstein.

\item\label{prop.smCarR} For $y\in\blowupLocusA$, the fibre $ \resMap^{-1}(y)$ has pure dimension $n-1$ or $n$.

\end{enumerate}
\end{prop}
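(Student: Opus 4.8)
The plan is to deduce all three parts from one fact, namely that $\resolution$ is an effective Cartier divisor in the smooth scheme $\resolutionAmbient$; parts~(\ref{prop.smCarB}) and~(\ref{prop.smCarR}) then follow quickly. For part~(\ref{prop.smCarA}), $\singular$ is an effective Cartier divisor in $\ambient$ by Assumption~\ref{assm.embed}, a hypersurface in the regular scheme $\ambient$ being locally cut out by a single equation which is a non-zero-divisor since $\singular$ has codimension one. For $\resolution$ I would use the standard identification, via $\incResInBlowupAmbient$, of $\resolution = \blowupSingular$ with the strict transform of $\singular$ under the blowup $\blowupAmbientMap\colon\resolutionAmbient\to\ambient$, compatibly with the maps down to $\ambient$ --- this is exactly the commutative square~\eqref{eq.resandamb}. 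Since $\resolutionAmbient$ is smooth, hence locally factorial, by Proposition~\ref{prop.smCar}(\ref{prop.smCarY}), and since the strict transform of the reduced divisor $\singular$ is reduced of pure codimension one (its generic points mapping isomorphically onto those of $\singular$), it is automatically an effective Cartier divisor: locally its ideal is the intersection of finitely many height-one primes, each principal in the factorial local rings of $\resolutionAmbient$, so the ideal itself is principal and generated by a non-zero-divisor.

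Part~(\ref{prop.smCarB}) is then immediate, since at each point $\singular$ is the spectrum of a quotient of the regular --- hence Gorenstein --- local ring of $\ambient$ by the single regular element cutting out the Cartier divisor, and a quotient of a Gorenstein local ring by a regular element is Gorenstein; the same applies verbatim to $\resolution$ inside $\resolutionAmbient$, using Proposition~\ref{prop.smCar}(\ref{prop.smCarY}) once more.

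For part~(\ref{prop.smCarR}), fix $y\in\blowupLocusA$. Because $\compMap = \blowupAmbientMap\circ\incResInBlowupAmbient = \incInAmbient\circ\resMap$ by construction and $\incInAmbient$ is a closed immersion with $y\in\singular$, the fibre $\resMap^{-1}(y)$ coincides scheme-theoretically with $\resolution\cap\blowupAmbientMap^{-1}(y)$ inside $\resolutionAmbient$. By Proposition~\ref{prop.smCar}(\ref{prop.smCarQ}), $\blowupLocusA$ is regularly embedded in $\ambient$ of codimension $n+1$, so $\blowupAmbientMap^{-1}(y)\cong\P^n$, namely the fibre over $y$ of $\excBlowupAmbient = \P(\normalof{\blowupLocusA}{\ambient})$. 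Restricting the invertible ideal sheaf $\ideal_\resolution$ of $\resolution$ to this $\P^n$ gives a line bundle with a canonical map to $\cO_{\P^n}$: if the map is zero then $\resMap^{-1}(y) = \blowupAmbientMap^{-1}(y)$ has pure dimension $n$; otherwise it is injective, since $\P^n$ is integral, so that $\resMap^{-1}(y)$ is an effective Cartier divisor in $\P^n$ and hence pure of dimension $n-1$ by Krull's principal ideal theorem (using that $\P^n$ is regular and connected, hence integral and catenary). The fibre is nonempty because $\resMap$ is surjective, the centre $\blowupLocusA$ having positive codimension in $\singular$ by Proposition~\ref{prop.smCar}(\ref{prop.smCarZ}), so exactly one of these alternatives holds.

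I expect the only genuine --- though still modest --- obstacle to be part~(\ref{prop.smCarA}): one is tempted to read off $\resMap^{-1}(\blowupLocusA)$ directly from the embedding $\blowupLocusA\hookrightarrow\singular$, but that embedding need not be regular since $\singular$ is singular, so the argument must instead be routed through $\ambient$, where Proposition~\ref{prop.smCar} supplies both the regular embedding of $\blowupLocusA$ and the smoothness of $\resolutionAmbient$. With those two inputs secured, the remainder is routine local algebra.
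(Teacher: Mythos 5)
Your proposal is correct and follows essentially the same route as the paper: part~(\ref{prop.smCarA}) is proved by showing $\resolution$ is reduced of pure codimension one in the smooth (hence locally factorial) $\resolutionAmbient$ --- using that the generic points of $\singular$ avoid $\blowupLocusA$ --- so that Weil implies Cartier; part~(\ref{prop.smCarB}) is the same one-liner; and part~(\ref{prop.smCarR}) intersects the Cartier divisor $\resolution$ with the fibre $\blowupAmbientMap^{-1}(y)\cong\P^n$. Your zero-versus-injective dichotomy for $\ideal_\resolution|_{\P^n}\to\cO_{\P^n}$ is just a cleaner phrasing of the paper's ``each component has codimension $0$ or $1$'' step, and your closing remark about routing through $\ambient$ rather than the possibly non-regular embedding $\blowupLocusA\hookrightarrow\singular$ is exactly the point the paper's proof is organized around.
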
 
\begin{proof}
\firstproofpart{\opt{ams}{1}\opt{comp}{i}} For this note that $\singular$ and $\resolution$ are hypersurfaces in smooth $\ambient$ and $\resolutionAmbient$ respectively, using Assumption~\ref{assm.embed} and Proposition~\ref{prop.smCar}(\ref{prop.smCarY}), and are  reduced by assumption.

\proofpart{\opt{ams}{2}\opt{comp}{ii}} This follows as $\singular$ and $\resolution$ are effective Cartier divisors in  smooth schemes.

\proofpart{\opt{ams}{3}\opt{comp}{iii}} Using Proposition~\ref{prop.smCar}(\ref{prop.smCarQ}), for closed points $y\in\blowupLocusA$ we have $ \blowupAmbientMap^{-1}(y)\cong \P^n$. Now $\resMap^{-1}(y)=\resolution\cap\blowupAmbientMap^{-1}(y)$ by commutativity of~\eqref{eq.resandamb}. This is non-empty by surjectivity of $\resMap$, and its irreducible components are codimension~$0$ or~$1$ in~$\P^n$ by (\ref{prop.smCarA}). In the former case $ \resMap^{-1}(y) \cong \P^n$. Otherwise 
$\resMap^{-1}(y)$ has constant codimension~$1$ in $\P^n$, hence the claim.
\end{proof}

The following is a consequence of Assumption~\ref{assm.crep}.

\begin{prop}\label{lem.normals} The invertible normal sheaves of $\singular$ and $\resolution$ are related by: 
\begin{enumerate}
\item\label{lem.normals1} $\resMap^* \normal{\singular} \cong \normal{\resolution} \otimes \incResInBlowupAmbient^*\omega_\blowupAmbientMap $
\item\label{lem.normals2} $\incResInBlowupAmbient^*\omega_\blowupAmbientMap \cong \cO(n\excBlowupSingular) $
\end{enumerate}

\end{prop}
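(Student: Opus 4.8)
The plan is to deduce both parts from the adjunction formula for the effective Cartier divisors $\singular\subset\ambient$ and $\resolution\subset\resolutionAmbient$ --- available by Proposition~\ref{prop.smCar2}(\ref{prop.smCarA}) together with the smoothness of $\ambient$ and $\resolutionAmbient$ from Proposition~\ref{prop.smCar}(\ref{prop.smCarY}) --- combined with crepancy (Assumption~\ref{assm.crep}) and the fibre square of Lemma~\ref{prop.twoexcs}. Everything reduces to formal manipulation of invertible sheaves.

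For part~(\ref{lem.normals1}) I would write adjunction in the form $\normal{\singular}\cong\omega_\singular\otimes\incInAmbient^*\omega_\ambient^\vee$ and $\normal{\resolution}\cong\omega_\resolution\otimes\incResInBlowupAmbient^*\omega_{\resolutionAmbient}^\vee$. Substituting the second of these and the definition $\omega_\blowupAmbientMap=\omega_{\resolutionAmbient}\otimes\blowupAmbientMap^*\omega_\ambient^\vee$ into $\normal{\resolution}\otimes\incResInBlowupAmbient^*\omega_\blowupAmbientMap$, the factor $\incResInBlowupAmbient^*\omega_{\resolutionAmbient}$ cancels against its dual, leaving $\omega_\resolution\otimes\incResInBlowupAmbient^*\blowupAmbientMap^*\omega_\ambient^\vee$. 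Rewriting $\blowupAmbientMap\circ\incResInBlowupAmbient=\incInAmbient\circ\resMap=\compMap$ by commutativity of~\eqref{eq.resandamb}, and then applying crepancy $\omega_\resolution\cong\resMap^*\omega_\singular$, this becomes $\resMap^*(\omega_\singular\otimes\incInAmbient^*\omega_\ambient^\vee)\cong\resMap^*\normal{\singular}$ by adjunction for $\singular$ once more. This is a short chain of canonical isomorphisms and should go through cleanly.

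For part~(\ref{lem.normals2}), since $\omega_\blowupAmbientMap\cong\cO(n\excBlowupAmbient)$ by Proposition~\ref{prop.smCar}(\ref{prop.smCarX}), it is enough to identify $\incResInBlowupAmbient^*\cO_{\resolutionAmbient}(\excBlowupAmbient)$ with $\cO_{\resolution}(\excBlowupSingular)$ and tensor $n$ copies. I would use that $\excBlowupSingular$ is an effective Cartier divisor on $\resolution$ --- being the exceptional divisor of a blowup, the relevant ideal sheaf is invertible --- and that, by Lemma~\ref{prop.twoexcs}, $\excBlowupSingular$ is the scheme-theoretic preimage $\resolution\times_{\resolutionAmbient}\excBlowupAmbient$. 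Pulling back the effective Cartier divisor $\excBlowupAmbient$ along $\incResInBlowupAmbient$ therefore yields $\excBlowupSingular$ together with the identification of associated invertible sheaves $\incResInBlowupAmbient^*\cO_{\resolutionAmbient}(\excBlowupAmbient)\cong\cO_{\resolution}(\excBlowupSingular)$.

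The points where I would be most careful --- and they are the only non-formal ones --- are the appeals to adjunction on the possibly singular schemes $\singular$ and $\resolution$, which are legitimate precisely because they are Cartier divisors in smooth schemes, and the compatibility of Cartier-divisor pullback with associated line bundles used in part~(\ref{lem.normals2}). The latter requires that no irreducible component of the reduced scheme $\resolution$ lies in $\excBlowupAmbient$; this holds because each component of $\resolution$ is a strict transform dominating a component of $\singular$ not contained in $\blowupLocusA$ (using $n\geq1$ from Proposition~\ref{prop.smCar}(\ref{prop.smCarZ})), so $\resolution$ meets $\excBlowupAmbient$ properly and the preimage stays Cartier.
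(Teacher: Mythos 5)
Your argument is correct and follows essentially the same route as the paper: part~(\ref{lem.normals1}) is the paper's comparison of the two factorizations of $\omega_\compMap$ through the square~\eqref{eq.resandamb}, merely unpacked via the adjunction formula $\omega_\incInAmbient\cong\normal{\singular}$, $\omega_\incResInBlowupAmbient\cong\normal{\resolution}$ rather than stated in terms of relative canonical sheaves, and part~(\ref{lem.normals2}) uses the same inputs (Proposition~\ref{prop.smCar}(\ref{prop.smCarX}) and the fibre square of Lemma~\ref{prop.twoexcs}). Your extra check that no component of $\resolution$ lies in $\excBlowupAmbient$ is a welcome precision the paper leaves implicit.
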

\begin{proof} 
\firstproofpart{\opt{ams}{1}\opt{comp}{i}} The commutative square~\eqref{eq.resandamb} gives $ \omega_\resMap \otimes \resMap^* \omega_\incInAmbient \cong \omega_\compMap \cong \omega_\incResInBlowupAmbient \otimes \incResInBlowupAmbient^*\omega_\blowupAmbientMap$.
But $\omega_\resMap$ is trivial  by Assumption~\ref{assm.crep}, and $\omega_\incInAmbient = \normal{\singular}$ and $\omega_\incResInBlowupAmbient = \normal{\resolution}$ by Proposition~\ref{prop.smCar2}(\ref{prop.smCarA}).

\proofpart{\opt{ams}{2}\opt{comp}{ii}} Note that $\incResInBlowupAmbient^*\cO(\excBlowupAmbient) \cong\cO(\excBlowupSingular)$ and use Proposition~\ref{prop.smCar}(\ref{prop.smCarX}). 
\end{proof}

The following compares the Cartier divisors $\resolution$ and $\singular$ via the blowup~$\blowupAmbientMap$.

\begin{prop}\label{lem.ideals} We have the following isomorphisms on $\resolutionAmbient$.
\begin{enumerate}
\item\label{lem.idealsX} $\ideal_\resolution \cong \blowupAmbientMap^!\ideal_\singular $
\item\label{lem.idealsB} $\ideal_\resolution \cong \blowupAmbientMap^*\ideal_\singular \otimes \cO(n\excBlowupAmbient)$
\end{enumerate}

\end{prop}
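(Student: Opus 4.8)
The plan is to prove part~(\ref{lem.idealsB}) and then deduce part~(\ref{lem.idealsX}). For the deduction: $\blowupAmbientMap$ is the blowup of a smooth scheme along the smooth centre~$\blowupLocusA$, hence a local complete intersection morphism, and it is birational and so of relative dimension zero; therefore $\blowupAmbientMap^!(\placeholder)\cong\blowupAmbientMap^*(\placeholder)\otimes\omega_\blowupAmbientMap$ on perfect complexes, and $\omega_\blowupAmbientMap\cong\cO(n\excBlowupAmbient)$ by Proposition~\ref{prop.smCar}(\ref{prop.smCarX}). Applying this to the invertible sheaf~$\ideal_\singular$ (invertible by Proposition~\ref{prop.smCar2}(\ref{prop.smCarA})) shows that parts~(\ref{lem.idealsX}) and~(\ref{lem.idealsB}) are equivalent.

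To prove part~(\ref{lem.idealsB}), note first that since $\ideal_\singular$ is invertible, the map $\blowupAmbientMap^*\ideal_\singular\to\cO_{\resolutionAmbient}$ is injective with image the ideal of the scheme-theoretic preimage $\blowupAmbientMap^{-1}(\singular)$, an effective Cartier divisor on the smooth scheme~$\resolutionAmbient$. By Assumption~\ref{assm.blowup} and the standard description of strict transforms under blowups (\cite[\href{https://stacks.math.columbia.edu/tag/080E}{Lemma~080E}]{stacks-project}), the strict transform of~$\singular$ in~$\resolutionAmbient$ is precisely~$\resolution$; hence $\blowupAmbientMap^{-1}(\singular)=\resolution+m\excBlowupAmbient$ as Cartier divisors, where $m\geq 1$ is the multiplicity of~$\singular$ along~$\blowupLocusA$ (if $\blowupLocusA$ is reducible one reads off such an~$m$ on each component and runs the argument below componentwise). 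Thus $\ideal_\resolution\cong\blowupAmbientMap^*\ideal_\singular\otimes\cO(m\excBlowupAmbient)$, and it remains to show $m=n$.

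Restricting this isomorphism along~$\incResInBlowupAmbient$ and using $\incResInBlowupAmbient^*\cO(\excBlowupAmbient)\cong\cO(\excBlowupSingular)$ (Lemma~\ref{prop.twoexcs}), together with $\ideal_\resolution|_\resolution\cong\normal{\resolution}^\vee$ and $\compMap^*\ideal_\singular=\resMap^*\incInAmbient^*\ideal_\singular\cong\resMap^*\normal{\singular}^\vee$ (as $\singular$ is Cartier in~$\ambient$), gives $\resMap^*\normal{\singular}\cong\normal{\resolution}\otimes\cO(m\excBlowupSingular)$. Comparing this with Proposition~\ref{lem.normals}, which yields $\resMap^*\normal{\singular}\cong\normal{\resolution}\otimes\cO(n\excBlowupSingular)$, we obtain
\begin{equation*}
\cO\big((n-m)\excBlowupSingular\big)\cong\cO_\resolution .
\end{equation*}
Now suppose $m\neq n$. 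The sheaf $\cO(-\excBlowupSingular)$ is the relative twisting sheaf of the blowup $\resMap\colon\resolution=\Bl_\blowupLocusA\singular\to\singular$, hence $\resMap$-ample, while the displayed isomorphism makes it a torsion class in~$\mathrm{Pic}(\resolution)$. A torsion line bundle has degree zero on every complete curve, whereas an $\resMap$-ample one is positive on any complete curve lying in a fibre of~$\resMap$; so all fibres of~$\resMap$ are finite, and since $\resMap$ is proper it is finite. But a finite contraction is an isomorphism onto its target (its target being recovered as the relative spectrum of the pushforward of the structure sheaf), so $\blowupLocusA$ is an effective Cartier divisor in~$\singular$ and $n=\codim_\singular\blowupLocusA=1$. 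Finally, at the generic point~$\xi$ of a component of~$\blowupLocusA$ the local ring $\cO_{\singular,\xi}$ is one-dimensional, reduced, and has principal maximal ideal, hence (by the Krull intersection theorem every nonzero element is a unit times a power of a uniformiser) is a discrete valuation ring; so $\singular$ is regular at~$\xi$, the ideal $\ideal_\singular$ is generated near~$\xi$ by an element not lying in $\mathfrak m_{\ambient,\xi}^{2}$, and $m=1=n$ --- a contradiction. Hence $m=n$, which proves part~(\ref{lem.idealsB}) and, with it, part~(\ref{lem.idealsX}).

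I expect the final step, passing from $\cO((n-m)\excBlowupSingular)\cong\cO_\resolution$ to the equality $m=n$, to be the main obstacle: over a non-proper base a line bundle associated to a nonzero effective divisor may be trivial, so one cannot argue numerically on~$\resolution$ directly, and the degenerate case where $\resMap$ is an isomorphism must be isolated and dispatched by the local computation above.
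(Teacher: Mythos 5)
Your proof is correct and follows essentially the same strategy as the paper: reduce to showing that the multiplicity $m$ in $\blowupAmbientMap^*\singular=\resolution+m\excBlowupAmbient$ equals $n$, use crepancy to get $(n-m)\excBlowupSingular=0$ in $\operatorname{Pic}(\resolution)$, and rule out $m\neq n$ by forcing $\resMap$ to be an isomorphism and then computing $m=n=1$ locally. The only (harmless) variations are that you import the relation $\resMap^*\normal{\singular}\cong\normal{\resolution}\otimes\cO(n\excBlowupSingular)$ from Proposition~\ref{lem.normals} instead of re-deriving it from the blowup formula and adjunction, and you conclude that $\resMap$ is an isomorphism via degrees of the torsion class $\cO(\excBlowupSingular)$ on complete curves in fibres rather than via the paper's relatively very ample $\cO_\resolution$ embedding $\resolution$ into $\operatorname{Proj}_\singular\cO_\singular$.
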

\begin{proof} Note first that $\dim\blowupAmbientMap=0$, so~(\ref{lem.idealsX}) and~(\ref{lem.idealsB}) are equivalent by construction of $\blowupAmbientMap^!$ and Proposition~\ref{prop.smCar}(\ref{prop.smCarX}). By Assumption~\ref{assm.blowup}, $\blowupLocusA$ is smooth. Assuming for now that $\blowupLocusA$ is furthermore connected, we have
\begin{equation*}\label{eqn.normalequiv} \blowupAmbientMap^* \singular = \resolution + m\excBlowupAmbient\end{equation*}
in~$\operatorname{Pic}(\resolution)$, where $m$ is the multiplicity along $\blowupLocusA$ of $\singular$ as a subscheme of $\ambient$. To show~(\ref{lem.idealsB}), I~claim then that $m=n$. 

Proposition~\ref{prop.smCar}(\ref{prop.smCarX}) yields
$K_\resolutionAmbient + \resolution 
  = \blowupAmbientMap^*( K_\ambient +  \singular)  + (n-  m)\excBlowupAmbient$
and so, restricting to $\resolution$,
\begin{equation*} K_{\resMap} = K_\resolution - \resMap^* K_\singular = (n-  m)\excBlowupSingular\end{equation*}
using adjunction and commutativity of \eqref{eq.resandamb}. But $K_{\resMap} = 0$ by  Assumption~\ref{assm.crep}, so 
\begin{equation}\label{eq.multExc} 
(n-  m)\excBlowupSingular = 0
\end{equation} 
 in~$\operatorname{Pic}(\resolution)$. Note that $-k\excBlowupSingular$ is very ample relative to~$\resMap$ for all $k\in\mathbb{N}$. 

Suppose for a contradiction that $m\neq n$, so that $\cO_\resolution$ is very ample relative to~$\resMap$ by \eqref{eq.multExc}.  But $\resMap$ is a contraction, hence a surjection with $\RDerived^0\resMap_* \cO_\resolution \cong \cO_\singular$, so then $\resMap$ embeds $\resolution$ into $\operatorname{Proj}_\singular\cO_\singular=\singular$ over the base~$\singular$, making $\resMap$ an isomorphism. Using Assumption~\ref{assm.blowup} and the universal property of blowup, $\blowupLocusA$ is then a Cartier divisor in~$\singular$, giving $n=1$. But then a fibre of $\resMap$ over $\blowupLocusA$ is a degree~$m$ hypersurface in $\mathbb{P}^1$ hence $m=1$, a contradiction.

Finally, if $Y$ is not connected, we may check~(\ref{lem.idealsB}) after base change to a neighbourhood of each component of~$\blowupLocusA$ in~$\ambient$ which is disjoint from the other components of~$\blowupLocusA$.
\end{proof}

\begin{rem} The above proof of Proposition~\ref{lem.ideals} is the only place where we use our assumption that $\resMap$ is a contraction. Therefore we may replace this assumption with the requirement that the multiplicity $m$ along $\blowupLocusA$ of $\singular$ as a subscheme of $\ambient$ is constant, and equal to $n \coloneqq \codim_\blowupLocusA\singular $.
\end{rem}

\begin{eg}\label{eg.cone} Let $\singular$ be the affine cone over a reduced  hypersurface $H$ of degree $n$ in $\mathbb{P}^n$, with $\ambient=\field^{n+1}$ its ambient space, for $n\geq 2$. Take $\resMap$ to be the blowup of $\singular$ at the vertex~$\{0\}$ of the affine cone, so that  $\blowupLocusA=\{0\}$. Then the blowups $\resolution$ and $\resolutionAmbient$ and their exceptional loci form a fibre square as follows.
\begin{equation*}
    \begin{tikzpicture}[scale=\stdshrink]
    	\node (excBlowupAmbient) at (1,0) {$\mathbb{P}^n$};
	\node (excBlowupSingular) at (0,0) {$H$};
	\node (resolutionAmbient) at (1,-1) {$\resolutionAmbient$};
	\node (resolution) at (0,-1) {$\resolution$}; 
	\draw [left hook->] (excBlowupSingular) to (resolution);
	\draw [right hook->] (resolution) to node[above]{\stdstyle $\incResInBlowupAmbient$} (resolutionAmbient);
	\draw [left hook->] (excBlowupAmbient) to (resolutionAmbient);
	\draw [right hook->] (excBlowupSingular) to (excBlowupAmbient);
	\end{tikzpicture}
\end{equation*} 

Consider $\resolutionAmbient$ as the total space $\Tot \cO_{\mathbb{P}^n}(-1)$ with projection $\pi$. Then $\resolution$ is cut out by $\pi^* \cO_{\mathbb{P}^n}(n)$, and we find using the adjunction formula that $\omega_\resolution$ is trivial. As $\singular$ is a hypersurface in affine space, $\omega_\singular$ is also trivial, so indeed $\resMap$ is crepant, and the assumptions of Section~\ref{sect.setting} are satisfied. 
\end{eg}

\begin{rem}\label{eg.relaxed} We allow $\singular$ to be reducible in Example~\ref{eg.cone} above, for instance by taking the cone on a nodal curve $\{x_1x_2=0\}$ in $\mathbb{P}^2$. Then $\singular$ has two irreducible components, corresponding to the branches of the node. The blowup $\resolution$ of $\singular$ likewise has two irreducible components, namely the preimages of the branches under $\pi$. 

In this case therefore the morphism $\resMap\colon\resolution \to\singular$ is between non-normal varieties, as both are singular in codimension~$1$. Our conventions allow such contractions (though note that contractions may be assumed to be normal elsewhere in the literature).
\end{rem}

Recall that $\blowupAmbientMap$ is the blowup of smooth $\ambient$ in smooth $\blowupLocusA$ of constant codimension~$n+1$.
\begin{equation*}
    \begin{tikzpicture}[scale=\stdshrink]
    	\node (ambient) at (0,0) {$\ambient$};
	\node (blowupLocusA) at (-1,0) {$\blowupLocusA$};
	\node (resolutionAmbient) at (0,1) {$\resolutionAmbient$};
	\node (excBlowupAmbient) at (-1,1) {$\overset{\phantom{.}}{\excBlowupAmbient}$};
	\draw [right hook->] (blowupLocusA) to node[below]{\stdstyle $\blowupIncAmbient$} (ambient);
	\draw [right hook->] (excBlowupAmbient) to node[above]{\stdstyle $\blowupExcIncAmbient$} (resolutionAmbient);
	\draw [->] (excBlowupAmbient) to node[left]{\stdstyle $\blowupProjAmbient$} (blowupLocusA);
	\draw [->] (resolutionAmbient) to node[right]{\stdstyle $\blowupAmbientMap$} (ambient);
	\end{tikzpicture}
\end{equation*}
Then we have semiorthogonal decompositions~\cite{Orl}
\begin{equation}\label{eq.sods}
\D(\resolutionAmbient) = \big\langle \blowupAmbientMap^* \D(\ambient), \SODembedA_1 \D(\blowupLocusA), \dots, \SODembedA_{n} \D(\blowupLocusA) \big\rangle = \big\langle \SODembedA_{-n+1} \D(\blowupLocusA), \dots, \SODembedA_{0} \D(\blowupLocusA), \blowupAmbientMap^* \D(\ambient) \big\rangle
\end{equation}
with embeddings $\SODembedA_\index = \blowupExcIncAmbient[*] ( \blowupProjAmbient^*(\placeholder) \otimes \cO_\blowupProjAmbient(\index-1) )$.

\begin{prop}\label{lem.intertwine}
$\SODembedA_\index (\placeholder) \otimes \ideal_\resolution \cong \SODembedA_{\index-n} \comp (\placeholder\otimes \blowupIncAmbient^* \ideal_\singular)$
\end{prop}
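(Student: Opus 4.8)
The plan is to expand both sides of the asserted isomorphism using the definition $\SODembedA_\index(\placeholder) = \blowupExcIncAmbient[*]\big(\blowupProjAmbient^*(\placeholder) \otimes \cO_{\blowupProjAmbient}(\index-1)\big)$ and to reduce everything to one identification of line bundles on $\excBlowupAmbient$, namely $\blowupExcIncAmbient^*\ideal_\resolution \cong \blowupProjAmbient^*\blowupIncAmbient^*\ideal_\singular \otimes \cO_{\blowupProjAmbient}(-n)$. Once that is in hand, the proposition is a formal manipulation with the projection formula and the monoidality of $\blowupProjAmbient^*$.

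First I would note that $\ideal_\resolution$ is an \emph{invertible} sheaf on $\resolutionAmbient$, since $\resolution$ is an effective Cartier divisor in $\resolutionAmbient$ by Proposition~\ref{prop.smCar2}(\ref{prop.smCarA}); in particular it is perfect, so the projection formula for the proper morphism $\blowupExcIncAmbient$ applies and, for any $\cF\in\D(\blowupLocusA)$,
\[
\SODembedA_\index(\cF) \otimes \ideal_\resolution
= \blowupExcIncAmbient[*]\big(\blowupProjAmbient^*\cF \otimes \cO_{\blowupProjAmbient}(\index-1)\big) \otimes \ideal_\resolution
\cong \blowupExcIncAmbient[*]\big(\blowupProjAmbient^*\cF \otimes \cO_{\blowupProjAmbient}(\index-1) \otimes \blowupExcIncAmbient^*\ideal_\resolution\big).
\]
Next I would compute $\blowupExcIncAmbient^*\ideal_\resolution$: by Proposition~\ref{lem.ideals}(\ref{lem.idealsB}) we have $\ideal_\resolution \cong \blowupAmbientMap^*\ideal_\singular \otimes \cO(n\excBlowupAmbient)$ on $\resolutionAmbient$, so restricting along $\blowupExcIncAmbient$, using the commutativity $\blowupAmbientMap \comp \blowupExcIncAmbient = \blowupIncAmbient \comp \blowupProjAmbient$, and the standard blowup identity $\blowupExcIncAmbient^*\cO(\excBlowupAmbient) \cong \cO_{\blowupProjAmbient}(-1)$ (valid because $\blowupLocusA$ has codimension $n+1$ in $\ambient$ by Proposition~\ref{prop.smCar}(\ref{prop.smCarQ}) and $\excBlowupAmbient$ is the corresponding projective bundle over $\blowupLocusA$), one obtains $\blowupExcIncAmbient^*\ideal_\resolution \cong \blowupProjAmbient^*\blowupIncAmbient^*\ideal_\singular \otimes \cO_{\blowupProjAmbient}(-n)$.

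Substituting this back, collecting the two $\blowupProjAmbient^*$-pullbacks into $\blowupProjAmbient^*(\cF \otimes \blowupIncAmbient^*\ideal_\singular)$ and adding the exponents of $\cO_{\blowupProjAmbient}(1)$, I get
\[
\SODembedA_\index(\cF) \otimes \ideal_\resolution
\cong \blowupExcIncAmbient[*]\big(\blowupProjAmbient^*(\cF \otimes \blowupIncAmbient^*\ideal_\singular) \otimes \cO_{\blowupProjAmbient}(\index-n-1)\big)
= \SODembedA_{\index-n}(\cF \otimes \blowupIncAmbient^*\ideal_\singular),
\]
and since every step is natural in $\cF$ this is an isomorphism of functors, as claimed. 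There is no serious obstacle here; the only point that genuinely matters is the input $\blowupExcIncAmbient^*\ideal_\resolution \cong \blowupProjAmbient^*\blowupIncAmbient^*\ideal_\singular \otimes \cO_{\blowupProjAmbient}(-n)$, where the precise exponent $n$ is exactly what Proposition~\ref{lem.ideals}(\ref{lem.idealsB}) supplies — and that, in turn, is where crepancy (Assumption~\ref{assm.crep}) enters the argument.
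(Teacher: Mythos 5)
Your proof is correct and follows essentially the same route as the paper: projection formula along $\blowupExcIncAmbient$, then the computation $\blowupExcIncAmbient^*\ideal_\resolution \cong \blowupProjAmbient^*\blowupIncAmbient^*\ideal_\singular \otimes \cO_{\blowupProjAmbient}(-n)$ via Proposition~\ref{lem.ideals}(\ref{lem.idealsB}) and $\blowupExcIncAmbient^*\cO(\excBlowupAmbient)\cong\cO_{\blowupProjAmbient}(-1)$, and finally recollecting terms. The extra justifications you supply (invertibility of $\ideal_\resolution$ for the projection formula, commutativity of the blowup square) are implicit in the paper's argument.
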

\begin{proof}Using the projection
formula
\begin{align*}
 \SODembedA_\index (\placeholder)\otimes \ideal_\resolution& \cong \blowupExcIncAmbient[*] (\blowupProjAmbient^*(\placeholder) \otimes \cO_\blowupProjAmbient(\index-1) \otimes \blowupExcIncAmbient^* \ideal_\resolution )
\end{align*}
 but by Proposition~\ref{lem.ideals}(\ref{lem.idealsB})
 \begin{equation*}
 \blowupExcIncAmbient^* \ideal_\resolution \cong \blowupExcIncAmbient^*(\blowupAmbientMap^*\ideal_\singular \otimes \cO(n\excBlowupAmbient))  \cong  \blowupProjAmbient^* \blowupIncAmbient^*\ideal_\singular
\otimes \cO_\blowupProjAmbient(-n)\end{equation*}
and the claim follows.
\end{proof}

\begin{thm}[\optbracket{Theorem~\ref{thm.pullbacksph}}]\label{thm.main} In the setting of Section~\ref{sect.setting},
\begin{enumerate}[start=0,label={(\arabic*)},ref={\arabic*}]
\item\label{thm.mainA} $\compMap^*$ is spherical.
\end{enumerate}
Let $\iun$ be the restriction of $\incResInBlowupAmbient^*$ to the left orthogonal of $\blowupAmbientMap^* \D(\ambient)$ in $\D(\resolutionAmbient)$, as follows.
\beq
\iun = \incResInBlowupAmbient^* \colon {}^\perp \blowupAmbientMap^* \D(\ambient)  \to \D(\resolution)
\eeq
Then furthermore:
\begin{enumerate}[resume*]
\item\label{thm.mainpB} $\iun$ is spherical.
\item\label{thm.mainpC} There is an isomorphism
\begin{equation*}
\tw[\compMap^*](\placeholder) \otimes \normal{\resolution} \cong \tw[\iun]^{-1}(\placeholder)[2]
\end{equation*}
 between autoequivalences of $\D(\resolution)$.
\end{enumerate}
Here $\normal{\resolution}$ is the invertible normal sheaf of $\resolution$ in $\resolutionAmbient=\blowupAmbient$.
\end{thm}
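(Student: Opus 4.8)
The plan is to derive all three parts of the statement from a single application of the Halpern-Leistner--Shipman factorization theorem (Theorem~\ref{thm.sodsph}) to the functor $\incResInBlowupAmbient^*\colon\D(\resolutionAmbient)\to\D(\resolution)$, using for the two semiorthogonal decompositions of $\D(\resolutionAmbient)$ required there the two Orlov blowup decompositions recorded in~\eqref{eq.sods}.

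First I would assemble the inputs. By Proposition~\ref{prop.smCar2}(\ref{prop.smCarA}), $\incResInBlowupAmbient$ is the inclusion of an effective Cartier divisor, so Proposition~\ref{prop.divsph}(\ref{prop.divsphA}) makes $\incResInBlowupAmbient^*$ spherical with $\tw[\incResInBlowupAmbient^*]\cong(\placeholder)\otimes\normal{\resolution}^\vee[2]$ and $\ctw[\incResInBlowupAmbient^*]\cong(\placeholder)\otimes\ideal_\resolution$. Put $\mathsf{A}=\blowupAmbientMap^*\D(\ambient)$ and $\mathsf{B}={}^\perp\mathsf{A}$; the first line of~\eqref{eq.sods} then identifies $\mathsf{B}$ with $\langle\SODembedA_1\D(\blowupLocusA),\dots,\SODembedA_n\D(\blowupLocusA)\rangle$ and gives the decomposition $\D(\resolutionAmbient)=\langle\mathsf{A},\mathsf{B}\rangle$ into admissible subcategories.

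The substantive step is to verify the second decomposition in the hypothesis of Theorem~\ref{thm.sodsph}, namely $\D(\resolutionAmbient)=\langle\ctw[\incResInBlowupAmbient^*]\mathsf{B},\mathsf{A}\rangle$. Here $\ctw[\incResInBlowupAmbient^*]\mathsf{B}=\mathsf{B}\otimes\ideal_\resolution$ is generated by the subcategories $\SODembedA_\index\D(\blowupLocusA)\otimes\ideal_\resolution$ for $1\le\index\le n$. Proposition~\ref{lem.intertwine} gives $\SODembedA_\index(\placeholder)\otimes\ideal_\resolution\cong\SODembedA_{\index-n}\comp(\placeholder\otimes\blowupIncAmbient^*\ideal_\singular)$, and since $\singular$ is Cartier in $\ambient$ the sheaf $\blowupIncAmbient^*\ideal_\singular$ is invertible, so tensoring by it is an autoequivalence of $\D(\blowupLocusA)$; hence $\SODembedA_\index\D(\blowupLocusA)\otimes\ideal_\resolution=\SODembedA_{\index-n}\D(\blowupLocusA)$. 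Therefore $\ctw[\incResInBlowupAmbient^*]\mathsf{B}=\langle\SODembedA_{-n+1}\D(\blowupLocusA),\dots,\SODembedA_0\D(\blowupLocusA)\rangle$, and $\langle\ctw[\incResInBlowupAmbient^*]\mathsf{B},\mathsf{A}\rangle$ is exactly the second line of~\eqref{eq.sods}. This is the only place crepancy is used — it enters via Proposition~\ref{lem.intertwine}, hence Proposition~\ref{lem.ideals} — and I expect it to be the main point of the write-up; everything else is bookkeeping, including the admissibility of the three subcategories involved, each of which is spanned by admissible Orlov components.

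Theorem~\ref{thm.sodsph}, applied with $\fun=\incResInBlowupAmbient^*$, then gives that the restrictions $\fun_{\mathsf{A}}$ and $\fun_{\mathsf{B}}$ are spherical and that $\tw[\incResInBlowupAmbient^*]\cong\tw[\fun_{\mathsf{A}}]\comp\tw[\fun_{\mathsf{B}}]$. The restriction $\fun_{\mathsf{B}}$ is the functor $\iun$ of the statement, yielding part~(\ref{thm.mainpB}); composing $\fun_{\mathsf{A}}$ with the equivalence $\blowupAmbientMap^*\colon\D(\ambient)\isoto\mathsf{A}$ and using $\incResInBlowupAmbient^*\blowupAmbientMap^*\cong\compMap^*$ from the commutative diagram~\eqref{eq.resandamb} identifies $\fun_{\mathsf{A}}$ with $\compMap^*$, which is therefore spherical (part~(\ref{thm.mainA})) and has $\tw[\fun_{\mathsf{A}}]\cong\tw[\compMap^*]$. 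Finally, from $\tw[\incResInBlowupAmbient^*]\cong\tw[\compMap^*]\comp\tw[\iun]$ we get $\tw[\iun]^{-1}\cong\tw[\incResInBlowupAmbient^*]^{-1}\comp\tw[\compMap^*]$, and substituting $\tw[\incResInBlowupAmbient^*]^{-1}\cong(\placeholder)\otimes\normal{\resolution}[-2]$ from Proposition~\ref{prop.divsph}(\ref{prop.divsphA}) gives $\tw[\iun]^{-1}(\placeholder)[2]\cong\tw[\compMap^*](\placeholder)\otimes\normal{\resolution}$, which is part~(\ref{thm.mainpC}).
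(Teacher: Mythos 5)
Your proposal is correct and follows essentially the same route as the paper's proof: both apply Theorem~\ref{thm.sodsph} to the spherical functor $\incResInBlowupAmbient^*$ with the two Orlov decompositions of~\eqref{eq.sods}, verify the compatibility $\ctw[\incResInBlowupAmbient^*]\mathsf{B}=\mathsf{B}'$ via Proposition~\ref{lem.intertwine} (which is indeed where crepancy enters, through Proposition~\ref{lem.ideals}), and rearrange the resulting factorization of $\tw[\incResInBlowupAmbient^*]\cong(\placeholder)\otimes\normal{\resolution}^\vee[2]$ to obtain part~(\ref{thm.mainpC}). The only cosmetic difference is that you spell out why tensoring by $\blowupIncAmbient^*\ideal_\singular$ is an autoequivalence of $\D(\blowupLocusA)$, which the paper leaves implicit.
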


\begin{proof}
By Proposition~\ref{prop.smCar2}(\ref{prop.smCarA}) we have that $\resolution$ is a Cartier divisor in $\resolutionAmbient$, so  
\begin{equation*}
\fun = \incResInBlowupAmbient^*\colon \D(\resolutionAmbient) \to \D(\resolution)
\end{equation*}
is spherical with cotwist $\placeholder\otimes \ideal_\resolution$ by Proposition~\ref{prop.divsph}(\ref{prop.divsphA}). By~\eqref{eq.sods}, $
\D(\resolutionAmbient) = \big\langle \mathsf{A}, \mathsf{B} \big\rangle = \big\langle \mathsf{B}', \mathsf{A} \big\rangle$
with $\mathsf{A}=\blowupAmbientMap^* \D(\ambient)$ and:
 \begin{equation*}\mathsf{B}  = \big\langle \SODembedA_1 \D(\blowupLocusA), \dots, \SODembedA_n \D(\blowupLocusA) \big\rangle
\qquad
\mathsf{B}'  = \big\langle \SODembedA_{-n+1} \D(\blowupLocusA), \dots, \SODembedA_{0} \D(\blowupLocusA) \big\rangle
\end{equation*}
By Proposition~\ref{lem.intertwine}, $\SODembedA_\index \D(\blowupLocusA)\otimes \ideal_\resolution  = \SODembedA_{\index-n} \D(\blowupLocusA)$
and so, as $\mathsf{B}$ and~$\mathsf{B}'$ are the smallest full triangulated subcategories containing the indicated images of~$\D(\blowupLocusA)$, we find the following.
\beq\ctw[\fun]\mathsf{B}=\mathsf{B}\otimes \ideal_\resolution = \mathsf{B}'\eeq

Then applying Theorem~\ref{thm.sodsph} to~$\fun$ gives spherical  
$\fun_{\mathsf{A}}= \fun \comp \blowupAmbientMap^* = \incResInBlowupAmbient^* \blowupAmbientMap^* \cong \compMap^*$ 
and~$\fun_{\mathsf{B}}=\iun$, yielding~(\ref{thm.mainA}) and~(\ref{thm.mainpB}) respectively, and furthermore $\tw[\fun] \cong \tw[\fun_{\mathsf{A}}] \comp \tw[\fun_{\mathsf{B}}]$ which yields~(\ref{thm.mainpC}) using Proposition~\ref{prop.divsph}(\ref{prop.divsphA}).
\end{proof}

Below we prove Theorem~\ref{thm.mainintro}, which describes $\tw[\compMap^*]$ in the case when $n=1$. Later we prove Theorem~\ref{thm.mainintroB} which describes $\tw[\compMap^*]$ for general $n\geq 1$ under further assumptions. For these results we use the following functors~$\gun_\index$.

\begin{defn}\label{def.sphA}  Recall  notation as follows.
\begin{equation*}
    \begin{tikzpicture}[scale=\stdshrink]
    	\node (singular) at (0,0) {$\singular$};
	\node (resolution) at (0,1) {$\resolution$}; 
	\node (blowupLocusA) at (-1,0) {$\blowupLocusA$};
	\node (excBlowupSingular) at (-1,1) {$\overset{\phantom{.}}{\excBlowupSingular}$};
	\draw [right hook->] (blowupLocusA) to (singular);
	\draw [->] (resolution) to node[right]{\stdstyle $\resMap$} (singular);
	\draw [right hook->] (excBlowupSingular) to (resolution);
	\draw [->] (excBlowupSingular) to node[left]{\stdstyle $\blowupProjA$} (blowupLocusA);
	\end{tikzpicture}
\end{equation*}
Define a functor 
\begin{equation*}
	\begin{tikzpicture}[xscale=\compdiagscale]
		\node (A) at (-0.25,0) {$ \gun_\index\colon\D(\blowupLocusA)$};
		\node (B) at (1,0) {$\D(\excBlowupSingular)$};
		\node (C) at (3,0) {$\D(\excBlowupSingular)$};
		\node (D) at (4,0) {$\D(\resolution)$};

		\draw [->] (A) to node[above]{\compstyle $\blowupProjA^*$} (B);
		\draw [->] (B) to node[above]{\compstyle $\otimes \cO_{\blowupProjA}(\index-1)$} (C);
		\draw [->] (C) to (D);
	\end{tikzpicture}
\end{equation*}
for $\index\in\mathbb{Z}$, where the last functor is  pushforward. 
\end{defn}

\begin{rem}\label{rem.sphobj} In the case that $\blowupLocusA$ is a point, as in Example~\ref{eg.cone}, we have $\gun_\index\colon \D(\pt) \to \D(\resolution)$ with $\gun_\index(\cO_{\pt})$ given by the following object.
\beq
\cE_\index =\cO_{\excBlowupSingular}(\index-1) \in \D(\resolution)
\eeq 
For $\gun_\index$ spherical, the twists $\tw[\gun_\index]$ are therefore simply twists $\twObj{\resolution}{\cE_\index}$ by spherical objects.
\end{rem}

\begin{prop}\label{lem.resembed}
$ \incResInBlowupAmbient^* \comp \SODembedA_\index \cong\gun_\index$

\end{prop}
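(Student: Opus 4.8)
The plan is to unwind the definition of $\SODembedA_\index$, to commute the restriction functor $\incResInBlowupAmbient^{*}$ past the exceptional pushforward by base change along the fibre square of Lemma~\ref{prop.twoexcs}, and then to recognise the outcome as the composite defining $\gun_\index$ in Definition~\ref{def.sphA}. Write $\iota\colon\excBlowupSingular\into\resolution$ and $\kappa\colon\excBlowupSingular\into\excBlowupAmbient$ for the two inclusions completing that fibre square, so $\incResInBlowupAmbient\,\iota=\blowupExcIncAmbient\,\kappa$ and $\excBlowupSingular=\resolution\cap\excBlowupAmbient$ as a scheme.

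The first, and main, input is that this fibre square is Tor-independent, i.e.\ $\cO_{\resolution}\Lotimes_{\cO_{\resolutionAmbient}}\cO_{\excBlowupAmbient}\cong\cO_{\excBlowupSingular}$ in degree $0$. In the smooth scheme $\resolutionAmbient$ both $\resolution$ and $\excBlowupAmbient$ are effective Cartier divisors, by Propositions~\ref{prop.smCar2}(\ref{prop.smCarA}) and~\ref{prop.smCar}(\ref{prop.smCarY}), and $\resolution$ is pure of codimension $1$; hence $\excBlowupSingular$, being an effective Cartier divisor in $\resolution$, has pure codimension $2$ in $\resolutionAmbient$, strictly less than the codimension $1$ of the reduced divisor $\excBlowupAmbient$. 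So $\resolution$ contains no component of $\excBlowupAmbient$, a local equation of $\resolution$ therefore restricts to a non-zero-divisor on $\excBlowupAmbient$, and consequently (i) $\kappa$ is the inclusion of an effective Cartier divisor, so $\kappa^{*}$ preserves $\D$, and (ii) the two-term Koszul complex resolves $\cO_{\excBlowupSingular}$, giving the Tor-independence. This is the step using smoothness of $\ambient$ (compare Remark~\ref{rem.smoothness}); I expect it to be the only genuine obstacle. The remaining inputs are formal: restricting the commutative square~\eqref{eq.resandamb} to $\excBlowupSingular$ and using that $\blowupLocusA\into\ambient$ is a monomorphism gives $\blowupProjAmbient\,\kappa=\blowupProjA$, hence $\kappa^{*}\blowupProjAmbient^{*}\cong\blowupProjA^{*}$; and since $\excBlowupSingular=\incResInBlowupAmbient^{-1}(\excBlowupAmbient)$ as Cartier divisors (Lemma~\ref{prop.twoexcs}), so that $\incResInBlowupAmbient^{*}\cO_{\resolutionAmbient}(\excBlowupAmbient)\cong\cO_{\resolution}(\excBlowupSingular)$ as in the proof of Proposition~\ref{lem.normals}(\ref{lem.normals2}), the standard identifications $\cO_{\blowupProjAmbient}(1)\cong\cO_{\resolutionAmbient}(-\excBlowupAmbient)|_{\excBlowupAmbient}$ and $\cO_{\blowupProjA}(1)\cong\cO_{\resolution}(-\excBlowupSingular)|_{\excBlowupSingular}$ (the latter using that $\excBlowupSingular$ is the exceptional divisor of $\resMap$) give $\kappa^{*}\cO_{\blowupProjAmbient}(1)\cong\cO_{\blowupProjA}(1)$.

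With these in hand, the base-change isomorphism $\incResInBlowupAmbient^{*}\,\blowupExcIncAmbient[*]\cong\iota_{*}\,\kappa^{*}$ holds ($\blowupExcIncAmbient$ being a closed immersion and the square Tor-independent), and, using pullback--tensor compatibility (all the relevant $\otimes$ being with line bundles),
\begin{align*}
\incResInBlowupAmbient^{*}\,\SODembedA_\index(\placeholder)
&=\incResInBlowupAmbient^{*}\,\blowupExcIncAmbient[*]\big(\blowupProjAmbient^{*}(\placeholder)\otimes\cO_{\blowupProjAmbient}(\index-1)\big)\\
&\cong\iota_{*}\,\kappa^{*}\big(\blowupProjAmbient^{*}(\placeholder)\otimes\cO_{\blowupProjAmbient}(\index-1)\big)\\
&\cong\iota_{*}\big(\blowupProjA^{*}(\placeholder)\otimes\cO_{\blowupProjA}(\index-1)\big),
\end{align*}
which is exactly $\gun_\index$ as in Definition~\ref{def.sphA}, whose final functor $\D(\excBlowupSingular)\to\D(\resolution)$ is the derived pushforward $\iota_{*}$. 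This gives $\incResInBlowupAmbient^{*}\comp\SODembedA_\index\cong\gun_\index$, as required.
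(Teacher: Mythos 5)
Your proof is correct, and its overall skeleton is the same as the paper's: rewrite $\gun_\index$ through the commutative triangle over $\blowupLocusA$ and the compatibility $\kappa^{*}\cO_{\blowupProjAmbient}(1)\cong\cO_{\blowupProjA}(1)$, then reduce everything to the base-change isomorphism $\incResInBlowupAmbient^{*}\,\blowupExcIncAmbient[*]\cong\iota_{*}\,\kappa^{*}$ across the fibre square of Lemma~\ref{prop.twoexcs}. Where you genuinely diverge is in how that base change is justified. The paper invokes \cite[Proposition~A.1]{Addington}, whose hypotheses are checked via smoothness and equidimensionality of $\resolutionAmbient$ and $\excBlowupAmbient$, the Cohen--Macaulay (indeed Gorenstein) property of $\resolution$ from Proposition~\ref{prop.smCar2}, and the expected-dimension count for $\excBlowupSingular$. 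You instead prove Tor-independence by hand: since $\resolution$ is an effective Cartier divisor, $\cO_{\resolution}$ has a two-term Koszul resolution, and the codimension count ($\excBlowupSingular$ pure of codimension $2$ versus $\excBlowupAmbient$ pure of codimension $1$) shows the local equation of $\resolution$ restricts to a non-zero-divisor on the reduced scheme $\excBlowupAmbient$, so the restricted Koszul complex still resolves $\cO_{\excBlowupSingular}$. This is more elementary and self-contained --- it sidesteps the Cohen--Macaulay input entirely and makes explicit exactly where smoothness of $\ambient$ enters (namely through $\resolution$ and $\excBlowupAmbient$ being Cartier in $\resolutionAmbient$ and $\excBlowupAmbient$ being reduced), which is consonant with Remark~\ref{rem.smoothness}. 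The only blemish is the phrase \emph{strictly less than the codimension $1$}, which should read that $\excBlowupSingular$ has codimension strictly greater than $1$ (equivalently, dimension strictly less than that of $\excBlowupAmbient$); the conclusion you draw from it --- that $\resolution$ contains no irreducible component of $\excBlowupAmbient$ --- is the correct one and is what the argument needs.
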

\begin{proof} We have a fibre square as follows~\cite[Appendix~B.6.9]{Fulton}.
\begin{equation}\label{eq.twoexcsnotn}
\begin{aligned}
    \begin{tikzpicture}[xscale=\stdscale]
    	\node (excBlowupAmbient) at (1,0) {$\excBlowupAmbient$};
	\node (excBlowupSingular) at (0,0) {$\excBlowupSingular$};
	\node (resolutionAmbient) at (1,-1) {$\resolutionAmbient$};
	\node (resolution) at (0,-1) {$\resolution$};
	\draw [left hook->] (excBlowupSingular) to node[left]{\stdstyle$\incExcInResolution$}(resolution);
	\draw [right hook->] (resolution) to node[above]{\stdstyle $\incResInBlowupAmbient$} (resolutionAmbient);
	\draw [left hook->] (excBlowupAmbient) to node[right]{\stdstyle $\blowupExcIncAmbient$} (resolutionAmbient);
	\draw [right hook->] (excBlowupSingular) to node[above]{\stdstyle $\incExcInExcAmbient$}(excBlowupAmbient);
	
	\end{tikzpicture}
\end{aligned}
\end{equation}
For $\blowupProjAmbient\colon \excBlowupAmbient \to \blowupLocusA$ the projection, we have
\begin{equation}\label{eq.compat}\incExcInExcAmbient^*\cO_\blowupProjAmbient(1) \cong \cO_\blowupProjA(1)
\end{equation}
allowing us to rewrite $\gun_\index$ as follows.
 \begin{equation*}
\gun_\index  =  \incExcInResolution[*] (\blowupProjA^*(\placeholder) \otimes \cO_\blowupProjA(\index-1) )  \cong  \incExcInResolution[*] \incExcInExcAmbient^* ( \blowupProjAmbient^*(\placeholder) \otimes \cO_\blowupProjAmbient(\index-1) )   
\end{equation*}

To conclude, we claim base change for the intersection~\eqref{eq.twoexcsnotn}, namely the following.
\begin{equation}\label{eq.bcadd}
\incResInBlowupAmbient^* \blowupExcIncAmbient[*] \cong \incExcInResolution[*] \incExcInExcAmbient^*
\end{equation}
By Proposition~\ref{prop.smCar}(\ref{prop.smCarY}), $\resolutionAmbient$ and $\excBlowupAmbient$ are smooth equidimensional. Though $\resolution$ may be singular, it is Gorenstein by Proposition~\ref{prop.smCar2}(\ref{prop.smCarB}), and equidimensional by Proposition~\ref{prop.smCar2}(\ref{prop.smCarA}). Then the claimed isomorphism will follow by~\cite[Proposition~A.1]{Addington}. The reference assumes that $\resolution$,  $\excBlowupAmbient$  and $\resolutionAmbient$ are connected, but the argument goes through if they are equidimensional and $\excBlowupSingular$ is of the expected dimension, namely $\dim \resolution + \dim\excBlowupAmbient - \dim \resolutionAmbient$. For us, this is $\dim \resolution - 1$, and $\excBlowupSingular$ is the exceptional divisor of $\resolution$, so we are done.  
\end{proof}

\begin{rem}\label{rem.smoothness}
Assumption~\ref{assm.embed} may be weakened to allow~$\ambient$ smooth in a neighbourhood of~$\singular$. Then $\resolutionAmbient$ is smooth in a neighbourhood of $\resolution$
and of~$\excBlowupAmbient$, and the argument in~\cite[Proposition~A.1]{Addington} suffices to give the base change~\eqref{eq.bcadd} used above. It seems however that some smoothness of~$\ambient$ (giving smoothness of $\resolutionAmbient$) is needed for this base change, see~\cite[after proof of Proposition~A.1]{Addington}.

\end{rem}

\begin{thm}[\optbracket{Theorem~\ref{thm.mainintro}}]\label{thm.mainn1} In the setting of Section~\ref{sect.setting} suppose $n=1$.
Define a functor 
\begin{equation*}
	\begin{tikzpicture}[xscale=\compdiagscale]
		\node (A) at (-0.25,0) {$\gun\colon \D(\blowupLocusA)$};
		\node (B) at (1,0) {$\D(\excBlowupSingular)$};
		\node (C) at (2,0) {$\D(\resolution)$};

		\draw [->] (A) to node[above]{\compstyle $\blowupProjA^*$} (B);
		\draw [->] (B) to (C);
	\end{tikzpicture}
\end{equation*}
by composition, where the last functor is  pushforward. Then:

\begin{enumerate}[label={(\arabic*)},ref={\arabic*}]
\item $\gun$ is spherical.
\item There is an isomorphism
\begin{equation*}
 \tw[\compMap^*] (\placeholder)\otimes \normal{\resolution}  \cong  \tw[\gun]^{-1}(\placeholder)[2]
\end{equation*}
between autoequivalences of $\D(\resolution)$.
\end{enumerate}
\end{thm}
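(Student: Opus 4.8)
The plan is to run the argument of Theorem~\ref{thm.main} in the special case $n = \codim_\singular\blowupLocusA = 1$ and then recognize the abstract spherical functor $\iun$ produced there as the concrete functor $\gun$.

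First I would apply Theorem~\ref{thm.main} directly: under the standing assumptions it gives that $\compMap^*$ is spherical, that $\iun = \incResInBlowupAmbient^*\colon{}^\perp\blowupAmbientMap^*\D(\ambient)\to\D(\resolution)$ is spherical, and that $\tw[\compMap^*](\placeholder)\otimes\normal{\resolution}\cong\tw[\iun]^{-1}(\placeholder)[2]$ as autoequivalences of $\D(\resolution)$. It therefore suffices to produce an isomorphism $\iun\cong\gun$; part~(1) of the theorem is then immediate, and part~(2) follows by substituting $\tw[\iun]\cong\tw[\gun]$ into the displayed isomorphism.

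To identify $\iun$ with $\gun$, I would note that when $n=1$ the semiorthogonal decomposition~\eqref{eq.sods} of $\D(\resolutionAmbient)$ has exactly one copy of $\D(\blowupLocusA)$, so ${}^\perp\blowupAmbientMap^*\D(\ambient) = \SODembedA_1\D(\blowupLocusA)$. Proposition~\ref{cor.resembed} (which rests on the base-change identity of Proposition~\ref{lem.resembed}) then says that the restriction of $\iun$ to this component equals $\gun_1$ of Definition~\ref{def.sphA}. Since $\cO_{\blowupProjA}(\index-1)$ is trivial for $\index=1$, the functor $\gun_1 = \incExcInResolution[*]\big(\blowupProjA^*(\placeholder)\otimes\cO_{\blowupProjA}(0)\big)$ is just $\incExcInResolution[*]\blowupProjA^*$, i.e.\ derived pullback along $\blowupProjA$ followed by derived pushforward along $\excBlowupSingular\hookrightarrow\resolution$ --- which is exactly $\gun$. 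Hence $\iun\cong\gun$ and we are done.

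There is essentially no new obstacle here beyond what was already overcome for Theorem~\ref{thm.main}: the only subtle input is the base-change isomorphism~\eqref{eq.bcadd} behind Proposition~\ref{lem.resembed}, which is where smoothness of $\ambient$ (hence of $\resolutionAmbient$) and Gorenstein equidimensionality of $\resolution$ are used. For completeness one should also observe that when $\blowupLocusA$ happens to be Cartier in $\singular$, so that $\resMap$ is an isomorphism, the statement reduces to the elementary computation recorded in Remark~\ref{rem.thmAdegen}; the argument above covers the remaining cases uniformly.
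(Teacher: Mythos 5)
Your proposal is correct and follows exactly the paper's own proof: the paper likewise deduces the result from Theorem~\ref{thm.main} by noting that for $n=1$ the decomposition of ${}^\perp\blowupAmbientMap^*\D(\ambient)$ has a single component, so Proposition~\ref{cor.resembed} identifies $\iun$ with $\gun_1=\gun$. Nothing further is needed.
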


\begin{proof}
For $n=1$ we have ${}^\perp \blowupAmbientMap^* \D(\ambient) = \SODembedA_1 \D(\blowupLocusA)$, so $\iun$ is isomorphic to~$\gun_1$ by Proposition~\ref{lem.resembed}, and $\gun=\gun_1$ by construction, so the result is 
contained in Theorem~\ref{thm.main}.\end{proof}

For a first example, recall Example~\ref{eg.conifold}, namely a small resolution of a $3$-fold ordinary double point~$\{ac+bd=0\} \cong \{xy+z^2+w^2=0\}$. We also have the following.

\begin{eg} Let $\singular$ be a surface quadric cone $\{xy+z^2=0\}$, and $\blowupLocusA$ a line through the origin. This $\blowupLocusA$ is Cartier except at the origin, and blowup along it yields a minimal resolution of $\singular$. Note that $\excBlowupSingular$ here is not smooth, indeed it is  a nodal curve, the union of the strict transform of $\blowupLocusA$ and a $\P^1$ over the origin in $\singular$.
\end{eg}

\begin{rem} Continuing the pattern above by taking a curve $\{xy=0\}$ yields only degenerate examples. Indeed, we saw in 
Proposition~\ref{prop.smCar}(\ref{prop.smCarZ}) that $n\coloneqq\codim_\singular \blowupLocusA$ must be positive, but blowing up a smooth point gives an isomorphism, whereas blowing up a node $\blowupLocusA$ on a curve is not a contraction (because the exceptional fibre is two points).
\end{rem}

\section{Blowups in non-Cartier divisors} 
\label{sec.eg}
In this section we work in the setting of Theorem~\ref{keythm.blowupB}, where $\resMap$ is the blowup along a divisor. (Note that the divisor is non-Cartier precisely when our contraction~$\resMap$ is not an isomorphism.) Under the assumptions of Theorem~\ref{keythm.blowupB}, $\singular$~has a family of $3$-fold ordinary double points parametrized by a possibly singular and non-reduced~$\blowupLocusB$.

I examine the geometry of~the blowup of $\singular$ along $\blowupLocusA$ and prove the geometric part of Theorem~\ref{keythm.blowupB}, namely part (\ref{keythm.blowupBA}).  The remaining homological part of the proof is then completed in Section~\ref{section.divisor}. Further examples of the geometry are given in Section~\ref{section.Examples}.

\begin{rem} The blowup of $\singular$ along $\blowupLocusA$ may admit a flop: I use a duality which corresponds to this flop in examples, namely the relation between locally free sheaves $\bunsec$ and $\buntot$, see below Definition~\ref{def.dualsheaves} and Example~\ref{eg.conifoldagain}.
\end{rem}

Below I explain in detail the assumption of Theorem~\ref{keythm.blowupB}, part~(\ref{cor.blowupBgeomAssmA}), that the normal cone~$\normalcone{\blowupLocusA}{\singular}$ is cut out of the total space of $\normalof{\blowupLocusA}{\ambient}$ in a particular way. Note first that:

\begin{itemize}
\item $\normalof{\blowupLocusA}{\ambient}$ is a locally free sheaf of rank~$2$ using Proposition~\ref{prop.smCar}(\ref{prop.smCarQ}). 

\item $\normalof{\singular}{\ambient}$ is an invertible sheaf using Proposition~\ref{prop.smCar2}(\ref{prop.smCarA}).
\end{itemize}
Letting $\Tot\normalof{\blowupLocusA}{ \ambient}$ be the total space, and $\bunproj$ its projection to $\exbase$, we make the following.

\begin{assm}\label{assm.cutnormal} Take  a section $\sec$ of the rank~$2$ locally free sheaf \begin{equation*}
\bunsec = \sHom(\normalof{\blowupLocusA}{\ambient},\normalof{\singular}{\ambient}|_\blowupLocusA)
\end{equation*}
on $\blowupLocusA$. Assume that the normal cone $\normalcone{\blowupLocusA}{\singular}$ is cut out of $\Tot \normalof{\blowupLocusA}{\ambient}$ by the canonically induced section $\indsec$ of $\bunproj^* \normalof{\singular}{\ambient}|_\blowupLocusA$, as described in Proposition~\ref{prop.cutnormal}.
\end{assm}

The sheaves appearing in the above will be denoted as follows for readability. 

\begin{defn}\label{def.dualsheaves}
Notate sheaves on $\blowupLocusA$ as follows.
\begin{itemize}
\item $\buntot = \normalof{\blowupLocusA}{ \ambient}$ a  locally free sheaf of rank~$2$
\item $\bunlin=\normalof{\singular}{\ambient}|_\blowupLocusA$ an invertible sheaf
\end{itemize}
By construction we therefore have the following. 
\begin{equation}\label{eq.dualbuns}\bunsec \cong \sHom(\buntot,\bunlin)
\end{equation}
\end{defn}

\begin{prop}\label{prop.cutnormal}
 A section $\sec$ of $\bunsec$ canonically induces a section $\indsec$ of $\bunproj^* \bunlin$, writing the projection $\bunproj\colon\Tot\buntot \to \exbase$. If $\sec$ is regular, then $\indsec$ is also regular.
\end{prop}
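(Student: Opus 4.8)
The plan is to first exhibit the canonical section $\indsec$, and then to deduce its regularity from that of $\sec$ by a purely local computation. For the construction, recall that $\bunproj_*\cO_{\Tot\buntot}\cong\Sym^\bullet\buntot^\vee$, so the degree-one piece furnishes a canonical inclusion $\buntot^\vee=\Sym^1\buntot^\vee\hookrightarrow\bunproj_*\cO_{\Tot\buntot}$. Tensoring with $\bunlin$ and applying the projection formula identifies $\Gamma(\exbase,\bunproj_*\cO_{\Tot\buntot}\otimes\bunlin)$ with $\Gamma(\Tot\buntot,\bunproj^*\bunlin)$, so a section $\sec$ of $\bunsec=\sHom(\buntot,\bunlin)\cong\buntot^\vee\otimes\bunlin$ is sent to a section $\indsec$ of $\bunproj^*\bunlin$. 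Equivalently, $\indsec=(\bunproj^*\sec)(\tau)$, where $\tau\in\Gamma(\Tot\buntot,\bunproj^*\buntot)$ is the tautological section, namely the image of $\id_{\buntot}$ under $\buntot\otimes\buntot^\vee\hookrightarrow\buntot\otimes\Sym^\bullet\buntot^\vee$. By construction $\indsec$ restricts to a $\bunproj^*\bunlin$-valued linear function on each fibre of $\bunproj$, which is the section described in Assumption~\ref{assm.cutnormal}.

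Next I would set up local coordinates. Cover $\exbase$ by connected affine opens $U=\Spec A$ over which $\buntot\cong\cO_U^{\oplus 2}$ and $\bunlin\cong\cO_U$; since $\exbase$ is smooth by Assumption~\ref{assm.blowup}, each such $U$ is integral, so $A$ is a domain and $\bunproj^{-1}(U)=\Spec A[t_1,t_2]$ with $t_1,t_2$ the fibre coordinates dual to the chosen basis of $\buntot$. In this chart $\sec$ corresponds to a pair $(s_1,s_2)\in A^2$ and, by the description above, $\indsec$ to $s_1t_1+s_2t_2$. Regularity of the line-bundle section $\indsec$ means precisely that it is a nonzerodivisor in $\cO_{\Tot\buntot}$, a condition that can be checked locally, so it suffices to show that $s_1t_1+s_2t_2$ is a nonzerodivisor in $A[t_1,t_2]$.

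Finally I would carry out this ring computation. Regularity of $\sec$ forces its zero scheme to have codimension $2$ in $\exbase$, so $(s_1,s_2)\neq(0,0)$ in $A$; as $A$ is a domain we may assume $s_1$ is a nonzerodivisor. If $(s_1t_1+s_2t_2)g=0$ with $g\in A[t_1,t_2]$, then grading by degree in $t_1,t_2$ lets us take $g$ homogeneous, $g=\sum_{i=0}^d a_it_1^it_2^{d-i}$, and comparing coefficients of $t_1^pt_2^{d+1-p}$ yields $s_1a_{p-1}+s_2a_p=0$ for all $p$, with the convention $a_{-1}=a_{d+1}=0$. Taking $p=d+1$ gives $s_1a_d=0$, hence $a_d=0$; descending in $p$, the relation $s_1a_{p-1}=-s_2a_p=0$ then forces $a_{p-1}=0$ at each stage, so $g=0$. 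Thus $s_1t_1+s_2t_2$ is a nonzerodivisor, and $\indsec$ is regular.

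There is no serious obstacle here; the one point to pin down carefully is the convention for $\Tot$, $\Sym$, and duals, so that the tautological-section description and the local formula $\indsec=s_1t_1+s_2t_2$ are unambiguous. It is also worth remarking that regularity of $\sec$ enters only to guarantee that some $s_i$ is locally a nonzerodivisor; given that, the graded computation already shows $\indsec$ is automatically a nonzerodivisor, so the hypothesis is in fact slightly stronger than what is used.
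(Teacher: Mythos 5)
Your proof is correct and follows essentially the same route as the paper: $\indsec$ is obtained by evaluating $\bunproj^*\sec$ on the tautological section of $\bunproj^*\buntot$, and regularity is then checked in local coordinates. The only difference is that your graded nonzerodivisor computation is more explicit than strictly needed --- once $A$ is a domain, so is $A[t_1,t_2]$, and it suffices (as the paper observes) that the local expression $s_1t_1+s_2t_2$ is non-zero.
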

\begin{proof} From \eqref{eq.dualbuns} we have
$\bunproj^* \bunsec \cong \sHom(\bunproj^* \buntot,\bunproj^* \bunlin)$
and $\bunproj^* \buntot$ has a tautological section, hence the section $\bunproj^* \sec$ of $\bunproj^* \bunsec$ induces a section~$\indsec$ of~$\bunproj^* \bunlin$ as required. 

If $\sec$ is regular, then $\indsec$ is regular by a local calculation as follows. Recalling that $\blowupLocusA$ is smooth, take local coordinates $\blowupLocusAcoord$, and fibre coordinates $\buntotcoord=(\buntotcoord_1,\buntotcoord_2)$ for $\Tot \buntot$. Then locally we have a section $\sec(\blowupLocusAcoord)$ of $\bunsec$, and may choose a trivialization of $\bunsec$ so that $\sec=(\sec_1,\sec_2)$ and  $\indsec=\sec\cdot \buntotcoord$ with the standard inner product. The claim is then that $\indsec$ is non-zero, which follows as the $\sec_i$ are non-zero by regularity.
\end{proof}

The following appears in the statement of Theorem~\ref{keythm.blowupB}.

\begin{defn}\label{defn.blowupLocusB} Let $\blowupLocusB=\zeroes{\sec}  \subset \blowupLocusA$
be the zeroes of the section $\sec$ from Assumption~\ref{assm.cutnormal}.
\end{defn}

If $\sec$ is regular as in Theorem~\ref{keythm.blowupB}, assumption~(\ref{cor.blowupBgeomAssmB}), then $\blowupLocusB$ is codimension~$2$ in~$ \blowupLocusA$. 

\begin{eg}\label{eg.conifoldagain} 

Take $\singular=\{ac+bd=0\}$ in~$\ambient = \mathbbm{k}^4$, and $\blowupLocusA = \{c,d=0\} \cong \mathbbm{k}^2$ with coordinates $(a,b)$. The bundle $\buntot=\normalof{\blowupLocusA}{\ambient}$ has fibre coordinates $(c,d)$, and $\singular$ is cut out of~$\ambient$ by a function so $\bunlin$ is trivial, and hence $\bunsec=\buntot^\vee$. Taking dual fibre coordinates for $\bunsec$, the regular section $\sec = (a,b)$  induces the section $\indsec=ac+bd$ of the trivial bundle~$\bunproj^* \bunlin$, which clearly cuts~$\normalcone{\blowupLocusA}{\singular}$ out of $\Tot\normalof{\blowupLocusA}{\ambient}$. Furthermore $\blowupLocusB =\zeroes{\sec}= \{ 0 \}$, the singular point of~$\singular$. 

Blowing up $\singular$ along $\blowupLocusA$ gives a small resolution $\resolution$. The duality between $\bunsec$ and $\buntot$ corresponds to the flop of this~$\resolution$, as follows. Identifying $\ambient$ with $\Tot\buntot$, the flop is given by blowing up $\singular$ along $\bunproj^{-1} \blowupLocusB$. Then $\bunproj^{-1} \blowupLocusB$ and $\blowupLocusA$ naturally arise as the zeroes of sections of~$\bunproj^*\bunsec$ and~$\bunproj^*\buntot$ respectively, namely $\bunproj^*\sec$ and the tautological section.
\end{eg}

\begin{prop}\label{prop.singloc} In the setting of Theorem~\ref{keythm.blowupB}, for the singular locus \,$\singLocus$ of $\singular$ we have:
\begin{equation*}
\singLocus \cap \blowupLocusA =\blowupLocusB
\end{equation*}
This may be interpreted as an equality of subschemes of $\singular$.
\end{prop}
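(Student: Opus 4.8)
The statement is local on $\blowupLocusA$, and --- exactly as in the proof of Proposition~\ref{lem.ideals} --- one reduces to the case that $\blowupLocusA$ is connected, hence integral since it is smooth, by working near each connected component disjoint from the others. The plan is then a coordinate computation in the spirit of the proof of Proposition~\ref{prop.cutnormal}: near a point of $\blowupLocusA$, choose local coordinates $\blowupLocusAcoord$ restricting to coordinates on $\blowupLocusA$ together with functions $\buntotcoord_1,\buntotcoord_2$ generating the ideal of $\blowupLocusA$ in $\ambient$ (possible since $\blowupLocusA$ is regularly embedded of codimension~$2$ in smooth $\ambient$ by Proposition~\ref{prop.smCar}(\ref{prop.smCarQ})); these $\buntotcoord_i$ also serve as fibre coordinates identifying $\Tot\normalof{\blowupLocusA}{\ambient}$ locally with $\Spec\cO_\blowupLocusA[\buntotcoord_1,\buntotcoord_2]$, and I would trivialize $\bunlin$ as in the proof of Proposition~\ref{prop.cutnormal} so that $\sec=(\sec_1,\sec_2)$ and $\indsec=\sec_1\buntotcoord_1+\sec_2\buntotcoord_2$.

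The key point is that Assumption~\ref{assm.cutnormal} pins down the \quoted{linear part} of a local equation $F$ of $\singular$ along $\blowupLocusA$. Writing $F=\buntotcoord_1 h_1+\buntotcoord_2 h_2$ (possible since $\blowupLocusA\subset\singular$) and $\bar h_i=h_i|_\blowupLocusA\in\cO_\blowupLocusA$, a standard computation of the associated graded ring of $\cO_\singular$ identifies $\normalcone{\blowupLocusA}{\singular}$ inside $\Spec\cO_\blowupLocusA[\buntotcoord_1,\buntotcoord_2]=\Tot\normalof{\blowupLocusA}{\ambient}$ with the vanishing locus of the leading $(\buntotcoord_1,\buntotcoord_2)$-adic form of $F$; here one uses that $\cO_\blowupLocusA$, hence $\cO_\blowupLocusA[\buntotcoord_1,\buntotcoord_2]$, is a domain, so that this leading form is a nonzerodivisor and the initial ideal of $(F)$ is the principal ideal it generates. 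Comparing with Assumption~\ref{assm.cutnormal}, which cuts out the same normal cone by the nonzero linear section $\indsec=\sec_1\buntotcoord_1+\sec_2\buntotcoord_2$ (nonzero since $\sec$ is a regular, in particular nonzero, section), and using that associates in the domain $\cO_\blowupLocusA[\buntotcoord_1,\buntotcoord_2]$ differ by a unit of $\cO_\blowupLocusA$, I would conclude that the leading form of $F$ has degree one, namely $\buntotcoord_1\bar h_1+\buntotcoord_2\bar h_2$ with $(\bar h_1,\bar h_2)\neq(0,0)$, and that $\bar h_i=u\sec_i$ for a unit $u\in\cO_\blowupLocusA^\times$. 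Hence $(\bar h_1,\bar h_2)=(\sec_1,\sec_2)$ as ideals of $\cO_\blowupLocusA$, which by Definition~\ref{defn.blowupLocusB} is the ideal of $\blowupLocusB$ in $\blowupLocusA$.

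It then remains to compute $\singLocus\cap\blowupLocusA$. The singular locus of the hypersurface $\singular$ in smooth $\ambient$, with its natural (Jacobian) scheme structure, is cut out in $\singular$ by the partial derivatives of $F$ with respect to the coordinates $\blowupLocusAcoord_i,\buntotcoord_1,\buntotcoord_2$. Restricting to $\blowupLocusA=\zeroes{\buntotcoord_1,\buntotcoord_2}$, the derivatives along $\blowupLocusAcoord$ lie in $(\buntotcoord_1,\buntotcoord_2)$ and so restrict to $0$, while $\partial F/\partial\buntotcoord_j$ restricts to $\bar h_j$. Thus $\singLocus\cap\blowupLocusA$ is cut out in $\blowupLocusA$ by $(\bar h_1,\bar h_2)=(\sec_1,\sec_2)$, so it equals $\blowupLocusB$ as a subscheme of $\blowupLocusA$, and therefore of $\singular$. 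Patching over the connected components of $\blowupLocusA$ finishes the argument.

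I expect the main obstacle to be the first identification --- reading off the \emph{scheme} structure of $\normalcone{\blowupLocusA}{\singular}$, equivalently the full initial ideal of $(F)$, from the single generator $F$. This is precisely where integrality of $\blowupLocusA$ is needed (to control nonzerodivisors and units in $\cO_\blowupLocusA[\buntotcoord_1,\buntotcoord_2]$), and where the content of Assumption~\ref{assm.cutnormal} --- that $\indsec$ is fibrewise linear and nonzero --- really enters; once $\sec$ is matched with $(\bar h_1,\bar h_2)$, the remaining identification of $\singLocus\cap\blowupLocusA$ is routine bookkeeping with the Jacobian ideal.
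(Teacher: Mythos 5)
Your proof is correct, and it takes a route that is close in spirit to the paper's but differs in one substantive way. The paper's own proof also works in the local coordinates $(\blowupLocusAcoord,\buntotcoord)$ of Proposition~\ref{prop.cutnormal}, but it applies the Jacobian criterion directly to $\indsec$ on $\Tot\normalof{\blowupLocusA}{\ambient}$ --- that is, it computes the singular locus of the normal cone $\normalcone{\blowupLocusA}{\singular}$ and restricts to the zero section, tacitly identifying this with $\singLocus\cap\blowupLocusA$. That identification is immediate in the local model of Setting~\ref{setn.local}, where $\singular$ literally coincides with its normal cone, but in the general setting of Theorem~\ref{keythm.blowupB} it is exactly the point your argument supplies: by writing an honest local equation $F=\buntotcoord_1h_1+\buntotcoord_2h_2$ of $\singular$ in $\ambient$ and using that $\cO_\blowupLocusA[\buntotcoord_1,\buntotcoord_2]$ is a domain, you show the initial ideal of $(F)$ is principal on its initial form, match that form against $\indsec$ to conclude $(\bar h_1,\bar h_2)=(\sec_1,\sec_2)$ up to a unit, and only then run the Jacobian computation on $F$ itself. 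So your proof is somewhat longer but closes a gap the paper leaves implicit, while the paper's is shorter at the cost of conflating $\singular$ with its normal cone along $\blowupLocusA$. The reduction to connected (hence integral) $\blowupLocusA$, which you need for the nonzerodivisor and unit arguments, is handled correctly and mirrors the same move in the proof of Proposition~\ref{lem.ideals}.
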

\begin{proof} As in the proof of Proposition~\ref{prop.cutnormal} we work locally on $\blowupLocusA$. As there, take local coordinates $(\blowupLocusAcoord,\buntotcoord)$ for  $\Tot \buntot = \Tot \normalof{\blowupLocusA}{ \ambient}$ such that $\indsec=\sec(\blowupLocusAcoord)\cdot \buntotcoord$. Then to check the Jacobian criterion we may calculate as follows.
\beq \frac{d\indsec}{d(\blowupLocusAcoord,\buntotcoord)}= \frac{d(\sec\cdot \buntotcoord)}{d(\blowupLocusAcoord,\buntotcoord)}=\left(\frac{d\sec}{d\blowupLocusAcoord}\cdot \buntotcoord,\sec\cdot \frac{d\buntotcoord}{d\buntotcoord}\right)=\left(\frac{d\sec}{d\blowupLocusAcoord}\cdot \buntotcoord,\sec\right)
\eeq
The singular locus of the normal cone $\normalcone{\blowupLocusA}{\singular}$ is then locally cut out by these two functions and $\indsec$, therefore $\singLocus \cap \blowupLocusA$ is given by further cutting by~$\buntotcoord$. We find that $\singLocus \cap \blowupLocusA$ is locally cut out of $\Tot \normalof{\blowupLocusA}{ \ambient}$ by $\sec$ and $\buntotcoord$, giving~$\blowupLocusB$ scheme-theoretically, as required.
\end{proof}

This yields the following dichotomy.

\begin{prop}\label{prop.degen} In the setting of Theorem~\ref{keythm.blowupB}, we have:
\begin{enumerate}
\item\label{prop.degenA} If $\dim \singular \leq 2$ then $\blowupLocusB$ is empty.
\item\label{prop.degenB} If $\dim \singular \geq 3$
then $\singular$ is singular.
\end{enumerate}
\end{prop}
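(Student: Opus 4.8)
The plan is to deduce both parts from the scheme-theoretic identity $\singLocus\cap\blowupLocusA=\blowupLocusB$ of Proposition~\ref{prop.singloc}, together with a dimension count for~$\blowupLocusB$.

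First I would pin down the codimension of $\blowupLocusB$. By Theorem~\ref{keythm.blowupB}(\ref{cor.blowupBgeomAssmB}) the section $\sec$ is regular, so its zero scheme $\blowupLocusB=\zeroes{\sec}$ in the smooth scheme $\blowupLocusA$ is either empty or a local complete intersection of pure codimension~$2$ (the Koszul complex of $\sec$ resolves $\cO_{\blowupLocusB}$). Since $\codim_\singular\blowupLocusA=1$, additivity of codimension on the catenary scheme $\singular$ shows that $\blowupLocusB$ is either empty or of pure codimension~$3$ in $\singular$, as already noted after Theorem~\ref{keythm.blowupB}. Part~(\ref{prop.degenA}) is then immediate: when $\dim\singular\le 2$ there is no nonempty closed subscheme of codimension~$3$, so $\blowupLocusB=\emptyset$.

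For part~(\ref{prop.degenB}), Proposition~\ref{prop.singloc} says that $\singular$ is singular precisely along $\blowupLocusB$, so it suffices to see that $\blowupLocusB$ is nonempty once $\dim\singular\ge 3$. Suppose instead $\blowupLocusB=\zeroes{\sec}=\emptyset$, i.e.\ $\sec$ is nowhere vanishing. Then the induced equation $\indsec$ of Assumption~\ref{assm.cutnormal} is fibrewise a nonzero linear form, so it cuts $\normalcone{\blowupLocusA}{\singular}$ out of the rank~$2$ bundle $\normalof{\blowupLocusA}{\ambient}$ as a rank-one subbundle. Hence $\normalcone{\blowupLocusA}{\singular}$ is a vector bundle of rank~$1$ over $\blowupLocusA$, so $\blowupLocusA$ is regularly embedded of codimension~$1$ in $\singular$, i.e.\ an effective Cartier divisor; by Assumption~\ref{assm.blowup} and the universal property of blowup, $\resMap$ is then an isomorphism. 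This is precisely the degenerate situation set aside at the start of Section~\ref{sec.eg} ($\blowupLocusA$ is non-Cartier exactly when $\resMap$ is not an isomorphism), so in our setting $\blowupLocusB\neq\emptyset$, and $\singular$ is singular along it.

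The delicate point is this last exclusion of $\blowupLocusB=\emptyset$ in~(\ref{prop.degenB}): the remainder is pure codimension bookkeeping, but a regular section of a rank-$2$ bundle on a scheme of dimension $\ge 2$ may be nowhere vanishing, so one must invoke the non-triviality of the contraction $\resMap$ (equivalently, the non-Cartier hypothesis on $\blowupLocusA$ in force throughout Section~\ref{sec.eg}) to rule this out. Granting that, both parts follow at once from Proposition~\ref{prop.singloc} and the codimension of $\blowupLocusB$.
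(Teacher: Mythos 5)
Part~(\ref{prop.degenA}) and the overall skeleton of your argument --- the codimension count for $\blowupLocusB$ combined with Proposition~\ref{prop.singloc} --- are exactly what the paper does; its proof is a three-line version of your first two paragraphs. The only genuine content beyond bookkeeping is the non-emptiness of $\blowupLocusB$ in part~(\ref{prop.degenB}), and you have correctly identified this as the delicate point. The paper obtains it by fiat: it reads ``$\blowupLocusB$ has constant codimension $3$ in $\singular$'' as entailing that $\blowupLocusB$ is non-empty once $\dim\singular\geq 3$, i.e.\ it implicitly takes regularity of $\sec$ to mean that $\zeroes{\sec}$ is non-empty of pure codimension~$2$ whenever the dimension permits. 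You instead try to prove non-emptiness by contradiction, and this is where your argument has a gap.

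The degenerate case in which $\sec$ is nowhere vanishing, $\blowupLocusA$ is Cartier in $\singular$ and $\resMap$ is an isomorphism is \emph{not} excluded from the setting of Theorem~\ref{keythm.blowupB}: the parenthetical at the start of Section~\ref{sec.eg} is an observation, not a hypothesis, and the paper explicitly treats this degenerate case of the theorem (Proposition~\ref{prop.thmAdegen}, Remark~\ref{rem.thmAdegen}, Remark~\ref{rem.degthmC}, Examples~\ref{eg.line} and~\ref{eg.conifoldsub}). What your contradiction argument actually establishes is a dichotomy: either $\blowupLocusB\neq\emptyset$ and $\singular$ is singular along it, or $\sec$ is nowhere vanishing, $\normalcone{\blowupLocusA}{\singular}$ is a line subbundle of $\normalof{\blowupLocusA}{\ambient}$ and $\resMap$ is an isomorphism --- and the second branch is not a contradiction, even for $\dim\singular\geq 3$. (A smooth hypersurface $\singular\subset\ambient$ containing a smooth divisor $\blowupLocusA$ cut out of $\singular$ by a further hypersurface of $\ambient$ satisfies Assumption~\ref{assm.cutnormal} with $\sec$ the nowhere-vanishing quotient map $\normalof{\blowupLocusA}{\ambient}\to\normalof{\singular}{\ambient}|_\blowupLocusA$, and then $\resMap$ is an isomorphism and $\singular$ may well be smooth.) So to finish you must either adopt the convention that a regular section of a rank~$2$ sheaf has non-empty zero locus of pure codimension~$2$ --- which is in effect how the paper's own proof concludes --- or impose non-degeneracy of $\resMap$ as an explicit extra hypothesis; it does not follow from anything stated in Section~\ref{sec.eg}.
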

\begin{proof} The subscheme $\blowupLocusB$ has  constant codimension $2$ in $\blowupLocusA$, thence constant codimension $3$ in $\singular$. So if $\dim \singular \geq 3$ then $\blowupLocusB$ is non-empty, giving (\ref{prop.degenB}) using Proposition~\ref{prop.singloc}.
\end{proof}

The following is a duality property of $\bunsec$ and $\buntot$ related as in \eqref{eq.dualbuns}.

\begin{lem}\label{lem.equiv} For $\bunsec$ and $\buntot$ locally free sheaves of finite rank, and an invertible sheaf $\bunlin$, all on the same space, we have the following.
\begin{align*}
\bunsec \cong \sHom(\buntot,\bunlin) \quad & \Longleftrightarrow \quad \buntot \cong \sHom(\bunsec,\bunlin)
\end{align*}
If furthermore $\bunsec$ and $\buntot$ have rank~$2$, then the sheaves below are dual.
\begin{equation*}
\sHom(\det \bunsec,\bunlin) \qquad  \qquad \sHom(\det \buntot,\bunlin)
\end{equation*}
\end{lem}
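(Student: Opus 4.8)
The plan is to deduce everything from the single observation that, since $\bunlin$ is invertible, the contravariant functor $\sHom(\placeholder,\bunlin)$ is an involution on locally free sheaves of finite rank. First I would note the canonical identification $\sHom(\cE,\bunlin)\cong\cE^\vee\otimes\bunlin$ for $\cE$ locally free of finite rank, and hence
\[
\sHom\big(\sHom(\cE,\bunlin),\bunlin\big)\;\cong\;(\cE^\vee\otimes\bunlin)^\vee\otimes\bunlin\;\cong\;\cE^{\vee\vee}\;\cong\;\cE,
\]
using biduality for $\cE$ together with $\bunlin^\vee\otimes\bunlin\cong\cO$. Applying $\sHom(\placeholder,\bunlin)$ to an isomorphism $\bunsec\cong\sHom(\buntot,\bunlin)$ and invoking this double-duality then gives $\sHom(\bunsec,\bunlin)\cong\buntot$. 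Since the asserted equivalence is symmetric in $\bunsec$ and $\buntot$, the reverse implication follows by exchanging their roles, which settles the first claim.

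For the rank~$2$ statement I would pass to determinants. Recall that for a locally free sheaf $\cE$ of rank~$r$ and an invertible sheaf $\cM$ one has $\det(\cE\otimes\cM)\cong\det\cE\otimes\cM^{\otimes r}$. Taking $r=2$ and $\cM=\bunlin$ in the identity $\bunsec\cong\buntot^\vee\otimes\bunlin$ supplied by the first part gives
\[
\det\bunsec\;\cong\;(\det\buntot)^\vee\otimes\bunlin^{\otimes 2},
\]
so that $\sHom(\det\bunsec,\bunlin)\cong(\det\bunsec)^\vee\otimes\bunlin\cong\det\buntot\otimes\bunlin^\vee$, whereas $\sHom(\det\buntot,\bunlin)\cong(\det\buntot)^\vee\otimes\bunlin$. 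Tensoring these two invertible sheaves together yields $\cO$, so they are mutually dual; equivalently their total spaces are dual line bundles over $\blowupLocusA$, as claimed.

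The argument is purely formal, so I do not expect a real obstacle; the one point deserving attention is that the rank~$2$ hypothesis is used precisely through the exponent in $\det(\cE\otimes\cM)\cong\det\cE\otimes\cM^{\otimes r}$ --- it is this factor of~$2$ that makes the two determinant line bundles dual to one another rather than merely isomorphic.
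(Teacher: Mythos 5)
Your argument is correct and is essentially the same formal computation as the paper's: the first claim via $\sHom(\placeholder,\bunlin)$ being an involution on locally free sheaves (which the paper dismisses as ``clear''), and the second via the identity $\det(\cE\otimes\cM)\cong\det\cE\otimes\cM^{\otimes 2}$ in rank~$2$, which the paper packages as a short chain of $\sHom$ isomorphisms but which amounts to exactly your computation $\det\bunsec\cong(\det\buntot)^\vee\otimes\bunlin^{\otimes 2}$. Your explicit check that the two line bundles tensor to $\cO$ matches the paper's verification that the dual of one is the other.
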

\begin{proof} The first is clear. For the second, $\sHom(\det \bunsec,\bunlin)$ is dual to
\begin{equation*}
\sHom(\bunlin,\det \bunsec)  \cong \sHom(\bunlin^{\otimes 2},\det \bunsec) \otimes \bunlin  \cong \sHom ( \det \buntot , \bunlin)  
\end{equation*}
giving the result.\end{proof}

For Theorem~\ref{keythm.blowupB}(\ref{keythm.blowupBA}), we want to show that $\blowupProjA\colon \excBlowupSingular\to\blowupLocusA$ is the blowup of  $\blowupLocusA$ along the codimension~$2$ locus $\blowupLocusB$, which is cut out by a section of a rank~$2$ locally free sheaf $\bunsec$. The following gives a standard description of such blowups.

\begin{lem}\label{lem.projbunblow} For $ \bunsec$ a rank~$2$ locally free sheaf on $ \exbase$, write $\bunsecproj\colon \mathbbm{P} \bunsec \to \exbase$ for the associated projective bundle. Then we have that:

\begin{enumerate}
\item\label{lem.projbunblowX}
There is an isomorphism
\begin{equation*}
\bunsecproj_*(\cO_\bunsecproj(1) \otimes \bunsecproj^* \det \bunsec) \cong \bunsec^\vee \otimes \det \bunsec \cong \bunsec 
\end{equation*}
and thence a section~$\sec$ of $ \bunsec$ induces a section~$\blowsec$ of $\cO_\bunsecproj(1) \otimes \bunsecproj^* \det \bunsec$
on $ \mathbbm{P} \bunsec$. 
\end{enumerate}

\noindent Assuming furthermore that $\sec$ is regular:

\begin{enumerate}[resume*]
\item\label{lem.projbunblowA} $\zeroes{\blowsec} \cong \Bl_{\zeroes{\sec}}\blowupLocusA$.
\item\label{lem.projbunblowB} Restricting $\bunsecproj$ to $\zeroes{\blowsec}$ gives the blowup morphism.
\end{enumerate}
\end{lem}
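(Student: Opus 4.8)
The plan for part~(\ref{lem.projbunblowX}) is a short formal computation. Combining the standard identification $\bunsecproj_*\cO_\bunsecproj(1)\cong\bunsec^\vee$ (so that $\mathbbm{P}\bunsec$ parametrises lines in the fibres of $\bunsec$, as forced by the first displayed isomorphism) with the projection formula gives $\bunsecproj_*\big(\cO_\bunsecproj(1)\otimes\bunsecproj^*\det\bunsec\big)\cong\bunsec^\vee\otimes\det\bunsec$, and the rank-$2$ identity $\bunsec^\vee\cong\bunsec\otimes(\det\bunsec)^{-1}$ — valid for any rank-$2$ locally free sheaf — turns this into $\bunsec$. Applying $H^0(\exbase,-)$ and the adjunction $\bunsecproj^*\dashv\bunsecproj_*$ yields
\[
H^0(\exbase,\bunsec)\;\cong\;H^0\!\big(\mathbbm{P}\bunsec,\,\cO_\bunsecproj(1)\otimes\bunsecproj^*\det\bunsec\big),
\]
so $\sec$ determines a section $\blowsec$; concretely, under the tautological surjection $\bunsecproj^*\bunsec\twoheadrightarrow\cO_\bunsecproj(1)\otimes\bunsecproj^*\det\bunsec$ the section $\blowsec$ is the image of $\bunsecproj^*\sec\colon\cO\to\bunsecproj^*\bunsec$. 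No regularity of $\sec$ is needed here.

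For parts~(\ref{lem.projbunblowA}) and~(\ref{lem.projbunblowB}) the conceptual picture is that $\zeroes{\blowsec}$ is the incidence locus of pairs (a point of $\blowupLocusA$; a line in the fibre of $\bunsec$ there containing the value of $\sec$): over $\blowupLocusA\setminus\blowupLocusB$ this line is unique, so $\bunsecproj$ restricts to an isomorphism there, while over $\blowupLocusB=\zeroes{\sec}$ the fibre is all of $\mathbbm{P}^1$. To make this scheme-theoretic I would work Zariski-locally on $\exbase$, where $\mathbbm{P}\bunsec$, $\blowsec$, $\blowupLocusB$ and $\Bl_{\blowupLocusB}\blowupLocusA$ all commute with restriction. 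Over an open $U$ trivialising $\bunsec$, write $\bunsec|_U\cong\cO_U^{\oplus 2}$ and $\sec=(\sec_1,\sec_2)$, a regular sequence cutting out $\blowupLocusB\cap U$; then $\mathbbm{P}\bunsec|_U\cong\operatorname{Proj}\cO_U[x_1,x_2]$ and, by part~(\ref{lem.projbunblowX}), $\blowsec$ is the linear form $\sec_2 x_1-\sec_1 x_2$. On the other hand $\Bl_{\blowupLocusB\cap U}U=\operatorname{Proj}_U\big(\bigoplus_{d\ge 0}(\sec_1,\sec_2)^d\big)$, and the classical description of the Rees algebra of a length-$2$ regular sequence identifies $\bigoplus_d(\sec_1,\sec_2)^d$ with $\cO_U[x_1,x_2]/(\sec_2 x_1-\sec_1 x_2)$, the generators $x_i$ mapping to $\sec_i$ in degree one. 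Hence both $\zeroes{\blowsec}|_U$ and $\Bl_{\blowupLocusB\cap U}U$ are the closed subscheme $\{\sec_2 x_1-\sec_1 x_2=0\}$ of $\mathbbm{P}\bunsec|_U$, compatibly with the projection to $U$, giving~(\ref{lem.projbunblowA}) and~(\ref{lem.projbunblowB}) locally.

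The gluing is then essentially automatic, because both subschemes are defined intrinsically: $\Bl_{\blowupLocusB}\blowupLocusA$ embeds canonically into $\mathbbm{P}\bunsec=\operatorname{Proj}\big(\Sym(\bunsec^\vee)\big)$ via the surjection $\Sym(\bunsec^\vee)\twoheadrightarrow\bigoplus_d\ideal_{\blowupLocusB}^d$ induced by pairing $\bunsec^\vee$ with $\sec$, and $\zeroes{\blowsec}$ is likewise a well-defined closed subscheme of $\mathbbm{P}\bunsec$; the local computation shows these two subschemes coincide, so they coincide globally, and $\bunsecproj$ restricts to the blowup morphism. The only genuinely substantive input is the classical Rees-algebra description used in the local step, which I would either cite or verify by checking that $\cO_U[x_1,x_2]/(\sec_2 x_1-\sec_1 x_2)$ is locally free of the expected rank in each graded degree (using that $\sec_1,\sec_2$ is regular), which forces the natural surjection onto the Rees algebra to be an isomorphism. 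I expect the main — and fairly mild — obstacle to be keeping the sign and twist conventions for $\cO_\bunsecproj(1)$, $\blowsec$ and $\det\bunsec$ coherent between part~(\ref{lem.projbunblowX}) and parts~(\ref{lem.projbunblowA})–(\ref{lem.projbunblowB}).
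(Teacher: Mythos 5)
Your proposal is correct and follows essentially the same route as the paper: part~(1) via the projection formula and the rank-$2$ identity $\bunsec^\vee\otimes\det\bunsec\cong\bunsec$, and parts~(2)--(3) by trivializing $\bunsec$ locally and identifying $\Bl_{\zeroes{\sec}}\blowupLocusA$ with the vanishing of the linear form $\bunseccoord_1\bunsecproj^*\sec_2-\bunseccoord_2\bunsecproj^*\sec_1$. The only difference is that you spell out the Rees-algebra justification for the local identification and the canonical embedding $\Bl_{\blowupLocusB}\blowupLocusA\into\mathbbm{P}\bunsec$ used to globalize, both of which the paper leaves implicit.
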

\begin{proof} For (\opt{ams}{\ref{lem.projbunblowX}}\opt{comp}{i}), the first isomorphism uses the projection formula, and the second follows using that $\bunsec$ is rank~2. Then (\opt{ams}{\ref{lem.projbunblowA}}\opt{comp}{ii}) and (\opt{ams}{\ref{lem.projbunblowB}}\opt{comp}{iii}) follow by construction of the blowup.
\end{proof}

The following  will be used to compare the blowup of $\blowupLocusB$ in $\blowupLocusA$ with the blowup of $\blowupLocusA$  in $\singular$.

\begin{prop}\label{lem.projbun} Let $\bunsec$ and $\buntot$ be rank~$2$ locally free sheaves on $\exbase$, and $\bunlin$~an invertible sheaf on $\exbase$ such that $\bunsec \cong \sHom(\buntot,\bunlin).$ Then:
\begin{enumerate}
\item\label{lem.projbunA} There is an isomorphism $\mathbbm{P} \bunsec\cong \mathbbm{P}
 \buntot$ on $\exbase$ coming from the following.
\begin{align*}
\bunsec & \cong  \buntot\otimes \sHom(\det \buntot,\bunlin) \end{align*}

\item\label{lem.projbunB} Writing $\bunsecproj\colon \mathbbm{P} \bunsec \to \exbase$ and $\buntotproj\colon \mathbbm{P} \buntot \to \exbase$, we have corresponding invertible sheaves under the isomorphism of~{(\ref{lem.projbunA})} as follows.
\begin{equation*}
\cO_\bunsecproj(1) \otimes \blowupProjLocal^* \sHom(\det \buntot,\bunlin) \qquad\longleftrightarrow\qquad \cO_{\buntotproj}(1) 
\end{equation*}

\item\label{lem.projbunC} We have further corresponding sheaves as follows.
\begin{equation*}
\cO_\bunsecproj(1) \otimes \bunsecproj^* \det \bunsec \qquad\longleftrightarrow\qquad \cO_{\buntotproj}(1)\otimes \buntotproj^* \bunlin\end{equation*}
\end{enumerate}
\end{prop}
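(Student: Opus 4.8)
The plan is to reduce all three parts to the standard fact that projectivization is unchanged by twisting the bundle by an invertible sheaf, together with some bookkeeping of determinants. I will use throughout the convention of Lemma~\ref{lem.projbunblow}, under which $\mathbbm{P}\cE$ is the bundle of lines in the fibres of~$\cE$ and $\bunsecproj_*\cO_\bunsecproj(1)\cong\bunsec^\vee$ for $\bunsec$ of rank~$2$. Concretely, for a vector bundle $\cE$ and an invertible sheaf $\cM$ on $\exbase$ there is a canonical isomorphism $\mathbbm{P}(\cE\otimes\cM)\cong\mathbbm{P}\cE$ over $\exbase$, compatible with the projections, under which $\cO_{\mathbbm{P}(\cE\otimes\cM)}(-1)$ corresponds to $\cO_{\mathbbm{P}\cE}(-1)$ tensored with the pullback of~$\cM$, and hence $\cO_{\mathbbm{P}(\cE\otimes\cM)}(1)$ corresponds to $\cO_{\mathbbm{P}\cE}(1)$ tensored with the pullback of~$\cM^\vee$ to $\exbase$.

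For part~(\ref{lem.projbunA}) I would first rewrite the hypothesis. Since $\buntot$ has rank~$2$ we have $\buntot^\vee\cong\buntot\otimes\sHom(\det\buntot,\cO_\exbase)$, so
\[
\bunsec\cong\sHom(\buntot,\bunlin)\cong\buntot^\vee\otimes\bunlin\cong\buntot\otimes\sHom(\det\buntot,\bunlin),
\]
which is the displayed isomorphism. Writing $\cM=\sHom(\det\buntot,\bunlin)$ and applying the standard fact with $\cE=\buntot$ gives $\mathbbm{P}\bunsec\cong\mathbbm{P}(\buntot\otimes\cM)\cong\mathbbm{P}\buntot$ over $\exbase$, with $\bunsecproj$ identified with $\buntotproj$; this is part~(\ref{lem.projbunA}). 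Part~(\ref{lem.projbunB}) is then immediate: the same application identifies $\cO_\bunsecproj(1)$ with $\cO_{\buntotproj}(1)\otimes\buntotproj^*\cM^\vee$, so tensoring by the pullback of $\cM=\sHom(\det\buntot,\bunlin)$ — which is $\blowupProjLocal^*\sHom(\det\buntot,\bunlin)$ on the $\mathbbm{P}\bunsec$ side and $\buntotproj^*\cM$ on the $\mathbbm{P}\buntot$ side, these agreeing under the identification of projections — gives exactly the correspondence claimed in~(\ref{lem.projbunB}).

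Finally, for part~(\ref{lem.projbunC}) I would tensor the correspondence $\cO_\bunsecproj(1)\leftrightarrow\cO_{\buntotproj}(1)\otimes\buntotproj^*\cM^\vee$ established in~(\ref{lem.projbunB}) by $\bunsecproj^*\det\bunsec$, reducing the claim to the identification $\cM^\vee\otimes\det\bunsec\cong\bunlin$ of invertible sheaves on $\exbase$. This last isomorphism follows from the rank-$2$ determinant computation $\det\bunsec\cong\det\sHom(\buntot,\bunlin)\cong\det\buntot^\vee\otimes\bunlin^{\otimes2}$ together with $\cM\cong\det\buntot^\vee\otimes\bunlin$, since then
\[
\cM^\vee\otimes\det\bunsec\cong(\det\buntot\otimes\bunlin^\vee)\otimes(\det\buntot^\vee\otimes\bunlin^{\otimes2})\cong\bunlin,
\]
as required.

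The only steps that require care are the direction of the twist on $\cO(1)$ under the isomorphism $\mathbbm{P}(\cE\otimes\cM)\cong\mathbbm{P}\cE$, and the determinant identity $\det\sHom(\buntot,\bunlin)\cong\det\buntot^\vee\otimes\bunlin^{\otimes2}$ (which uses $\rk\buntot=2$); I expect these to be the only places where a dual or a tensor power could slip, everything else being formal manipulation of line bundles, valid since $\exbase$, $\mathbbm{P}\bunsec$ and $\mathbbm{P}\buntot$ are honest schemes over which these identifications behave as usual.
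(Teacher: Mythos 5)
Your proof is correct, and for parts~(\ref{lem.projbunA}) and~(\ref{lem.projbunB}) it is essentially the paper's own argument: rewrite $\bunsec \cong \buntot \otimes \cM$ with $\cM = \sHom(\det\buntot,\bunlin)$ using the rank-$2$ self-duality of $\buntot$, then invoke the standard fact that $\mathbbm{P}(\cE\otimes\cM)\cong\mathbbm{P}\cE$ with $\cO(1)$ picking up a twist by $\cM^\vee$. Where you diverge is part~(\ref{lem.projbunC}): the paper re-runs the construction starting from the dual relation $\buntot \cong \bunsec\otimes\sHom(\det\bunsec,\bunlin)$, observes that projectivizing yields the \emph{same} isomorphism $\mathbbm{P}\bunsec\cong\mathbbm{P}\buntot$ as in~(\ref{lem.projbunA}), and then simplifies $\sHom(\det\bunsec,\bunlin)\otimes\det\bunsec\cong\bunlin$; you instead stay entirely on the correspondence already established in~(\ref{lem.projbunB}) and reduce~(\ref{lem.projbunC}) to the determinant identity $\cM^\vee\otimes\det\bunsec\cong\bunlin$, which you verify via $\det\sHom(\buntot,\bunlin)\cong\det\buntot^\vee\otimes\bunlin^{\otimes 2}$. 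Your route is marginally cleaner in that it avoids having to check that two a priori different constructions give the same isomorphism of projective bundles, at the cost of one extra determinant computation; both are valid, and your identification of the two delicate points (the direction of the twist on $\cO(1)$ and the rank-$2$ determinant identity) is exactly right.
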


\begin{proof} 
\firstproofpart{\opt{ams}{1}\opt{comp}{i}}  Using that $\buntot$ is rank~$2$, we have
\begin{equation*}
\bunsec \cong \sHom(\buntot, \bunlin) \cong \sHom(\sHom(\buntot, \det \buntot), \bunlin) \cong  \buntot\otimes\buntwist\end{equation*}
where $\buntwist=\sHom(\det \buntot,\bunlin)$. The latter is invertible, so projectivizing gives the claim.

\proofpart{\opt{ams}{2}\opt{comp}{ii}} We have corresponding invertible sheaves
\begin{equation*}
\cO_\bunsecproj(1) \qquad\longleftrightarrow\qquad
\cO_{\buntotproj}(1) \otimes \buntotproj^* \buntwist^\vee
\end{equation*}
and we use that the correspondence is monoidal.

\proofpart{\opt{ams}{3}\opt{comp}{iii}} By Lemma~\ref{lem.equiv}, $\buntwist^\vee \cong \sHom(\det \bunsec,\bunlin)$ and we again use the  monoidal property.
\end{proof}

We are now ready to prove the geometric part of Theorem~\ref{keythm.blowupB}: the homological part is completed in Section~\ref{section.divisor}. We take our usual assumptions from Section~\ref{sect.setting} with the exception of Assumption~\ref{assm.crep}, namely crepancy of $\resMap$.

\begin{thm}[\optbracket{Theorem~\ref{keythm.blowupB}, part~(\ref{keythm.blowupBA})}]\label{cor.blowupBgeom}  

Under Assumptions~\ref{assm.embed} and~\ref{assm.blowup} of Section~\ref{sect.setting} with~$n=1$, take furthermore Assumption~\ref{assm.cutnormal} with section $\sec$ regular. Then:

\begin{enumerate}[start=0,label={(\arabic*)},ref={\arabic*}]
\item\label{cor.blowupBgeomA} The projection $\blowupProjA\colon\excBlowupSingular\to\blowupLocusA$ is the blowup of $\blowupLocusA$ along $\blowupLocusB$.
\end{enumerate}
Writing $\blowupProjB\colon \excBlowupB\to\blowupLocusB$ for the projection,  furthermore:
\begin{enumerate}[resume*]
\item\label{cor.blowupBgeomB} The restriction of \,$ \cO_{\blowupProjA}(1)$ to $\excBlowupB$  is given by
$\cO_{\blowupProjB}(1) \otimes \blowupProjB^* \buntwist|_\blowupLocusB$
 with $\buntwist$ as follows.
\begin{equation*}
\buntwist  =\sHom(\det\normalof{\blowupLocusA}{ \ambient}, \normalof{\singular}{\ambient}|_\blowupLocusA) 
\end{equation*}
\end{enumerate}
\end{thm}

\begin{figure}[htb]
\begin{center}
\threefoldpicture{1}
\captionsized{Sketch of the blowup of $\singular$ along $\blowupLocusA$ in the setting of Theorem~\ref{keythm.blowupB}. Compare Figure~\ref{fig.singn1} which shows the geometry before the blowup. The locus $\excBlowupSingularShort=\excBlowupAmbient$ is a $\mathbb{P}^1$-bundle over $\blowupLocusA$. I illustrate fibres $\excBlowupSingularShort_z$ over $z$ in~$\blowupLocusB$, and $\excBlowupSingularShort_y$ over $y$ not in $\blowupLocusB$. The normal bundle $\normalof{\excBlowupSingularShort}{\resolutionAmbient}$ is the total space of $\cO_{\blowupProjAmbient}(-1)$. This bundle contains the strict transform of\opt{ams}{ }\opt{comp}{~}$\normalcone{\blowupLocusA}{\singular}$, shown by thickened lines: its intersection with $\excBlowupSingularShort$ is the locus $\excBlowupSingular$.}
\label{fig.blowupn1}
\end{center}
\end{figure}

\begin{proof}
\firstproofpart{0} Recall that by Definition~\ref{defn.blowupLocusB}, $\blowupLocusB$ is the zeroes of the section~$\sec$ of~$\bunsec$ on~$\blowupLocusA$. Then by Lemma~\ref{lem.projbunblow}, writing $\bunsecproj\colon \mathbbm{P} \bunsec \to \exbase$, there  is an induced section $\blowsec$ of  $\cO_\bunsecproj(1) \otimes \bunsecproj^* \det \bunsec$ on $ \mathbbm{P} \bunsec$, and $\blowsec$ cuts out $\blowupB$. 

Now we compare $\blowupB$ with the exceptional locus~$\excBlowupSingular$, illustrated in Figure~\ref{fig.blowupn1}. Take  $\buntot= \normal{\blowupLocusA}{\ambient}$ and $\bunlin=\normal{\singular}{\ambient}|_\blowupLocusA$ as in Definition~\ref{def.dualsheaves}, and apply Proposition~\ref{lem.projbun}(\ref{lem.projbunA}). This gives the first isomorphism below: the second follows using Proposition~\ref{prop.smCar}(\ref{prop.smCarQ}).
\begin{equation}\label{eqn.isotoB}\P\bunsec \cong\P\buntot \cong \excBlowupAmbient
\end{equation}
I will show that $\blowupB$ is taken to $\excBlowupSingular$ by~\eqref{eqn.isotoB}.
First note that under \eqref{eqn.isotoB} we have, using Proposition~\ref{lem.projbun}(\ref{lem.projbunC}), corresponding invertible sheaves
\begin{equation*}
\cO_\bunsecproj(1) \otimes \bunsecproj^* \det \bunsec \qquad \longleftrightarrow \qquad  \cO_{\blowupProjAmbient}(1)\otimes \blowupProjAmbient^* \bunlin\end{equation*}
where we write $\blowupProjAmbient\colon \excBlowupAmbient \to \blowupLocusA$ for the projection. The section~$\blowsec$ of the left-hand sheaf corresponds to a section~$\excsec$, say, of the right-hand sheaf. Note that  
\begin{equation*}
\blowupProjAmbient_*(\cO_{\blowupProjAmbient}(1)\otimes \blowupProjAmbient^* \bunlin)  \cong  \sHom(\buntot,\bunlin) = \bunsec
\end{equation*}
and so $\sec$ also induces a section of $\cO_{\blowupProjAmbient}(1)\otimes \blowupProjAmbient^* \bunlin$. Reviewing the proofs of Lemma~\ref{lem.projbunblow}(\ref{lem.projbunblowX}) and Proposition~\ref{lem.projbun} 
this is found to be $\excsec$.

Now $\excBlowupSingular$ is the projectivization of $\normalcone{\blowupLocusA}{\singular}$. The latter lies in the bundle $\Tot\normalof{\blowupLocusA}{ \ambient}$. Writing $\bunproj$ for the bundle projection, Proposition~\ref{prop.cutnormal} explains that  $\sec$ induces a section~$\indsec$ of~$\bunproj^* \bunlin$ which, by Assumption~\ref{assm.cutnormal},  cuts out~$\normalcone{\blowupLocusA}{ \singular}$. This $\indsec$ is linear on the fibres of~$\bunproj$, and by the proof of Proposition~\ref{prop.cutnormal} we find that it induces the same section of $\cO_{\blowupProjAmbient}(1)\otimes \blowupProjAmbient^* \bunlin$ as above. Combining, the isomorphism \eqref{eqn.isotoB} restricts to~$\blowupB\cong\excBlowupSingular$, and the proof is completed using Lemma~\ref{lem.projbunblow}(\ref{lem.projbunblowB}).

\proofpart{\opt{ams}{1}\opt{comp}{i}} We have corresponding invertible sheaves under~\eqref{eqn.isotoB}  as follows, by Proposition~\ref{lem.projbun}(\ref{lem.projbunB}).
\begin{equation*}\cO_\blowupProjLocal(1) \otimes \blowupProjLocal^* \sHom(\det \buntot,\bunlin) \qquad \longleftrightarrow \qquad  
\cO_{\blowupProjAmbient}(1)
\end{equation*}
I claim that restricting to $\excBlowupB$ gives the following, yielding the result.
\begin{equation*}
 \cO_\blowupProjB(1) \otimes \blowupProjB^* \sHom(\det \buntot,\bunlin)|_\blowupLocusB \qquad \longleftrightarrow\qquad \blowupExcBInc^*\cO_{\blowupProjA}(1)
\end{equation*}
For the left-hand side note that the restriction of $\blowupProjLocal$ to $\excBlowupB$ is identified with~$\blowupProjB$, and thence the restriction of $\cO_\blowupProjLocal(1)$ is identified with $\cO_\blowupProjB(1)$. The claim follows by commutativity of:
\begin{equation*}\label{eq.commBb}
    \begin{tikzpicture}[scale=\stdshrink]
    	\node (blowupLocusA) at (0,0) {$\blowupLocusA$};
	\node (excBlowupSingular) at (0,1) {$\mathbbm{P} \bunsec$}; 
	\node (blowupLocusB) at (-1,0) {$\blowupLocusB$};
	\node (excBlowupB) at (-1,1) {$\excBlowupB$};
	\draw [right hook->] (blowupLocusB) to node[below]{} (blowupLocusA); 
	\draw [->] (excBlowupB) to node[left]{\stdstyle $\blowupProjB$} (blowupLocusB);
	\draw [right hook->] (excBlowupB) to node[above]{} (excBlowupSingular);
	\draw [->] (excBlowupSingular) to node[right]{\stdstyle $\bunsecproj$} (blowupLocusA); 
	\end{tikzpicture}
\end{equation*}
\noindent For the right-hand side we use the compatibility of $\cO_{\blowupProjA}(1)$ and $\cO_{\blowupProjAmbient}(1)$, namely~\eqref{eq.compat}.
\end{proof}

\begin{rem} The definition of $\buntwist$ in the theorem may be motivated as follows. In the degenerate case where $\blowupLocusA$ is Cartier in $\singular$, so that $\resMap$ is an isomorphism, then from
\begin{equation*}
0 \to \normalof{\blowupLocusA}{\singular} \to \normalof{\blowupLocusA}{\ambient} \to \normalof{\singular}{\ambient}|_\blowupLocusA \to 0
\end{equation*}
we find that $\buntwist \cong \conormalof{\blowupLocusA}{\singular}$. So we may think of $\buntwist$ as a substitute for $\conormalof{\blowupLocusA}{\singular}$ in the interesting case where $\blowupLocusA$ is non-Cartier in $\singular$ (and consequently $\normal{\blowupLocusA}{\singular}$ is not invertible).
\end{rem}

\begin{rem} Locally over~$\blowupLocusA$, the sections $\blowsec$ and~$\excsec$
in the proof above are given by
\beq
\bunseccoord_1 \,\bunsecproj^*\sec_2 -  \bunseccoord_2 \,\bunsecproj^*\sec_1  
\qquad \text{and} \qquad \buntotcoord_1 \,\blowupProjAmbient^*\sec_1 +  \buntotcoord_2 \,\blowupProjAmbient^*\sec_2  
\eeq
for fibre coordinates $\bunseccoord$ and $\buntotcoord$ on $\Tot\bunsec$ and $\Tot\buntot$ respectively, and taking a corresponding trivialization of $\bunsec$ so that $\sec=(\sec_1,\sec_2)$.
\end{rem}

Finally, we note how $\omega_\singular$ and $\omega_\blowupLocusB$ are related, for reference later.

\begin{prop}\label{prop.canonical} There is an isomorphism as follows.
\begin{equation*}
\omega_{\blowupLocusB} \cong  \omega_{\singular}\otimes \bunlin |_{\blowupLocusB} =  \omega_{\singular}\otimes \normalof{\singular}{\ambient} |_{\blowupLocusB} 
\end{equation*}
\end{prop}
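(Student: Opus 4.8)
The plan is to derive the identity by chaining canonical-bundle (adjunction) formulas along the three regular embeddings in play, $\blowupLocusB \hookrightarrow \blowupLocusA$, $\blowupLocusA \hookrightarrow \ambient$ and $\singular \hookrightarrow \ambient$, together with the identification $\bunsec \cong \buntot^\vee \otimes \bunlin$ recorded in Definition~\ref{def.dualsheaves}. First I would note that all dualizing sheaves involved are invertible: $\ambient$ is smooth, $\singular$ is an effective Cartier divisor in $\ambient$ by Proposition~\ref{prop.smCar2}(\ref{prop.smCarA}), $\blowupLocusA$ is smooth by Assumption~\ref{assm.blowup}, and $\blowupLocusB$ --- though possibly singular and non-reduced --- is the zero scheme of the regular section $\sec$ of the rank-$2$ bundle $\bunsec$ (Definition~\ref{defn.blowupLocusB}), hence a local complete intersection in the smooth scheme $\blowupLocusA$, so Gorenstein with $\omega_{\blowupLocusB}$ a line bundle.

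The three steps are then: (i) since $\blowupLocusB$ is cut out of $\blowupLocusA$ by a regular section of $\bunsec$, its conormal sheaf is $\bunsec^\vee|_{\blowupLocusB}$, so adjunction (via the Koszul resolution of $\cO_{\blowupLocusB}$ over $\cO_{\blowupLocusA}$) gives $\omega_{\blowupLocusB} \cong \omega_{\blowupLocusA}|_{\blowupLocusB} \otimes (\det\bunsec)|_{\blowupLocusB}$; (ii) from $\bunsec \cong \buntot^\vee \otimes \bunlin$ with $\buntot = \normalof{\blowupLocusA}{\ambient}$ of rank $2$ one gets $\det\bunsec \cong (\det\buntot)^\vee \otimes \bunlin^{\otimes 2}$, while adjunction for the regular embedding $\blowupLocusA \hookrightarrow \ambient$ (Proposition~\ref{prop.smCar}(\ref{prop.smCarQ})) gives $\omega_{\blowupLocusA} \cong \omega_\ambient|_{\blowupLocusA} \otimes \det\buntot$; (iii) adjunction for the Cartier divisor $\singular \hookrightarrow \ambient$ gives $\omega_\singular \cong \omega_\ambient|_\singular \otimes \normalof{\singular}{\ambient}$, which restricted to $\blowupLocusB \subset \blowupLocusA \subset \singular$ reads $\omega_\singular|_{\blowupLocusB} \cong \omega_\ambient|_{\blowupLocusB} \otimes \bunlin|_{\blowupLocusB}$, using $\normalof{\singular}{\ambient}|_{\blowupLocusA} = \bunlin$. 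Substituting (ii) into (i), the two copies of $\det\buntot$ cancel and leave $\omega_{\blowupLocusB} \cong \omega_\ambient|_{\blowupLocusB} \otimes \bunlin^{\otimes 2}|_{\blowupLocusB}$; by (iii) this equals $\omega_\singular|_{\blowupLocusB} \otimes \bunlin|_{\blowupLocusB}$, and the second displayed form follows from $\bunlin|_{\blowupLocusB} = \normalof{\singular}{\ambient}|_{\blowupLocusB}$. Equivalently one could run a single adjunction for the codimension-$3$ l.c.i.\ embedding $\blowupLocusB \hookrightarrow \ambient$ together with the conormal exact sequence for $\blowupLocusB \subset \blowupLocusA \subset \ambient$, which shows directly that $\det\normalof{\blowupLocusB}{\ambient} \cong \bunlin^{\otimes 2}|_{\blowupLocusB}$.

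I do not expect a real obstacle. The one point deserving a line of justification is invoking the adjunction formula for the possibly singular, non-reduced $\blowupLocusB$: this is valid precisely because $\blowupLocusB$ is a local complete intersection in smooth $\blowupLocusA$, so $\cO_{\blowupLocusB}$ carries the Koszul resolution attached to $\sec$ and its dualizing sheaf is the invertible sheaf $\omega_{\blowupLocusA}|_{\blowupLocusB} \otimes \det\normalof{\blowupLocusB}{\blowupLocusA}$, exactly as in the smooth case; all the other manipulations only involve pullbacks of line bundles and hence are exact. Everything else is elementary bookkeeping with determinants of rank-$2$ bundles.
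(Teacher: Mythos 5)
Your proposal is correct and follows essentially the same route as the paper: both chain the adjunction formulas for $\blowupLocusB\subset\blowupLocusA$, $\blowupLocusA\subset\ambient$ and $\singular\subset\ambient$ and then cancel the $\det\buntot$ factors using $\bunsec\cong\sHom(\buntot,\bunlin)$ (the paper packages this cancellation via its Lemma~\ref{lem.equiv}, you do it by computing $\det\bunsec$ directly, which amounts to the same thing). Your extra remark justifying adjunction for the possibly non-reduced l.c.i.\ $\blowupLocusB$ is a sensible addition that the paper leaves implicit.
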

\begin{proof} By the adjunction formula and Definition~\ref{def.dualsheaves} we have the following.
\beq
\omega_{\singular}|_{\blowupLocusA} \cong \omega_{\ambient}\otimes \normalof{\singular}{\ambient}|_\blowupLocusA = \omega_{\ambient}|_{\blowupLocusA} \otimes \bunlin
\eeq
Combining with
\beq
\omega_{\ambient}|_{\blowupLocusA} \cong \omega_{\blowupLocusA} \otimes \det \conormalof{\blowupLocusA}{ \ambient} = \omega_{\blowupLocusA} \otimes \det \buntot^\vee
\eeq
we then find
\beq
\omega_{\singular}|_{\blowupLocusA} \cong  \omega_{\blowupLocusA} \otimes \sHom(\det \buntot, \bunlin)  \cong  \omega_{\blowupLocusA} \otimes \sHom(\bunlin, \det \bunsec)
\eeq
where the second isomorphism is by Lemma~\ref{lem.equiv}. The result follows by restricting this to~$\blowupLocusB$ and comparing with
$\omega_{\blowupLocusB} \cong  \omega_{\blowupLocusA} \otimes \det \bunsec|_{\blowupLocusB}$ from the adjunction formula.\end{proof}

\begin{rem} In particular, in the case that $\bunlin$ is trivial on $\blowupLocusB$, if $\omega_\singular$ is trivial (or, equivalently by crepancy, $\omega_\resolution$ is trivial) then the same is true for~$\omega_\blowupLocusB$. In this case, therefore, we relate Calabi--Yau spaces of different dimension via spherical functors, see Remark~\ref{rem.dims}.  
\end{rem}

\section{Twists and non-Cartier divisors}
\label{section.divisor}

In this section I describe the twist $\tw[\compMap^*]$ for $\resMap$ the blowup in a non-Cartier divisor under  natural geometric assumptions, see Theorem~\ref{thm.blowupB} below. This is used to complete the proof of Theorem~\ref{keythm.blowupB}. To prove it, we repeat the trick of Theorem~\ref{thm.main}: there, we used that a pullback to $\resolution$ is spherical; here, we use that a pushforward to $\resolution$ is spherical, namely the pushforward from $\excBlowupSingular$. In this section, I write $\inc\colon \excBlowupSingular\into\resolution$. 

\opt{ams}{\stdskip}

Under the assumptions of Section~\ref{sect.setting} we have a blowup square as follows.
\begin{equation*}
    \begin{tikzpicture}[scale=1]
    	\node (singular) at (0,0) {$\singular$};
	\node (resolution) at (0,1) {$\resolution$}; 
	\node (blowupLocusA) at (-1,0) {$\blowupLocusA$};
	\node (excBlowupSingular) at (-1,1) {$\overset{\phantom{.}}{\excBlowupSingular}$};
	\draw [right hook->] (blowupLocusA) to  (singular);
	\draw [->] (resolution) to node[right]{\stdstyle $\resMap$} (singular);
	\draw [right hook->] (excBlowupSingular) to node[above]{\stdstyle $\inc$} (resolution);
	\draw [->] (excBlowupSingular) to node[left]{\stdstyle $\blowupProjA$} (blowupLocusA);
	\end{tikzpicture}
\end{equation*}

\begin{thm}\label{thm.blowupB} In the setting  of Section~\ref{sect.setting} take $n=1$, and require:

\begin{enumerate}[label={(\roman*)},ref={\roman*}]
\item\label{thm.blowupBassmA} The projection 
 $\blowupProjA$ is the blowup along $\blowupLocusB$ regularly embedded of  codimension~$2$.
 \end{enumerate}
 Writing $\blowupProjB\colon \excBlowupB\to\blowupLocusB$ for the projection, require also that:
 
\begin{enumerate}[resume*]\item\label{thm.blowupBassmB} The restriction of $ \cO_{\blowupProjA}(1)$ to $\excBlowupB$ is given by
$\cO_{\blowupProjB}(1) \otimes \blowupProjB^* \divcasetwist $
for $ \divcasetwist$~invertible.
\end{enumerate}

\noindent Now we may put
\begin{equation*}
	\begin{tikzpicture}[xscale=\compdiagscale]
		\node (A) at (-0.25,0) {$\hun\colon \D(\blowupLocusB)$};
		\node (B) at (1,0) {$\D(\excBlowupB)$};
		\node (C) at (2.75,0) {$\D(\excBlowupB)$};
		\node (D) at (3.75,0) {$\D(\resolution)$};

		\draw [->] (A) to node[above]{\compstyle $\blowupProjB^*$} (B);
		\draw [->] (B) to node[above]{\compstyle $\otimes \cO_{\blowupProjB}(-1)$} (C);
		\draw [->] (C) to (D);
	\end{tikzpicture}
\end{equation*}
where the last functor is  pushforward. Then:
\begin{enumerate}[label={(\arabic*)},ref={\arabic*}]
\item\label{thm.blowupBB} $\hun$ is spherical.
\item\label{thm.blowupBC} There is an isomorphism
\begin{equation*}
 \tw[\compMap^*](\placeholder)\otimes \resMap^* \normal{\singular}  \cong  \tw[\hun](\placeholder)  [2]
\end{equation*}
 between autoequivalences of $\D(\resolution)$.
\end{enumerate}
\end{thm}

\begin{rem} Note that $\resMap^* \normal{\singular}  = \resMap^* \omega_\incInAmbient \cong  \omega_\compMap   $ using crepancy of~$\resMap$ and that $  \omega_\compMap\cong \omega_\resMap \otimes \resMap^* \omega_\incInAmbient   $.
\end{rem}

I prove Theorem~\ref{thm.blowupB} at the end of this section. Combining with Theorem~\ref{cor.blowupBgeom} gives:

\begin{cor}[\optbracket{Theorem~\ref{keythm.blowupB}}]\label{cor.blowupB}
In the setting of Section~\ref{sect.setting} suppose $n=1$, and furthermore that Assumption~\ref{assm.cutnormal} holds with the section $\sec$ regular. Then:
\begin{enumerate}[start=0,label={(\arabic*)},ref={\arabic*}]
\item\label{cor.blowupBZ} The projection $\blowupProjA$ is the blowup of\, $\blowupLocusA$\! along $\blowupLocusB$, the zeroes of~$\sec$ in $\blowupLocusA$.
\end{enumerate}
Furthermore, parts~(\ref{thm.blowupBB}) and~(\ref{thm.blowupBC}) of Theorem~\ref{thm.blowupB} hold.
\end{cor}

Corollary~\ref{cor.blowupB} applies in a range of examples where $\singular$ contains a family of $3$-fold ordinary double points parametrized by $\blowupLocusB$, see Section~\ref{section.Examples}. A first interesting example is the following $3$-fold, which appears in Reid's pagoda paper~\cite{Pagoda}.

\begin{eg}\label{eg.pagoda} 
Let $\singular$ be $u^2-v^2=x^2-y^{2\pagoda}$ in $\ambient = \mathbb{A}^4$.  This has small resolutions $\resolution$ given by blowing up along $(u+v,x\pm y^\pagoda)$, so we take $\blowupLocusA = \{u+v,x+ y^\pagoda=0\}.$ 
Rewriting~$\singular$ as
\begin{equation*}
(u+v)\big((u+v)-2v\big)= (x+ y^\pagoda)\big((x+y^\pagoda)-2y^\pagoda\big)
\end{equation*}
the normal cone $\normalcone{\blowupLocusA}{\singular}$ is given by $(u+v)v= (x+ y^\pagoda)y^\pagoda$. Hence Assumption~\ref{assm.cutnormal} holds, as $\normalcone{\blowupLocusA}{\singular}$ is cut out of $\normalof{\blowupLocusA}{\ambient}$ by a function induced by a regular section $\sec=(v,-y^\pagoda)$ of the trivial rank~$2$ bundle on~$\blowupLocusA$. 

We are therefore in the setting of Corollary~\ref{cor.blowupB} with $\blowupLocusB= \{y^\pagoda,v=0\} \subset \blowupLocusA$, a fattened point of length~$\pagoda$. The spherical functor $\hun\colon\D(\blowupLocusB)\to\D(\resolution)$ may be described as follows. Let $R=\field[y]/y^\pagoda$ so that $\blowupLocusB = \Spec R$. Then $\cO_{\blowupProjB}(-1)$ on $\excBlowupB$ has an $R$-module structure via~$\blowupProjB$, and we may put $\cE=\cO_{\blowupProjB}(-1) \in \D(\resolution)$ using the embedding of $\excBlowupB$ in $\resolution$. Take a diagram as follows, where $\D(R)$ is the bounded derived category of finitely generated $R$-modules.
\begin{equation*}
	\begin{tikzpicture}[xscale=1.8]
		\node (A) at (0,0) {$\D(\blowupLocusB)$};
		\node (B) at (1,0) {$\D(\resolution)$};
		\node (C) at (0,-1) {$\D(R)$};
		\node (D) at (1,-1) {$\D(\resolution)$};

		\draw [->,transform canvas={yshift=-\arrowSep}] (A) to node[below]{\stdstyle $\hun$} (B);
		\draw [<-,transform canvas={yshift=\arrowSep}] (A) to node[above]{\stdstyle $\Radj\hun$} (B);

		\draw [->] (A) to node[below, sloped]{\stdstyle $\sim$} (C);
		\draw [->,transform canvas={yshift=-\arrowSep}] (C) to node[below]{\stdstyle $\cE{\otimes_R}\placeholder$} (D);
		\draw [<-,transform canvas={yshift=\arrowSep}] (C) to node[above]{\stdstyle $\Hom(\cE,\placeholder)$} (D);
		\draw [transform canvas={xshift=\equalsSep}] (B) to (D);
		\draw [transform canvas={xshift=-\equalsSep}] (B) to (D);

	\end{tikzpicture}
\end{equation*} 
Reviewing the definition of $\hun$, we see that both squares in the diagram commute, and thence $\tw[\hun]$ fits into a triangle as follows.
 \begin{equation*}
\cE\otimes_R\Hom(\cE,\placeholder) \to \id \to \tw[\hun] \to
\end{equation*}
We see that $\tw[\hun]$ recovers Toda's fat spherical twist~\cite{Tod}.\footnote{Toda takes  a sheaf $\cE$ on $\resolution \times \Spec R$. We equivalently take a sheaf $\cE$ on $\resolution$ with an $R$-module structure.}
However the method of proof here is quite different, being more geometric and avoiding homological calculations, and the isomorphism~\eqref{thm.blowupBC} is new to my knowledge.
\end{eg}

\begin{rem}\label{rem.pagoda} In the above Example~\ref{eg.pagoda} it is straightforward to check Corollary~\ref{cor.blowupB}(\ref{cor.blowupBZ}) directly as follows. Observe that~$\resolution$ is the graph of the rational map $\singular \dasharrow \mathbbm{P}^1$ given below.
\beq(u+v:x+ y^\pagoda)= (x- y^\pagoda:u-v) = \big((x +y^\pagoda) - 2y^\pagoda:(u+v)-2v\big)\eeq 
Now $\excBlowupSingular$ is the graph of this rational map after 
restriction to $\blowupLocusA$, \opt{ams}{which is }$(y^\pagoda:v)$. Furthermore $\blowupLocusA \cong \mathbb{A}^2$ where we take coordinates $(y,v)$ on the latter. Hence $\blowupProjA\colon \excBlowupSingular \to \blowupLocusA$ is the blowup of $\mathbb{A}^2$ along $\blowupLocusB= \{y^\pagoda,v=0\}$.  Note that~$\excBlowupSingular$ is singular for $\pagoda>1$: a chart on it is given by the graph of $y^\pagoda/v\colon\blowupLocusA \dasharrow \mathbbm{A}^1$, which is a singular hypersurface.
\end{rem}

\begin{rem}\label{rem.redconifold}
For $\pagoda=1$ in the above Example~\ref{eg.pagoda}, $\singular = \{ u^2-v^2=x^2-y^{2}\} \cong \{ ac+bd=0 \}$ after a change of coordinates, giving the 3-fold ordinary double point of Example~\ref{eg.conifoldagain}. In this case the argument above shows that $\tw[\hun]$ is simply a twist $\twObj{\resolution}{\cE}$ around a spherical object $\cE=\cO_{\excBlowupBshort}(-1)$ with support $\excBlowupBshort=\excBlowupB\cong\mathbb{P}^1$.
The isomorphism~\eqref{thm.blowupBC} then gives the following.
\begin{equation*}
 \tw[\compMap^*] \cong  \twObj{\resolution}{\cE} [2]
\end{equation*}
Note that this is easily seen to hold  on the object $\cE$ itself, as follows. The left-hand side gives~$\cE$ because $\resMap_* \cE = 0$ by a standard cohomology calculation, and $\tw[\compMap^*]$ fits in a triangle with $\compMap^*\compMap_* \to \id $ where $\compMap_*\cong\incInAmbient_*\resMap_*$. The right-hand side also gives~$\cE$ using that 
in general on a $3$-fold $\twObj{\resolution}{\cE} (\cE) \cong  \cE[-2]$, see for instance~\cite[Exercise~8.5(ii)]{Huy}.
\end{rem}

\begin{rem}\label{rem.BB} I explain the relation of Corollary~\ref{cor.blowupB} to work of Bodzenta and Bondal~\cite{BB}. They consider a flopping contraction of curves to $\singular$ an affine canonical hypersurface singularity of multiplicity~2. In this setting, there is an autoequivalence
\beq\funFlopFlop\colon \D(\resolution) \to\D(\resolution') \to \D(\resolution)
\eeq
via the derived category of the flop $\resolution'$. They explain that this autoequivalence fits in a triangle of Fourier--Mukai functors~\cite[end of Section~4.5, equation~65]{BB} as follows.\footnote{More precisely, they make this statement at the unbounded quasicoherent level.}
\beq
\id[1] \overset{\alpha}{\to} \compMap^* \compMap_* \to \funFlopFlop \to  
\eeq
Here $\normal{\singular}$ is trivial so $\compMap^! \cong \compMap^*[-1]$, hence $\alpha[-1]
$ yields a morphism $\eta\colon \id \to \compMap^! \compMap_*$. Supposing that $\eta$ is the unit, and that we may take functorial cones, then $\Cone(\eta)\cong\funFlopFlop[-1]$. On the other hand, using Remark~\ref{rem.inverses} we have $
\Cone(\eta)\cong\tw[\compMap^!]^{-1}[1]\cong\tw[\compMap^*]^{-1}[1] $.

\def\BBref{\cite[Corollary~5.18]{BB}}

Bodzenta--Bondal\opt{comp}{~\BBref} further give a spherical functor $\Psi$ such that $\tw[\Psi]^{-1} \cong \funFlopFlop$\opt{ams}{~\BBref}. Combining with the above we would find
$\tw[\compMap^*] \cong \tw[\Psi][2]$ which resembles the isomorphism~\eqref{thm.blowupBC} above. It would therefore be interesting to compare~$\Psi$ and our functor~$\hun$. More broadly, this resemblance suggests that there should be common generalizations of the results here and those of Bodzenta--Bondal.
\end{rem}

I now prepare for the proof of Theorem~\ref{thm.blowupB}. Take notation as follows.
\begin{equation}\label{eq.blowupsqB}
\begin{aligned}
    \begin{tikzpicture}[xscale=\stdscale]
    	\node (blowupLocusA) at (0,0) {$\blowupLocusA$};
	\node (excBlowupSingular) at (0,1) {$\excBlowupSingular$}; 
	\node (blowupLocusB) at (-1,0) {$\blowupLocusB$};
	\node (excBlowupB) at (-1,1) {$\excBlowupB$};
	\draw [right hook->] (blowupLocusB) to (blowupLocusA);
	\draw [->] (excBlowupB) to node[left]{\stdstyle $\blowupProjB$} (blowupLocusB);
	\draw [right hook->] (excBlowupB) to node[above]{\stdstyle $\blowupExcBInc$} (excBlowupSingular);
	\draw [->] (excBlowupSingular) to node[right]{\stdstyle $\blowupProjA$} (blowupLocusA);
	\end{tikzpicture}
	\end{aligned}
\end{equation}

\noindent Using this notation, assumption~(\ref{thm.blowupBassmB}) of Theorem~\ref{thm.blowupB} has the following form.

\begin{assm}[\optbracket{Assumption~\ref{assm.div}}]\label{assm.res} The restriction of $ \cO_{\blowupProjA}(1)$ to $\excBlowupB$ is  as follows.
 \begin{equation*} \blowupExcBInc^* \cO_\blowupProjA(1) \cong \cO_\blowupProjB(1) \otimes \blowupProjB^* \divcasetwist
\end{equation*}
\end{assm}

Though $\blowupLocusB$ may not be smooth or reduced (see for instance Example~\ref{eg.pagoda}) we still have: 

\begin{prop}\label{prop.sodsB} 
Given a square \eqref{eq.blowupsqB}
for the blowup of smooth $\blowupLocusA$ along $\blowupLocusB$ regularly embedded of  codimension~$2$, then:
\begin{enumerate}
\item\label{prop.sodsBA} Functors  $\blowupProjA^*$ and $\blowupProjB^*$ preserve the bounded coherent derived category.
\item\label{prop.sodsBB} There are semiorthogonal decompositions 
\begin{equation}\label{eqn.sodB}
\D(\excBlowupSingular) = \big\langle \blowupProjA^* \D(\blowupLocusA), \SODembedB_1 \D(\blowupLocusB) \big\rangle = \big\langle  \SODembedB_0 \D(\blowupLocusB), \blowupProjA^* \D(\blowupLocusA) \big\rangle 
\end{equation}
with embeddings $
\SODembedB_\index = \blowupExcBInc[*] ( \blowupProjB^*(\placeholder) \otimes \cO_\blowupProjB(\index-1) )$.
\end{enumerate}
\end{prop}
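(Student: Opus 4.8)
My plan is to treat the two assertions of Proposition~\ref{prop.sodsB} separately, the second being a version of Orlov's blow-up formula in which the centre is allowed to be singular and non-reduced.

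For part~\ref{prop.sodsBA}: since $\blowupLocusB$ is regularly embedded in $\blowupLocusA$ of codimension~$2$, its conormal sheaf $\conormalof{\blowupLocusB}{\blowupLocusA}$ is locally free of rank~$2$, and the exceptional divisor $\excBlowupB$ is a $\P^1$-bundle over $\blowupLocusB$ (the projectivised conormal bundle), with $\blowupProjB$ the bundle projection and $\cO_\blowupProjB(1)$ its relative hyperplane sheaf. In particular $\blowupProjB$ is flat, so $\blowupProjB^*$ is exact and visibly sends $\D(\blowupLocusB)$ into $\D(\excBlowupB)$. For $\blowupProjA^*$ I would use that $\blowupLocusA$ is smooth: being regular it has finite global dimension, so every object of $\D(\blowupLocusA)$ is quasi-isomorphic to a bounded complex of locally free sheaves, whose derived pullback along $\blowupProjA$ is again a bounded complex of locally free sheaves on $\excBlowupSingular$ and so lies in $\D(\excBlowupSingular)$; equivalently, $\blowupProjA$ has finite Tor-dimension because its target is regular. (Here $\excBlowupSingular$ itself may well be singular, cf.\ Remark~\ref{rem.pagoda}, but this causes no trouble.)

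For part~\ref{prop.sodsBB}: this is Orlov's semiorthogonal decomposition for the blow-up $\blowupProjA\colon\excBlowupSingular\to\blowupLocusA$ along $\blowupLocusB$, specialised to codimension~$2$ so that exactly one copy of $\D(\blowupLocusB)$ occurs. I would run the standard argument. The exceptional locus $\blowupExcBInc\colon\excBlowupB\hookrightarrow\excBlowupSingular$ is an effective Cartier divisor with $\blowupExcBInc^*\cO_{\excBlowupSingular}(\excBlowupB)\cong\cO_\blowupProjB(-1)$; one has $R\blowupProjA_*\cO_{\excBlowupSingular}\cong\cO_{\blowupLocusA}$, so $\blowupProjA^*$ is fully faithful by the projection formula; Orlov's decomposition of the derived category of the $\P^1$-bundle $\excBlowupB$ over $\blowupLocusB$ gives that each $\SODembedB_\index$ is fully faithful and yields the required semiorthogonalities, each of which reduces, via adjunction along the Cartier divisor $\blowupExcBInc$, base change around the square~\eqref{eq.blowupsqB}, and the projection formula, to the vanishing $\blowupProjB_*\cO_\blowupProjB(-1)=0$; and generation follows by d\'evissage along $\excBlowupB$, using the short exact sequence $0\to\cO_{\excBlowupSingular}(-\excBlowupB)\to\cO_{\excBlowupSingular}\to\blowupExcBInc_*\cO_{\excBlowupB}\to 0$. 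The two presentations in~\eqref{eqn.sodB} are interchanged by the mutation identity $\SODembedB_1\D(\blowupLocusB)\otimes\cO_{\excBlowupSingular}(\excBlowupB)=\SODembedB_0\D(\blowupLocusB)$, which is immediate from the projection formula and $\blowupExcBInc^*\cO_{\excBlowupSingular}(\excBlowupB)\cong\cO_\blowupProjB(-1)$.

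The main obstacle is that the standard references for the blow-up formula, including \cite{Orl}, state it for a \emph{smooth} centre, whereas here $\blowupLocusB$ may be singular and non-reduced. So the substantive point is to verify that Orlov's proof uses only that $\blowupLocusB$ is regularly embedded: the needed inputs are the projective-bundle structure of $\blowupProjB$ (valid since $\conormalof{\blowupLocusB}{\blowupLocusA}$ is locally free), Orlov's decomposition of the derived category of a projective bundle over an arbitrary base, the facts that $\excBlowupB$ is Cartier in $\excBlowupSingular$ and $R\blowupProjA_*\cO_{\excBlowupSingular}\cong\cO_{\blowupLocusA}$, and formal manipulations with adjunction, base change and the projection formula --- none of which require $\blowupLocusB$ smooth, and none of which are disturbed by $\excBlowupSingular$ being singular. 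Once this is in hand, the remaining bookkeeping with the twisting indices is routine.
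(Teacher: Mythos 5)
Your proposal is correct in outline, but it takes a more self-contained route than the paper. For part~(\ref{prop.sodsBA}) you argue exactly as the paper does: $\blowupProjB^*$ is pullback along a flat ($\P^1$-bundle) morphism, and $\blowupProjA^*$ preserves boundedness because its target $\blowupLocusA$ is smooth, hence $\blowupProjA$ has finite Tor-dimension. For part~(\ref{prop.sodsBB}) the paper does not reprove anything: it observes that the decomposition is Orlov's blowup formula for a regularly embedded (possibly singular, non-reduced) centre and cites \cite[Theorem 3.4]{KuzRat} and \cite[Corollary~6.10]{BerghSchnurer}, which already cover this generality. You instead propose to rerun Orlov's argument and check that each step (the $\P^1$-bundle structure of $\excBlowupB = \P(\conormalof{\blowupLocusB}{\blowupLocusA})$, $\blowupProjA_*\cO_{\excBlowupSingular}\cong\cO_{\blowupLocusA}$, adjunction along the Cartier divisor $\excBlowupB$ with $\blowupExcBInc^*\cO(\excBlowupB)\cong\cO_\blowupProjB(-1)$, Tor-independent base change around the blowup square, and the vanishing $\blowupProjB_*\cO_\blowupProjB(-1)=0$) needs only the regular embedding. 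That verification is exactly the content of the cited references, so your route buys independence from them at the cost of more work; you have correctly identified the one substantive issue (smoothness of the centre is not available) that makes the citation nontrivial. Two small points to tighten if you write this out in full: the two presentations in~\eqref{eqn.sodB} are not literally interchanged by tensoring with $\cO(\excBlowupB)$ (that operation does not preserve $\blowupProjA^*\D(\blowupLocusA)$), so the second semiorthogonality $\Hom(\blowupProjA^*\D(\blowupLocusA),\SODembedB_0\D(\blowupLocusB))=0$ and the corresponding generation statement should each be checked directly --- as you in fact indicate, both again reduce to $\blowupProjB_*\cO_\blowupProjB(-1)=0$; and you should note that admissibility (both adjoints) of the components holds because $\blowupProjA$ and $\blowupExcBInc$ are proper lci, matching the paper's convention for semiorthogonal decompositions.
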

\begin{proof} 
\firstproofpart{\opt{ams}{1}\opt{comp}{i}} We have the following.
\begin{itemize}
\item $\blowupProjA^*$ is pullback from smooth $\blowupLocusA$.\footnote{Note however that $\excBlowupSingular$ may be singular, as in Remark~\ref{rem.pagoda}.}
\item $\blowupProjB^*$ is pullback along a flat morphism.
\end{itemize}
Note that the flatness here follows because $\blowupLocusB$ is regularly embedded in $\blowupLocusA$.

\proofpart{\opt{ams}{2}\opt{comp}{ii}} This is Orlov's blowup formula, which holds as $\blowupLocusB$ is regularly embedded. See \cite[Theorem 3.4]{KuzRat} for a sketch proof, and also~\cite[Corollary~6.10]{BerghSchnurer} in the setting of algebraic stacks (noting that their locally bounded pseudo-coherent categories coincide with bounded coherent categories assuming our schemes are Noetherian, which holds by finite type).
\end{proof}

\begin{prop}\label{eq.blowupBshift} Under Assumption~\ref{assm.res} we have the following.
\begin{equation*} \SODembedB_\index (\placeholder)\otimes \cO_\blowupProjA(-1) \cong\SODembedB_{\index-1} \comp  (\placeholder \otimes \divcasetwist^\vee )
\end{equation*}
\end{prop}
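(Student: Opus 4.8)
The plan is a direct computation using the definition of the embedding functors $\SODembedB_\index$ and the projection formula, with the key geometric input being Assumption~\ref{assm.res}. Recall that $\SODembedB_\index = \blowupExcBInc[*] \big(\, \blowupProjB^*(\placeholder) \otimes \cO_\blowupProjB(\index-1) \big)$, so applying it and then tensoring on $\excBlowupSingular$ by the invertible sheaf $\cO_\blowupProjA(-1)$ gives, via the projection formula for the closed embedding $\blowupExcBInc\colon \excBlowupB \into \excBlowupSingular$,
\begin{equation*}
\SODembedB_\index(\placeholder) \otimes \cO_\blowupProjA(-1) \cong \blowupExcBInc[*]\big(\, \blowupProjB^*(\placeholder) \otimes \cO_\blowupProjB(\index-1) \otimes \blowupExcBInc^*\cO_\blowupProjA(-1)\big).
\end{equation*}
Now I would substitute the description of $\blowupExcBInc^*\cO_\blowupProjA(1)$ from Assumption~\ref{assm.res}, namely $\blowupExcBInc^*\cO_\blowupProjA(1) \cong \cO_\blowupProjB(1) \otimes \blowupProjB^*\divcasetwist$, so that $\blowupExcBInc^*\cO_\blowupProjA(-1) \cong \cO_\blowupProjB(-1) \otimes \blowupProjB^*\divcasetwist^\vee$. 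Inserting this and combining the twist factors $\cO_\blowupProjB(\index-1) \otimes \cO_\blowupProjB(-1) \cong \cO_\blowupProjB(\index-2)$, and pulling the $\blowupProjB^*\divcasetwist^\vee$ factor inside the $\blowupProjB^*$, yields
\begin{equation*}
\SODembedB_\index(\placeholder) \otimes \cO_\blowupProjA(-1) \cong \blowupExcBInc[*]\big(\, \blowupProjB^*(\placeholder \otimes \divcasetwist^\vee) \otimes \cO_\blowupProjB((\index-1)-1)\big),
\end{equation*}
which is precisely $\SODembedB_{\index-1} \comp (\placeholder \otimes \divcasetwist^\vee)$ by the definition of $\SODembedB_{\index-1}$.

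Since all the sheaves being tensored are invertible, all tensor products are underived and the manipulations above are literally monoidal rearrangements; the only genuine inputs are the projection formula (valid here because $\blowupExcBInc$ is a closed immersion, indeed of a Cartier divisor in $\excBlowupSingular$, so $\blowupExcBInc[*]$ and $\blowupExcBInc^*$ behave well on bounded coherent categories as recorded in Remark~\ref{rem.adjs}) and Assumption~\ref{assm.res}. There is essentially no obstacle: the main point to be careful about is bookkeeping of the twist indices, ensuring $(\index-1) + (-1) = (\index-1) - 1$ lands correctly so that the output matches $\SODembedB_{\index-1}$ rather than a neighbouring index, and checking that $\divcasetwist^\vee$ is indeed pulled back from $\blowupLocusB$ (it is, being the restriction to $\excBlowupB$ via $\blowupProjB$ of the invertible sheaf $\divcasetwist$ on $\blowupLocusA$, or more precisely its appearance in Assumption~\ref{assm.res} is already as $\blowupProjB^*\divcasetwist$). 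Thus the statement follows formally.
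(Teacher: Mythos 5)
Your proof is correct and follows essentially the same route as the paper's: the projection formula along $\blowupExcBInc$ combined with the restriction formula for $\cO_\blowupProjA(1)$ from Assumption~\ref{assm.res}, then a monoidal rearrangement of the invertible twist factors. The only cosmetic difference is that the paper writes out the case $\index=1$ explicitly and notes the general case follows similarly, whereas you carry the general index through directly.
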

\begin{proof}
For the case $\index=1$ we have
 \begin{equation*}
 \blowupExcBInc[*] \blowupProjB^*(\placeholder)  \otimes \cO_\blowupProjA(-1)  \cong \blowupExcBInc[*] ( \blowupProjB^*(\placeholder\otimes \divcasetwist^\vee) \otimes \cO_\blowupProjB(-1) )
\end{equation*}
and the general case follows similarly.
\end{proof}

We define functors $\hun_\index$ for $\index\in\mathbb{Z}$ as follows, noting that $\hun_0$ gives $\hun$ from Theorem~\ref{thm.blowupB}.

\begin{defn}\label{defn.Gs}
Recalling the notation of~\eqref{eq.blowupsqB}, define a functor
\begin{equation*}
	\begin{tikzpicture}[xscale=\compdiagscale]
		\node (A) at (-0.25,0) {$\hun_\index\colon \D(\blowupLocusB)$};
		\node (B) at (1,0) {$\D(\excBlowupB)$};
		\node (C) at (3,0) {$\D(\excBlowupB)$};
		\node (D) at (4,0) {$\D(\resolution)$};

		\draw [->] (A) to node[above]{\compstyle $\blowupProjB^*$} (B);
		\draw [->] (B) to node[above]{\compstyle $\otimes \cO_{\blowupProjB}(\index-1)$} (C);
		\draw [->] (C) to (D);
	\end{tikzpicture}
\end{equation*}
by composition. Here $\blowupProjB^*$ preserves the bounded coherent derived category as noted in Proposition~\ref{prop.sodsB}(\ref{prop.sodsBA}), and the last functor is  pushforward.
\end{defn}

\begin{prop}\label{lem.G}$\inc_* \SODembedB_\index \cong \hun_\index$
\end{prop}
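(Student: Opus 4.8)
The plan is to unwind both sides and observe that everything is formal. By Definition~\ref{defn.Gs}, $\hun_\index$ is the composite of $\blowupProjB^*$, of tensoring by $\cO_{\blowupProjB}(\index-1)$, and of derived pushforward along the inclusion of the exceptional locus $\excBlowupB$ into $\resolution$. First I would note that this last inclusion factors through $\excBlowupSingular$: the blowup square~\eqref{eq.blowupsqB} provides the closed immersion $\blowupExcBInc\colon\excBlowupB\into\excBlowupSingular$, and then $\inc\colon\excBlowupSingular\into\resolution$ is the inclusion of the exceptional locus, so the composite $\inc\circ\blowupExcBInc$ is exactly the inclusion $\excBlowupB\into\resolution$ appearing in Definition~\ref{defn.Gs}. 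Both $\blowupExcBInc$ and $\inc$ are closed immersions, hence proper, so derived pushforward is functorial along the composite and splits as $\inc_*\circ\blowupExcBInc[*]$. Substituting this into the description of $\hun_\index$ and comparing with the embedding $\SODembedB_\index = \blowupExcBInc[*]\big(\blowupProjB^*(\placeholder)\otimes\cO_{\blowupProjB}(\index-1)\big)$ from Proposition~\ref{prop.sodsB}(\ref{prop.sodsBB}) yields $\inc_*\SODembedB_\index\cong\hun_\index$ directly.

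Alongside this I would check that each functor in play preserves the bounded coherent derived category, so that the identity holds on the nose as functors $\D(\blowupLocusB)\to\D(\resolution)$: $\blowupProjB^*$ does by Proposition~\ref{prop.sodsB}(\ref{prop.sodsBA}) (flatness of $\blowupProjB$, since $\blowupLocusB$ is regularly embedded), tensoring by an invertible sheaf obviously does, and $\blowupExcBInc[*]$ and $\inc_*$ do because closed immersions are proper.

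I do not expect a genuine obstacle here; the proposition is a formal consequence of the definition of $\SODembedB_\index$ in Proposition~\ref{prop.sodsB}(\ref{prop.sodsBB}) and of $\hun_\index$ in Definition~\ref{defn.Gs}. The only point that deserves to be made explicit is the factorization of the pushforward $\excBlowupB\to\resolution$ through $\excBlowupSingular$, which is precisely what links the two-step presentation of $\hun_\index$ to the single embedding functor $\SODembedB_\index$.
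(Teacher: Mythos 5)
Your proposal is correct and is essentially the paper's own argument: the paper simply recalls that $\inc$ is the inclusion of $\excBlowupSingular$ in $\resolution$ and that $\SODembedB_\index = \blowupExcBInc[*]\big(\blowupProjB^*(\placeholder)\otimes\cO_{\blowupProjB}(\index-1)\big)$, and declares the claim clear. You have merely made explicit the one formal point the paper leaves implicit, namely the factorization of the pushforward $\excBlowupB\to\resolution$ through $\excBlowupSingular$ and its compatibility with composition.
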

\begin{proof}Recalling that $\inc$ denotes the inclusion of $\excBlowupSingular$ in $\resolution$, the claim is clear.\end{proof}

\begin{prop}\label{lem.Grel} Under Assumption~\ref{assm.res} the functors $\hun_\index$ are related by:
\begin{equation*}
  \hun_{\index+l} (\placeholder) \cong \hun_\index (\placeholder\otimes \divcasetwist^{\vee\otimes l})\otimes \cO(-l\,\excBlowupSingular)
\end{equation*}
\end{prop}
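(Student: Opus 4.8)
The plan is to reduce the statement to the relation between the embedding functors $\SODembedB_\index$ already proved in Proposition~\ref{eq.blowupBshift}, together with the identification $\inc_*\SODembedB_\index \cong \hun_\index$ of Proposition~\ref{lem.G}. First I would iterate Proposition~\ref{eq.blowupBshift} (and its inverse, which is available since $\divcasetwist$ is invertible) to obtain, for every $l\in\mathbb Z$, an isomorphism of functors
\[
\SODembedB_{\index+l}(\placeholder)\otimes \cO_\blowupProjA(-l) \cong \SODembedB_\index\comp\big(\placeholder\otimes \divcasetwist^{\vee\otimes l}\big).
\]

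Next I would record the one piece of geometric input. Since $\excBlowupSingular$ is the exceptional divisor of the blowup $\resMap$, it is an effective Cartier divisor in $\resolution$ with ideal sheaf $\cO_\resolution(-\excBlowupSingular)$, and by the standard description of $\cO_\blowupProjA(1)$ on $\excBlowupSingular = \operatorname{Proj}$ of the associated graded ring one has $\cO_\resolution(-\excBlowupSingular)|_{\excBlowupSingular}\cong N^\vee_{\excBlowupSingular/\resolution}\cong \cO_\blowupProjA(1)$. Hence $\cO_\blowupProjA(-l)\cong \inc^*\cO(l\,\excBlowupSingular)$.

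Then I would apply $\inc_*$ to the iterated relation above. On the right, Proposition~\ref{lem.G} turns $\inc_*\SODembedB_\index(\placeholder\otimes\divcasetwist^{\vee\otimes l})$ into $\hun_\index(\placeholder\otimes\divcasetwist^{\vee\otimes l})$. On the left, the projection formula for the proper closed immersion $\inc$, combined with the identification of $\cO_\blowupProjA(-l)$ just made, gives
\[
\inc_*\big(\SODembedB_{\index+l}(\placeholder)\otimes \inc^*\cO(l\,\excBlowupSingular)\big)\cong \inc_*\SODembedB_{\index+l}(\placeholder)\otimes \cO(l\,\excBlowupSingular)\cong \hun_{\index+l}(\placeholder)\otimes \cO(l\,\excBlowupSingular),
\]
again using Proposition~\ref{lem.G}. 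Equating the two sides and tensoring by $\cO(-l\,\excBlowupSingular)$ yields the claimed isomorphism $\hun_{\index+l}(\placeholder)\cong \hun_\index(\placeholder\otimes\divcasetwist^{\vee\otimes l})\otimes \cO(-l\,\excBlowupSingular)$.

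The only mildly delicate point is the identification $\inc^*\cO(\excBlowupSingular)\cong\cO_\blowupProjA(-1)$; it is standard for blowups, but worth spelling out here because $\blowupLocusA$ is non-Cartier in $\singular$ — what is actually used, though, is only that $\excBlowupSingular$ is Cartier in $\resolution$, which holds for the exceptional divisor of any blowup, and that $\cO_\blowupProjA(1)$ is by construction the Serre twisting sheaf of that Proj. Everything else is formal: iteration of Proposition~\ref{eq.blowupBshift}, the projection formula, and Proposition~\ref{lem.G}.
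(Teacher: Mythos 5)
Your proposal is correct and follows essentially the same route as the paper: the key inputs are identical (the relation $\inc^*\cO(\excBlowupSingular)\cong\cO_\blowupProjA(-1)$, Proposition~\ref{eq.blowupBshift}, the projection formula, and Proposition~\ref{lem.G}), the only cosmetic difference being that you iterate the $\SODembedB$-relation before applying $\inc_*$, whereas the paper pushes forward the single-step relation and then inducts. Both yield the same isomorphism, so there is nothing to add.
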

\begin{proof} 
First note that $\inc^* \cO(\excBlowupSingular) \cong   \cO_\blowupProjA(-1)$ so Proposition~\ref{eq.blowupBshift} yields the following.
 \begin{equation*} \SODembedB_\index (\placeholder)\otimes \inc^* \cO(\excBlowupSingular) \cong \SODembedB_{\index-1} \comp  (\placeholder \otimes \divcasetwist^\vee )
\end{equation*}
Applying $\inc_*$ gives
\begin{equation*}
 \hun_\index(\placeholder)\otimes \cO(\excBlowupSingular) =  \hun_{\index-1} \comp (\placeholder \otimes \divcasetwist^\vee )
\end{equation*}
using Proposition~\ref{lem.G}. This gives the claim for $l=1$, and the rest follows by induction.
\end{proof}

Recall that we found a spherical functor $\gun$ from $\D(\blowupLocusA)$  in Theorem~\ref{thm.mainintro}.   We now establish that $\hun = \hun_0$ from $ \D(\blowupLocusB)$ is also spherical, and relate the twists of $\gun$ and $\hun$ on $\D(\resolution)$.

\begin{prop}\label{prop.blowupB}
Take the assumptions of Theorem~\ref{thm.blowupB} and recall the functors $\hun_\index$ from Definition~\ref{defn.Gs}. Then:
\begin{enumerate}[label={(\arabic*)},ref={\arabic*}]
\item\label{prop.blowupBA} $\hun_\index$ is spherical for each $\index\in\mathbb{Z}$.
\item\label{prop.blowupBB} There are isomorphisms
\begin{equation*}
\tw[\gun] \comp \tw[\hunb] \cong \placeholder\otimes  \cO(\excBlowupSingular) \cong \tw[\hun_0] \comp \tw[\gun] 
\end{equation*}
between autoequivalences of $\D(\resolution)$.
\end{enumerate}
\end{prop}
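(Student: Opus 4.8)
The plan is to rerun the argument of Theorem~\ref{thm.main} --- applying the factorization criterion Theorem~\ref{thm.sodsph} --- but with the spherical functor now taken to be $\inc_*\colon\D(\excBlowupSingular)\to\D(\resolution)$ rather than the pullback used there. This is spherical by Proposition~\ref{prop.divsph}(\ref{prop.divsphB}), since $\excBlowupSingular$ is the exceptional divisor of the blowup $\resMap$ and hence an effective Cartier divisor in $\resolution$; its twist is $\tw[\inc_*]\cong\placeholder\otimes\cO(\excBlowupSingular)$ and its cotwist is $\ctw[\inc_*]\cong\placeholder\otimes\normal{\excBlowupSingular}[-2]$, where $\normal{\excBlowupSingular}\cong\inc^*\cO(\excBlowupSingular)\cong\cO_{\blowupProjA}(-1)$, as recorded in the proof of Proposition~\ref{lem.Grel}. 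The semiorthogonal decompositions of $\D(\excBlowupSingular)$ fed into Theorem~\ref{thm.sodsph} will be the two displayed in \eqref{eqn.sodB} from Proposition~\ref{prop.sodsB}(\ref{prop.sodsBB}).

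First I would set $\mathsf{A}=\blowupProjA^*\D(\blowupLocusA)$ and $\mathsf{B}=\SODembedB_1\D(\blowupLocusB)$, so that $\D(\excBlowupSingular)=\langle\mathsf{A},\mathsf{B}\rangle$ by the first decomposition in \eqref{eqn.sodB}. Proposition~\ref{eq.blowupBshift} gives $\SODembedB_1(\placeholder)\otimes\cO_{\blowupProjA}(-1)\cong\SODembedB_0(\placeholder\otimes\divcasetwist^\vee)$; discarding the cotwist shift (irrelevant at the level of subcategories) and using that $\placeholder\otimes\divcasetwist^\vee$ is an autoequivalence of $\D(\blowupLocusB)$, this gives $\ctw[\inc_*]\mathsf{B}=\mathsf{B}\otimes\cO_{\blowupProjA}(-1)=\SODembedB_0\D(\blowupLocusB)$, so $\langle\ctw[\inc_*]\mathsf{B},\mathsf{A}\rangle$ is exactly the second decomposition in \eqref{eqn.sodB}. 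Hence Theorem~\ref{thm.sodsph} applies: the restrictions $(\inc_*)_{\mathsf{A}}$ and $(\inc_*)_{\mathsf{B}}$ are spherical, and composing with the equivalences $\blowupProjA^*\colon\D(\blowupLocusA)\isoto\mathsf{A}$ and $\SODembedB_1\colon\D(\blowupLocusB)\isoto\mathsf{B}$ identifies them with $\gun=\inc_*\comp\blowupProjA^*$ and $\hunb=\hun_1=\inc_*\comp\SODembedB_1$ (Proposition~\ref{lem.G}) respectively. As the spherical property is preserved under pre- and post-composition with equivalences, $\gun$ and $\hunb$ are spherical (re-deriving Theorem~\ref{thm.mainn1} for $\gun$), and the twist factorization of Theorem~\ref{thm.sodsph}(\ref{thm.sodsphB}) reads $\placeholder\otimes\cO(\excBlowupSingular)\cong\tw[\inc_*]\cong\tw[\gun]\comp\tw[\hunb]$.

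Next I would repeat the argument with the two components interchanged: set $\mathsf{A}'=\SODembedB_0\D(\blowupLocusB)$ and $\mathsf{B}'=\blowupProjA^*\D(\blowupLocusA)$, so $\D(\excBlowupSingular)=\langle\mathsf{A}',\mathsf{B}'\rangle$ by the second decomposition in \eqref{eqn.sodB}. Here $\ctw[\inc_*]\mathsf{B}'=\blowupProjA^*\D(\blowupLocusA)\otimes\cO_{\blowupProjA}(-1)$, and $\langle\blowupProjA^*\D(\blowupLocusA)\otimes\cO_{\blowupProjA}(-1),\SODembedB_0\D(\blowupLocusB)\rangle$ is again a semiorthogonal decomposition of $\D(\excBlowupSingular)$ --- it is the image of the first decomposition in \eqref{eqn.sodB} under the autoequivalence $\placeholder\otimes\cO_{\blowupProjA}(-1)$, using again $\SODembedB_1\D(\blowupLocusB)\otimes\cO_{\blowupProjA}(-1)=\SODembedB_0\D(\blowupLocusB)$ from Proposition~\ref{eq.blowupBshift}. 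Applying Theorem~\ref{thm.sodsph} once more: $(\inc_*)_{\mathsf{A}'}$, which is identified with $\hun_0=\inc_*\comp\SODembedB_0$ by Proposition~\ref{lem.G}, is spherical, and $\tw[\inc_*]\cong\tw[\hun_0]\comp\tw[\gun]$. Combined with the first factorization this is part~(\ref{prop.blowupBB}), and since $\hun=\hun_0$ it also gives part~(\ref{prop.blowupBA}) for $\index\in\{0,1\}$. For general $\index$, Proposition~\ref{lem.Grel} expresses $\hun_\index$ as $\hun_0$ precomposed with the autoequivalence $\placeholder\otimes\divcasetwist^{\vee\otimes\index}$ of $\D(\blowupLocusB)$ and postcomposed with the autoequivalence $\placeholder\otimes\cO(-\index\,\excBlowupSingular)$ of $\D(\resolution)$, so $\hun_\index$ is spherical because $\hun_0$ is.

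Most of this is bookkeeping --- tracking which of $\gun,\hun_0,\hunb$ each restricted functor becomes under the embeddings $\blowupProjA^*$ and $\SODembedB_\index$, and correctly retaining the shift $[-2]$ in the cotwist for the twist formula while ignoring it when comparing subcategories. The one point that needs genuine checking is the mutation compatibility $\D(\excBlowupSingular)=\langle\ctw[\inc_*]\mathsf{B},\mathsf{A}\rangle$ required to invoke Theorem~\ref{thm.sodsph}: in the first application it is immediate from Proposition~\ref{eq.blowupBshift} together with \eqref{eqn.sodB}, and in the second it rests on the observation that a semiorthogonal decomposition remains one after tensoring by the line bundle $\cO_{\blowupProjA}(-1)$, with resulting components precisely $\ctw[\inc_*]\mathsf{B}'$ and $\mathsf{A}'$. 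I do not anticipate any obstacle beyond this.
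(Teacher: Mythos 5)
Your proof is correct, and for the first factorization $\tw[\inc_*]\cong\tw[\gun]\comp\tw[\hunb]$ it coincides with the paper's argument: both apply Theorem~\ref{thm.sodsph} to the spherical functor $\inc_*$ with $\mathsf{A}=\blowupProjA^*\D(\blowupLocusA)$, $\mathsf{B}=\SODembedB_1\D(\blowupLocusB)$, using Proposition~\ref{eq.blowupBshift} to verify the mutation condition $\ctw[\inc_*]\mathsf{B}=\SODembedB_0\D(\blowupLocusB)$. Where you genuinely diverge is in obtaining the second isomorphism $\tw[\hun_0]\comp\tw[\gun]\cong\placeholder\otimes\cO(\excBlowupSingular)$: you run Theorem~\ref{thm.sodsph} a second time with the components swapped, taking $\mathsf{A}'=\SODembedB_0\D(\blowupLocusB)$ and $\mathsf{B}'=\blowupProjA^*\D(\blowupLocusA)$, and checking the required second decomposition $\langle\ctw[\inc_*]\mathsf{B}',\mathsf{A}'\rangle$ by tensoring the first decomposition of~\eqref{eqn.sodB} by the line bundle $\cO_{\blowupProjA}(-1)$ --- a legitimate move, since an autoequivalence carries semiorthogonal decompositions (with admissible components) to semiorthogonal decompositions. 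The paper instead deduces the second isomorphism from the first: it uses Proposition~\ref{lem.Grel} to write $\hun_0\cong\Phi\hunb\Phi'$ with $\Phi=\placeholder\otimes\cO(\excBlowupSingular)$, invokes the conjugation identity $\tw[\Phi\hunb\Phi']\cong\Phi\tw[\hunb]\Phi^{-1}$, and substitutes the first factorization. Your route is arguably cleaner in that it produces both factorizations symmetrically from the same criterion and avoids the conjugation bookkeeping, at the cost of one extra verification of the mutation hypothesis; the paper's route reuses machinery (Proposition~\ref{lem.Grel} and the twist-conjugation lemma) that it needs anyway to handle $\hun_\index$ for general $\index$, which you also fall back on for part~(\ref{prop.blowupBA}). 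Your handling of the shift $[-2]$ in the cotwist (irrelevant for comparing subcategories, retained in the twist formulas) and of the identifications via the equivalences $\blowupProjA^*$ and $\SODembedB_\index$ is correct.
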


\begin{proof} Recall that $\gun$ is defined as follows.
\begin{equation*}
	\begin{tikzpicture}[xscale=\compdiagscale]
		\node (A) at (-0.25,0) {$\gun{} \colon \D(\blowupLocusA)$};
		\node (B) at (1,0) {$\D(\excBlowupSingular)$};
		\node (C) at (2.25,0) {$\D(\resolution)$};

		\draw [->] (A) to node[above]{\compstyle $\blowupProjA^*$} (B);
		\draw [->] (B) to node[above]{\compstyle $\inc_*$} (C);
	\end{tikzpicture}
\end{equation*}
Here $\excBlowupSingular$ is a Cartier divisor in $\resolution$ so, writing $\excBlowupSingularShort=\excBlowupSingular$ for readability,
\begin{equation*}\fun = \inc_*\colon \D(\excBlowupSingularShort) \to \D(\resolution)
\end{equation*}
 is spherical with cotwist $\placeholder\otimes \normal{\excBlowupSingularShort}[-2]$ by Proposition~\ref{prop.divsph}(\ref{prop.divsphB}). Now $\excBlowupSingularShort$ is the exceptional locus of the blowup $\blowupProjA$ so $\normal{\excBlowupSingularShort}=\cO_\blowupProjA(-1)$, and thence Proposition~\ref{eq.blowupBshift} yields the following.
 \beq
\ctw[\fun] \SODembedB_1 \D(\blowupLocusB)=\SODembedB_0 \D(\blowupLocusB)
 \eeq
Applying Theorem~\ref{thm.sodsph} to $\fun$ using the decompositions~\eqref{eqn.sodB} then gives spherical functors
\beq
\fun_{\mathsf{A}}  = \inc_*  \blowupProjA^* = \gun
\qquad
\fun_{\mathsf{B}}  =\inc_* \SODembedB_1  \cong \hunb
\eeq
where the isomorphism is by Proposition~\ref{lem.G}, yielding~(\ref{prop.blowupBA}) for $\index=1$.
 We further obtain a factorization $ \tw[\fun] \cong \tw[\fun_{\mathsf{A}}]\comp\tw[\fun_{\mathsf{B}}]\cong  \tw[\gun] \comp \tw[\hunb]$
and as $\tw[\fun] \cong \placeholder\otimes \ideal_{\excBlowupSingularShort}^\vee = \placeholder\otimes \cO(\excBlowupSingularShort)$
  by Proposition~\ref{prop.divsph}(\ref{prop.divsphB}), we get the first isomorphism of~(\ref{prop.blowupBB}).
  
  For the rest we use some standard facts about spherical functors. By  Proposition~\ref{lem.Grel}  we may write $\hun_\index\cong \Phi\hunb\!\Phi'$ with autoequivalences $\Phi$ and $\Phi'$ for each $\index\in\mathbb{Z}$. Knowing that~$\hun_1$ is spherical, this implies that $\hun_\index$ is also spherical, completing the proof of~(\ref{prop.blowupBA}).
  
To complete the proof of~(\ref{prop.blowupBB}), note that for $\index=0$ we have $\Phi = \placeholder\otimes \cO(\excBlowupSingularShort)$ and
\beq
\tw[\hun_0]  \cong\tw[\Phi\hunb\!\Phi'] \cong\tw[\Phi\hunb] \cong \Phi\tw[\hunb]\!\Phi^{-1}
\eeq
where the last two isomorphisms are straightforward consequences of the definitions: see for instance~\cite[Lemma~6.3]{Godinho}. We thence have
\begin{align*} \tw[\hun_0]
  & \cong \tw[\hunb] (\placeholder\otimes \cO(-\excBlowupSingularShort)) \otimes \cO(\excBlowupSingularShort)
\end{align*}
but using  the first isomorphism of~(\ref{prop.blowupBB}) we can write
\begin{equation*}  \tw[\hun_1](\placeholder) \cong \tw[\gun]^{-1}(\placeholder\otimes \cO(\excBlowupSingularShort)
)  
\qquad \Longrightarrow \qquad  \tw[\hun_0](\placeholder) \cong \tw[\gun]^{-1}(\placeholder)  \otimes \cO(\excBlowupSingularShort)
\end{equation*}
which rearranges to give the second isomorphism of~(\ref{prop.blowupBB}).
\end{proof}

\begin{rem}
The method of proof above is dual to that of the previous Theorem~\ref{thm.main} in the sense that it uses the spherical functor associated to the pushforward along the embedding of an effective Cartier divisor, rather than the pullback. However the argument here is simpler, as Proposition~\ref{lem.resembed} required base change whereas Proposition~\ref{lem.G} does not.
\end{rem}

Combining Theorem~\ref{thm.mainn1} with Proposition~\ref{prop.blowupB}, we now have that $\tw[\gun]$ is related to two different spherical twists, namely $\tw[\compMap^*]$ and $\tw[\hun]$. This finally gives the following.

\begin{proof}[Proof of Theorem~\ref{thm.blowupB}]
By Proposition~\ref{prop.blowupB} we have that $\hun =\hun_0 $ is spherical, and
\begin{align*}  \tw[\hun](\placeholder) & \cong \tw[\gun]^{-1}(\placeholder)  \otimes \cO(\excBlowupSingular) \\
& \cong \tw[\compMap^*] \comp (\placeholder) \otimes \normal{\resolution} \otimes \cO(\excBlowupSingular) [-2]
\tag{Theorem~\ref{thm.mainn1}} \\
 & \cong \tw[\compMap^*] (\placeholder)\otimes \resMap^*\normal{\singular} [-2] \tag{Proposition~\ref{lem.normals}}
\end{align*}
where in the last line we use that $n=1$, yielding the result.
\end{proof}

\section{Action on a spanning class}\label{sec.objects}

In this section I explain how our derived symmetries act on a spanning class. We make the following assumption on $\resMap$, which holds if $\resMap$ is a resolution of rational singularities.\footnote{We should take~$\singular$ normal here, see \cite[Remark~1.5]{Kov}.}

\begin{assm}\label{assm.rat}  The canonical morphism $\cO_\singular \to \resMap_* \cO_\resolution $ is an isomorphism.
\end{assm}

\noindent As usual, we take the derived pushdown. Note therefore that this strengthens our assumption that $\resMap$ is a contraction, which required only $\RDerived^0\resMap_*\cO_\resolution \cong \cO_\singular$.

\begin{prop} The objects $\mathcal{C} = \compMap^* \D(\ambient)\cup \ker \resMap_*$ 
are a (one-sided) spanning class.
\end{prop}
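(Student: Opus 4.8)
The plan is to verify directly the defining orthogonality property of a one-sided spanning class: if $F\in\D(\resolution)$ satisfies $\Hom_{\resolution}(\omega,F[i])=0$ for every $\omega\in\mathcal{C}$ and every $i\in\mathbb{Z}$, then $F\cong 0$. Both functors $\compMap^*$ and $\compMap_*$ are well-defined on bounded coherent derived categories here: $\compMap^*$ preserves $\D$ because $\ambient$ is smooth, so bounded coherent complexes have finite Tor-dimension, while $\compMap_*$ preserves $\D$ because $\compMap$ is proper ($\resMap$ being projective and $\incInAmbient$ a closed immersion). The adjunction $\compMap^*\dashv\compMap_*$ therefore holds at this level.

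First I would exploit the component $\compMap^*\D(\ambient)$ of $\mathcal{C}$. For any $E\in\D(\ambient)$, adjunction gives
\[
\Hom_{\resolution}(\compMap^* E,F[i])\cong\Hom_{\ambient}(E,\compMap_* F[i]).
\]
By hypothesis the left-hand side vanishes for all $E$ and all $i$, so $\Hom_{\ambient}(E,\compMap_* F[i])=0$ for all $E\in\D(\ambient)$ and $i\in\mathbb{Z}$; taking $E=\compMap_* F$ and $i=0$ forces $\id_{\compMap_* F}=0$, hence $\compMap_* F\cong 0$. Since $\compMap=\incInAmbient\circ\resMap$ with $\incInAmbient$ a closed immersion, $\incInAmbient_*$ is conservative, so $\resMap_* F\cong 0$; that is, $F\in\ker\resMap_*\subseteq\mathcal{C}$.

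Now I would feed this conclusion back into the second component of $\mathcal{C}$: since $F\in\ker\resMap_*\subseteq\mathcal{C}$, the hypothesis applied with $\omega=F$ and $i=0$ gives $\Hom_{\resolution}(F,F)=0$, so $\id_F=0$ and hence $F\cong 0$. This establishes that $\mathcal{C}$ is a (one-sided) spanning class for $\D(\resolution)$.

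There is no substantial obstacle. The only points deserving a word are that $\compMap^*$ and $\compMap_*$ are defined on the bounded coherent categories (as noted, from smoothness of $\ambient$ and properness of $\resMap$) together with the conservativity of $\incInAmbient_*$, and the elementary fact that an object $G$ of a triangulated category with $\Hom(G,G)=0$ is zero — applied once in $\D(\ambient)$ to $\compMap_* F$ and once in $\D(\resolution)$ to $F$.
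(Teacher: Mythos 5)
Your argument is correct and coincides with the paper's own proof: both use the adjunction $\compMap^*\dashv\compMap_*$ together with Yoneda to get $\compMap_*F\cong 0$, then the factorization $\compMap_*=\incInAmbient_*\resMap_*$ with $\incInAmbient$ a closed embedding to conclude $F\in\ker\resMap_*$, and finally self-orthogonality to force $F\cong 0$. The extra remarks on why $\compMap^*$ and $\compMap_*$ preserve the bounded coherent categories are a welcome but inessential addition.
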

\begin{proof} 
We claim $\cF\in\mathcal{C}^\perp$  implies $\cF\cong 0$, where $\mathcal{C}^\perp $ is the right orthogonal to $\mathcal{C}$.\footnote{A (two-sided) spanning class also satisfies  this  condition  with ${}^\perp\mathcal{C}$ in place of $\mathcal{C}^\perp$, see for instance~\cite[Definition 1.47]{Huy}.} First, for $\cF\in\compMap^* \D(\ambient)^\perp$ we have $\compMap_*\cF\cong 0$ by adjunction and Yoneda. Now $\compMap_*\cF\cong\incInAmbient_*\resMap_*\cF\cong 0$ and, because $\incInAmbient$ is a closed embedding, we deduce that $\resMap_*\cF\cong 0$ and the result follows.
\end{proof}

\begin{prop}\label{prop.objA} In the setting of Section~\ref{sect.setting} we have:
\begin{enumerate}
\item\label{prop.objAA}
$\tw[\compMap^*]\compMap^*( \placeholder) \cong \compMap^*( \placeholder) \otimes \resMap^*\normal{\singular}^\vee[2]$
\item\label{prop.objAB}
$ \tw[\compMap^*] |_{\ker \resMap_*} \cong \id $
\item\label{prop.objAC}
$\hypertw \compMap^*( \placeholder) \cong \compMap^*( \placeholder) $
\item\label{prop.objAD}
$\hypertw  |_{\ker \resMap_*} \cong \placeholder \otimes \resMap^* \normal{\singular} [-2] $
\end{enumerate}
Furthermore, $\hypertw$ preserves $\ker \resMap_*$.
\end{prop}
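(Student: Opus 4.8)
The plan is to deduce all four isomorphisms from the defining triangle~\eqref{eq.twisttri} for $\tw[\compMap^*]$, together with the twist description $\tw[\compMap^*](\placeholder)\otimes\normal{\resolution}\cong\tw[\iun]^{-1}(\placeholder)[2]$ from Theorem~\ref{thm.main}(\ref{thm.mainpC}) and the relation $\resMap^*\normal{\singular}\cong\normal{\resolution}\otimes\cO(n\excBlowupSingular)$ from Proposition~\ref{lem.normals}. The key structural input is that $\compMap_*\cong\incInAmbient_*\resMap_*$ with $\incInAmbient$ a closed embedding, and that $\compMap^*\compMap_*\cong\resMap^*\resMap_*$ after restricting along $\incInAmbient$, so that the counit $\compMap^*\compMap_*\to\id_\resolution$ factors through $\resMap^*\resMap_*\to\id$; Assumption~\ref{assm.rat} makes $\resMap^*\resMap_*$ well-behaved, in particular $\resMap_*\compMap^*(\placeholder)=\resMap_*\resMap^*\incInAmbient^*(\placeholder)\cong\incInAmbient^*(\placeholder)$ by the projection formula and $\cO_\singular\isoto\resMap_*\cO_\resolution$.

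For part~(\ref{prop.objAA}), I would apply the triangle~\eqref{eq.twisttri} to an object of the form $\compMap^*\cG$ for $\cG\in\D(\ambient)$. The counit $\compMap^*\compMap_*\compMap^*\cG\to\compMap^*\cG$ is a split epimorphism (it has the section coming from the unit $\id\to\compMap_*\compMap^*$ after applying $\compMap^*$), and since $\tw[\compMap^*]\cong\tw[\iun]^{-1}(\placeholder)[2]\otimes\normal{\resolution}^\vee$ is an autoequivalence, the triangle shows $\tw[\compMap^*]\compMap^*\cG$ is the cone, which splits off as the "missing summand." The cleanest route is to observe that $\compMap^*\compMap_*\compMap^*\cong\compMap^*\incInAmbient^*\incInAmbient_*\resMap_*\resMap^*\incInAmbient^*(\placeholder)$, and to use the standard self-intersection/Koszul triangle for the Cartier divisor $\singular\hookrightarrow\ambient$: $\incInAmbient^*\incInAmbient_*(\placeholder)\cong(\placeholder)\oplus(\placeholder)\otimes\normal{\singular}^\vee[1]$ is not quite an isomorphism but fits in a triangle $(\placeholder)\otimes\normal{\singular}^\vee[1]\to\incInAmbient^*\incInAmbient_*(\placeholder)\to(\placeholder)\to$. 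Pulling back along $\resMap^*$ and using $\resMap_*\resMap^*\cong\id$ on $\D(\singular)$, this triangle becomes precisely the triangle~\eqref{eq.twisttri} evaluated on $\compMap^*\cG$, identifying $\tw[\compMap^*]\compMap^*\cG\cong\compMap^*\cG\otimes\resMap^*\normal{\singular}^\vee[2]$. For part~(\ref{prop.objAB}), if $\cF\in\ker\resMap_*$ then $\compMap_*\cF\cong\incInAmbient_*\resMap_*\cF\cong0$, so the triangle~\eqref{eq.twisttri} collapses to give $\id_\resolution\isoto\tw[\compMap^*]$ on $\cF$.

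Parts~(\ref{prop.objAC}) and~(\ref{prop.objAD}) are then formal consequences of Definition~\ref{defn.hypertw}, namely $\hypertw=\tw[\compMap^*](\placeholder)\otimes\resMap^*\normal{\singular}[-2]$. For~(\ref{prop.objAC}): $\hypertw\compMap^*\cG\cong(\tw[\compMap^*]\compMap^*\cG)\otimes\resMap^*\normal{\singular}[-2]\cong\compMap^*\cG\otimes\resMap^*\normal{\singular}^\vee[2]\otimes\resMap^*\normal{\singular}[-2]\cong\compMap^*\cG$. For~(\ref{prop.objAD}): on $\ker\resMap_*$ we have $\tw[\compMap^*]\cong\id$ by~(\ref{prop.objAB}), so $\hypertw|_{\ker\resMap_*}\cong(\placeholder)\otimes\resMap^*\normal{\singular}[-2]$. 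The main obstacle is the careful bookkeeping in part~(\ref{prop.objAA}): I need to check that the triangle obtained by pulling back the Koszul triangle for $\singular\hookrightarrow\ambient$ genuinely coincides (not merely term-by-term, but as a triangle of functors) with the twist triangle~\eqref{eq.twisttri} restricted to the image of $\compMap^*$. This requires verifying that the comparison map $\compMap^*\compMap_*\compMap^*\to\resMap^*\resMap_*\resMap^*\incInAmbient^*$ is compatible with counits, which follows from Assumption~\ref{assm.rat} and functoriality of base change, but deserves an explicit check.
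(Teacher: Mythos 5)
Your proposal is correct, and parts (\ref{prop.objAB})--(\ref{prop.objAD}) coincide with the paper's proof (the kernel statement is immediate from $\compMap_*\cong\incInAmbient_*\resMap_*$ with $\incInAmbient$ a closed embedding, and the last two parts are formal from Definition~\ref{defn.hypertw}). For part~(\ref{prop.objAA}) you take a slightly different route: you compute the cone of the counit $\compMap^*\compMap_*\compMap^*\to\compMap^*$ directly, reducing via $\resMap_*\resMap^*\cong\id$ (Assumption~\ref{assm.rat} plus the projection formula) to the Koszul triangle $(\placeholder)\otimes\normal{\singular}^\vee[1]\to\incInAmbient^*\incInAmbient_*\to\id\to$ for the Cartier divisor $\singular\subset\ambient$, pulled back along $\resMap^*$. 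The paper instead invokes the general spherical-functor identity $\tw[\compMap^*]\compMap^*\cong\compMap^*\ctw[\compMap^*][2]$ from~\cite[Section~1.3]{Addington}, identifies $\ctw[\compMap^*]\cong\ctw[\incInAmbient^*]\cong\placeholder\otimes\ideal_\singular$ using the same Assumption~\ref{assm.rat}, and restricts $\ideal_\singular$ to $\singular$. The two arguments use identical inputs and differ only in packaging: the paper's version offloads the functoriality and the counit bookkeeping to the cited identity, whereas yours requires the compatibility check you flag at the end --- that the counit of the composed adjunction $\compMap^*\dashv\compMap_*$ factors as $\epsilon_\resMap\circ\resMap^*\epsilon_\incInAmbient\resMap_*$, with the $\epsilon_\resMap$-factor invertible on the image of $\resMap^*$ by the triangle identity and Assumption~\ref{assm.rat}. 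That check does go through, so your argument is complete; the paper's route is marginally shorter and is the one I would recommend writing up.
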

\begin{proof}
By Definition~\ref{defn.hypertw} for the hypersurface twist $\hypertw$, (\ref{prop.objAC}) and (\ref{prop.objAD})  will follow from (\ref{prop.objAA})~and~(\ref{prop.objAB}) respectively. The last part then uses the
projection formula. For (\opt{ams}{\ref{prop.objAA}}\opt{comp}{i}), we have 
 \begin{align*} \tw[\compMap^*]\compMap^*( \placeholder)
 & \cong \compMap^*\ctw[\compMap^*] ( \placeholder) [2]
\end{align*}
by a general property of spherical functors, see for instance \cite[Section~1.3]{Addington}.\footnote{Note that Addington~\cite{Addington} writes $\ctw[\fun]$ where we would write $\ctw[\fun][1]$.}
Under Assumption~\ref{assm.rat}, $\id \to \resMap_*\resMap^*$ is an isomorphism by the projection formula, and so $\ctw[\compMap^*] \cong \ctw[\incInAmbient^*]$ using that $\compMap^* \cong  \resMap^* \incInAmbient^*$. But $\ctw[\incInAmbient^*] \cong \placeholder\otimes \ideal_\singular$ by  Proposition~\ref{prop.divsph}(\ref{prop.divsphA}). Combining we have
 \begin{align*} \tw[\compMap^*]\compMap^*( \placeholder)
 & \cong \compMap^* ( \placeholder\otimes \ideal_\singular) [2]\end{align*}
and we conclude using $\incInAmbient^* \ideal_\singular \cong \normal{\singular}^\vee$. Finally (\opt{ams}{\ref{prop.objAB}}\opt{comp}{ii}) is from the definition, using $\compMap_* \cong \incInAmbient_* \resMap_*$.
\end{proof}

\begin{cor} In the setting of Theorem~\ref{thm.mainintro} we have:
\begin{enumerate}
\item
$\tw[\gun]^{-1}\compMap^*( \placeholder) \cong \compMap^*( \placeholder) \otimes\cO(-\excBlowupSingular)$
\item
$ \tw[\gun]^{-1}|_{\ker \resMap_*} \cong \placeholder \otimes \normal{\resolution}[-2] $
\end{enumerate}
\end{cor}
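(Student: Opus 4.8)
The plan is to read both formulas directly off the functorial isomorphism of Theorem~\ref{thm.mainintro}(2), feeding in the already-computed action of $\tw[\compMap^*]$ on the spanning class $\mathcal{C}$ from Proposition~\ref{prop.objA}. Since we are in the setting of Theorem~\ref{thm.mainintro} we have $\codim_\singular\blowupLocusA=1$ and an isomorphism $\tw[\compMap^*](\placeholder)\otimes\normal{\resolution}\cong\tw[\gun]^{-1}(\placeholder)[2]$ of autoequivalences of $\D(\resolution)$; rearranging, this is the same as
\[
\tw[\gun]^{-1}(\placeholder)\cong\big(\tw[\compMap^*](\placeholder)\otimes\normal{\resolution}\big)[-2].
\]
It therefore suffices to evaluate the right-hand side on objects of the form $\compMap^*(\placeholder)$ and to restrict it to $\ker\resMap_*$, which is exactly what Proposition~\ref{prop.objA} controls.

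For part~(1), substituting Proposition~\ref{prop.objA}(\ref{prop.objAA}), namely $\tw[\compMap^*]\compMap^*(\placeholder)\cong\compMap^*(\placeholder)\otimes\resMap^*\normal{\singular}^\vee[2]$, into the displayed isomorphism gives
\[
\tw[\gun]^{-1}\compMap^*(\placeholder)\cong\compMap^*(\placeholder)\otimes\resMap^*\normal{\singular}^\vee\otimes\normal{\resolution},
\]
with the shifts cancelling. It then remains to identify the invertible twisting sheaf. Because $n=\codim_\singular\blowupLocusA=1$, Proposition~\ref{lem.normals} yields $\resMap^*\normal{\singular}\cong\normal{\resolution}\otimes\cO(\excBlowupSingular)$, hence $\resMap^*\normal{\singular}^\vee\otimes\normal{\resolution}\cong\cO(-\excBlowupSingular)$, which is the claimed formula.

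For part~(2), restricting the displayed isomorphism to $\ker\resMap_*$ and substituting Proposition~\ref{prop.objA}(\ref{prop.objAB}), namely $\tw[\compMap^*]|_{\ker\resMap_*}\cong\id$, gives $\tw[\gun]^{-1}|_{\ker\resMap_*}\cong(\placeholder)\otimes\normal{\resolution}[-2]$ immediately. No step here is genuinely difficult; the only points requiring care are orienting Theorem~\ref{thm.mainintro}(2) correctly (inverting and shifting by $[-2]$, not $[2]$), tracking the dual on $\normal{\singular}$, and specializing the normal-sheaf comparison of Proposition~\ref{lem.normals} to $n=1$ so that $\resMap^*\normal{\singular}^\vee\otimes\normal{\resolution}$ collapses to $\cO(-\excBlowupSingular)$. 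Note that Assumption~\ref{assm.rat} is in force throughout Section~\ref{sec.objects}, which is what underwrites the use of Proposition~\ref{prop.objA}.
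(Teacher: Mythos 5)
Your proposal is correct and follows essentially the same route as the paper: evaluate the isomorphism of Theorem~\ref{thm.mainintro}(2) on $\compMap^*(\placeholder)$ and on $\ker\resMap_*$ using Proposition~\ref{prop.objA}, then collapse $\resMap^*\normal{\singular}^\vee\otimes\normal{\resolution}$ to $\cO(-\excBlowupSingular)$ via Proposition~\ref{lem.normals} with $n=1$. The shift bookkeeping and the remark about Assumption~\ref{assm.rat} are both accurate.
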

 \begin{proof} Theorem~\ref{thm.mainintro} gives $\tw[\compMap^*](\placeholder) \otimes \normal{\resolution} \cong \tw[\gun]^{-1}(\placeholder) [2]$, and we use Propositions~\ref{prop.objA} and~\ref{lem.normals}.
 \end{proof}

\begin{cor} In the setting of Theorem~\ref{keythm.blowupB} we have:
\begin{enumerate}
\item
$\tw[\hun]\compMap^*( \placeholder) \cong \compMap^*( \placeholder)$
\item
$ \tw[\hun]|_{\ker \resMap_*} \cong \placeholder\otimes \resMap^* \normal{\singular}[-2] $
\end{enumerate}
Furthermore, $\tw[\hun]$ preserves $\ker \resMap_*$.
\end{cor}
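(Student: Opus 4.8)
The plan is to deduce this corollary directly from Theorem~\ref{keythm.blowupB}(\ref{keythm.blowupBC}) (equivalently Corollary~\ref{cor.blowupB}(\ref{cor.blowupBB})) together with Proposition~\ref{prop.objA}, exactly as the preceding corollary was deduced from Theorem~\ref{thm.mainintro}. Recall that Theorem~\ref{keythm.blowupB}(\ref{keythm.blowupBC}) reads
\begin{equation*}
\tw[\compMap^*](\placeholder)\otimes \resMap^* \normal{\singular}  \cong  \tw[\hun] (\placeholder) [2],
\end{equation*}
so applying both sides to $\compMap^*(\placeholder)$ and then to objects of $\ker \resMap_*$ will give the two displayed isomorphisms.

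For part~(i), I would compute
\begin{align*}
\tw[\hun]\compMap^*(\placeholder)[2] &\cong \tw[\compMap^*]\compMap^*(\placeholder)\otimes \resMap^*\normal{\singular} \\
&\cong \compMap^*(\placeholder)\otimes \resMap^*\normal{\singular}^\vee[2]\otimes \resMap^*\normal{\singular}
\end{align*}
using Proposition~\ref{prop.objA}(\ref{prop.objAA}), and then observe that $\resMap^*\normal{\singular}^\vee\otimes \resMap^*\normal{\singular}\cong\cO_\resolution$, so the $[2]$ shifts cancel and we are left with $\tw[\hun]\compMap^*(\placeholder)\cong\compMap^*(\placeholder)$. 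For part~(ii), restricting the isomorphism of Theorem~\ref{keythm.blowupB}(\ref{keythm.blowupBC}) to $\ker \resMap_*$ and invoking Proposition~\ref{prop.objA}(\ref{prop.objAB}), which says $\tw[\compMap^*]|_{\ker \resMap_*}\cong\id$, gives $\tw[\hun](\placeholder)[2]\cong (\placeholder)\otimes \resMap^*\normal{\singular}$ on this subcategory, i.e. $\tw[\hun]|_{\ker \resMap_*}\cong (\placeholder)\otimes \resMap^*\normal{\singular}[-2]$. Note this uses Assumption~\ref{assm.rat}, which is in force throughout Section~\ref{sec.objects} and is what licenses Proposition~\ref{prop.objA}.

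Finally, for the claim that $\tw[\hun]$ preserves $\ker \resMap_*$, I would argue that $\resMap_*(\placeholder)\otimes \resMap^*\normal{\singular}[-2]\cong (\resMap_*(\placeholder))\otimes\normal{\singular}[-2]$ by the projection formula, so if $\cF\in\ker\resMap_*$ then $\resMap_*(\tw[\hun]\cF)\cong(\resMap_*\cF)\otimes\normal{\singular}[-2]\cong 0$, using part~(ii); hence $\tw[\hun]\cF\in\ker\resMap_*$. None of this is delicate — the only mild point to be careful about is that one must apply Proposition~\ref{prop.objA} (hence implicitly Assumption~\ref{assm.rat}), and that the invertible sheaf twists genuinely cancel in part~(i); I do not anticipate a real obstacle, as the corollary is purely formal given the results already established.
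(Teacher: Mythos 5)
Your proposal is correct and follows exactly the route the paper takes: the paper's own proof simply cites the isomorphism of Theorem~\ref{keythm.blowupB} together with Proposition~\ref{prop.objA} for parts (i) and (ii), and invokes the projection formula for the preservation of $\ker\resMap_*$, which is precisely what you spell out. Your observation that Assumption~\ref{assm.rat} is implicitly in force (via Proposition~\ref{prop.objA}) is also accurate.
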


\begin{proof} Theorem~\ref{keythm.blowupB} gives $\tw[\compMap^*](\placeholder)\otimes \resMap^* \normal{\singular}  \cong  \tw[\hun] (\placeholder) [2]$, and we again use Proposition~\ref{prop.objA}. 
\end{proof}

\section{Blowups in general codimension} 
\label{sect.gencodim}

In this section I prove Theorem~\ref{thm.mainintroB} which describes $\tw[\compMap^*]$ for  $\resMap$ the blowup in a locus with general codimension $n\geq 1$, under additional global assumptions.

\opt{ams}{\stdskip}

The following result is a cousin of Theorem~\ref{thm.sodsph} of Halpern-Leistner and Shipman, in a setting where the source of the spherical functor has a Serre functor~\cite{BondalKapranov,Huy}. 

\def\footnotetext{Addington and Aspinwall credit this result to unpublished work of Kuznetsov. A further treatment is given by Kuznetsov and Perry~\cite[Proposition A.1]{KP}.}

\begin{thm}[{\cite[Theorem~11]{AddingtonAspinwall}}\opt{ams}{\footnote{\footnotetext}}]\opt{comp}{\footnote{\footnotetext}}\label{thm.sodsphserre} Let  $\fun\colon \mathsf{D} \to \mathsf{D}'$ be a spherical functor where $\mathsf{D}$ has a Serre functor $\serre{}$ and a semiorthogonal decomposition as follows.
\begin{equation*}
\mathsf{D} = \big\langle \mathsf{A}_0, \dots, \mathsf{A}_n\big\rangle
\end{equation*}
Assume  $\ctw[\fun][d] \cong  \serre{}$ for some $d\in\mathbb{Z}$.
Then $\fun_\index$, the restriction of\,~$\fun$ to~$\mathsf{A}_\index$, is spherical, and:
\begin{equation*}
\tw[\fun] \cong \tw[\fun_0] \comp \dots \comp \tw[\fun_n]
\end{equation*}
\end{thm}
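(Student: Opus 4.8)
The plan is an induction on $n$, bootstrapping from the two-component case Theorem~\ref{thm.sodsph}, with the Serre functor hypothesis doing the work of producing the compatible second decomposition at each stage.

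For $n=0$ there is nothing to prove. For $n\geq 1$, coarsen the decomposition to the admissible two-step form $\mathsf{D} = \langle \mathsf{A}_0, \mathsf{B}\rangle$, where $\mathsf{B} = \langle \mathsf{A}_1,\dots,\mathsf{A}_n\rangle = {}^\perp\mathsf{A}_0$. To invoke Theorem~\ref{thm.sodsph} for $\fun$ and this decomposition I must exhibit the matching decomposition $\mathsf{D} = \langle \ctw[\fun]\mathsf{B}, \mathsf{A}_0\rangle$. This is where the hypothesis $\ctw[\fun][d]\cong\serre{}$ enters: since a shift fixes every subcategory, $\ctw[\fun]\mathsf{B}$ coincides as a subcategory with $\serre{}\mathsf{B}$, and the standard Serre-twisted rotation of semiorthogonal decompositions (see \cite{BondalKapranov}) gives $\mathsf{D}=\langle\mathsf{A}_0,\mathsf{B}\rangle=\langle\serre{}\mathsf{B},\mathsf{A}_0\rangle$; indeed the needed semiorthogonality is $\Hom(\mathsf{A}_0,\serre{}\mathsf{B})\cong\Hom(\mathsf{B},\mathsf{A}_0)^\vee=0$ by Serre duality, while $\serre{}$ being an equivalence preserves admissibility. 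Theorem~\ref{thm.sodsph} now yields that $\fun_0 = \fun|_{\mathsf{A}_0}$ and $\fun|_{\mathsf{B}}$ are spherical and that $\tw[\fun]\cong\tw[\fun_0]\comp\tw[\fun|_{\mathsf{B}}]$.

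It remains to feed $\fun|_{\mathsf{B}}$ back into the induction with the decomposition $\mathsf{B}=\langle\mathsf{A}_1,\dots,\mathsf{A}_n\rangle$. Let $\iota\colon\mathsf{B}\into\mathsf{D}$ be the inclusion, so $\fun|_{\mathsf{B}}=\fun\comp\iota$, which has adjoints $\Ladj{\iota}\Ladj{\fun}\dashv\fun|_{\mathsf{B}}\dashv\Radj{\iota}\Radj{\fun}$ as $\mathsf{B}$ is admissible. Being admissible in a category with a Serre functor, $\mathsf{B}$ has a Serre functor, and the adjunction $\iota\dashv\Radj{\iota}$ together with full faithfulness of $\iota$ identifies it as $\serre{}_{\mathsf{B}}\cong\Radj{\iota}\comp\serre{}\comp\iota$. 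Pre-composing with $\iota$ and post-composing with $\Radj{\iota}$ in the defining triangle $\ctw[\fun]\to\id_{\mathsf{D}}\to\Radj{\fun}\fun$ (both operations being exact), and using the isomorphism $\Radj{\iota}\comp\iota\cong\id_{\mathsf{B}}$, identifies $\Radj{(\fun|_{\mathsf{B}})}(\fun|_{\mathsf{B}})=\Radj{\iota}\Radj{\fun}\fun\iota$ with the cone on the unit map $\Radj{\iota}\ctw[\fun]\iota\to\id_{\mathsf{B}}$; hence $\ctw[\fun|_{\mathsf{B}}]\cong\Radj{\iota}\ctw[\fun]\iota\cong\Radj{\iota}\serre{}[-d]\iota\cong\serre{}_{\mathsf{B}}[-d]$. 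So $\fun|_{\mathsf{B}}$ satisfies the hypotheses with the same shift $d$, and the inductive hypothesis gives that $\fun_1,\dots,\fun_n$ are spherical and $\tw[\fun|_{\mathsf{B}}]\cong\tw[\fun_1]\comp\dots\comp\tw[\fun_n]$. Combining with the previous paragraph yields $\tw[\fun]\cong\tw[\fun_0]\comp\tw[\fun_1]\comp\dots\comp\tw[\fun_n]$, as required.

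The main obstacle I expect is the coherence bookkeeping in the last paragraph: checking that applying $\Radj{\iota}(\placeholder)\iota$ to the cotwist triangle of $\fun$ genuinely produces the cotwist triangle of $\fun|_{\mathsf{B}}$ — that is, that the unit of the composite adjunction matches up — together with the dual identification $\serre{}_{\mathsf{B}}\cong\Radj{\iota}\serre{}\iota$. Both are routine in an enhanced (dg or stable) setting, where cones are functorial, but they are the points that need to be pinned down carefully; the remainder is a clean iteration of the two-component statement.
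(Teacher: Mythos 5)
The paper does not actually prove this statement: it is quoted verbatim from Addington--Aspinwall \cite[Theorem~11]{AddingtonAspinwall} (credited there to Kuznetsov, with a further treatment in \cite[Proposition~A.1]{KP}), so there is no internal proof to compare against. Your argument is a genuine, essentially correct derivation of the result from material the paper does contain, namely the two-component criterion of Theorem~\ref{thm.sodsph}. The route — coarsen to $\mathsf{D}=\langle\mathsf{A}_0,\mathsf{B}\rangle$, use $\ctw[\fun]\cong\serre{}[-d]$ together with the Bondal--Kapranov rotation $\mathsf{D}=\langle\mathsf{A}_0,\mathsf{B}\rangle=\langle\serre{}\mathsf{B},\mathsf{A}_0\rangle$ to manufacture the compatible second decomposition, apply Theorem~\ref{thm.sodsph}, and then induct on $\mathsf{B}$ — is sound, and the key computation $\ctw[\fun\comp\iota]\cong\Radj{\iota}\comp\ctw[\fun]\comp\iota\cong\serre{}_{\mathsf{B}}[-d]$ is exactly what makes the inductive hypothesis propagate with the same shift $d$. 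This buys a self-contained proof within the paper's toolkit, at the cost of having to verify the unit-coherence you flag; the cited sources instead run the mutation argument directly on the $(n+1)$-term decomposition.

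Two small points to pin down. First, the identification of the unit of the composite adjunction $\fun\comp\iota\dashv\Radj{\iota}\comp\Radj{\fun}$ with $\Radj{\iota}\eta_{\fun}\iota$ precomposed with the isomorphism $\id_{\mathsf{B}}\isoto\Radj{\iota}\iota$ is standard, but the passage from a triangle of functors to an isomorphism of cones does require the enhanced (Fourier--Mukai) setting that the paper assumes throughout, so you should say you are working there. Second, in the inductive step you should record that each $\mathsf{A}_\index$ with $\index\geq 1$ remains admissible in $\mathsf{B}$ (restrict the adjoints of $\mathsf{A}_\index\into\mathsf{D}$ along $\iota$) and that the restriction of $\fun|_{\mathsf{B}}$ to $\mathsf{A}_\index$ agrees, together with its adjoints and hence its twist, with the restriction $\fun_\index$ of $\fun$ itself; both are routine but needed for the final composition to read $\tw[\fun_0]\comp\dots\comp\tw[\fun_n]$ as stated.
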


We may then prove the following.

\begin{thm}[\optbracket{Theorem~\ref{thm.mainintroB}}]\label{thm.mainB} In the setting of Section~\ref{sect.setting} suppose that:
\begin{enumerate}[label={(\roman*)},ref={\roman*}]
\item\label{thm.mainBaB} $\ambient$ is projective. 
\item\label{thm.mainBaA} $\singular$ is an anticanonical divisor in $\ambient$. 
\end{enumerate}
We may put 
\begin{equation*}
	\begin{tikzpicture}[xscale=\compdiagscale]
		\node (A) at (-0.25,0) {$ \gun_\index\colon\D(\blowupLocusA)$};
		\node (B) at (1,0) {$\D(\excBlowupSingular)$};
		\node (C) at (3,0) {$\D(\excBlowupSingular)$};
		\node (D) at (4,0) {$\D(\resolution)$};

		\draw [->] (A) to node[above]{\compstyle $\blowupProjA^*$} (B);
		\draw [->] (B) to node[above]{\compstyle $\otimes \cO_{\blowupProjA}(\index-1)$} (C);
		\draw [->] (C) to (D);
	\end{tikzpicture}
\end{equation*}
for $\index\in\mathbb{Z}$, where the last functor is  pushforward. Then:

\begin{enumerate}[label={(\arabic*)},ref={\arabic*}]
\item\label{thm.mainBrA} $\gun_\index$ is spherical for each $\index\in\mathbb{Z}$.
\item\label{thm.mainBrB} There is an isomorphism
\begin{equation*}
 \tw[\compMap^*] (\placeholder)\otimes \normal{\resolution} \cong \big(\tw[\gun_1] \comp \dots \comp \tw[\gun_n]\big)^{-1}(\placeholder)[2]
\end{equation*}
between autoequivalences of $\D(\resolution)$.\end{enumerate}
\end{thm}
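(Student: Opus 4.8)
The plan is to run the argument of Theorems~\ref{thm.main} and~\ref{thm.mainn1} one more time, now factoring the twist of the spherical functor $\incResInBlowupAmbient^*\colon\D(\resolutionAmbient)\to\D(\resolution)$ across all $n+1$ pieces of the Orlov decomposition of $\D(\resolutionAmbient)$ at once. Since there are now more than two pieces, the factoring tool will not be the Halpern-Leistner--Shipman criterion but its Serre-functor refinement, Theorem~\ref{thm.sodsphserre}. As in the proof of Theorem~\ref{thm.main}, $\resolution$ is a Cartier divisor in $\resolutionAmbient$ by Proposition~\ref{prop.smCar2}(\ref{prop.smCarA}), so $\fun=\incResInBlowupAmbient^*$ is spherical with $\tw[\fun]\cong\placeholder\otimes\normal{\resolution}^\vee[2]$ and $\ctw[\fun]\cong\placeholder\otimes\ideal_\resolution$ by Proposition~\ref{prop.divsph}(\ref{prop.divsphA}). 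I would take the first semiorthogonal decomposition in~\eqref{eq.sods}, writing $\mathsf{A}_0=\blowupAmbientMap^*\D(\ambient)$ and $\mathsf{A}_\index=\SODembedA_\index\D(\blowupLocusA)$ for $\index=1,\dots,n$, so that $\D(\resolutionAmbient)=\langle\mathsf{A}_0,\dots,\mathsf{A}_n\rangle$, and apply Theorem~\ref{thm.sodsphserre} to $\fun$ with this decomposition.

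The two hypotheses of Theorem~\ref{thm.sodsphserre} to check are that $\D(\resolutionAmbient)$ has a Serre functor and that $\ctw[\fun][d]\cong\serre{}$ for some $d\in\mathbb{Z}$. The first holds because $\resolutionAmbient=\blowupAmbient$ is smooth by Proposition~\ref{prop.smCar}(\ref{prop.smCarY}) and projective by hypothesis~(\ref{thm.mainBaB}), so $\serre{}\cong\placeholder\otimes\omega_\resolutionAmbient[\dim\resolutionAmbient]$. The second is where the global hypotheses enter: by~(\ref{thm.mainBaA}) we have $\ideal_\singular\cong\omega_\ambient$, and since $\dim\blowupAmbientMap=0$ we have $\blowupAmbientMap^!(\placeholder)\cong\blowupAmbientMap^*(\placeholder)\otimes\omega_\blowupAmbientMap$, so Proposition~\ref{lem.ideals}(\ref{lem.idealsX}) gives $\ideal_\resolution\cong\blowupAmbientMap^!\ideal_\singular\cong\blowupAmbientMap^*\omega_\ambient\otimes\omega_\blowupAmbientMap\cong\omega_\resolutionAmbient$, the last step using $\omega_\blowupAmbientMap=\omega_\resolutionAmbient\otimes\blowupAmbientMap^*\omega_\ambient^\vee$. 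Hence $\ctw[\fun]\cong\placeholder\otimes\ideal_\resolution\cong\placeholder\otimes\omega_\resolutionAmbient$ and $\ctw[\fun][\dim\resolutionAmbient]\cong\serre{}$. I expect this verification --- identifying $\ideal_\resolution$ with $\omega_\resolutionAmbient$ and confirming that the shift is $d=\dim\resolutionAmbient$ --- to be the main step requiring care; the remainder parallels Theorem~\ref{thm.main}.

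Theorem~\ref{thm.sodsphserre} then yields that the restriction $\fun_\index$ of $\fun$ to $\mathsf{A}_\index$ is spherical for each $\index=0,\dots,n$, together with a factorization $\tw[\fun]\cong\tw[\fun_0]\comp\tw[\fun_1]\comp\dots\comp\tw[\fun_n]$. I would identify $\fun_0=\incResInBlowupAmbient^*\comp\blowupAmbientMap^*\cong\compMap^*$ by commutativity of~\eqref{eq.resandamb} (so that $\compMap^*$ is spherical, as in Theorem~\ref{thm.main}), and $\fun_\index=\incResInBlowupAmbient^*\comp\SODembedA_\index\cong\gun_\index$ for $\index=1,\dots,n$ by Proposition~\ref{lem.resembed}; this gives part~(\ref{thm.mainBrA}) for $\index\in\{1,\dots,n\}$. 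For general $\index\in\mathbb{Z}$ I would post-compose the intertwining isomorphism of Proposition~\ref{lem.intertwine} with $\incResInBlowupAmbient^*$ and use Proposition~\ref{lem.resembed} together with $\incResInBlowupAmbient^*\ideal_\resolution\cong\normal{\resolution}^\vee$ to obtain $\gun_\index(\placeholder)\otimes\normal{\resolution}^\vee\cong\gun_{\index-n}(\placeholder\otimes\blowupIncAmbient^*\ideal_\singular)$; thus each $\gun_\index$ differs from some $\gun_{\index'}$ with $\index'\in\{1,\dots,n\}$ only by pre- and post-composition with autoequivalences, so sphericalness propagates by the standard fact that $\Phi\comp\fun\comp\Phi'$ is spherical whenever $\fun$ is spherical and $\Phi,\Phi'$ are autoequivalences, as in the proof of Proposition~\ref{prop.blowupB}. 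This proves~(\ref{thm.mainBrA}).

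For~(\ref{thm.mainBrB}) I would combine the factorization with $\tw[\fun]\cong\placeholder\otimes\normal{\resolution}^\vee[2]$ to get
\[
\placeholder\otimes\normal{\resolution}^\vee[2]\;\cong\;\tw[\compMap^*]\comp\tw[\gun_1]\comp\dots\comp\tw[\gun_n].
\]
Solving for $\tw[\compMap^*]$ and evaluating the composition gives $\tw[\compMap^*](\placeholder)\cong\big(\tw[\gun_1]\comp\dots\comp\tw[\gun_n]\big)^{-1}(\placeholder)\otimes\normal{\resolution}^\vee[2]$, and tensoring both sides by $\normal{\resolution}$ yields the stated isomorphism.
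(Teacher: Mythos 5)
Your proposal is correct and follows essentially the same route as the paper: the same application of Theorem~\ref{thm.sodsphserre} to the decomposition~\eqref{eq.sods}, the same verification that $\ideal_\resolution \cong \omega_{\resolutionAmbient}$ via Proposition~\ref{lem.ideals}(\ref{lem.idealsX}) and the anticanonical hypothesis, and the same identifications $\fun_0 \cong \compMap^*$ and $\fun_\index \cong \gun_\index$ via Proposition~\ref{lem.resembed}. The only cosmetic difference is in extending sphericalness to all $\index\in\mathbb{Z}$: you restrict Proposition~\ref{lem.intertwine} to $\resolution$ to relate $\gun_\index$ and $\gun_{\index-n}$, whereas the paper's Lemma~\ref{lem.resembeddiff} relates consecutive indices by tensoring with $\cO_\resolution(-\excBlowupSingular)$ --- both cover all of $\mathbb{Z}$ from $\{1,\dots,n\}$ and invoke the same standard fact about composing a spherical functor with autoequivalences.
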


\begin{proof}
We use an argument similar to Theorem~\ref{thm.main}. Recall from there that
\begin{equation*}
\fun = \incResInBlowupAmbient^*\colon \D(\resolutionAmbient) \to \D(\resolution)
\end{equation*}
is spherical with cotwist $\placeholder\otimes \ideal_\resolution$. Using (\ref{thm.mainBaB}) the blowup~$\resolutionAmbient$ is projective, and it is  smooth by Proposition~\ref{prop.smCar}(\ref{prop.smCarY}).  Hence $\D(\resolutionAmbient)$ has a Serre functor $\serre{} = \placeholder \otimes \omega_{\resolutionAmbient}[\dim {\resolutionAmbient}]$, see for instance \cite[Theorem~3.12]{Huy}. To check that $\serre{}$ satisfies the condition of Theorem~\ref{thm.sodsphserre}, first note that by (\ref{thm.mainBaA}) we have $\ideal_\singular \cong \omega_\ambient$. Then Proposition~\ref{lem.ideals}(\ref{lem.idealsX}) gives $ \ideal_\resolution \cong \blowupAmbientMap^!\ideal_\singular \cong \blowupAmbientMap^!\omega_\ambient \cong \omega_{\resolutionAmbient}$ 
so $\ctw[\fun]\cong \placeholder \otimes \omega_{\resolutionAmbient}$ and the condition is satisfied with $d=\dim \resolutionAmbient.$ 

Then applying Theorem~\ref{thm.sodsphserre} to the semiorthogonal decomposition
\begin{align}\label{eq.longsod}
\D(\resolutionAmbient) & = \big\langle \blowupAmbientMap^* \D(\ambient), \SODembedA_1 \D(\blowupLocusA), \dots, \SODembedA_n \D(\blowupLocusA) \big\rangle
\end{align}
from~\eqref{eq.sods}
gives spherical functors $\fun_\index$ for $\index=0,\dots,n$ and a factorization as follows.
\begin{equation*}
\tw[\fun] \cong \tw[\fun_0] \comp \tw[\fun_1] \comp \dots \comp \tw[\fun_{n}]
\end{equation*}
We have $\tw[\fun] \cong \placeholder\otimes \normal{\resolution}^\vee[2]$ and  $\fun_{0}\cong \compMap^*$ as in the proof of Theorem~\ref{thm.main}. Now $\fun_\index \cong \gun_\index$ for $\index=1,\dots,n$ using Proposition~\ref{lem.resembed}, and so these $\gun_\index$ are spherical.  The following Lemma~\ref{lem.resembeddiff} gives that for each $\index,\index'\in\mathbb{Z}$, $\gun_\index \cong \Phi \gun_{\index'}$ for some autoequivalence~$\Phi$, and we deduce~(\ref{thm.mainBrA}). For  (\ref{thm.mainBrB}), the factorization of $\tw[\fun]$ gives the following, and we rearrange. 
\begin{equation*}
\placeholder\otimes \normal{\resolution}^\vee [2]\cong  \tw[\compMap^*] \comp \tw[\gun_1] \comp \dots \comp \tw[\gun_n]\qedhere
\end{equation*}
\end{proof}

\begin{lem}\label{lem.resembeddiff}
 The functors $\gun_\index$ are related by invertible sheaves,  as follows.
\begin{equation*}
\gun_{\index+l}(\placeholder)\cong  \gun_\index (\placeholder) \otimes \cO_\resolution(-l\,\excBlowupSingular)    
\end{equation*}
\end{lem}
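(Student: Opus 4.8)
The plan is to reduce the statement to a single identification of invertible sheaves on $\excBlowupSingular$, namely
\[
\incExcInResolution^*\cO_\resolution(\excBlowupSingular)\cong\cO_{\blowupProjA}(-1),
\]
where $\incExcInResolution\colon\excBlowupSingular\into\resolution$ denotes the inclusion, as in Definition~\ref{def.sphA}. Granting this, the computation is short: by definition $\gun_\index(\placeholder)=\incExcInResolution[*]\bigl(\blowupProjA^*(\placeholder)\otimes\cO_{\blowupProjA}(\index-1)\bigr)$, so
\[
\gun_{\index+l}(\placeholder)=\incExcInResolution[*]\bigl(\blowupProjA^*(\placeholder)\otimes\cO_{\blowupProjA}(\index-1)\otimes\cO_{\blowupProjA}(l)\bigr).
\]
Taking the $(-l)$-th tensor power of the displayed identity gives $\cO_{\blowupProjA}(l)\cong\incExcInResolution^*\cO_\resolution(-l\,\excBlowupSingular)$, and then the projection formula pulls the invertible sheaf $\cO_\resolution(-l\,\excBlowupSingular)$ through the pushforward $\incExcInResolution[*]$, yielding $\gun_{\index+l}(\placeholder)\cong\gun_\index(\placeholder)\otimes\cO_\resolution(-l\,\excBlowupSingular)$, which is the claim.

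To establish the sheaf identity I would use the fibre square of Lemma~\ref{prop.twoexcs}, in the notation of~\eqref{eq.twoexcsnotn}, so that $\incResInBlowupAmbient\circ\incExcInResolution=\blowupExcIncAmbient\circ\incExcInExcAmbient$ as morphisms $\excBlowupSingular\to\resolutionAmbient$. Since $\excBlowupSingular$ is the Cartier divisor $\incResInBlowupAmbient^{-1}(\excBlowupAmbient)$ we have $\incResInBlowupAmbient^*\cO_{\resolutionAmbient}(\excBlowupAmbient)\cong\cO_\resolution(\excBlowupSingular)$ — this is the identity already used in the proof of Proposition~\ref{lem.normals}. Pulling back along $\incExcInResolution$ and commuting pullbacks around the square gives
\[
\incExcInResolution^*\cO_\resolution(\excBlowupSingular)\cong\incExcInExcAmbient^*\blowupExcIncAmbient^*\cO_{\resolutionAmbient}(\excBlowupAmbient)\cong\incExcInExcAmbient^*\normal{\excBlowupAmbient}.
\]
Now $\blowupAmbientMap$ is the blowup of smooth $\ambient$ along smooth $\blowupLocusA$, so $\excBlowupAmbient\cong\P\normalof{\blowupLocusA}{\ambient}$ and $\normal{\excBlowupAmbient}\cong\cO_{\blowupProjAmbient}(-1)$ in the standard way; applying $\incExcInExcAmbient^*$ and invoking the compatibility $\incExcInExcAmbient^*\cO_{\blowupProjAmbient}(1)\cong\cO_{\blowupProjA}(1)$ from~\eqref{eq.compat} completes the identification. (In the codimension-one situation of Section~\ref{section.divisor} this is precisely the relation recorded at the start of the proof of Proposition~\ref{lem.Grel}; the above is its general-codimension counterpart.)

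I do not foresee any real obstacle here: the only delicate point is the orientation of the twist in the normal-sheaf identification — that restricting $\cO_{\blowupProjAmbient}(1)$ to $\excBlowupAmbient$ corresponds to $\cO_{\resolutionAmbient}(-\excBlowupAmbient)|_{\excBlowupAmbient}$ rather than to its inverse — and once this is pinned down everything is a formal manipulation with the projection formula. The resulting statement is exactly what is needed in the proof of Theorem~\ref{thm.mainB} to supply, for each $\index,\index'$, an autoequivalence $\Phi$ with $\gun_\index\cong\Phi\,\gun_{\index'}$, namely $\Phi=(\placeholder)\otimes\cO_\resolution\bigl(-(\index-\index')\excBlowupSingular\bigr)$.
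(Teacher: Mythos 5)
Your proposal is correct and follows essentially the same route as the paper: the paper's proof likewise reduces to the identification $\incExcInResolution^*\cO_\resolution(\excBlowupSingular)\cong\cO_{\blowupProjA}(-1)$ and the projection formula, merely asserting that identity without proof and inducting from the case $l=1$. Your extra justification of the sheaf identity via Lemma~\ref{prop.twoexcs}, the normal sheaf of $\excBlowupAmbient$, and the compatibility~\eqref{eq.compat} is sound and only fills in detail the paper leaves implicit.
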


\begin{proof} We have $\gun_{\index+1} =  \incExcInResolution[*] (\blowupProjA^*(\placeholder) \otimes \cO_\blowupProjA(\index) )$
where $\incExcInResolution$ denotes the embedding of $\excBlowupSingular$ in~$\resolution$, as in the proof of Proposition~\ref{lem.resembed}. Noting that $\incExcInResolution^* \cO_\resolution(\excBlowupSingular) = \cO_\blowupProjA(-1)$ and using the projection formula we find
$ \gun_{\index+1} (\placeholder) \otimes \cO_\resolution(\excBlowupSingular)  \cong \gun_{\index} (\placeholder)$,
and the result follows.
\end{proof}

\begin{rem} The same argument using the other decomposition of~\eqref{eq.sods} gives the following.
\begin{equation*}
 \tw[\compMap^*] (\placeholder\otimes \normal{\resolution}) \cong \big( \tw[\gun_{-n+1}] \comp \dots \comp \tw[\gun_0]\big)^{-1}(\placeholder)[2]
\end{equation*}
\end{rem}

\begin{rem}\label{rem.relgen}Assumptions~(\ref{thm.mainBaB}) and~(\ref{thm.mainBaA}) of Theorem~\ref{thm.mainB} can surely be relaxed. In particular, we might replace the Serre functor in the proof above with a relative Serre functor by working over the base~$\ambient$, and noting that our contraction~$\resMap$ is projective by definition. The result could then follow from a relative version of Theorem~\ref{thm.sodsphserre}. Another approach could be to iteratively apply  Theorem~\ref{thm.sodsph} of Halpern-Leistner--Shipman, under appropriate assumptions, to the semiorthogonal decomposition~\eqref{eq.longsod}. 
\end{rem}

\begin{eg}\label{eg.quarticK3ext} Let $\singular$ be a quartic K3 surface in $\ambient = \mathbb{P}^3$ with a node~$x$. Let $\blowupLocusA = \{x\}$ and write $\excBlowupSingularShort=\excBlowupSingular\cong\mathbb{P}^1$ for the exceptional locus in the blowup~$\resolution$, as illustrated in Figure~\ref{fig.surf}. Then Theorem~\ref{thm.mainB} applies with $n=2$. As in Remark~\ref{rem.sphobj}, the twists $\tw[\gun_1]$ and $\tw[\gun_2]$ are simply twists  by spherical objects
\beq
\cO_{\excBlowupSingularShort} \cong \cO_{\mathbb{P}^1} \qquad\text{and}\qquad
\cO_{\excBlowupSingularShort}(1) \cong \cO_{\mathbb{P}^1}(2)\eeq 
using that $\excBlowupSingularShort$ is a conic in $\excBlowupAmbient\cong\mathbb{P}^2$. By~Theorem~\ref{thm.mainB}(\ref{thm.mainBrB}) we therefore have a relation
\begin{equation}\label{eq.twoterm}
 \tw[\compMap^*] (\placeholder)\otimes \normal{\resolution} \cong \big(\tw[\cO_{\mathbb{P}^1}] \comp \tw[\cO_{\mathbb{P}^1}(2)]\big)^{-1}(\placeholder)[2]
\end{equation}
in the autoequivalence group of $\D(\resolution)$. Note that $\normal{\resolution}|_{\mathbb{P}^1}\cong\cO_{\mathbb{P}^1}(4)$, where the~$4$ is minus the self-intersection number of the curve~$\excBlowupSingularShort$ in~$\resolution$. In Example~\ref{eg.defquarticK3} below I explain how, after some work, the above relation is compatible with known results.
\end{eg}

We have global analogues of the Calabi--Yau $n$-fold cones of Example~\ref{eg.cone}, as follows.

\begin{eg} As in Example~\ref{eg.cone}, consider the affine cone over a reduced hypersurface~$H$ of degree $n$ in $\mathbb{P}^n$ for $n\geq 2$, defined by $p_n(x)$ a homogeneous polynomial of degree~$n$ in variables $x_0,\dots,x_n$. Now we construct a singular Calabi--Yau $n$-fold, satisfying the assumptions of Theorem~\ref{thm.mainB}, with a chart given by a deformation of this cone. 

Take a further variable $y$ and put~$\singular=\zeroes{q} \subset \P^{n+1}$ for 
\beq
q(x,y) = p_n(x) y^2 + r_{n+1}(x)  y + r_{n+2}(x)
\eeq  
where $r_d(x)$ denotes a generic homogeneous polynomial of degree $d$ in the variables $x$. Then $\singular$ is reduced. On the chart $U=\{y\neq 0\}$, the restriction $\singular_U$~is cut out by 
\beq
q_U(x/y) = p_n(x/y) + r_{n+1}(x/y) + r_{n+2}(x/y)
\eeq  
after dividing through by $y^{n+2}$, so~$X_U$ is a deformation of the cone $\zeroes{p_n}$, as required. Take $\ambient=\P^{n+1}$ and $\blowupLocusA$ to be the point $(x:y)=(0:1)$. Then the normal cone $\normalcone{\blowupLocusA}{\singular}$ is  $\zeroes{p_n} \subset \A^{n+1}$ and so $\excBlowupSingular$ is a degree~$n$ hypersurface in $\excBlowupAmbient\cong\P^n$.

This satisfies the assumptions of Theorem~\ref{thm.mainB}: in particular, $\resMap$ is crepant by a local calculation on the chart~$U$ as in Example~\ref{eg.cone}, and $\singular$ is degree~$n+2$ in~$\P^{n+1}$ as required.
\end{eg}

\section{Compatibility with base change}
\label{section.Functoriality}

In this section I explain how the twist $\tw[\compMap^*]$ has a pleasing compatibility with base change. I show how this leads to interesting results, even for the basic example of a 3-fold ordinary double point, after base change to a hyperplane section.

\opt{ams}{\stdskip}

\begin{prop}\label{prop.twistbc} Take a fibre square where $\compMap$ and $\compMap'$ are obtained as in Section~\ref{sect.setting}.
\begin{equation*}
    \begin{tikzpicture}[scale=\stdshrink]
    	\node (ambientSource) at (1,0) {$\resolution$};
	\node (ambientSourceP) at (0,0) {$\resolution'$};
	\node (ambient) at (1,-1) {$\ambient$};
	\node (ambientP) at (0,-1) {$\ambient'$}; 
	\draw [->] (ambientSourceP) to node[left]{\stdstyle $\compMap'$}(ambientP);
	\draw [->] (ambientSourceP) to node[above]{\stdstyle $\ambientSourcebc$} (ambientSource);
	\draw [->] (ambientSource) to node[right]{\stdstyle $\compMap\phantom{'}$}(ambient);
	\draw [->] (ambientP) to (ambient);
	\end{tikzpicture}
\end{equation*} 
Then the associated twists are {intertwined} by $\ambientSourcebc_*$ as follows.
\beq
\tw[\compMap^*] \ambientSourcebc_* \cong \ambientSourcebc_* \tw[\compMap'^*]
\eeq
\end{prop}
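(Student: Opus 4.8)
The plan is to reduce the identity to the defining triangle~\eqref{eq.twisttri} of the twist together with a base change isomorphism around the given Cartesian square. Write $\ambientbc\colon\ambient'\to\ambient$ for its lower arrow, so that $\compMap\,\ambientSourcebc = \ambientbc\,\compMap'$. Applying $(\placeholder)\,\ambientSourcebc_*$ to $\compMap^*\compMap_* \to \id_{\resolution} \to \tw[\compMap^*] \to$ and $\ambientSourcebc_*\,(\placeholder)$ to $\compMap'^*\compMap'_* \to \id_{\resolution'} \to \tw[\compMap'^*] \to$ produces two triangles of Fourier--Mukai functors
\begin{equation*}
\compMap^*\compMap_*\,\ambientSourcebc_* \to \ambientSourcebc_* \to \tw[\compMap^*]\,\ambientSourcebc_* \to
\qquad\text{and}\qquad
\ambientSourcebc_*\,\compMap'^*\compMap'_* \to \ambientSourcebc_* \to \ambientSourcebc_*\,\tw[\compMap'^*] \to
\end{equation*}
whose third terms are exactly the two functors to be compared. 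Since in the enhanced setting a cone of a morphism of Fourier--Mukai functors is again, functorially, a Fourier--Mukai functor, it suffices to produce an isomorphism $\compMap^*\compMap_*\,\ambientSourcebc_* \cong \ambientSourcebc_*\,\compMap'^*\compMap'_*$ compatible with the adjunction-counit morphisms from each side down to $\ambientSourcebc_*$.

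To build that isomorphism I would first rewrite $\compMap_*\,\ambientSourcebc_* \cong \ambientbc_*\,\compMap'_*$ using $\compMap\,\ambientSourcebc = \ambientbc\,\compMap'$, and then invoke base change for the Cartesian square in the form $\compMap^*\,\ambientbc_* \cong \ambientSourcebc_*\,\compMap'^*$ (the same kind of statement used earlier, cf.\ the proof of Proposition~\ref{lem.resembed}); composing gives $\compMap^*\compMap_*\,\ambientSourcebc_* \cong \compMap^*\,\ambientbc_*\,\compMap'_* \cong \ambientSourcebc_*\,\compMap'^*\compMap'_*$. The compatibility with the counits $\compMap^*\compMap_*\to\id$ and $\compMap'^*\compMap'_*\to\id$ is the Beck--Chevalley coherence between base change maps and the units and counits of adjunction, which I would check from the triangle identities or take from the standard mate formalism. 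Passing to cones then yields the asserted natural isomorphism $\tw[\compMap^*]\,\ambientSourcebc_* \cong \ambientSourcebc_*\,\tw[\compMap'^*]$.

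The only genuinely non-formal input, and the step I expect to be the main obstacle, is the base change isomorphism $\compMap^*\,\ambientbc_* \cong \ambientSourcebc_*\,\compMap'^*$: this needs the square to be Tor-independent over $\ambient$, equivalently derived Cartesian. When $\ambientbc$ is flat this is just the classical flat base change theorem. In general I would deduce it from the hypothesis that $\compMap'$ is again of the form of Section~\ref{sect.setting}: then $\singular' = \singular\times_\ambient\ambient'$ is an effective Cartier divisor in the smooth scheme $\ambient'$, so $\singular$ and $\ambient'$ are already Tor-independent over $\ambient$; and since $\resolution$ is Cohen--Macaulay by Proposition~\ref{prop.smCar2}(\ref{prop.smCarB}) while $\resolution' = \resolution\times_\singular\singular'$ has the expected dimension (being once more a blowup as in Section~\ref{sect.setting}), a local dimension count upgrades this to Tor-independence of $\resolution$ and $\ambient'$ over $\ambient$. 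A secondary point to keep honest is that $\ambientSourcebc_*$ must preserve the bounded coherent category $\D$; this is clear in the applications, where $\ambientbc$, hence $\ambientSourcebc$, is a closed embedding, and holds more generally whenever $\ambientbc$ is proper or of finite Tor-dimension.
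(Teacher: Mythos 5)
Your proposal is correct, and it reaches the same non-formal input as the paper (Tor-independence of the square, deduced from smoothness of the ambients, Cohen--Macaulayness of $\resolution$ and $\resolution'$, and the expected-dimension condition $\dim\compMap=\dim\compMap'$ --- this is exactly the Addington Proposition~A.1 style base change the paper invokes), but it is organized differently. The paper first proves the pullback intertwinement $\ambientSourcebc^*\tw[\compMap^*]\cong\tw[\compMap'^*]\ambientSourcebc^*$, using base change in the form $\ambientbc^*\compMap_*\cong\compMap'_*\ambientSourcebc^*$ together with a counit-compatibility check taken from Lipman, and then obtains the pushforward statement purely formally by passing to right adjoints, using that the twists are equivalences so their right adjoints are their inverses. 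You instead prove the $\ambientSourcebc_*$ statement directly, which forces you to use the base change isomorphism with the roles of the two legs swapped, $\compMap^*\ambientbc_*\cong\ambientSourcebc_*\compMap'^*$, and to verify the counit compatibility for that square. The trade-off is real but minor: your route does not need invertibility of the twists (which the paper's Lemma explicitly assumes for its part~(2)), but it does need $\ambientbc_*$ and $\ambientSourcebc_*$ to preserve the bounded coherent category --- a point you correctly flag, and which holds in the intended applications where $\ambientbc$ is a closed embedding --- whereas the paper's adjoint trick confines all the analytic content to the pullback statement. Both arguments rely equally on functorial cones of Fourier--Mukai morphisms in the enhanced setting, so I see no gap.
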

\begin{proof}
This follows immediately from Lemma~\ref{lem.twistbc}(\ref{lem.twistbcB}) below, after noting the following. By assumption $\dim  \compMap = \dim \compMap' = -1$. By Theorem~\ref{thm.pullbacksph} the functors $\compMap^*$ and $\compMap'^*$ are spherical so their twists have inverses. Finally, $\resolution$ and $\resolution'$ are Gorenstein by Proposition~\ref{prop.smCar2}(\ref{prop.smCarB}).
\end{proof}

\begin{lem}\label{lem.twistbc} Take a fibre square of equidimensional schemes with $\ambient$~and~$\ambient' $ smooth, $\ambientSource$~and~$\ambientSource'$ Cohen--Macaulay, $\dim \compMap = \dim \compMap'$ and $\compMap$~proper, as follows.
\begin{equation*}
    \begin{tikzpicture}[scale=\stdshrink]
    	\node (ambientSource) at (1,0) {$\ambientSource$};
	\node (ambientSourceP) at (0,0) {$\ambientSource'$};
	\node (ambient) at (1,-1) {$\ambient$};
	\node (ambientP) at (0,-1) {$\ambient'$}; 
	\draw [->] (ambientSourceP) to node[left]{\stdstyle $\compMap'$}(ambientP);
	\draw [->] (ambientSourceP) to node[above]{\stdstyle $\ambientSourcebc$} (ambientSource);
	\draw [->] (ambientSource) to node[right]{\stdstyle $\compMap\phantom{'}$}(ambient);
	\draw [->] (ambientP) to  node[below]{\stdstyle $\ambientbc$}(ambient);
	\end{tikzpicture}
\end{equation*} 
Assume that $\tw[\compMap^*]$ and $\tw[\compMap'^*]$ fit into triangles of Fourier--Mukai functors as usual. Then:
\begin{enumerate}
\item\label{lem.twistbcA} $\ambientSourcebc^* \tw[\compMap^*] \cong \tw[\compMap'^*] \ambientSourcebc^* $
\end{enumerate}
If furthermore $\tw[\compMap^*]$ and $\tw[\compMap'^*]$ have inverses, then:
\begin{enumerate}[resume*]
\item\label{lem.twistbcB} $ \tw[\compMap^*] \ambientSourcebc_* \cong \ambientSourcebc_* \tw[\compMap'^*]  $
\end{enumerate}
\end{lem}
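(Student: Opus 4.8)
The plan is to derive both parts from the Tor-independent base change isomorphism attached to the given fibre square, together with the functoriality of cones of Fourier--Mukai functors. First I would record that, because $\ambient$ and $\ambient'$ are smooth, $\ambientSource$ is Cohen--Macaulay, and $\ambientSource' = \ambientSource \times_\ambient \ambient'$ has the expected dimension (which is guaranteed by $\dim\compMap' = \dim\compMap$), the square is Tor-independent, so that there is a base change isomorphism
\[
\ambientbc^*\compMap_* \cong \compMap'_*\ambientSourcebc^*.
\]
This is precisely the situation of \cite[Proposition~A.1]{Addington}, already invoked in the proof of Proposition~\ref{lem.resembed}. Commutativity of the square gives in addition $\ambientSourcebc^*\compMap^* \cong \compMap'^*\ambientbc^*$.

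For part~(\ref{lem.twistbcA}) I would apply $\ambientSourcebc^*$ to the triangle defining the twist,
\[
\compMap^*\compMap_* \to \id_\ambientSource \to \tw[\compMap^*] \to,
\]
and rewrite the left-hand term using the two isomorphisms above: $\ambientSourcebc^*\compMap^*\compMap_* \cong \compMap'^*\ambientbc^*\compMap_* \cong \compMap'^*\compMap'_*\ambientSourcebc^*$. This produces a triangle
\[
\compMap'^*\compMap'_*\ambientSourcebc^* \to \ambientSourcebc^* \to \ambientSourcebc^*\tw[\compMap^*] \to,
\]
and the point to verify is that its first arrow is identified with the adjunction counit $\compMap'^*\compMap'_*\ambientSourcebc^* \to \ambientSourcebc^*$, i.e.\ that base change is compatible with counits. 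Granting this, the triangle agrees with the one obtained by applying $\tw[\compMap'^*]$ to $\ambientSourcebc^*$, and since everything is realised by Fourier--Mukai functors the cones are canonically identified, giving $\ambientSourcebc^*\tw[\compMap^*] \cong \tw[\compMap'^*]\ambientSourcebc^*$.

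For part~(\ref{lem.twistbcB}) I would pass to right adjoints in the isomorphism of part~(\ref{lem.twistbcA}). Using $\ambientSourcebc^* \dashv \ambientSourcebc_*$, and the fact that $\tw[\compMap^*]$ and $\tw[\compMap'^*]$ now have quasi-inverses (so their right adjoints are $\tw[\compMap^*]^{-1}$ and $\tw[\compMap'^*]^{-1}$), the right adjoint of $\ambientSourcebc^*\tw[\compMap^*]$ is $\tw[\compMap^*]^{-1}\ambientSourcebc_*$ and that of $\tw[\compMap'^*]\ambientSourcebc^*$ is $\ambientSourcebc_*\tw[\compMap'^*]^{-1}$, whence
\[
\tw[\compMap^*]^{-1}\ambientSourcebc_* \cong \ambientSourcebc_*\tw[\compMap'^*]^{-1}.
\]
Composing with $\tw[\compMap^*]$ on the left and $\tw[\compMap'^*]$ on the right then rearranges this to $\tw[\compMap^*]\ambientSourcebc_* \cong \ambientSourcebc_*\tw[\compMap'^*]$, as claimed.

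I expect the main obstacle to lie in part~(\ref{lem.twistbcA}): checking Tor-independence of the square from the Cohen--Macaulay and dimension hypotheses (the same technical point as in Proposition~\ref{lem.resembed}), and then verifying that the base change isomorphism intertwines the counits $\compMap^*\compMap_* \to \id$ and $\compMap'^*\compMap'_* \to \id$, so that the two triangles coincide as morphisms of triangles and not merely termwise. Once this compatibility is secured, the remainder, including the adjunction step for part~(\ref{lem.twistbcB}), is formal.
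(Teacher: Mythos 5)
Your proposal is correct and follows essentially the same route as the paper: the base change isomorphism via \cite[Proposition~A.1]{Addington}, the check that it intertwines the adjunction counits (which the paper carries out using the description of the base change morphism in \cite[Proposition~3.7.2]{Lip}), forming cones of Fourier--Mukai functors, and passing to right adjoints for part~(\ref{lem.twistbcB}). The one technical point you flag as the main obstacle --- counit compatibility --- is exactly the point the paper addresses with the Lipman reference.
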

\begin{proof}
We have an isomorphism $ \ambientSourcebc^*\compMap^*\compMap_* \cong \compMap'^*\ambientbc^*\compMap_* \isoto\compMap'^*\compMap'_*\ambientSourcebc^*$ where the base change follows using the argument of \cite[Proposition~A.1]{Addington}, as in the proof of Proposition~\ref{lem.resembed}. We then check that this isomorphism fits into the following commutative square using the description of the base change morphism in~\cite[Proposition~3.7.2]{Lip}. 
\begin{equation*}
    \begin{tikzpicture}[xscale=1.25]
    	\node (A) at (1,0) {$\compMap'^*\compMap'_*\ambientSourcebc^*$};
	\node (B) at (0,0) {$\ambientSourcebc^*\compMap^*\compMap_*$};
	\node (C) at (1,-1) {$\ambientSourcebc^*$};
	\node (D) at (0,-1) {$\ambientSourcebc^*$}; 
	\draw [->] (B) to node[left]{\stdstyle $\ambientSourcebc^*\varepsilon$}(ambientP);
	\draw [->] (B) to node[above]{\stdstyle $\sim$} (A);
	\draw [->] (A) to node[right]{\stdstyle $\varepsilon'\ambientSourcebc^*$}(C);
	\draw [transform canvas={yshift=\equalsSep}] (D) to (C);
	\draw [transform canvas={yshift=-\equalsSep}] (D) to (C);
	\end{tikzpicture}
\end{equation*} 
Here $\varepsilon$ and $\varepsilon'$ denote counits. Then (\opt{ams}{\ref{lem.twistbcA}}\opt{comp}{i}) follows by forming triangles of Fourier--Mukai functors using the two vertical arrows, and (\opt{ams}{\ref{lem.twistbcB}}\opt{comp}{ii}) by taking adjoints and rearranging.
\end{proof}

\begin{eg}\label{eg.defquarticK3} I continue Example~\ref{eg.quarticK3ext}, where $\singular$ was a quartic K3 surface. Let $\singular$ now be a one-parameter \emph{deformation} of a quartic K3 surface~$\singular'$ with a node~$x$. Assume this deformation to be embedded in a smooth one-parameter deformation $\ambient$ of the ambient space  $\ambient'=\mathbb{P}^3$ of~$\singular'$. Require that $x$ is an ordinary double point in the 3-fold $\singular$. 

Assume given a smooth divisor $\blowupLocusA$ in $\singular$ whose blowup yields a small resolution $\resolution$ of $\singular$ whose central fibre is the resolution $\resolution'$ of $\singular'$ given by blowup of $\blowupLocusA' = \{x\}$. We are then in the setting of Theorem~\ref{thm.pullbacksph} for both $\resolution$ and $\resolution'$, and the conditions of Proposition~\ref{prop.twistbc} are satisfied with $\ambientSourcebc\colon\resolution'\into\resolution$ the inclusion. 

Now the surface $\singular'$ was studied in Example~\ref{eg.quarticK3ext}, where we found the isomorphism below, with $\mathbb{P}^1$ denoting the exceptional curve of $\resMap'$.
\begin{equation}\label{eq.surface}
\tw[\compMap'^*] (\placeholder)\otimes\normal{\resolution'} \cong \big(\tw[\cO_{\mathbb{P}^1}] \tw[\cO_{\mathbb{P}^1}(2)]\big)^{-1} (\placeholder)[2]
\end{equation}
I explain how~\eqref{eq.surface}  is consistent with known relations amongst autoequivalences of\opt{ams}{ }\opt{comp}{~}$\D(\resolution')$.

Write $\cE = \cO_{\P^1}(-1)$ in $\D(\resolution)$. It is well-known, and a consequence of Theorem~\ref{keythm.blowupB}, that this is a spherical object. Noting that $\normal{\resolution'} \cong \bcembed^*\normal{\resolution}$ we may relate $\tw[\compMap'^*]$ with $\twObj{\resolution}{\cE}$ via
\begin{align*}
\bcembed_* \big(\tw[\compMap'^*] (\placeholder)\otimes\normal{\resolution'}\big) \cong \bcembed_* \!\tw[\compMap'^*] (\placeholder)\otimes\normal{\resolution} 
& \cong \tw[\compMap^*] \!\bcembed_*(\placeholder)\otimes\normal{\resolution} \tag{Proposition~\ref{prop.twistbc}} \\
& \cong \twObj{\resolution}{\cE}    \bcembed_* (\placeholder)\otimes \resMap^* \normal{\singular}^\vee[2]\otimes\normal{\resolution}  \tag{Theorem~\ref{keythm.blowupB}}\\
& \cong \twObj{\resolution}{\cE}    \bcembed_* (\placeholder)\otimes \cO(-\excBlowupSingularShort) [2] \tag{Proposition~\ref{lem.normals}}  
\end{align*}
where I write $\excBlowupSingularShort$ 
for the exceptional divisor in $\resolution$.

On the other hand, $\cE' = \cO_{\P^1}(-1)$ in $\D(\resolution')$ is an example of a $\mathbb{P}^n$-object of Huybrechts and Thomas~\cite{HuyTho} for the case $n=1$. In this case their definition reduces to that of a spherical object on a variety of dimension~$2$. They find an intertwinement $\twObj{\resolution}{\cE}   \bcembed_* \cong \bcembed_* \!\twObj{\resolution}{\cE'}^2$ in \cite[Propositions~2.7 and~2.9]{HuyTho}. Combining with the above  gives
\begin{align*}\bcembed_* \big(\tw[\compMap'^*] (\placeholder)\otimes\normal{\resolution'}\big)[-2] & \cong   \bcembed_* \big(\twObj{\resolution}{\cE'}^2 (\placeholder)\otimes \bcembed^*\cO(-\excBlowupSingularShort) \big)\\
 & \cong   \bcembed_* \big(\twObj{\resolution}{\cE'}^2 (\placeholder)\otimes \cO(-2\excBlowupSingularShort') \big)
\end{align*}
where I write $\excBlowupSingularShort'$ 
for the exceptional curve in $\resolution'$. Here the coefficient~$2$ for~$\excBlowupSingularShort'$ comes from the degree of the conic~$\excBlowupSingularShort'$ in~$\excBlowupSingularShort\cong\P^2$.\footnote{Indeed, it is clear that $\bcembed^*\cO(\excBlowupSingularShort)\cong\cO(m\excBlowupSingularShort')$ for some $m\in\Z$. But restricting $\cO(\excBlowupSingularShort)$ to $\excBlowupSingularShort'\cong\P^1$ gives $\cO_{\mathbb{P}^1}(-2)$ as in Example~\ref{eg.quarticK3ext}, and we may deduce the claim.} We may then combine with \eqref{eq.surface} as follows.
\begin{align*}\label{eq.intertwin}
 \bcembed_*  \big(\tw[\cO_{\mathbb{P}^1}] \tw[\cO_{\mathbb{P}^1}(2)]\big)^{-1} (\placeholder) & \cong    \bcembed_* \big(\twObj{\resolution}{\cO_{\P^1}(-1)}^2 (\placeholder)\otimes \cO(-2\excBlowupSingularShort') \big)
\end{align*}
By Proposition~\ref{prop.K3rel}, this is implied by a known relation amongst autoequivalences of~$\D(\resolution')$.
\end{eg}

\begin{prop}\label{prop.K3rel}
Take a K3 surface~$B$ with a \mbox{$-2$-curve}~$C\cong\P^1$. Assume there exists a line bundle~$\cO_B(1)$ with degree~$1$ on $C$, so that $\cO(C) \cong \cO_B(-2)$. Then writing
\beq\twk[k] = \tw[\cO_C(k)] \text{\qquad and \qquad} \lb[k] =\placeholder\otimes\cO_B(k)\eeq
for autoequivalences of $\D(B)$, there is a relation $(\twk[0]\twk[2])^{-1} \cong  \lb[4] \twk[-1]^2$ so that we have:
\begin{align*}
( \twk[0]\twk[2])^{-1} (\placeholder) & \cong  \twk[-1]^2 (\placeholder) \otimes\cO(-2C)
\end{align*}
\end{prop}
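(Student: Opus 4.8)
The plan is to rewrite both autoequivalences in terms of the single spherical twist $T := \twk[0] = \tw[\cO_C]$ and the line-bundle twist $L := \lb[1] = \placeholder \otimes \cO_B(1)$, reduce the claim to an identity of words in $T$ and $L$, and then deduce that identity from the standard relations among spherical twists attached to a $(-2)$-curve. First I would make the reduction explicit. The hypothesis $\cO(C) \cong \cO_B(-2)$ says precisely that $\placeholder \otimes \cO(-C) \cong L^2 = \lb[2]$, and hence $\placeholder \otimes \cO(-2C) \cong L^4 = \lb[4]$, so the two displayed forms of the relation coincide and it suffices to prove $(\twk[0]\twk[2])^{-1} \cong \lb[4]\,\twk[-1]^2$. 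Since $\cO_B(1)$ has degree $1$ on $C$ we have $\cO_C(k) \otimes \cO_B(m) \cong \cO_C(k+m)$, so the standard conjugation formula for spherical twists (Seidel--Thomas) gives $L^m \circ \twk[k] \circ L^{-m} \cong \twk[k+m]$, that is $\twk[k] \cong L^k T L^{-k}$ for all $k \in \Z$. Substituting, the left-hand side becomes $L^2 T^{-1} L^{-2} T^{-1}$ and the right-hand side becomes $L^3 T^2 L$, so the proposition is equivalent to the single word identity
\begin{equation*}
L^2\, T^{-1}\, L^{-2}\, T^{-1} \;\cong\; L^3\, T^2\, L
\end{equation*}
in the autoequivalence group of $\D(B)$.

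To prove this identity I would use what is known about twists along a $(-2)$-curve, noting first that the hypothesis $\cO(C)\cong\cO_B(-2)$ is exactly what makes it plausible: on the Mukai lattice the right-hand side acts as multiplication by $e^{4c_1(\cO_B(1))}$, while the left-hand side mixes in the reflection in the Mukai vector $v(\cO_C)$, and a short computation shows the two agree on $\widetilde H(B)$ precisely when $c_1(\cO_B(1))$ is proportional to $[C]$ --- which is forced by the hypothesis. For the functor-level statement I would use that each $\twk[k]$ and each $L^m$ restricts to an autoequivalence of the full subcategory $\D_C(B) \subset \D(B)$ of complexes supported (set-theoretically) on $C$ --- for the twists because $\cO_C(k)$ is supported on $C$, for the line bundles trivially --- and that this subcategory depends only on a formal neighbourhood of $C$, hence agrees with the analogous subcategory for the minimal resolution of the $A_1$ Kleinian singularity, where the relations among spherical and line-bundle twists are classical (Seidel--Thomas, Ishii--Uehara, Bridgeland). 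From those relations, together with the conjugation formula above, the word identity follows by a short manipulation, with care taken over the shift $[1-\dim B]=[-1]$ produced by each spherical twist. To transfer the identity from $\D_C(B)$ to all of $\D(B)$ I would set $F := (\lb[4]\,\twk[-1]^2)^{-1} \circ (\twk[0]\twk[2])^{-1}$: granted the computation on $\D_C(B)$, $F$ fixes every $\cO_C(k)$, it visibly fixes $\cO_x$ for $x \notin C$ (all the twists involved do) and fixes $\cO_B$; hence by Orlov's classification of autoequivalences of a K3 preserving all skyscrapers, $F$ is an automorphism composed with a line-bundle twist, and testing on $\cO_B(1)$ and on a point of $C$ forces $F \cong \id$.

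The step I expect to be the main obstacle is precisely this last comparison between the local $A_1$ picture on $\D_C(B)$ and the global surface, together with keeping track of shifts through the chain of four twists. An alternative that sidesteps the group theory, and is perhaps closer to a direct verification, is to check $(\twk[0]\twk[2])^{-1}(X) \cong \lb[4]\,\twk[-1]^2(X)$ object-by-object on the spanning class $\{\cO_x : x \in B\} \cup \{\cO_B\}$: for $x \notin C$ both sides fix $\cO_x$, while for $x \in C$ and for $\cO_B$ one computes each $\twk[k]$ from its defining triangle using the explicitly known graded groups $\Hom^\bullet_B(\cO_C(j),\cO_C(k))$ and $\Hom^\bullet_B(\cO_C(k),\cO_x)$, and then upgrades object-wise agreement to an isomorphism of Fourier--Mukai functors by the usual spanning-class argument. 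Either way, the hypothesis $\cO(C) \cong \cO_B(-2)$ --- equivalently $\placeholder\otimes\cO(-C)\cong\lb[2]$ --- enters in an essential way.
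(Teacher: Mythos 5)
Your reduction is correct and coincides with the first half of the paper's own proof: the conjugation formula $\twk[k]\cong\lb[k]\twk[0]\lb[-k]$ (equivalently $\lb[l]\twk[k]\cong\twk[k+l]\lb[l]$) turns the claim into the word identity $L^2T^{-1}L^{-2}T^{-1}\cong L^3T^2L$ with $T=\twk[0]$, $L=\lb[1]$, and your Mukai-lattice check is a sensible way to see why $\cO(C)\cong\cO_B(-2)$ is essential. Where the proposal stops short is at the one piece of genuine mathematical input: you defer the word identity to unspecified ``classical relations'' in the local $A_1$ model, to be transported to $\D(B)$ via the subcategory $\D_C(B)$ and Orlov's classification of autoequivalences preserving skyscrapers. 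The paper instead names the single relation that is needed, namely
\beq
\big(\lb[1]\twk[-1]\big)^2\cong\id,
\eeq
and justifies it in one line: the bundle $\cO_B\oplus\cO_B(1)$ is tilting relative to the contraction of $C$, and $\lb[1]\twk[-1]$ swaps its two summands. In your notation $\lb[1]\twk[-1]=L\cdot L^{-1}TL=TL$, so this is exactly $(TL)^2\cong\id$, i.e.\ $LTL\cong T^{-1}$. Your word identity then follows in two lines:
\beq
\big(L^2T^{-1}L^{-2}T^{-1}\big)^{-1}\big(L^3T^2L\big)\cong TL^2TLT^2L\cong (TL)(LTL)T(TL)\cong (TL)T^{-1}T(TL)\cong (TL)^2\cong\id,
\eeq
with no shifts to track and no need for the $\D_C(B)$/Orlov machinery, since the tilting-bundle argument already lives on $\D(B)$.

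So the concrete gap is that nothing in your write-up actually produces the relation $(TL)^2\cong\id$ or any equivalent of it; the references you point to do contain enough to extract it (it is the statement that the mutation $\lb[1]\twk[-1]$ attached to a $-2$-curve is an involution, familiar from the $A_1$ flop--flop), but as written the core of the argument is a citation you have not pinned down, and the local-to-global transfer you build around it is extra work that evaporates once the relation is stated globally. Your fallback of verifying the identity object-by-object on $\{\cO_x\}\cup\{\cO_B\}$ would also work, but it is considerably more laborious than the two-line group computation above.
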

 
 \begin{proof}
It is well known that $(\lb[1] \twk[-1])^2  \cong \id
$ as may be seen by an argument with a tilting bundle $\cO_B\oplus\cO_B(1)$: this bundle is tilting relative to the contraction of $C$, and $\lb[1] \twk[-1]$ swaps its two summands. Noting also that $\lb[l] \twk[k]\cong \twk[k+l]\lb[l] $ by for instance \cite[Lemma~8.21]{Huy}, we find
$ \twk[-1] \twk[0]\cong \lb[-2]$ and thence that
$\lb[4]\twk[-1]^2 \twk[0]\twk[2]  \cong \lb[4]\twk[-1]\lb[-2]\twk[2]  \cong  \lb[4]\twk[-1]\twk[0]\lb[-2]$ 
which is isomorphic to the identity, giving the result.
\end{proof}

\begin{rem} In the case of hypersurfaces of varieties with exceptional sequences, Canonaco and Karp~\cite{CanKarp,Can} have established general methods to obtain associated higher degree relations in the autoequivalence group, of a similar flavour to the relations above.\end{rem}

\section{Local examples}
\label{section.Examples}

In this final section, I give a specialization of the setting of Theorem~\ref{keythm.blowupB} in which $\ambient$ is the total space of a locally free sheaf over~$\blowupLocusA$. This may be thought of as a local model for the global setting of Sections~\ref{sec.eg} and~\ref{section.divisor}. I~finish with some examples of this geometry.

\begin{setn}[local model]\label{setn.local} Take sheaves on $\blowupLocusA$ smooth equidimensional as follows.
\begin{itemize}
\item $\buntot$ a locally free sheaf of rank $2$ 
\item $\bunlin$ an invertible sheaf
\end{itemize}
Let $\ambient= \Tot \buntot$ with projection~$\bunproj$ and take also the following.
\begin{itemize}
\item $\sec$ a regular section  of $\bunsec= \sHom(\buntot,\bunlin)$
\end{itemize}
For $\indsec$ the canonical regular section of $\bunproj^*\bunlin$ from Proposition~\ref{prop.cutnormal} let $\singular=\zeroes{\indsec}  \subset \ambient$ and consider $\blowupLocusA$ as the zero section in~$S$, so that $\blowupLocusA$ is contained in~$\singular$. Let $\resMap\colon\resolution\to\singular$ be the blowup of~$\singular$ along~$\blowupLocusA$. 
\end{setn}

\begin{prop}\label{prop.localThmC} Setting~\ref{setn.local} satisfies the assumptions of Theorem~\ref{keythm.blowupB}.
\end{prop}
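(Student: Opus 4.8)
The plan is to verify the hypotheses of Theorem~\ref{keythm.blowupB} one by one, namely Assumptions~\ref{assm.embed}--\ref{assm.crep} of Section~\ref{sect.setting} together with Assumption~\ref{assm.cutnormal} and regularity of~$\sec$. Since $\blowupLocusA$ is smooth equidimensional and $\buntot$ is locally free of rank~$2$, the total space $\ambient=\Tot\buntot$ is smooth and equidimensional; by Proposition~\ref{prop.cutnormal} the section $\indsec$ of the line bundle $\bunproj^*\bunlin$ is regular, so $\singular=\zeroes{\indsec}$ is an effective Cartier divisor in~$\ambient$, giving Assumption~\ref{assm.embed}. Reducedness of~$\singular$ is a local check: with $\ambient$ locally $\Spec A[z_1,z_2]$ for $A$ regular and $\indsec=\sec_1z_1+\sec_2z_2$ for a regular sequence $\sec_1,\sec_2$, those elements are coprime in the unique factorization domain $A[z_1,z_2]$, so the linear-in-$z$ element $\indsec$ is prime; then $\resolution$, being the scheme-theoretic closure of $\blowupAmbientMap^{-1}(\singular\setminus\blowupLocusA)$ in $\resolutionAmbient:=\Bl_\blowupLocusA\ambient$, is reduced too, and both schemes are separated of finite type. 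The zero section $\blowupLocusA$ is smooth equidimensional, regularly embedded in~$\ambient$ of codimension~$2$ (Proposition~\ref{prop.smCar}(\ref{prop.smCarQ})), and lies in~$\singular$ since $\indsec$ vanishes on it; hence $\codim_\singular\blowupLocusA=1=:n$, and $\resMap=\Bl_\blowupLocusA\singular$ is projective by construction and birational since $\blowupLocusA$ meets every component of~$\singular$ in a nowhere dense set. This handles Assumption~\ref{assm.blowup} and the codimension condition of Theorem~\ref{keythm.blowupB}.

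Next I would compute the multiplicity~$m$ of~$\singular$ along~$\blowupLocusA$: among fibrewise-linear sections $\indsec$ corresponds to~$\sec$, which is nonzero at the generic point of every component of~$\blowupLocusA$ since $\zeroes{\sec}=\blowupLocusB$ has codimension~$2$; hence a local generator of $\ideal_\singular$ lies in $\ideal_\blowupLocusA\setminus\ideal_\blowupLocusA^2$ generically along~$\blowupLocusA$, so $m=1=n$. By the remark following Proposition~\ref{lem.ideals} this makes that proposition available without knowing yet that $\resMap$ is a contraction, giving $\ideal_\resolution\cong\blowupAmbientMap^*\ideal_\singular\otimes\cO(n\excBlowupAmbient)$ on~$\resolutionAmbient$; moreover the crepancy-free part of the same proof (the blowup formula $K_{\resolutionAmbient}\cong\blowupAmbientMap^*K_\ambient+n\excBlowupAmbient$ together with $\blowupAmbientMap^*\singular=\resolution+m\excBlowupAmbient$, restricted to~$\resolution$) yields $K_\resolution\cong\resMap^*K_\singular$ once $m=n$, i.e.\ Assumption~\ref{assm.crep}. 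The remaining point, $\RDerived^0\resMap_*\cO_\resolution\cong\cO_\singular$, is the one I expect to be the main obstacle, since $\singular$ and~$\resolution$ are in general non-normal (as in Example~\ref{eg.pagoda}), so the usual normality argument is unavailable. I would instead push $0\to\ideal_\resolution\to\cO_{\resolutionAmbient}\to\cO_\resolution\to0$ forward along~$\blowupAmbientMap$: from the vanishing $\RDerived(\blowupProjAmbient)_*\cO_{\blowupProjAmbient}(-1)=0$ on the $\P^1$-bundle~$\excBlowupAmbient$ one gets $\RDerived\blowupAmbientMap_*\cO(\excBlowupAmbient)\cong\cO_\ambient$, hence by the projection formula and the identification of~$\ideal_\resolution$ above, $\RDerived\blowupAmbientMap_*\ideal_\resolution\cong\ideal_\singular$ in degree~$0$; combined with the standard $\RDerived\blowupAmbientMap_*\cO_{\resolutionAmbient}\cong\cO_\ambient$ (blowup of a smooth scheme along a smooth centre), the resulting triangle $\ideal_\singular\to\cO_\ambient\to\RDerived\blowupAmbientMap_*\cO_\resolution$ shows that the map $\ideal_\singular\to\cO_\ambient$ is injective with cokernel $\incInAmbient_*\RDerived^0\resMap_*\cO_\resolution$ and that $\RDerived^{\geq1}\resMap_*\cO_\resolution=0$. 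A codimension-two comparison on the smooth scheme~$\ambient$ (the map agrees with the inclusion away from~$\blowupLocusA$, and two effective Cartier divisors agreeing outside codimension~$2$ coincide) identifies this map with $\ideal_\singular\hookrightarrow\cO_\ambient$, so $\resMap_*\cO_\resolution\cong\cO_\singular$; thus $\resMap$ is a crepant birational contraction as in Section~\ref{sect.setting} (and, incidentally, Assumption~\ref{assm.rat} holds as well).

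Finally I would verify Assumption~\ref{assm.cutnormal}. Since $\blowupLocusA$ is the zero section, $\normalof{\blowupLocusA}{\ambient}\cong\buntot$ canonically, so $\Tot\normalof{\blowupLocusA}{\ambient}=\ambient$ with $\blowupLocusA$ its zero section; and $\normalof{\singular}{\ambient}|_\blowupLocusA\cong\cO_\ambient(\singular)|_\blowupLocusA\cong\bunproj^*\bunlin|_\blowupLocusA\cong\bunlin$, so the sheaf $\bunsec=\sHom(\buntot,\bunlin)$ coincides with the sheaf $\sHom(\normalof{\blowupLocusA}{\ambient},\normalof{\singular}{\ambient}|_\blowupLocusA)$ of Assumption~\ref{assm.cutnormal}, and the induced sections of $\bunproj^*\bunlin$ agree. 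Because $\indsec$ is homogeneous of degree~$1$ for the fibrewise scaling, $\singular=\zeroes{\indsec}$ is a cone over~$\blowupLocusA$ with vertex locus~$\blowupLocusA$, so its normal cone along~$\blowupLocusA$ is~$\singular$ itself inside $\normalcone{\blowupLocusA}{\ambient}=\Tot\normalof{\blowupLocusA}{\ambient}$; concretely, in the local coordinates above the ideal $(\indsec)\subset A[z_1,z_2]$ is homogeneous in the $z$-grading, so the associated graded ring of $(\bar z_1,\bar z_2)$ in $A[z_1,z_2]/(\indsec)$ is that ring again, exhibiting $\normalcone{\blowupLocusA}{\singular}$ as $\zeroes{\indsec}$ cut out of $\Tot\normalof{\blowupLocusA}{\ambient}$ by the fibrewise-linear section~$\indsec$, exactly as required. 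With $\sec$ regular by hypothesis, all the assumptions of Theorem~\ref{keythm.blowupB} are then satisfied.
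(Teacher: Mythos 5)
Your proof is correct, and its treatment of Assumption~\ref{assm.cutnormal} matches the paper's: both identify $\normalof{\blowupLocusA}{\ambient}$ with $\buntot$ and use fibrewise linearity of $\indsec$ to see that the normal cone $\normalcone{\blowupLocusA}{\singular}$ is $\singular$ itself sitting inside $\Tot\normalof{\blowupLocusA}{\ambient}=\ambient$. Where you genuinely diverge is crepancy. The paper gets it from Theorem~\ref{cor.blowupBgeom}(\ref{cor.blowupBgeomA}), whose proof does not use crepancy: since $\blowupProjA\colon\excBlowupSingular\to\blowupLocusA$ is the blowup along the codimension-$2$ locus $\blowupLocusB$, the exceptional locus of $\resMap$ has codimension~$2$ in $\resolution$, so $\omega_\resMap$ is an invertible sheaf trivial outside codimension~$2$ on the $S_2$ scheme $\resolution$, hence trivial. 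You instead compute the multiplicity of $\singular$ along $\blowupLocusA$ directly ($m=1=n$, because $\sec$ is nonvanishing at the generic point of each component of $\blowupLocusA$) and run the divisor computation from the proof of Proposition~\ref{lem.ideals} forwards to get $K_\resMap=(n-m)\excBlowupSingular=0$; this is more elementary and does not need the blowup description of $\excBlowupSingular$. You also verify two things the paper's proof leaves implicit but which are genuinely part of the standing setting of Section~\ref{sect.setting}: reducedness of $\singular$, and the contraction property $\RDerived^0\resMap_*\cO_\resolution\cong\cO_\singular$. Your route to the latter --- pushing the ideal-sheaf sequence forward along $\blowupAmbientMap$, using $\RDerived\blowupAmbientMap_*\cO(\excBlowupAmbient)\cong\cO_\ambient$ together with Proposition~\ref{lem.ideals}(\ref{lem.idealsB}), and pinning down the resulting map $\ideal_\singular\to\cO_\ambient$ by an $S_2$ extension argument --- is sound and yields Assumption~\ref{assm.rat} for free. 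One cosmetic point: in the reducedness check, $A[z_1,z_2]$ need not be a UFD for $A$ merely regular rather than regular local, so either localize at a point first or argue that $\singular$ is Cohen--Macaulay and generically smooth (it is smooth over $\blowupLocusA\setminus\blowupLocusB$, whose preimage is dense in $\singular$).
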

\begin{proof}
We first check Assumptions~\ref{assm.embed} and~\ref{assm.blowup} of Section~\ref{sect.setting}. Note that $\singular$ is a hypersurface in smooth $\ambient$ by construction, and that $\codim_\ambient \blowupLocusA=\rk\buntot=2$ hence $n\coloneqq\codim_\singular \blowupLocusA=1$. For Assumption~\ref{assm.crep}, by Theorem~\ref{cor.blowupBgeom}(\ref{cor.blowupBgeomA}) the projection $\blowupProjA\colon\excBlowupSingular\to\blowupLocusA$ is itself the blowup of~$\blowupLocusA$ along~$\blowupLocusB$, whose exceptional locus is a divisor in~$\excBlowupSingular$. But $\blowupProjA$ is the restriction of $\resMap$, so then the exceptional locus of~$\resMap$ is codimension~$2$, hence $\resMap$ is crepant.

Assumption~\ref{assm.cutnormal} holds by construction, as the normal bundle $\normalof{\blowupLocusA}{ \ambient} \cong \buntot$ may be identified with the bundle $\bunproj\colon\ambient\to\blowupLocusA$ itself and, furthermore, noting that $\indsec$ is linear on fibres of $\bunproj$, the normal cone~$\normalcone{\blowupLocusA}{\singular}$ may be identified with $\singular$.
\end{proof}

Recall the following from the statement of Theorem~\ref{keythm.blowupB}.

\begin{defn}\label{defn.blowupLocusBlocal} Let 
$\blowupLocusB=\zeroes{\sec}  \subset \blowupLocusA$ be the zeroes of the section $\sec$ from Setting~\ref{setn.local}.
\end{defn}

I now specialize some of the results of Sections~\ref{sec.eg} and~\ref{section.divisor} to the local model Setting~\ref{setn.local}. In this setting the blowup square from Definition~\ref{def.sphA} acquires further morphisms induced by the projection~$\bunproj\colon\ambient \to \exbase$. These map from right to left in the  diagram below.
\begin{equation}\label{eq.blowupAlocal}
\begin{aligned}
    \begin{tikzpicture}[scale=\stdscale]
    	\node (singular) at (0,0) {$\singular$};
	\node (resolution) at (0,1) {$\resolution$}; 
	\node (blowupLocusA) at (-1,0) {$\blowupLocusA$};
	\node (excBlowupSingular) at (-1,1) {$\overset{\phantom{.}}{\excBlowupSingular}$};
	\draw [right hook->,transform canvas={yshift=\arrowSep}] (blowupLocusA) to (singular);
	\draw [<-,transform canvas={yshift=-\arrowSep}] (blowupLocusA) to (singular);
	\draw [->] (resolution) to node[right]{\stdstyle $\resMap$} (singular);
	\draw [right hook->,transform canvas={yshift=\arrowSep}] (excBlowupSingular) to (resolution);
	\draw [<-,transform canvas={yshift=-\arrowSep}] (excBlowupSingular) to (resolution);
	\draw [->] (excBlowupSingular) to node[left]{\stdstyle $\blowupProjA$} (blowupLocusA);
	\draw [->] (resolution) to node[above left]{\stdstyle $\bunprojres$} (blowupLocusA);
	\end{tikzpicture}
\end{aligned}
\end{equation}

We then have the following version of Theorem~\ref{keythm.blowupB} in the local model.

\begin{cor} In Setting~\ref{setn.local} we have that:
\begin{enumerate}[label={(\arabic*)},ref={\arabic*}]
\item $\hun$ is spherical.
\item There is an isomorphism
\begin{equation*}
 \tw[\compMap^*](\placeholder)\otimes \bunprojres^* \bunlin  \cong  \tw[\hun](\placeholder)  [2]
\end{equation*}
 between  autoequivalences of $\D(\resolution)$.
\end{enumerate}
Here $\bunprojres$ is the projection to~$\blowupLocusA$ from \eqref{eq.blowupAlocal} above.
\end{cor}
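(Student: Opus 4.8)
The plan is to deduce this corollary directly from Corollary~\ref{cor.blowupB}, which is the global form of Theorem~\ref{keythm.blowupB}, together with Proposition~\ref{prop.localThmC}. First I would invoke Proposition~\ref{prop.localThmC} to confirm that Setting~\ref{setn.local} satisfies all the hypotheses of Theorem~\ref{keythm.blowupB}: namely Assumptions~\ref{assm.embed}, \ref{assm.blowup} and~\ref{assm.crep} of Section~\ref{sect.setting}, together with Assumption~\ref{assm.cutnormal} and the regularity of the section~$\sec$. In particular the locus $\blowupLocusB$ from Definition~\ref{defn.blowupLocusBlocal} coincides with the one appearing in Corollary~\ref{cor.blowupB}, and the functor $\hun$ built from the blowup square of Theorem~\ref{cor.blowupBgeom}(\ref{cor.blowupBgeomA}) is exactly the one named in the statement. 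Consequently Corollary~\ref{cor.blowupB}(\ref{cor.blowupBA}) gives part~(1), that $\hun$ is spherical, while Corollary~\ref{cor.blowupB}(\ref{cor.blowupBB}) gives an isomorphism $\tw[\compMap^*](\placeholder)\otimes\resMap^*\normal{\singular}\cong\tw[\hun](\placeholder)[2]$ between autoequivalences of $\D(\resolution)$.

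It then remains only to identify the twisting sheaf $\resMap^*\normal{\singular}$ with $\bunprojres^*\bunlin$. Here I would use that in Setting~\ref{setn.local} the hypersurface $\singular=\zeroes{\indsec}$ is cut out of $\ambient=\Tot\buntot$ by the regular section $\indsec$ of $\bunproj^*\bunlin$, as recorded in Proposition~\ref{prop.cutnormal} and the proof of Proposition~\ref{prop.localThmC}, so that $\normal{\singular}=\normalof{\singular}{\ambient}\cong\bunproj^*\bunlin|_\singular$. Pulling back along $\resMap$ and using that $\bunproj\circ\compMap=\bunprojres$ by commutativity of the diagram~\eqref{eq.blowupAlocal} gives $\resMap^*\normal{\singular}\cong\compMap^*\bunproj^*\bunlin\cong\bunprojres^*\bunlin$. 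Substituting this into the isomorphism from Corollary~\ref{cor.blowupB}(\ref{cor.blowupBB}) yields part~(2).

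I expect there to be no substantial obstacle: the corollary is essentially a transcription of Theorem~\ref{keythm.blowupB} into the notation of the local model, with $\resMap^*\normal{\singular}$ rewritten in terms of the data $(\buntot,\bunlin,\sec)$ on $\blowupLocusA$. The only point requiring any care is the normal-bundle bookkeeping, that is, checking that $\indsec$ really is a regular section cutting $\singular$ out as an effective Cartier divisor and that the various projections compose as asserted in~\eqref{eq.blowupAlocal}; both of these are already established in Section~\ref{sect.gencodim} and Section~\ref{sec.eg}, so the proof reduces to citing them.
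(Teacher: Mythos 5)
Your proposal is correct and matches the paper's own proof: the paper likewise deduces the corollary from Theorem~\ref{keythm.blowupB} (applicable by Proposition~\ref{prop.localThmC}) and then simply notes that $\normal{\singular}=\normalof{\singular}{\ambient}\cong\bunproj^*\bunlin|_\singular$, whence $\resMap^*\normal{\singular}\cong\bunprojres^*\bunlin$. The only difference is that you spell out the verification of hypotheses and the composition of projections in more detail than the paper does.
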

\begin{proof}
This is Theorem~\ref{keythm.blowupB} after noting that $\normal{\singular} = \normalof{\singular}{\ambient} \cong \bunproj^* \bunlin |_\singular$
 so that $\resMap^* \normal{\singular} \cong  \bunprojres^* \bunlin$.
 \end{proof}

As in Theorem~\ref{cor.blowupBgeom}(\ref{cor.blowupBgeomB}), we have that the  restriction of $ \cO_{\blowupProjA}(1)$ to $\excBlowupB$ is
$\cO_{\blowupProjB}(1) \otimes \blowupProjB^* \buntwist|_\blowupLocusB$
 for $\buntwist$ an invertible sheaf on $\blowupLocusA$. This $\buntwist$ may be described as follows.

\begin{prop}\label{prop.buntwist} 
In Setting~\ref{setn.local} we have that:
\beq
\buntwist  \cong \sHom(\det\buntot, \bunlin)  \cong \sHom(\bunlin, \det\bunsec)  
 \eeq
We have special cases as follows.
\begin{enumerate}
\item\label{prop.bunspecialA} If $\bunlin$ is trivial then
$\bunsec \cong \buntot^\vee$ and $\buntwist \cong \det\bunsec\cong \det \buntot^\vee.$
\item\label{prop.bunspecialB} If $\bunlin\cong\det \buntot$, or equivalently $\bunlin\cong\det \bunsec$, then
$ \bunsec \cong\buntot$ 
 and $\buntwist$ is trivial.
\end{enumerate}
\end{prop}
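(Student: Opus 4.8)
The plan is to read off $\buntwist$ from Theorem~\ref{cor.blowupBgeom}(\ref{cor.blowupBgeomB}), which identifies it as $\sHom(\det\normalof{\blowupLocusA}{\ambient},\, \normalof{\singular}{\ambient}|_\blowupLocusA)$, and then translate the two normal sheaves into the data $\buntot$ and $\bunlin$ of Setting~\ref{setn.local}. First I would note, exactly as in the proof of Proposition~\ref{prop.localThmC}, that $\blowupLocusA$ sits in $\ambient = \Tot\buntot$ as the zero section, so $\normalof{\blowupLocusA}{\ambient} \cong \buntot$; and that $\singular = \zeroes{\indsec}$ for the regular section $\indsec$ of the line bundle $\bunproj^*\bunlin$, whence $\normalof{\singular}{\ambient} \cong (\bunproj^*\bunlin)|_\singular$. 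Restricting this to the zero section $\blowupLocusA$, along which $\bunproj$ is the identity, gives $\normalof{\singular}{\ambient}|_\blowupLocusA \cong \bunlin$. Substituting both identifications into the formula for $\buntwist$ yields the first isomorphism $\buntwist \cong \sHom(\det\buntot,\bunlin)$.

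For the second isomorphism $\sHom(\det\buntot,\bunlin) \cong \sHom(\bunlin,\det\bunsec)$, the quickest route is a direct computation with invertible sheaves. Since $\bunsec = \sHom(\buntot,\bunlin) \cong \buntot^\vee \otimes \bunlin$ with $\buntot$ of rank $2$, one has $\det\bunsec \cong \det\buntot^\vee \otimes \bunlin^{\otimes 2}$, so $\sHom(\bunlin,\det\bunsec) \cong \bunlin^\vee \otimes \det\buntot^\vee \otimes \bunlin^{\otimes 2} \cong \det\buntot^\vee \otimes \bunlin \cong \sHom(\det\buntot,\bunlin)$. Alternatively this is exactly the duality between $\sHom(\det\buntot,\bunlin)$ and $\sHom(\det\bunsec,\bunlin)$ recorded in Lemma~\ref{lem.equiv}, combined with $\sHom(\det\bunsec,\bunlin)^\vee \cong \sHom(\bunlin,\det\bunsec)$.

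For the two special cases I would simply specialize these formulas. If $\bunlin$ is trivial then $\bunsec = \sHom(\buntot,\cO_\blowupLocusA) \cong \buntot^\vee$ and $\buntwist \cong \sHom(\det\buntot,\cO_\blowupLocusA) \cong \det\buntot^\vee \cong \det\bunsec$. If $\bunlin \cong \det\buntot$ then $\bunsec \cong \sHom(\buntot,\det\buntot) \cong \buntot$ by the rank-$2$ identity $\sHom(\cE,\det\cE)\cong\cE$ already used in the proof of Proposition~\ref{lem.projbun}; moreover $\det\bunsec \cong \det\buntot^\vee \otimes \bunlin^{\otimes 2} \cong \det\buntot \cong \bunlin$, which shows the two hypotheses $\bunlin \cong \det\buntot$ and $\bunlin \cong \det\bunsec$ are equivalent, and $\buntwist \cong \sHom(\det\buntot,\det\buntot) \cong \cO_\blowupLocusA$.

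There is no serious obstacle here: the entire statement is bookkeeping with invertible sheaves on $\blowupLocusA$ plus the rank-$2$ determinant identity. The only point needing a moment's care is the identification $\normalof{\singular}{\ambient}|_\blowupLocusA \cong \bunlin$, which uses that $\bunproj$ restricts to the identity on the zero section $\blowupLocusA$; once that is in hand, everything is formal.
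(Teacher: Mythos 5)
Your proposal is correct and follows essentially the same route as the paper: identify $\normalof{\blowupLocusA}{\ambient}\cong\buntot$ and $\normalof{\singular}{\ambient}|_\blowupLocusA\cong\bunlin$ to read off the first isomorphism from Theorem~\ref{cor.blowupBgeom}(\ref{cor.blowupBgeomB}), then obtain the second isomorphism and the special cases by formal manipulation of invertible sheaves (the paper cites Lemma~\ref{lem.equiv} where you also give the equivalent direct determinant computation). No gaps.
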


\begin{proof} 
Noting that $\normalof{\blowupLocusA}{ \ambient} \cong \buntot$ and $\normalof{\singular}{\ambient} \cong \bunproj^* \bunlin|_\singular$ so that $\normalof{\singular}{\ambient}|_\blowupLocusA \cong \bunlin$, the first isomorphism comes from Theorem~\ref{cor.blowupBgeom}(\ref{cor.blowupBgeomB}), and  the second isomorphism uses Lemma~\ref{lem.equiv}. Now (\ref{prop.bunspecialA}) follows using the definition of $\bunsec$.  For (\ref{prop.bunspecialB}), we again use Lemma~\ref{lem.equiv}.
\end{proof}

In the local setting we have the following strengthening of Proposition~\ref{prop.singloc}.

\begin{prop} In Setting~\ref{setn.local} the singular locus \,$\singLocus = \blowupLocusB \subset \blowupLocusA$.
\end{prop}
\begin{proof}
Note that $\singLocus \subset \blowupLocusA$ using the construction of $\singular$, and apply Proposition~\ref{prop.singloc}.
\end{proof}

\begin{prop}\label{prop.canonicallocal} In Setting~\ref{setn.local} we have
$\omega_{\ambient} \cong \bunproj^* (\omega_{\blowupLocusA} \otimes \det \buntot^\vee)$
 and furthermore
\beq
\omega_{\singular} \cong  \bunproj^* (\omega_{\blowupLocusA} \otimes \sHom(\det \buntot, \bunlin))  \cong  \bunproj^* (\omega_{\blowupLocusA} \otimes  \sHom(\bunlin, \det \bunsec))
\eeq
where we reuse the notation~$\bunproj$ for the restriction of~$\bunproj\colon\ambient \to \exbase$ to~$\singular$.
\end{prop}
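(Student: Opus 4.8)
The plan is to derive both isomorphisms from the two structural features of Setting~\ref{setn.local}: that $\ambient$ is the total space of the rank~$2$ bundle $\buntot$ over the smooth scheme $\blowupLocusA$, and that $\singular$ is the vanishing locus of a regular section of $\bunproj^*\bunlin$. First I would record that for the vector bundle projection $\bunproj\colon\ambient=\Tot\buntot\to\blowupLocusA$ the relative cotangent sheaf is $\Omega_{\ambient/\blowupLocusA}\cong\bunproj^*\buntot^\vee$, so that $\omega_{\ambient/\blowupLocusA}\cong\bunproj^*\det\buntot^\vee$; tensoring with $\bunproj^*\omega_{\blowupLocusA}$ yields the first claimed formula (and incidentally reconfirms that $\ambient$ is smooth, since $\blowupLocusA$ is).

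Next I would treat $\omega_\singular$ by adjunction. By Proposition~\ref{prop.cutnormal} the section $\indsec$ of $\bunproj^*\bunlin$ cutting out $\singular$ is regular, so $\singular$ is an effective Cartier divisor in $\ambient$ with $\normalof{\singular}{\ambient}\cong(\bunproj^*\bunlin)|_\singular$. The adjunction formula $\omega_\singular\cong\omega_\ambient|_\singular\otimes\normalof{\singular}{\ambient}$, combined with the first formula and $\det\buntot^\vee\otimes\bunlin\cong\sHom(\det\buntot,\bunlin)$, gives
\[
\omega_\singular\cong\bunproj^*\bigl(\omega_{\blowupLocusA}\otimes\sHom(\det\buntot,\bunlin)\bigr)\big|_\singular,
\]
which is the middle expression once one notes that, as in the statement, $\bunproj$ now denotes the composite $\singular\hookrightarrow\ambient\to\blowupLocusA$.

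Finally, the equality of line bundles $\sHom(\det\buntot,\bunlin)\cong\sHom(\bunlin,\det\bunsec)$ is the same rank-$2$ manipulation used in the proof of Lemma~\ref{lem.equiv}: since $\bunsec=\sHom(\buntot,\bunlin)$ with $\buntot$ of rank~$2$ one gets $\det\bunsec\cong\det\buntot^\vee\otimes\bunlin^{\otimes2}$, hence $\sHom(\bunlin,\det\bunsec)\cong\det\buntot^\vee\otimes\bunlin\cong\sHom(\det\buntot,\bunlin)$; substituting into the display above gives the last expression. I do not expect any genuine obstacle in this argument --- it is adjunction plus bookkeeping --- the only points needing attention being the variance of $\Omega_{\ambient/\blowupLocusA}$ and the harmless overloading of the symbol $\bunproj$.
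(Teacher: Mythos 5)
Your proof is correct and follows essentially the same route as the paper: the standard identification $\omega_{\ambient/\blowupLocusA}\cong\bunproj^*\det\buntot^\vee$ for a total space, adjunction for the Cartier divisor $\singular=\zeroes{\indsec}$ with $\normalof{\singular}{\ambient}\cong\bunproj^*\bunlin|_\singular$, and the rank-$2$ duality of Lemma~\ref{lem.equiv} for the final rewriting. The only difference is that you spell out the determinant computation that the paper delegates to Lemma~\ref{lem.equiv}; no gap.
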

\begin{proof} The claim for $\omega_{\ambient}$ is standard, using that $\ambient$ is a bundle over~$\blowupLocusA$. The claim for $\omega_{\singular}$ then uses adjunction $\omega_{\singular} \cong \omega_{\ambient}|_{\singular} \otimes \bunproj^*\bunlin$,
and the last isomorphism is by Lemma~\ref{lem.equiv}.
\end{proof}

I give examples of Setting~\ref{setn.local}, taking $\bunlin$ trivial unless stated otherwise. First I give $3$-folds $\singular$ which may be constructed from surfaces $\exbase$ giving $4$-folds $\ambient$.

\begin{eg}\label{eg.fourfold} Take $\exbase = \mathbb{P}^1 \times \mathbb{P}^1$ and a generic section $\sec$ of $\bunsec = \cO(1,1)^{ \oplus 2} $. For instance, taking coordinates $(x_i:y_i)$ with $i=1,2$ for the $\mathbb{P}^1$ factors, we may take $\sec = (x_1x_2,y_1y_2)$.  Then $\blowupLocusB$ is the points $((1:0),(0:1))$ and $((0:1),(1:0))$.
The singular 3-fold~$\singular$ is the zeroes of a function on a 4-fold\footnote{For further study of the derived category of this 4-fold $\ambient$, see \cite{Kite,Don}.} namely the total space~$\ambient$ of~$\buntot = \cO(-1,-1)^{ \oplus 2} $.
\end{eg}

\begin{eg} Take $ \exbase =\mathbb{P}^2$ and a generic section $\sec$ of $\bunsec$ its tangent sheaf. Then by a Chern class calculation $\sec$ has 3~zeroes, so that $\blowupLocusB$ is 3~points. The singular \mbox{3-fold} $\singular$ is then the zeroes of a function on  the total space~$\ambient$ of~$\buntot$, the {co}tangent sheaf of~$\mathbb{P}^2$.
\end{eg}

To describe $\tw[\hun]$ in these cases, we have the following.

\begin{prop} Take $\dim \singular= 3$ so that $\dim \blowupLocusB = 0$ with $\blowupLocusB$ given by reduced points $\{z_1,\dots,z_p\}$. Then 
$\tw[\hun] \cong  \twObj{\resolution}{\cE_1} \comp \dots \comp \twObj{\resolution}{\cE_p}$
where $\cE_i = \cO_{\excBlowupBshort_i}(-1)$ for $\excBlowupBshort_i=\resMap^{-1} (z_i)\cong\mathbb{P}^1$.
\end{prop}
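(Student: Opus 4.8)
The plan is to exploit the fact that, for $\blowupLocusB=\{z_1,\dots,z_p\}$ a disjoint union of reduced points, $\D(\blowupLocusB)$ is the orthogonal direct sum $\bigoplus_{i=1}^{p}\D(\pt)$, its $i$-th summand $\D(\pt)_i$ consisting of the objects supported at $z_i$, and then to feed this decomposition repeatedly into Theorem~\ref{thm.sodsph} of Halpern-Leistner--Shipman.

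First I would settle the geometry and compute $\hun$ on skyscrapers. Since $\blowupLocusB$ is reduced of constant codimension~$2$ in the smooth surface $\blowupLocusA$, the blowup $\blowupProjA\colon\excBlowupSingular\to\blowupLocusA$ of Theorem~\ref{cor.blowupBgeom}(\ref{cor.blowupBgeomA}) has exceptional locus $\excBlowupB$ equal to the disjoint union of its components $\excBlowupBshort_1,\dots,\excBlowupBshort_p$, each $\excBlowupBshort_i\cong\mathbb P^1$, with $\blowupProjB$ restricting to the structure map $\excBlowupBshort_i\to\{z_i\}$. As $\blowupProjA$ is the restriction of $\resMap$ we get $\excBlowupBshort_i=\resMap^{-1}(z_i)$, matching the statement, and $\cO_{\blowupProjB}(-1)|_{\excBlowupBshort_i}\cong\cO_{\mathbb P^1}(-1)$. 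Unwinding Definition~\ref{defn.Gs} on the skyscraper $\cO_{z_i}$, flatness of $\blowupProjB$ gives $\blowupProjB^{*}\cO_{z_i}\cong\cO_{\excBlowupBshort_i}$; tensoring by $\cO_{\blowupProjB}(-1)$ and pushing forward to $\resolution$ yields $\hun(\cO_{z_i})\cong\cE_i$ with $\cE_i=\cO_{\excBlowupBshort_i}(-1)$. More generally $\hun$ is block-diagonal for $\D(\blowupLocusB)=\bigoplus_i\D(\pt)_i$, its restriction $\hun_i:=\hun|_{\D(\pt)_i}$ landing in the subcategory of $\D(\resolution)$ of objects supported on $\excBlowupBshort_i$.

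Next I would verify that the cotwist $\ctw[\hun]$ preserves each summand $\D(\pt)_i$; this is the only nonformal input before invoking Theorem~\ref{thm.sodsph}. Writing $\hun\cong\inc_*\SODembedB_0$ as in Proposition~\ref{lem.G}, we have $\Radj\hun\hun\cong\Radj{(\SODembedB_0)}\,\inc^{!}\inc_{*}\,\SODembedB_0$; the functors $\inc_*,\inc^{!},\SODembedB_0$ only tensor by line bundles and form cones, so they send an object of $\D(\pt)_i$ to one supported on $\excBlowupBshort_i$, and since the $\excBlowupBshort_i$ are pairwise disjoint, the adjoint $\Radj{(\SODembedB_0)}$ of the fully faithful $\SODembedB_0$ sends an $\excBlowupBshort_i$-supported object back into $\D(\pt)_i$. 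Hence $\Radj\hun\hun$, and therefore $\ctw[\hun]$, is block-diagonal. Now $\hun$ is spherical by Corollary~\ref{cor.blowupB}(\ref{cor.blowupBA}) and each $\D(\pt)_i$ is admissible in $\D(\blowupLocusB)$, so for $\mathsf A=\D(\pt)_1$ and $\mathsf B=\bigoplus_{i\geq2}\D(\pt)_i$ the hypotheses $\D(\blowupLocusB)=\langle\mathsf A,\mathsf B\rangle=\langle\ctw[\hun]\mathsf B,\mathsf A\rangle$ of Theorem~\ref{thm.sodsph} hold, the second using $\ctw[\hun]\mathsf B=\mathsf B$. Theorem~\ref{thm.sodsph} then gives that $\hun_1$ and $\hun|_{\mathsf B}$ are spherical with $\tw[\hun]\cong\tw[\hun_1]\comp\tw[\hun|_{\mathsf B}]$; iterating on $\hun|_{\mathsf B}$, which is again of the same shape, one obtains that each $\hun_i$ is spherical and $\tw[\hun]\cong\tw[\hun_1]\comp\dots\comp\tw[\hun_p]$.

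Finally, since $\hun_i\colon\D(\pt)\to\D(\resolution)$ is spherical with $\hun_i(\cO_{\pt})\cong\cE_i$, the object $\cE_i$ is spherical and $\tw[\hun_i]=\twObj{\resolution}{\cE_i}$ by the conventions of Section~\ref{sec.sphobj}; substituting gives $\tw[\hun]\cong\twObj{\resolution}{\cE_1}\comp\dots\comp\twObj{\resolution}{\cE_p}$. As a sanity check, the $\cE_i$ have pairwise disjoint supports, so $\Hom_{\resolution}(\cE_i,\cE_j[k])=0$ for $i\neq j$ and the composite is independent of the ordering. I expect the main obstacle to be the block-diagonality step for $\ctw[\hun]$ — essentially the assertion that restricting a spherical functor to a direct summand of its source preserves sphericality — since everything else is an unwinding of definitions and a mechanical iteration of Theorem~\ref{thm.sodsph}; in particular the support-preservation bookkeeping for $\Radj\hun\hun$ must be made airtight, and this is also exactly where the argument stays Ext-free, in the spirit of the rest of the paper.
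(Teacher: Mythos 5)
Your argument is correct and rests on exactly the two observations the paper's own (two-line) proof uses: that $\hun(\cO_{z_i})\cong\cE_i$ and that the $\cE_i$ are orthogonal because their supports $\excBlowupBshort_i$ are disjoint, after which the paper simply appeals to ``standard methods''. Your iterated application of Theorem~\ref{thm.sodsph} to the orthogonal decomposition $\D(\blowupLocusB)\cong\bigoplus_i\D(\pt)_i$, with the support-based check that $\ctw[\hun]$ is block-diagonal, is a legitimate and self-contained way of carrying out those standard methods, entirely in the spirit of the paper's toolkit.
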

\begin{proof} We have that $\hun\colon \cO_{z_i} \mapsto \cE_i$ and that the $\cE_i$ are orthogonal in $\D(\resolution)$ because their supports are disjoint. The claim follows by standard methods, compare Remark~\ref{rem.redconifold}.
\end{proof}

I now give examples with positive-dimensional $\blowupLocusB$ as follows.

\begin{eg}\label{eg.ellnormcurveetc} We  obtain singular Calabi--Yau hypersurfaces $\singular$ in 
$\ambient= \Tot \buntot\cong \Tot \bunsec^\vee$ by taking spaces as follows.
\begin{itemize}
\item Take $\exbase = \mathbb{P}^3$ and $\bunsec = \cO(2) ^{ \oplus 2}$. Then $\blowupLocusB$ is an elliptic normal curve of degree $4$.

\item  Take $\exbase = \mathbb{P}^4$ and $\bunsec = \cO(2)  \oplus \cO(3)$. This gives $\blowupLocusB$ a K3~surface of genus~4.

\item  Take $\exbase = \mathbb{P}^{2k+1}$ and $\bunsec = \cO(k+1) ^ {\oplus 2}$. Then $\blowupLocusB$ is a Calabi--Yau $(2k-1)$-fold.
\end{itemize}
\end{eg}

Generalizing some of the examples above, we may take $\exbase$ with a spin structure, in the sense that $\omega_\exbase$ has a square root, as follows.

\begin{eg} Take $\exbase$ with an invertible sheaf $\buntheta$ such that $ \buntheta \otimes \buntheta \cong \omega_\exbase $ and let $\bunsec = \buntheta ^ {{\vee}\oplus 2}$. Then $\ambient= \Tot \buntot$ with $\buntot \cong \buntheta  \otimes \field^2$, and $\ambient$ is Calabi--Yau using Proposition~\ref{prop.canonicallocal}.
\end{eg}

\begin{rem}\label{rem.dims}
In the above examples $\det \bunsec^\vee\cong\det \buntot\cong \omega_\exbase$, so $\ambient$ and $\singular$ are Calabi--Yau by Proposition~\ref{prop.canonicallocal}. The same then holds for  $\blowupLocusB$  by Proposition~\ref{prop.canonical} so that Theorem~\ref{keythm.blowupB} relates Calabi--Yau spaces $\blowupLocusB$, $\singular$, $\ambient$, with dimensions $d-2$,~$d$,~$d+1$, respectively.

Spaces $\singular$ which are \emph{not} Calabi--Yau may be obtained by keeping the same section $\sec$ of~$\bunsec$ as in the above examples, but allowing  the invertible sheaf~$\bunlin$ to be non-trivial, as can be seen from Proposition~\ref{prop.canonicallocal}. More generally, we may take an arbitrary regularly embedded~$\blowupLocusB$ of codimension~$2$ in  smooth equidimensional $\blowupLocusA$, cut out by a section of some locally free~$ \bunsec$. For $\buntot = \sHom(\bunsec,\bunlin)$ with arbitrary invertible $\bunlin$, we are then in Setting~\ref{setn.local} using Lemma~\ref{lem.equiv}.
\end{rem}




\opt{annalen}{
\stdskip
\noindent The author states that there is no conflict of interest, and that the manuscript has no associated data.
}

\end{document}